\documentclass[12pt,reqno]{amsart}
\usepackage{amsmath,amsfonts,amssymb}
\allowdisplaybreaks
\usepackage[utf8]{inputenc}
\usepackage[english]{babel}
\usepackage[T1]{fontenc}
\usepackage{lmodern}
\usepackage{mathtools}
\usepackage{scrextend}
\usepackage{enumitem}
\usepackage{graphicx}
\usepackage{stmaryrd}

\usepackage[all]{xy}
\usepackage[scr=boondox,frak=euler]{mathalfa}

\usepackage{tikz}
\usetikzlibrary{intersections}
\usetikzlibrary {calc}

\usepackage{bm}
\usepackage[normalem]{ulem}
\numberwithin{equation}{section}
\usepackage{xcolor}

\usepackage{centernot}

\usepackage[initials]{amsrefs}

\renewcommand{\leqslant}{\leq}
\renewcommand{\geqslant}{\geq}

\newcommand{\cb}{{\mathcal B}}
\newcommand{\cc}{{\mathcal C}}

\newcommand{\cf}{{\mathcal F}}

\newcommand{\cn}{{\mathcal N}}
\newcommand{\cm}{{\mathcal M}}

\newcommand{\E}{{\mathbb E}}

\newcommand{\mf}{\mathfrak{F}}
\newcommand{\MM}{{\mathbb M}}
\newcommand{\N}{{\mathbb N}}
\renewcommand{\P}{{\mathbb P}}

\newcommand{\R}{{\mathbb R}}

\newcommand{\nn}{n}
\newcommand{\f}{{\rm f}}

\newcommand{\vir}{,\,}

\newcommand{\Supp}{{\rm Supp}\;}

\newcommand{\expp}[1]{\mathop {\mathrm{e}^{ #1}}}


\title[Additive functionals of conditioned BGW trees]{Global regime 
  for general additive functionals
of conditioned Bienaymé-Galton-Watson trees}
\date{\today}

\usepackage[text={500pt,660pt},centering]{geometry}
\usepackage[colorlinks=true, pdfstartview=FitV, linkcolor=blue, citecolor=red, urlcolor=blue,pagebackref=false]{hyperref}
\setlength{\parskip}{1em}

\usepackage[noabbrev]{cleveref}

\setcounter{tocdepth}{1}
\makeatletter
\def\l@subsection{\@tocline{2}{0pt}{2.5pc}{5pc}{}}
\makeatother

\usepackage{amsthm}

\theoremstyle{plain}
\newtheorem{thm}{Theorem}[section]
\newtheorem{corollary}[thm]{Corollary}
\newtheorem{proposition}[thm]{Proposition}
\newtheorem{lemma}[thm]{Lemma}
\newtheorem*{thm*}{Theorem}
\newtheorem*{proposition*}{Proposition}

\theoremstyle{definition}
\newtheorem{remark}[thm]{Remark}
\newtheorem{example}[thm]{Example}

\newtheorem*{remark*}{Remark}

\usepackage{amsopn}

   {\left\lbrace\begin{array}{@{}l@{}}}
   {\end{array}\right.}
\newcommand{\norm}[1]{\left\lVert#1\right\rVert}
\newcommand{\real}{\mathbb{R}}
\newcommand{\dd}{\mathrm{d}}
\newcommand{\ex}[1]{\operatorname{\mathbb{E}}\left[#1\right]}
\newcommand{\n}{\operatorname{\mathbf{N}}}
\newcommand{\pr}[1]{\operatorname{\mathbb{P}}\left(#1\right)}

\mathchardef\mhyphen="2D

\renewcommand{\epsilon}{\varepsilon}
\renewcommand{\phi}{\varphi}

\newcommand{\ghp}{d_\mathrm{GHP}}

\newcommand{\costmh}{\mathcal{A}^{\mathfrak{mh}}}

\newcommand{\costj}{\mathcal{A}^{\mathfrak{mh}, \circ}}

\newcommand{\psij}{\Psi^{\mathfrak{mh}}}
\newcommand{\M}{\mathcal{M}}

\newcommand{\MF}{\mathcal{M}_\mathfrak{F}}

\newcommand{\dF}{d_\mathfrak{F}}

\newcommand{\B}{\mathcal{B}}

\newcommand{\BF}{\mathcal{B}_{\mathfrak{F}}}
\newcommand{\Btr}{\mathcal{B}_{\mathrm{tr}}}
\newcommand{\law}{\overset{\scriptscriptstyle (d)}{=}} 
\newcommand{\cvlaw}{\xrightarrow[]{\scriptscriptstyle (d)}} 
\newcommand{\cvlawd}{\xrightarrow[n \to \infty]{(d)}}
\newcommand{\cvlawmean}{\xrightarrow[n \to \infty]{(d)+\mathrm{mean}}}
\newcommand{\lawd}{\overset{(d)}{=}}

\newcommand{\ind}{\mathbf{1}}
\newcommand{\tree}{T}
\newcommand{\rdtree}{\mathcal{T}}
\newcommand{\rddtree}{\tau}
\renewcommand{\L}[1]{\norm{#1}_{\mathrm{L}}}

\renewcommand{\*}[1]{#1^{\nu}}
\renewcommand{\H}{\mathfrak{h}}
\newcommand{\Hexc}{\mathfrak{h}}
\newcommand{\m}{\mathfrak{m}}
\renewcommand{\root}{\emptyset}
\newcommand{\T}{\mathbb{T}}
\newcommand{\TT}{\mathbb{T}_0}
\newcommand{\ttt}{\mathrm{T}}

\newcommand{\Gammaeuler}{\Gamma}
\newcommand{\e}{\mathrm{e}}
\newcommand{\Br}{B_{\mathrm{ex}}}

\newcommand{\excm}[1]{\operatorname{\mathbf{N}}^{(#1)}}
\newcommand{\spn}{\lambda_0}
\newcommand{\supp}{\Delta}

\newcommand{\height}{\mathsf{H}}
\newcommand{\mass}{\mathsf{S}}
\newcommand{\theight}{\mathrm{T}}
\newcommand{\tmass}{\mathrm{T}}
\newcommand{\ppp}{\mathfrak{T}}
\newcommand{\sigmaxi}{\sigma_\xi}

\author{Romain Abraham}
\address{Romain Abraham,
Institut Denis Poisson,
Universit\'{e} d'Orl\'{e}ans,
Universit\'e de Tours,
CNRS,
France}
\email{romain.abraham@univ-orleans.fr}

\author{Jean-Fran\c{c}ois Delmas}
\address{Jean-Fran\c{c}ois Delmas,
 CERMICS, Ecole des Ponts, France}
\email{delmas@cermics.enpc.fr}

\author{Michel Nassif}
\address{Michel Nassif,
CERMICS, Ecole des Ponts, France}
\email{michel.nassif@enpc.fr}

\begin{document}

\begin{abstract}
  We give  an invariance principle  for very general additive  functionals of
  conditioned Bienaymé-Galton-Watson trees in the global regime when the
  offspring distribution  lies in the  domain of attraction of  a stable
  distribution, the limit being an  additive functional of a stable Lévy
  tree.   This includes  the case  when the  offspring distribution  has
  finite variance (the Lévy tree being  then the Brownian tree). We also
  describe,  using  an  integral  test,  a  phase  transition  for  toll
  functions depending on the size and height.
\end{abstract}

\subjclass[2010]{60J80, 60F17 05C05}

\keywords{Galton-Watson trees, Lévy trees, additive functionals, scaling limit, phase transition}

\maketitle


\section{Introduction}

In view of the many applications of trees (in computer science, biology,
physics, ...), the  study of additive functionals on  large random trees
has seen a lot of development in recent years, see references below.  In
this  paper,  we  consider   asymptotics  for general  additive  functionals  on
conditioned  Bienaymé-Galton-Watson   (BGW  for  short)  trees   in  the
so-called global regime.

Recall that a  functional $F$ defined on finite  rooted ordered discrete
trees is said to be additive if it satisfies the recursion
\begin{equation}
F(\mathbf{t}) = \sum_{i=1}^d F(\mathbf{t}_i) + f(\mathbf{t}),
\end{equation}
where $\mathbf{t}_1,  \ldots, \mathbf{t}_d$  are the subtrees  rooted at
the $d$ children of the root of the tree $\mathbf{t}$ and $f$ is a given
toll function. Notice that this can also be written as
\begin{equation}\label{additive functional}
F(\mathbf{t}) = \sum_{w \in \mathbf{t}} f(\mathbf{t}_w),
\end{equation}
where $\mathbf{t}_w$ is the subtree of $\mathbf{t}$ above the vertex $w$
and rooted at $w$. Such  functionals are encountered in computer science
where  they  represent the  cost  of  divide-and-conquer algorithms,  in
phylogenetics where  they are used as  a rough measure of  tree shape to
detect imbalance  or in  chemical graph  theory where  they appear  as a
predictive tool  for some chemical  properties. Among these,  we mention
the total path length defined as the sum of the distances to the root of
all vertices, the Wiener index \cite{szekely2016problems} defined as the
sum  of  the  distances  between   all  pairs  of  vertices,  the  shape
functional,  the Sackin  index,  the Colless  index  and the  cophenetic
index,   see  \cite{shao1990tree}   for  their   definitions  and   also
\cite{delmas2018} for  their representation using  additive functionals,
and the references therein.  See also \cite{ralaivaosaona2020acentral}
for other functionals such that the number of matchings,  dominating
sets, independent sets for trees. 
We also  mention the Shao and Sokal's $B_1$
index \cite{agapow2002power, shao1990tree} defined by
\begin{equation}
B_1(\mathbf{t}) = \sum_{\substack{w \in \mathbf{t}^\circ\\w\neq \emptyset}} \frac{1}{\H(\mathbf{t}_w)},
\end{equation}
where for every finite rooted ordered tree $\mathbf{t}$,
$\H(\mathbf{t})$ is its height and $\mathbf{t}^\circ$ is the set of
internal vertices. It is used for assessing the balance of phylogenetic
trees, see \emph{e.g.}
\cite{fabre2012glimpse,jabot2009inferring,kirxpatrick1993searching,poon2015phylodynamic,scott2020inferring}. 

We shall consider in this paper random discrete trees $\rddtree^n$ which
are BGW trees conditioned to have $n$ vertices, and then study the limit
of  rescaled additive  functionals as  $n$  goes to  infinity.  One  can
distinguish between  local and global  regime. In the local  regime, the
toll function  is small or even  vanishes when the subtree  is large; so
the main  contribution to the  additive functional comes from  the small
subtrees. These being almost  independent, we understand intuitively why
the       limit        distribution       is        Gaussian.        See
\cite{janson2016asymptotic,ralaivaosaona2020acentral,wagner2015central}
for asymptotic  results in the local  regime. In the global  regime, the
toll  function  is  large  when  the  subtree  is  large;  so  the  main
contribution  comes from  large subtrees  which are  strongly dependent.
This intuitively explain why we expect  the limit to be non-Gaussian. As
far as we  know, asymptotic results in the global  regime deal with toll
functions depending only on the size.   In this paper, we shall focus on
the  global  regime for  general  toll  functions.  In  particular,  our
results apply to toll functions depending  on the size and height.  When
the   toll   function   is   monomial   in  the   size   of   the   tree
$f(\mathbf{t})  =   |\mathbf{t}|^{\alpha'}$,  with   $|\mathbf{t}|$  the
cardinal  of   $\mathbf{t}$,  Fill  and   Kapur  \cite{fill2004limiting}
observed a  phase transition at $\alpha'  = 1/2$ for binary  trees under
the  Catalan  model  (which  is   a  special  case  of  conditioned  BGW
trees): the global regime corresponds to $\alpha'>1/2$. This was later generalized  by Fill and
Janson \cite{fillsum} to 
BGW  trees with  critical  offspring distribution  with finite  variance
using techniques from complex analysis; they identified a local regime for $\alpha' < 0$ and an intermediate regime for $0 < \alpha' < 1/2$. When the offspring distribution
has infinite variance  but lies in the domain of  attraction of a stable
distribution  with  index  $\gamma   \in  (1,2]$,  Delmas,  Dhersin  and
Sciauveau  \cite{delmas2018}  proved  convergence  in  distribution  for
$\alpha' \geqslant  1$ using stable  Lévy trees and conjectured  a phase
transition  at $\alpha'  =1/\gamma$. We  shall prove this conjecture,  as a particular case  of our
main result, see Theorem \ref{thm:main}.

Let $\xi$ be a $\N$-valued random variable. We write BGW($\xi$) tree
for a  BGW tree  with offspring  distribution (the law of) $\xi$. We denote by $\rddtree^n$  a BGW($\xi$) tree conditioned to have
$n$ vertices and we assume that $\xi$ is critical, \emph{i.e.} $\ex{\xi} = 1$,
      nondegenerate, \emph{i.e.} $\pr{\xi = 0} >0$, and that it belongs to the domain of attraction of a stable
      distribution with index $\gamma \in (1,2]$, \emph{i.e.} there  exists a positive sequence    $(b_n,    \,    n     \geqslant    1)$    such    that    if
$(\xi_n,\, n \geqslant 1)$ is a sequence of independent random variables
with      the       same      distribution      as       $\xi$      then
$b_n^{-1}  \left(   \sum_{k=1}^n  \xi_k   -  n  \right)$   converges  in
distribution towards a  stable random variable whose Laplace  transform is given
by  $  \exp(\kappa  \lambda^\gamma)$  for $\lambda\geq  0$,  with  index
$\gamma  \in   (1,2]$  and   normalizing  constant  $\kappa   >0$  (the
 constant  $\kappa$   depends      on     the      choice     of the sequence
$(b_n, \,  n \geqslant 1)$.  Under  these assumptions, it is  also well
known  that, as  $n$ goes  to infinity,  $\rddtree^n$ properly  rescaled
converges in distribution with respect to the Gromov-Hausdorff-Prokhorov topology to
the  stable  Lévy tree  $\rdtree$  with  index $\gamma$  (and  branching
mechanism  $\psi(\lambda) =  \kappa \lambda^\gamma$)  which is  a rooted
random real tree (see Section \ref{levy tree} for a precise definition),
see Aldous  \cite{aldous1991continuum} for the finite  variance case and
Duquesne \cite{duquesne2003limit} for the  general case. The stable Lévy
tree is a generalization of Aldous' Brownian continuum random tree which
corresponds to $\gamma = 2$. We recall  that the stable Lévy tree is the
real tree  coded by the normalized  excursion of the height  process associated with a stable L\'evy process and
that it  codes the  genealogy of continuous-state  branching processes,
see \emph{e.g.} Le Gall and  Le Jan \cite{le1998branching}, Duquesne and
Le Gall \cite{duquesne2002random,  duquesne2005probabilistic}. We recall
that  any  real  tree  $\tree$   is  endowed  with  the  length  measure
$\ell(\dd  y)$ (which  roughly speaking is  the Lebesgue
measure on  the branches of the tree)  and that the Lévy  tree is naturally
endowed  with a  mass measure  (which  roughly speaking  is the  uniform
probability measure on the infinite  set of leaves). One of our main results can be stated as follows. We refer the reader to
Proposition \ref{general functional} and Theorem \ref{mass+height
  convergence} for  more general statements. Recall that $\mathbf{t}^\circ$
denotes the set of
internal vertices of the discrete tree  $\mathbf{t}$. 

\begin{thm}
\label{thm:main}
Let $\rddtree^n$ be a BGW($\xi$) tree conditioned to have $n$ vertices,
with $\xi$  being critical, nondegenerate and in the domain of attraction of a stable distribution with index $\gamma \in (1,2]$.   We suppose moreover that the sequence $(b_n,  \,  n \geqslant  1)$ defined  as above is such that $(b_n /n^{1/\gamma}, \, n \geqslant 1)$ is bounded away from zero and infinity. Let $\rdtree$   be   the  stable   Lévy   tree   with  branching   mechanism
$\psi(\lambda)  =  \kappa  \lambda^\gamma$. Let 
$\alpha',\beta \in \real$. 
	\begin{enumerate}[label=(\roman*),leftmargin=*]
		\item If $\gamma \alpha' + (\gamma-1)\beta >1$, we have the
          convergence in distribution and of the first moment 
\begin{equation}
\label{convergence length measures}
	\frac{b_n^{1+\beta}}{n^{1+\alpha'+\beta}} \sum_{w \in
      \rddtree^{n,\circ}} |\rddtree^n_{w}|^{\alpha'}\, 
    \H(\rddtree^n_{w})^\beta \cvlawmean \int_{\rdtree}
    \m(\rdtree_y)^{\alpha'}\,  \H(\rdtree_y)^\beta\, \ell(\dd y),
\end{equation}
where the right hand-side of
\eqref{convergence length measures} has finite mean and, for $y\in \rdtree$,  $\rdtree_y$ is the subtree of $\rdtree$
above $y$, $\m(\rdtree_y)$ 
is its mass, and $\H(\rdtree_y)$ its height. 
\item If $\gamma \alpha' + (\gamma-1)\beta \leqslant 1$, we have the convergence in distribution and of the first moment
		\begin{equation}
		\frac{b_n^{1+\beta}}{n^{1+\alpha' +\beta}} \sum_{w \in
          \rddtree^{n,\circ}} |\rddtree^n_{w}|^{\alpha'} \,
        \H(\rddtree^n_{w})^\beta \cvlawmean \infty.\end{equation} 
	\end{enumerate}
\end{thm}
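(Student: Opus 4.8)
\emph{Proof strategy.}
Write $\theta = \gamma\alpha' + (\gamma-1)\beta$ and observe that the sum in \eqref{convergence length measures} is the additive functional \eqref{additive functional} with the monomial toll function $f(\mathbf t) = |\mathbf t|^{\alpha'}\,\H(\mathbf t)^\beta$. The plan is to deduce both items from the general invariance principle (Proposition~\ref{general functional} and Theorem~\ref{mass+height convergence}) applied with the test function $g(m,h) = m^{\alpha'}h^{\beta}$, and to supply the one piece of information those results require about the limit. Under the Gromov--Hausdorff--Prokhorov rescaling sending $\rddtree^n$ to $\rdtree$, the mass of a fringe subtree satisfies $|\rddtree^n_w| \approx n\,\m(\rdtree_y)$, its height $\H(\rddtree^n_w) \approx (n/b_n)\,\H(\rdtree_y)$ (recall $n/b_n \asymp n^{1-1/\gamma}$ by the assumption on $(b_n)$), and the counting measure on internal vertices rescales to $(n/b_n)\,\ell$; the product of these three factors is precisely $n^{1+\alpha'+\beta}/b_n^{1+\beta}$, and it identifies the candidate limit $\int_{\rdtree}\m(\rdtree_y)^{\alpha'}\,\H(\rdtree_y)^{\beta}\,\ell(\dd y)$. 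Since the general results are formulated under an integrability hypothesis on the limiting functional, the whole theorem reduces to deciding when this integral has finite expectation, and to treating the complementary case; I claim the threshold is $\theta = 1$.

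\emph{The integrability threshold and item (i).}
I would compute the first moment of the limit by a spinal (Bismut-type) decomposition of the stable Lévy tree along its length measure: conditionally on the mass of the subtree $\rdtree_y$ at a typical length point being $m$, that subtree is a stable tree of mass $m$, so by the stable scaling its height is distributed as $m^{(\gamma-1)/\gamma}\,\H(\rdtree)$, while the intensity with which length points produce subtrees of mass about $m$ behaves like $m^{-1-1/\gamma}\,\dd m$ near $m=0$ (this exponent being forced by the tail $\n(\sigma > m)\asymp m^{-1/\gamma}$ of the total mass under the excursion measure). Integrating, one finds
\[
\ex{\int_{\rdtree}\m(\rdtree_y)^{\alpha'}\H(\rdtree_y)^{\beta}\,\ell(\dd y)}
\;\asymp\; \ex{\H(\rdtree)^{\beta}}\int_{0}m^{\,\alpha' + \beta(\gamma-1)/\gamma - 1 - 1/\gamma}\,\dd m
\;=\; \ex{\H(\rdtree)^{\beta}}\int_{0}m^{(\theta-\gamma-1)/\gamma}\,\dd m,
\]
which is finite exactly when $(\theta-\gamma-1)/\gamma > -1$, i.e.\ $\theta > 1$; here $\ex{\H(\rdtree)^{\beta}}$ is finite for every real $\beta$ because the height of the normalised stable tree has moments of all orders (an upper tail bound for $\beta\ge0$, and a stretched-exponential lower deviation bound for $\beta<0$, a very short tree being forced to carry little mass). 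This yields item (i): apply Theorem~\ref{mass+height convergence} with $g(m,h)=m^{\alpha'}h^\beta$, whose (mild) regularity hypotheses are immediate and whose integrability hypothesis is the computation above.

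\emph{Item (ii).}
Here $\theta \le 1$. Since all summands are nonnegative, I would bound the sum below by the contribution of fringe subtrees of a fixed small size. For a fixed $k\ge 2$ in the support of $|\rddtree|$, the law of large numbers for fringe subtrees gives $n^{-1}\#\{w\in\rddtree^{n,\circ} : |\rddtree^n_w| = k\} \to \pr{|\rddtree| = k}$ and $n^{-1}\sum_{w:\,|\rddtree^n_w|=k}\H(\rddtree^n_w)^{\beta}\to \pr{|\rddtree|=k}\,\ex{\H(\rddtree^k)^{\beta}}$, both in probability and in mean (the summands being bounded), where $\rddtree^k$ is BGW($\xi$) conditioned to have $k$ vertices. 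Hence the contribution of size-$k$ fringe subtrees to the left-hand side of \eqref{convergence length measures} is at least $(c-o_{\P}(1))\,n^{(1-\theta)/\gamma}\,c_k$ with $c_k := k^{\alpha'}\,\pr{|\rddtree|=k}\,\ex{\H(\rddtree^k)^{\beta}} \in (0,\infty)$ and $c>0$ (using $b_n\asymp n^{1/\gamma}$, so $b_n^{1+\beta}/n^{\alpha'+\beta}\asymp n^{(1-\theta)/\gamma}$). If $\theta < 1$ the prefactor $n^{(1-\theta)/\gamma}$ already diverges and a single value of $k$ forces the sum to $+\infty$ in probability and in mean. If $\theta = 1$ the prefactor is of order $1$; summing over $k\le K$ and using the tails $\pr{|\rddtree|=k}\asymp k^{-1-1/\gamma}$ and $\ex{\H(\rddtree^k)^{\beta}}\asymp k^{\beta(\gamma-1)/\gamma}$, one gets $\sum_{k}c_k \asymp \sum_{k}k^{(\theta-\gamma-1)/\gamma} = \sum_k k^{-1} = \infty$, so $\liminf_n$ of the sum is $\ge c\sum_{k\le K}c_k$, which tends to $\infty$ as $K\to\infty$; the same bound for the expectations (or a direct first-moment computation, of order $n^{(1-\theta)/\gamma}$ for $\theta<1$ and of order $\log n$ for $\theta=1$) gives the convergence of the mean.

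\emph{Main obstacle.}
The delicate point is item (i): turning the heuristic above into a rigorous identity requires the Bismut-type decomposition of the stable Lévy tree along its length measure together with the exact small-mass intensity $m^{-1-1/\gamma}\,\dd m$, so that the threshold comes out to be precisely $\theta = 1$; one also needs finiteness of $\ex{\H(\rdtree)^{\beta}}$ for arbitrary real $\beta$, which for negative $\beta$ rests on a lower-deviation estimate for the height of the normalised stable tree. The remaining inputs — the regularity hypotheses of the general theorem for the monomial $g$, the fringe-subtree law of large numbers, and the tail asymptotics of $\pr{|\rddtree| = k}$ and of $\ex{\H(\rddtree^{k})^{\beta}}$ — are standard or already available in the cited references.
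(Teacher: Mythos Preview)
Your treatment of item (i) coincides with the paper's: Theorem~\ref{thm:main}(i) is simply the monomial case $f(x,u)=x^{\alpha}u^{\beta}$ (with $\alpha=\alpha'-1$) of Proposition~\ref{general functional} or Theorem~\ref{mass+height convergence}, whose hypothesis is exactly $\gamma\alpha+(\gamma-1)(\beta+1)>0$, i.e.\ $\theta>1$. Your Bismut-type moment computation is the content of Proposition~\ref{prop:EYf} and Corollary~\ref{cor:expectation functional on Levy tree}; it recovers the threshold correctly, but note that to invoke Theorem~\ref{mass+height convergence} you do not need it, since the stated hypothesis there is the elementary integral test $\int_0 x^{\alpha\gamma/(\gamma-1)+\beta}\,\dd x<\infty$.

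Your argument for item (ii) is correct and takes a genuinely different route. The paper goes through the continuous limit: Theorem~\ref{mass+height convergence}(iii) truncates $f$ and uses $\costj_n\cvlaw\psij_{\rdtree}$ in $\M$, and then appeals to Proposition~\ref{joint phase transition} for the statement $\psij_{\rdtree}(x^{\alpha}u^{\beta})=\infty$ a.s.\ when $\theta\le 1$, whose proof (Section~\ref{section proof phase transition}) is the heaviest part and relies on Bismut's decomposition, the subordinator and record processes $(\mass_r,\height_r)$ of \eqref{def subordinator+record}, and a Kochen--Stone second-moment argument. You instead bound the discrete sum below by fringe contributions of fixed sizes $k$, combining Aldous' fringe law of large numbers with the local-limit asymptotic $\pr{|\rddtree|=k}\asymp k^{-1-1/\gamma}$ and $\ex{\H(\rddtree^{k})^{\beta}}\asymp k^{\beta(\gamma-1)/\gamma}$ (the latter following from Lemma~\ref{height finite moments} and the moment convergence of $\frac{b_k}{k}\H(\rddtree^k)$). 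This is shorter and entirely discrete. The trade-off is scope: your argument is tailored to the monomial toll and does not give the almost-sure divergence of the limiting functional on the Lévy tree, whereas the paper's route proves the phase transition on $\rdtree$ itself (Proposition~\ref{joint phase transition}) and hence covers the wider class of tolls in Theorem~\ref{mass+height convergence}(iii).
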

 We complete the previous result with some comments.

\begin{remark}
\label{rem:main}
\begin{enumerate}[label=(\roman*),leftmargin=*]
\item 
From Theorem \ref{thm:main}, we obtain a phase change for functionals of
the mass and height at $\gamma \alpha' + (\gamma -
1)\beta=1$. Heuristically, the condition on $\alpha'$ and $\beta$ is due
to the fact that the height of a (unnormalized) stable Lévy tree scales
as its mass to the power $(\gamma-1)/\gamma$. Let us mention that this
phase change is specific to BGW trees, see Remark \ref{non generic phase
  transition} in this direction. 
   \item
   See conditions \ref{xi1} and \ref{xi2} in Section \ref{Sect BGW} for a more detailed discussion of the assumptions on the offspring distribution. The additional boundedness assumption on $(b_n/n^{1/\gamma}, \, n\geqslant 1)$ is also equivalent to \ref{xi3}. This latter can be dropped in (i) of Theorem \ref{thm:main} when $\alpha'\geqslant 1$ and $\beta \geqslant 0$ according to Proposition \ref{convergence for continuous functions discrete}.
   \item We also
have the
convergence (and finiteness) of the moments  of all order $p>1$ in
\eqref{convergence length measures} as soon as  $p(\gamma \alpha +
(\gamma-1)\beta)>1-\gamma$, with $\alpha=\alpha'-1$, see
Proposition~\ref{general functional}.  In particular for $\beta=0$, we
have the convergence of all nonnegative moments for $\alpha'\geq
1$. However, in the finite variance case, for $\alpha'\in (1/2, 1)$ (and $\beta=0$), our result is not
optimal, see (vi) below. 
\item Theorem \ref{thm:main} generalizes a result by Delmas, Dhersin and
  Sciauveau where only 
functionals of the mass are considered (\emph{i.e.} $\beta = 0$), see
\cite[Lemma 4.6]{delmas2018}. 
In particular, we prove the conjecture stated therein:  when $\beta=0$, there is a
phase transition  at $\alpha' = 1/\gamma$ (the parameter $\alpha$
therein corresponds to $\alpha'-1$ here). If we fix $\alpha' =0$ and
let $\beta$ 
vary, the phase transition  occurs at $\beta = 1/(\gamma-1) \geqslant 1$. In
particular, Shao and Sokal's $B_1$ index, which corresponds to $\alpha
=0$ and $\beta = -1$, lies in the local regime, whatever the value of
the index $\gamma$ and is therefore not covered by our
results. See also (v) below. 
\item If the offspring distribution has finite variance $\sigmaxi^2 \in
(0,\infty)$, one can take $b_n = b\sqrt{n}$ in which case $\rdtree$ is
distributed as the Brownian continuum random tree with branching
mechanism $\psi(\lambda) = \sigmaxi^2\lambda^2/(2b^2)$.  For $b=\sigmaxi$, the contour
process of $\rdtree$ is a standard Brownian motion under its normalized excursion
measure. 
\item\label{local regime  remark} Assume that the offspring  distribution has
  finite  variance   $\sigmaxi^2  \in  (0,\infty)$,  which   implies  that
  $\gamma=2$.  We consider  the asymptotics in the local
  regime  of $\sum_{w \in
      \rddtree^{n,\circ}} |\rddtree^n_{w}|^{\alpha'}\, 
    \H(\rddtree^n_{w})^\beta$, that is when $\alpha', \beta \in \R $
    such that $2\alpha' + \beta <0$. Denote by  $F_{\alpha',\beta}$ the additive
  functional \eqref{additive functional} associated with the toll function
  $f_{\alpha',\beta}(\mathbf{t})         =        |\mathbf{t}|^{\alpha'}
  \H(\mathbf{t})^\beta\ind_{\{|\mathbf{t}|                       >1\}}$.
  By  \cite[Theorem  1.5]{janson2016asymptotic}   and  Lemma  \ref{local
    regime lemma}, we have
\[
\frac{F_{\alpha',\beta}(\rddtree^n) - n\mu}{\sqrt{n}} \cvlawd
\mathcal{N}(0,\varsigma^2), 
\]
where $\mu, \varsigma^2$ are finite and given by $\mu =
\ex{f_{\alpha',\beta}(\rddtree)}$ and by $\varsigma^2 = 2 
\ex{f_{\alpha',\beta}(\rddtree)\left(F_{\alpha',\beta}(\rddtree) 
 -    |\rddtree|\mu\right)}  
 - \operatorname{Var}( f_{\alpha',\beta}(\rddtree)) - \mu^2/\sigmaxi^2$,
and  
$\rddtree$ is the corresponding unconditioned BGW tree. In particular,
this covers Shao and Sokal's $B_1$ index (where $\alpha'=0$ and
$\beta=-1$). Notice that this leaves a gap for $0\leqslant
2\alpha'+\beta \leqslant 1$. At least when $\beta = 0$, the situation is
well understood. Fill and Janson \cite{fillsum} identify three different regimes: the global regime for $\alpha' >1/2$, the local regime for $\alpha' < 0$ and an intermediate regime for $0<\alpha'<1/2$. The nontrivial asymptotic behavior of
$F_{\alpha',\beta}(\rddtree^n)$ for $\gamma\in (1, 2)$ and
$\gamma\alpha' + (\gamma-1) \beta \leq 1$ (that is the non global regime in
the non quadratic case) is an open question. 
\item When $\rddtree^n$  is uniformly distributed among the  set of full
  binary  ordered  trees  with  $n$ vertices  (which  corresponds  to  a
  conditioned BGW($\xi$) tree with $\pr{\xi =  0}= \pr{\xi = 2} = 1/2$),
  Fill and  Kapur \cite{fill2004limiting}  studied the local  and global
  regime  when  the  toll  function  is  a power  of  the  size  of  the
  tree. Concerning  the global  regime, they  showed the  convergence in
  distribution,  using  the  convergence  of  all  positive  moments  in
  \eqref{convergence   length  measures}   for  $\alpha'   >  1/2$   and
  $\beta =0$, see Eq. (3.14) and  Proposition 3.5 therein. In that case,
  one can take $b_n = \sqrt{n}$  and $\rdtree$ is the Brownian tree with
  branching  mechanism  $\psi(\lambda)   =  \lambda^2/2$. See also  Fill and
Janson \cite{fillsum} for general  critical  offspring distribution with
finite  variance. The  explicit
  formula for the first moment of the right  hand-side of \eqref{convergence
    length   measures}   are   given   by   the   right   hand-side   of
  \eqref{eq:moment-Psi-Br} with $\kappa=1/2$ and $\alpha=\alpha'-1$.
\item As an application, using 
  \eqref{convergence length 
      measures}, we obtain, when  
$\alpha' > 1/\gamma$,  in Example \ref{power log} (with
$\alpha'=\alpha+1$) an   
    asymptotic expansion in distribution for 
$ b_n\, n^{-(1+\alpha')}  \sum_{w \in
  \rddtree^{n,\circ}}\left|\rddtree^{n}_w\right|^{\alpha'} \log
\left|\rddtree^n_w\right|$. 
\end{enumerate}
\end{remark}

More generally, if  one views a discrete  tree as a real  tree, then the
left-hand side in \eqref{convergence length  measures} is related to the
discrete                          length                         measure
$\ell_n(\dd y)=\sum_{w\in  \rddtree^n} \delta_w(\dd y)$  of $\rddtree^n$
(after rescaling by  $b_n/n$). One way to interpret the  result would be
to       say       that       the       sequence       of       measures
$\int_{\rddtree^n} \delta_{\rddtree^n_y} \,  \ell_n(\dd y)$ converges in
distribution  to $\int_{\rdtree}  \delta_{\rdtree_y}\,  \ell(\dd y)$  in
some   sense.  One   might  then   hope  to   prove  that   the  mapping
$\tree \mapsto \int_\tree \delta_{\tree_y}\,  \ell(\dd y)$ is continuous
on the space of compact real trees. This is not true however, see Remark
\ref{non generic  phase transition}, one  problem being that  the length
measure  is not  finite in  general.  To overcome  this difficulty,  our
approach,  inspired by  \cite{delmas2018}, consists  in considering  the
length  measure biased  by  the  size of  the  subtree  above $y$,  thus
penalizing small subtrees.

More precisely let $\T$ be the space of (equivalent classes of) weighted
rooted   compact    real   trees   (\emph{i.e.}   the    set   of   quadruplets
$(\tree,\root,d,\mu)$ where  $(\tree,d)$ is a real  tree, $\root$ is a
distinguished vertex of $\tree$ called the root,  and the mass measure $\mu$
is a  finite measure on  $\tree$). 
We recall that the length measure $\ell$ on a real tree $(\tree, d)$ has
an intrinsic definition. 
For every  $(\tree,\root,d,\mu)\in\T$, we
define  a  measure  $\Psi_\tree$  on $\T\times  \R_+$  by:  for
every nonnegative  measurable function 
$f$ defined on $\T\times \R_+$, 
\begin{equation}
\label{eq:def-Psi-intro}
\Psi_\tree(f) = \int_\tree \mu(\tree_y) f\left(\tree_y, H(y)\right)\, \ell(\dd y),
\end{equation}
where $H(y)=d(\root, y)$ denotes the height of $y$ (\emph{i.e.} the
distance to the root) in $\tree$. 
We also consider the measure $\psij_\tree$ on $\R_+^2$ defined similarly
to $\Psi_\tree$ for functions depending only on the mass and height of
the tree, see \eqref{psi joint}.

If $\mathbf{t}$ is a finite rooted ordered tree and $a>0$, we denote by
$a\mathbf{t}$ the real tree associated with $\mathbf{t}$, rescaled so
that all edges have length $a$ and equipped with the uniform probability
measure on the set of vertices whose height is an integer multiple of
$a$, see Section \ref{real trees} for a precise definition. Furthermore, for
$w\in \mathbf{t}$, we write $aw$ for the corresponding vertex in
$a\mathbf{t}$ and  $a\mathbf{t}_w$ for the subtree of
$a\mathbf{t}$ above $aw$. The height of $w$  in $\mathbf{t}$
is denoted by
$H(w)$; and thus the height of $aw$ in $a\mathbf{t}$ is $a H(w)$. In the spirit of \cite{delmas2018}, we consider the measure 
$\mathcal{A}^\circ_{\mathbf{t},a}$ on $\T\times \R_+$ defined by: for nonnegative  measurable function $f $
defined on $\T\times \R_+$, 
\begin{equation}
\label{eq:def-A-intro}
\mathcal{A}^\circ_{\mathbf{t},a}(f) = \frac{a}{|\mathbf{t}|} \sum_{w \in
  \mathbf{t}^\circ}|\mathbf{t}_w| f\left(a\mathbf{t}_w, aH(w)\right). 
\end{equation}
In  \eqref{eq:def-A-intro}, instead  of  summing over  all the  internal
vertices ($w\in \mathbf{t}^\circ$) one could  also sum over all vertices
including the  leaves ($w\in \mathbf{t}$);  in this case the  measure is
denoted by  $\mathcal{A}_{\mathbf{t},a}$. The two measures  are close in
total                            variation                            as
$d_{\mathrm{TV}}(\mathcal{A}_{\mathbf{t},a},
\mathcal{A}_{\mathbf{t},a}^\circ)\leqslant  a$,
see \eqref{distance a  a}.  We mention  that the
measure   $\mathcal{A}_{\mathbf{t},a}$   was   already   considered   in
\cite{delmas2018} for functions $f$ depending only on the size. 
 
 For every finite rooted ordered tree $\mathbf{t}$ and $a>0$, we show
 (see Lemma \ref{distance between measures}) that the measures
 $\mathcal{A}_{\mathbf{t},a}^\circ$  and
 $\mathcal{A}_{\mathbf{t},a}$ can be approximated by $\Psi_{a\mathbf{t}}$.
  In Proposition \ref{length measure}, we give another expression for $\Psi_\tree$:
 \begin{equation}
\Psi_{\tree}(f) = \int_{\tree} \mu(\dd x)\int_{0}^{H(x)}f\left(\tree_{r\vir x},r\right)\dd r,
 \end{equation}
for every nonnegative measurable function $f$ defined on $\T \times \R_+ $. Here $\tree_{r\vir x}$ is the subtree of $\tree$ above level $r$ containing $x$.
This  latter expression of  $\Psi_\tree$ is used to prove it is
continuous as a function of $\tree$, see Proposition \ref{continuity of
  the measure}. 
\begin{thm}
\label{thm:Psi-cont}
The mapping $\tree \mapsto \Psi_\tree$, from $\T$ endowed with the
Gromov-Hausdorff-Prokhorov topology to $\M(\T \times \R_+)$,  the space
of nonnegative finite measures on $\T \times \R_+$,  endowed with the topology of weak convergence, is well defined and continuous.
\end{thm}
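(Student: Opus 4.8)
The plan is to prove continuity by exhibiting $\Psi_\tree$ as a suitable integral against the mass measure and exploiting the alternative formula from Proposition~\ref{length measure}, namely $\Psi_{\tree}(f) = \int_{\tree} \mu(\dd x)\int_{0}^{H(x)}f(\tree_{r,x},r)\dd r$. The advantage of this representation is that it replaces the length measure $\ell$ (which is only $\sigma$-finite and behaves badly under $\ghp$-convergence) by the finite mass measure $\mu$, and the inner integral over $r\in[0,H(x)]$ is over a compact interval whose length is controlled by $\diam(\tree)$. First I would check the measure is well defined and finite: taking $f\equiv 1$ gives $\Psi_\tree(\T\times\R_+)=\int_\tree H(x)\,\mu(\dd x)\leq \diam(\tree)\,\mu(\tree)<\infty$ since $\tree$ is compact, so $\Psi_\tree\in\M(\T\times\R_+)$.

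For continuity, I would argue along a sequence $\tree_k\to\tree$ in $\ghp$ and test against a fixed bounded Lipschitz (hence uniformly continuous) function $g$ on $\T\times\R_+$; since $\M(\T\times\R_+)$ with the weak topology is metrized by the bounded-Lipschitz distance, it suffices to show $\Psi_{\tree_k}(g)\to\Psi_\tree(g)$ for all such $g$. By the ($\ghp$-analogue of the) Gromov correspondence one can realize all the $\tree_k$ and $\tree$ in a common compact metric space $Z$ so that the Hausdorff distances of the (images of the) trees and the Prokhorov distances of the pushed mass measures tend to $0$; I would then couple $\mu_k$ and $\mu$ so that a $\mu$-typical point $x$ and its counterpart $x_k\in\tree_k$ are close, with $H_k(x_k)\to H(x)$. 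The key geometric input is that the subtree map is continuous in the following sense: if $x_k\to x$ and $r_k\to r<H(x)$ then the subtrees $(\tree_k)_{r_k,x_k}$ converge to $\tree_{r,x}$ in $\ghp$ — this is where one uses that the trees themselves converge and that cutting at a level avoiding the (at most countable) set of branch-point heights is a continuous operation. Granting this, $g((\tree_k)_{r_k,x_k},r_k)\to g(\tree_{r,x},r)$ for Lebesgue-a.e.\ $r\in[0,H(x)]$ and $\mu$-a.e.\ $x$, and dominated convergence (the integrand is bounded by $\|g\|_\infty$, the domain by $\diam$) closes the argument after passing the coupling through.

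The main obstacle I anticipate is the subtree-continuity claim and the accompanying measurability/coupling bookkeeping: one must verify that $\tree_{r,x}$, viewed as an element of $\T$, depends continuously on $(\tree,r,x)$ off a negligible set, handle the ``exceptional'' levels $r$ where the cut meets a branch point (these form a Lebesgue-null set for each fixed $x$, so they do not affect the inner integral), and make the coupling of the mass measures compatible with these pointwise statements. A clean way to organize this is to first prove the deterministic statement that for fixed $\tree$ the map $(r,x)\mapsto (\tree_{r,x},r)$ is continuous at every $(r,x)$ with $r<H(x)$ and $r$ not a branch-point height below $x$, then prove the ``joint'' continuity along an approximating sequence $\tree_k\to\tree$ using an isometric embedding, and finally assemble the two via a Skorokhod-type coupling of $(\mu_k)$ to $\mu$. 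The remaining estimates — finiteness, domination, and the reduction to bounded Lipschitz test functions — are routine given the compactness of the trees.
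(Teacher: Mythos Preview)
Your proposal is correct and follows essentially the same approach as the paper: both use the integral representation $\Psi_\tree(f)=\int_\tree \mu(\dd x)\int_0^{H(x)}f(\tree_{r,x},r)\,\dd r$, reduce to bounded Lipschitz test functions, couple the mass measures of $\tree$ and $\tree'$, establish that the subtree $\tree_{r,x}$ varies continuously (in $\ghp$) away from branch-point levels, observe that such levels are Lebesgue-null, and conclude by dominated convergence. The only methodological difference is that the paper works directly with the correspondence/coupling definition of $\ghp$ (a correspondence $\mathcal R\subset\tree\times\tree'$ and a measure $m$ on $\tree\times\tree'$) rather than embedding into a common space, and in place of your qualitative subtree-continuity claim it proves a quantitative estimate (Lemma~\ref{continuity lemma}): for $(x,x')\in\mathcal R$ and $r\geq 6\delta$, $\ghp(\tree_{r,x},\tree'_{r,x'})\leq 8\delta+2\m(\tree_{[r-6\delta,r+6\delta],x})+2\H(\tree_{[r-3\delta,r+6\delta],x})$, which packages both the ``away from branch points'' and the coupling bookkeeping into a single inequality and makes the dominated-convergence step immediate.
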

This allows to  derive a general invariance principle:  for any sequence
of  random  discrete  trees  $(\rddtree^n,  \,  n  \in  \N)$  such  that
$a_n  \rddtree^n$ converges  in distribution  to some  random real  tree
$\rdtree$    in    the   Gromov-Hausdorff-Prokhorov    topology    where
$(a_n, \, n\in \N)$ is a  sequence of positive numbers converging to $0$
and such  that $(a_n \ex{\H(\rddtree^n)},  \, n\in \N)$ is  bounded, one
has    the    convergence    in    distribution    of    the    measures
$\mathcal{A}_{\rddtree^n,a_n}^\circ$  and $\mathcal{A}_{\rddtree^n,a_n}$
to $\Psi_\rdtree$ (this is a  consequence of Lemma \ref{distance between
  measures} and  Theorem \ref{thm:Psi-cont}).  For example,  this applies
to Pólya trees, see Remark \ref{rem:polya}, which were shown to converge
to    the     Brownian    tree,    see     \cite{haas2012scaling}    and
\cite{panagiotou2018scaling}.   For BGW  trees,  we  have the  following
result which is a direct consequence of  the convergence on conditioned
BGW trees to stable Lévy tree, see \cite{duquesne2003limit}, and
Theorem \ref{thm:Psi-cont} and  Lemma \ref{distance between  measures}.  

\begin{corollary}
\label{cor:cv-intro} 
Let $\rddtree^n$ be a BGW($\xi$)  tree conditioned to have $n$ vertices,
with $\xi$  satisfying   \ref{xi1}  and
\ref{xi2},  and $(b_n,  \,  n \geqslant  1)$ be  defined  as in Theorem
\ref{thm:main}.  Let 
$\rdtree$   be   the  stable   Lévy   tree   with  branching   mechanism
$\psi(\lambda)  =  \kappa  \lambda^\gamma$. We have the following 
          convergence in distribution and of all positive moments
\[
\frac{b_n}{n^{2}} \sum_{w \in
  \rddtree^{n,\circ}}|\rddtree^n_{w}|f\left(\frac{b_n}{n}\rddtree^n_{w},\frac{b_n}{n}H(w)\right)\xrightarrow[n
\to \infty]{(d) + \mathrm{moments}} \Psi_{\rdtree}(f),
\]
where $f$ is a bounded continuous real-valued function defined on
$\T\times \R_+$. 
\end{corollary}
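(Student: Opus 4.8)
The plan is to combine the scaling limit for conditioned BGW trees with the two structural results already stated, namely Theorem \ref{thm:Psi-cont} (continuity of $\tree\mapsto\Psi_\tree$) and Lemma \ref{distance between measures} (which says $\mathcal{A}_{\mathbf{t},a}^\circ$ and $\mathcal{A}_{\mathbf{t},a}$ are well approximated by $\Psi_{a\mathbf{t}}$). First I would observe that, under \ref{xi1} and \ref{xi2}, with $a_n=b_n/n$ one has $a_n\rddtree^n\cvlawd \rdtree$ in the Gromov--Hausdorff--Prokhorov topology (this is the Duquesne--Aldous limit theorem, \cite{duquesne2003limit}), and that by construction the left-hand side of the displayed convergence is exactly $\mathcal{A}_{\rddtree^n,a_n}^\circ(f)$ up to the identification $\frac{b_n}{n^2}\sum_{w}|\rddtree^n_w| f(\cdot) = \frac{a_n}{|\rddtree^n|}\sum_{w\in\rddtree^{n,\circ}}|\rddtree^n_w| f(\cdot)$ since $|\rddtree^n|=n$. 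By Lemma \ref{distance between measures}, $\mathcal{A}_{\rddtree^n,a_n}^\circ$ and $\Psi_{a_n\rddtree^n}$ differ by an error controlled by $a_n$ (and $a_n\ex{\H(\rddtree^n)}$), which tends to $0$; hence it suffices to prove $\Psi_{a_n\rddtree^n}(f)\cvlawd\Psi_\rdtree(f)$ and the convergence of moments.

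For the convergence in distribution: since $f$ is bounded continuous on $\T\times\R_+$, the map $\tree\mapsto\Psi_\tree(f)$ is continuous from $(\T,\ghp)$ to $\R_+$ by Theorem \ref{thm:Psi-cont} together with the fact that weak convergence of finite measures tested against a fixed bounded continuous function is continuous. Then the continuous mapping theorem applied to $a_n\rddtree^n\cvlawd\rdtree$ gives $\Psi_{a_n\rddtree^n}(f)\cvlawd\Psi_\rdtree(f)$, and adding back the $O(a_n)$ correction from Lemma \ref{distance between measures} (Slutsky) yields convergence in distribution of the original sum. One subtlety to address is measurability and the boundedness hypothesis $(a_n\ex{\H(\rddtree^n)})$ bounded required to invoke Lemma \ref{distance between measures}; here $\ex{\H(\rddtree^n)}$ is of order $n/b_n = a_n^{-1}$ (a known estimate, e.g. from the convergence of the rescaled height process or Kolmogorov-type bounds), so $a_n\ex{\H(\rddtree^n)}=O(1)$ — I would either cite this or note it follows from the $\mathrm{L}^1$ part of the tree convergence.

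For the convergence of all positive moments, the strategy is uniform integrability of $(\Psi_{a_n\rddtree^n}(f)^p)_n$ for every $p>0$. Since $f$ is bounded, $|f|\le\|f\|_\infty$, and $\Psi_\tree(\mathbf{1})=\int_\tree\mu(\tree_y)\,\ell(\dd y)$; by the alternative expression $\Psi_\tree(\mathbf{1})=\int_\tree\mu(\dd x)\int_0^{H(x)}\mu(\tree_{r,x})\,\dd r$ from Proposition \ref{length measure}, one bounds $\Psi_{a_n\rddtree^n}(\mathbf 1)$ by a constant times $a_n\,|\rddtree^n|^{-1}\sum_{w\in\rddtree^{n}}|\rddtree^n_w|\le a_n\,|\rddtree^n|^{-1}\cdot|\rddtree^n|\cdot\H(\rddtree^n)\cdot\ldots$ — more carefully, $\mathcal{A}_{\rddtree^n,a_n}(\mathbf 1)=\frac{a_n}{n}\sum_{w\in\rddtree^n}|\rddtree^n_w| = \frac{a_n}{n}\sum_{w}\sum_{v\ge w}1=\frac{a_n}{n}\sum_v(H(v)+1)=\frac{a_n}{n}(\mathrm{tpl}(\rddtree^n)+n)$ where $\mathrm{tpl}$ is the total path length. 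The moments of $a_n\,\mathrm{tpl}(\rddtree^n)/n$ are known to be uniformly bounded — indeed this is the Wiener-index type quantity whose moment convergence follows, e.g., from \cite{delmas2018} (the $\alpha'=1$, $\beta=0$ case of the general framework) — so $\sup_n\ex{\Psi_{a_n\rddtree^n}(\mathbf 1)^p}<\infty$, whence $\sup_n\ex{\Psi_{a_n\rddtree^n}(f)^p}\le\|f\|_\infty^p\sup_n\ex{\Psi_{a_n\rddtree^n}(\mathbf 1)^p}<\infty$. Uniform integrability of all powers plus convergence in distribution gives convergence of all positive moments, and the $O(a_n)$ correction is harmless.

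The main obstacle is the moment control: one needs a self-contained bound on $\sup_n\ex{(a_n\,\mathrm{tpl}(\rddtree^n)/n)^p}$ for all $p>0$ (equivalently $\sup_n\ex{\mathcal{A}_{\rddtree^n,a_n}(\mathbf 1)^p}<\infty$) under only assumptions \ref{xi1}--\ref{xi2}. The cleanest route is to invoke the already-available moment estimates for additive functionals with size-dependent toll functions from the companion analysis (Proposition \ref{general functional} specialized to $\alpha=0$, $\beta=0$, i.e. $\alpha'=1$), or alternatively a direct spinal-decomposition / many-to-one computation bounding $\ex{\mathrm{tpl}(\rddtree^n)^p}$ by $C_p\,(n/b_n)^p n^p$ using the local limit theorem for $\pr{|\rddtree|=n}$ and the convergence of the rescaled height process; I would present the former, citing the relevant proposition, to keep the proof short. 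Everything else (continuity, Slutsky, continuous mapping, the total-variation bound between $\mathcal{A}^\circ$ and $\mathcal{A}$ and between $\mathcal{A}$ and $\Psi$) is routine once those inputs are in hand.
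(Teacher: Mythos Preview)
Your argument for the convergence in distribution is essentially the paper's: Duquesne's limit $a_n\rddtree^n\cvlawd\rdtree$, Theorem \ref{thm:Psi-cont} (continuity of $\Psi$), then Lemma \ref{distance between measures} and Slutsky to pass from $\Psi_{a_n\rddtree^n}$ to $\mathcal{A}_n^\circ$. This part is fine.

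The moment control, however, is where you diverge from the paper and make life harder than necessary. You compute $\mathcal{A}_n(\mathbf 1)=\frac{a_n}{n}\sum_v(H(v)+1)$ and then propose to bound moments of the total path length, either by citing Proposition \ref{general functional} or \cite{delmas2018} or by a spinal computation. The first option is circular: Proposition \ref{general functional} relies on Corollary \ref{convergence for continuous functions discrete}, which \emph{is} the present statement. The second and third options work but are overkill. The paper's route is the one-line bound you nearly wrote yourself: since $H(v)\le\H(\rddtree^n)$ for every $v$, one has
\[
\mathcal{A}_n^\circ(\mathbf 1)=\frac{a_n}{n}\sum_{w\in\rddtree^{n,\circ}}|\rddtree^n_w|\le\frac{a_n}{n}\cdot n\cdot\H(\rddtree^n)=\frac{b_n}{n}\H(\rddtree^n),
\]
so $|\mathcal{A}_n^\circ(f)|\le\|f\|_\infty\,\frac{b_n}{n}\H(\rddtree^n)$. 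The sub-exponential tail bounds on the height (Lemma \ref{subgaussian}, valid under \ref{xi1}--\ref{xi2} alone) give $\sup_n\ex{\big(\frac{b_n}{n}\H(\rddtree^n)\big)^p}<\infty$ for every $p\in\real$ (Lemma \ref{height finite moments}), and uniform integrability of all powers follows. You already invoked the $p=1$ case of this bound to control $a_n\ex{\H(\rddtree^n)}$; the same input at general $p$ closes the argument without any appeal to total path length moments.
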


We improve this result by allowing the function $f$ to blow up as either
the  mass or  the  height goes  to zero  under  the stronger  assumption
\ref{xi3}: see Proposition \ref{general  functional}, and more precisely
Theorem  \ref{mass+height  convergence}  when  $f$ is  a  product  of  a
function of the  mass and a function  of the height, one of  them being a
power  function.  As   a  particular  case,  property   (i)  of  Theorem
\ref{thm:main} gives  a precise result when  $f$ is a power  function of
the  mass and  the height.  Related  to this  latter result,  we give  a
complete description of the finiteness of $\psij_{\rdtree}(f)$ for power
functions  $f$ where  $\rdtree$  is the  stable Lévy  tree  and we  also
compute its  first moment. We refer  to Corollaries \ref{cor:expectation
  functional  on  Levy  tree}  and \ref{moments  brownian  height},  and
Proposition   \ref{joint   phase   transition}  for   a   more   general
statement.  By   convention,  we  write   $\psij_\rdtree(g(x)h(u))$  for
$\psij_\rdtree(f)$  where  $f(x,u) =  g(x)h(u)$  and  we  see $g$  as  a
function of the mass and $h$ as a function of the height. In particular,
thanks to  \eqref{eq:def-Psi-intro}, we  have for $\alpha,  \beta\in \R$
that
$\psij_\rdtree(x^\alpha             u^\beta)=             \int_{\rdtree}
\m(\rdtree_y)^{\alpha'}\,     \H(\rdtree_y)^\beta\,     \ell(\dd     y)$
with $\alpha'=\alpha+1$.
\begin{proposition}
\label{prop:moment-intro}
	Let $\rdtree$ be the stable Lévy tree with branching mechanism
    $\psi(\lambda) = \kappa \lambda^\gamma$ and let $\alpha, \beta \in
    \real$. We have
\begin{align}\gamma \alpha + (\gamma-1)(\beta+1) > 0 
\quad \Longleftrightarrow \quad 
\psij_\rdtree(x^\alpha u^\beta) < \infty \ \text{a.s.} 
\quad \Longleftrightarrow \quad
\ex{\psij_\rdtree(x^\alpha u^\beta)} <\infty, \\
	\gamma \alpha + (\gamma-1)(\beta+1) \leqslant 0 
\quad \Longleftrightarrow \quad 
\psij_\rdtree(x^\alpha u^\beta) = \infty \ \text{a.s.} 
\quad \Longleftrightarrow \quad
\ex{\psij_\rdtree(x^\alpha u^\beta)} =\infty.
	\end{align}
	For every $\alpha,\beta \in \real$ such that $\gamma \alpha + (\gamma-1)(\beta+1) >0$, we have
	\begin{equation}
\label{eq:moment-psimh}
	\ex{\psij_\rdtree(x^\alpha u^\beta)} = \frac{1}{\kappa^{1/\gamma}|\Gammaeuler(-1/\gamma)|}\mathrm{B}\!\left(\alpha + (\beta+1)(1-1/\gamma),1-1/\gamma\right)\ex{\H(\rdtree)^\beta},
	\end{equation}
	where $\Gammaeuler$ is the gamma function and $\mathrm{B}$ is the
    beta function. Furthermore, we have $\ex{\psij_\rdtree(x^\alpha
      u^\beta)^p} < \infty$ for every $p\geq 1$ such that $p(\gamma \alpha +
    (\gamma-1)\beta) > 1-\gamma$. In the Brownian case ($\gamma=2)$, for
    every $\alpha, \beta\in \real$ such that $2\alpha + \beta +1>0$, we
    have	 
	\begin{equation}
\label{eq:moment-Psi-Br}
\ex{\psij_\rdtree(x^\alpha u^\beta)} = \frac{1}{\sqrt{\pi\kappa}}\left(\frac{\pi}{\kappa}\right)^{\beta/2}\xi(\beta)\mathrm{B}\!\left(\alpha + \frac{\beta+1}{2},\frac{1}{2}\right),
	\end{equation}
	where $\xi$ is the Riemann xi function defined by $\xi(s) = \frac{1}{2} s(s-1)\pi^{-s/2} \Gammaeuler(s/2)\zeta(s)$ for every $s \in \mathbb{C}$ and $\zeta$ is the Riemann zeta function.
\end{proposition}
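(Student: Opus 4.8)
The plan is to reduce everything to the case $\beta=0$ by a scaling argument on subtrees, compute the first moment in that case by a spinal (Bismut) decomposition of the Lévy tree together with the known law of its total mass, then read off the phase transition and, for $\gamma=2$, the Brownian formula. Writing $\m(\rdtree_y)^{\alpha+1}=\m(\rdtree_y)\,\m(\rdtree_y)^{\alpha}$ and $\m(\rdtree_y)=\mu\{x:\ y\preceq x\}$, the proof of Proposition~\ref{length measure} gives $\psij_\rdtree(x^{\alpha}u^{\beta})=\int_{\rdtree}\mu(\dd x)\int_{0}^{H(x)}\m(\rdtree_{r,x})^{\alpha}\,\H(\rdtree_{r,x})^{\beta}\,\dd r$, where $\rdtree_{r,x}$ is the subtree above level $r$ containing $x$. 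Under the normalized law $\mu$ is a probability measure, so taking expectations amounts to sampling a leaf $X$ according to $\mu$. By the branching property of the Lévy tree -- a consequence of Bismut's decomposition and of the invariance of the stable tree under re-rooting -- conditionally on $\m(\rdtree_{r,X})=m$ the subtree $\rdtree_{r,X}$ is a stable Lévy tree of mass $m$, hence has the law of $m^{(\gamma-1)/\gamma}\rdtree'$ for an independent normalized stable tree $\rdtree'$; thus $\condex{\H(\rdtree_{r,X})^{\beta}}{\m(\rdtree_{r,X})=m}=m^{(\gamma-1)\beta/\gamma}\,\ex{\H(\rdtree)^{\beta}}$, the latter moment being finite for every $\beta\in\real$ (the lower tail of $\H(\rdtree)$ decays super‑polynomially, by self-similarity and large deviations for the associated CSBP). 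Tonelli's theorem then gives $\ex{\psij_\rdtree(x^{\alpha}u^{\beta})}=\ex{\H(\rdtree)^{\beta}}\,\ex{\psij_\rdtree(x^{\tilde\alpha})}$ with $\tilde\alpha:=\alpha+\tfrac{\gamma-1}{\gamma}\beta$, and since $\tilde\alpha+1-1/\gamma=\alpha+(\beta+1)(1-1/\gamma)$ has the same sign as $\gamma\alpha+(\gamma-1)(\beta+1)$, everything reduces to $\beta=0$.

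For $\beta=0$, set $F(\rdtree,x):=\int_{0}^{H(x)}\m(\rdtree_{r,x})^{\alpha}\dd r$, so $\psij_\rdtree(x^{\alpha})=\int_{\rdtree}\mu(\dd x)F(\rdtree,x)$, and evaluate $\bN[\int_{\rdtree}\mu(\dd x)F(\rdtree,x)\,e^{-\lambda\sigma}]$ in two ways, $\sigma$ being the total mass and $\bN$ the excursion measure. On the scaling side, $F$ is multiplied by $s^{\alpha+(\gamma-1)/\gamma}$ under $\superP^{(s)}$ and the $\mu$-integration contributes a further factor $s$; since $\bN[\sigma\in\dd s]=\frac{1}{\kappa^{1/\gamma}|\Gammaeuler(-1/\gamma)|}s^{-1-1/\gamma}\dd s$ (the Lévy measure of $\psi^{-1}$, because $\bN[1-e^{-\lambda\sigma}]=\psi^{-1}(\lambda)$), this equals a constant times $\ex{\psij_\rdtree(x^{\alpha})}\,\lambda^{-(\alpha+2-2/\gamma)}$. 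On the Bismut side, $\bN[\int_{\rdtree}\mu(\dd x)(\cdots)]=\int_{0}^{\infty}\dd a\,(\cdots)$, the spine $\llbracket\root,x\rrbracket$ having length $a=H(x)$; the subtrees grafted along it form a Poisson process, hence $\m(\rdtree_{r,x})$ is the increment over $(r,a]$ of a stable-$(1-1/\gamma)$ subordinator $S$ with $\ex{e^{-\lambda S_{b}}}=e^{-b\psi'(\psi^{-1}(\lambda))}$, with independent increments over disjoint intervals (consistent with $\bN[\sigma e^{-\lambda\sigma}]=(\psi^{-1})'(\lambda)$). Splitting $\sigma=S_{(0,r]}+S_{(r,a]}$, using independence, Fubini and the substitution $b=a-r$, the Bismut side becomes $\frac{1}{\psi'(\psi^{-1}(\lambda))}\int_{0}^{\infty}\dd b\,\ex{e^{-\lambda S_{b}}S_{b}^{\alpha}}=\frac{1}{\psi'(\psi^{-1}(\lambda))}\int e^{-\lambda s}s^{\alpha}\,U(\dd s)$, $U(\dd s)\propto s^{-1/\gamma}\dd s$ being the potential measure of $S$; this is again a constant times $\lambda^{-(\alpha+2-2/\gamma)}$ and is finite exactly when $\alpha+1-1/\gamma>0$. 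Equating and simplifying via $|\Gammaeuler(-1/\gamma)|=\gamma\,\Gammaeuler(1-1/\gamma)$ (from $\Gammaeuler(1-1/\gamma)=(-1/\gamma)\Gammaeuler(-1/\gamma)$) yields $\ex{\psij_\rdtree(x^{\alpha})}=\frac{1}{\kappa^{1/\gamma}|\Gammaeuler(-1/\gamma)|}\mathrm{B}(\alpha+1-1/\gamma,\,1-1/\gamma)$. What makes this Laplace-transform argument legitimate on the whole range is precisely that integrating over a $\mu$-sampled point supplies the extra power of $\sigma$ needed to render the $\bN$-integral finite near $\sigma=0$.

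These two steps give \eqref{eq:moment-psimh}, the first argument of $\mathrm B$ being $\alpha+(\beta+1)(1-1/\gamma)$; in particular the mean is finite, hence $\psij_\rdtree(x^{\alpha}u^{\beta})<\infty$ a.s., when $\gamma\alpha+(\gamma-1)(\beta+1)>0$. When $\gamma\alpha+(\gamma-1)(\beta+1)\le 0$ I show $\psij_\rdtree(x^{\alpha}u^{\beta})=\infty$ a.s.: integrating the representation above over $x$ before $r$ gives $\psij_\rdtree(x^{\alpha}u^{\beta})=\int_{0}^{\infty}\dd r\sum_{i}\m(\rdtree^{(i)}_{r})^{\alpha+1}\H(\rdtree^{(i)}_{r})^{\beta}$ over the subtrees $\rdtree^{(i)}_{r}$ above level $r$; by the special Markov property these form, given the local time at level $r$, a Poisson measure governed by $\bN$, so their masses satisfy $\#\{i:\sigma_i>s\}\asymp s^{-1/\gamma}$ as $s\to 0$ and, conditionally on $\sigma_i=m$, $\H(\rdtree^{(i)}_{r})=m^{(\gamma-1)/\gamma}\H'_i$ with $(\H'_i)$ i.i.d.\ positive; hence $\sum_i\sigma_i^{\alpha+1}\H(\rdtree^{(i)}_{r})^{\beta}=\sum_i\sigma_i^{\tilde\alpha+1}(\H'_i)^{\beta}=\infty$ a.s.\ for every $r>0$ as soon as $\tilde\alpha+1\le 1/\gamma$, i.e.\ $\gamma\alpha+(\gamma-1)(\beta+1)\le 0$, and then $\psij_\rdtree(x^{\alpha}u^{\beta})=\infty$ a.s., so its mean is infinite too. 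This establishes all the equivalences. In the Brownian case $\gamma=2$ one substitutes into \eqref{eq:moment-psimh} the classical value of the height moments of the Brownian tree -- equivalently of the moments of the maximum of the normalized Brownian excursion, rescaled according to $\kappa$ -- namely $\ex{\H(\rdtree)^{\beta}}=2(\pi/\kappa)^{\beta/2}\xi(\beta)$, valid for all $\beta\in\real$ (with $\xi>0$ on $\real$), and uses $|\Gammaeuler(-1/2)|=2\sqrt{\pi}$ to arrive at \eqref{eq:moment-Psi-Br}.

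For the moments of order $p\ge 1$, Jensen's inequality applied to the probability measure $\mu$ gives $\psij_\rdtree(x^{\alpha}u^{\beta})^{p}\le\int_{\rdtree}\mu(\dd x)\bigl(\int_{0}^{H(x)}\m(\rdtree_{r,x})^{\alpha}\H(\rdtree_{r,x})^{\beta}\dd r\bigr)^{p}$, so $\ex{\psij_\rdtree(x^{\alpha}u^{\beta})^{p}}\le\ex{\bigl(\int_{0}^{H(X)}\m(\rdtree_{r,X})^{\alpha}\H(\rdtree_{r,X})^{\beta}\dd r\bigr)^{p}}$ for a $\mu$-sampled $X$; this $p$-th moment is bounded by running the spinal decomposition once more and controlling the $p$-th moment of the Poissonian sum of the contributions of the subtrees grafted along the spine by a Rosenthal-type inequality and the stable structure, giving finiteness exactly when $p(\gamma\alpha+(\gamma-1)\beta)>1-\gamma$; this is the content of Proposition~\ref{general functional}. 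The main obstacle throughout is decoupling the total-mass normalization, which is exactly what the passage to the $\sigma$-finite measure $\bN$ and the Laplace transform in $\sigma$ accomplish.
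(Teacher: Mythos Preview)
Your first-moment computation and the Brownian specialization follow the paper's route closely: both pass to the excursion measure $\n$, use a spinal (Bismut) decomposition, and exploit Laplace transforms in $\sigma$ together with the stable scaling to factor out $\ex{\H(\rdtree)^\beta}$. The paper packages this as Proposition~\ref{prop:EYf} and Corollary~\ref{cor:expectation functional on Levy tree}; your reduction ``conditionally on $\m(\rdtree_{r,X})=m$ the subtree is a mass-$m$ Lévy tree'' is exactly what those results yield, though it is not an immediate consequence of re-rooting invariance alone and deserves a pointer to the precise statement.

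Your divergence argument, however, takes a genuinely different route from the paper and contains a real gap. The paper works along the spine: under $\n[\sigma\,\cdot\,|\,H(U)=t]$ the pair $(\sigma_{t-r,U},\H_{t-r,U})$ is the process $(\mass_r,\height_r)$ built from a Poisson point process of grafted trees, and divergence of $\int_0 f(\mass_r,\height_r)\,\dd r$ is obtained via a Kochen--Stone second-moment argument on first-passage times (Lemmas~\ref{mass polynomial} and~\ref{height polynomial}); only then is the conclusion transferred from $\n$ to $\excm{1}$ by disintegration and scaling, using the monotonicity of $g$ or $h$ in an essential way. Your horizontal-slice argument instead invokes the special Markov property at a fixed level $r$ to say that the subtrees above $r$ form, given the local time, a Poisson point process with intensity $L_r\,\n$. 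That is true under $\n$, but it is \emph{not} true under $\P=\excm{1}$: the conditioning on total mass $1$ destroys the Poisson structure (the masses $\sigma_i$ must sum to the mass above level $r$, which is $\le 1$). So your claim that $\sum_i \sigma_i^{\tilde\alpha+1}(\H'_i)^\beta=\infty$ a.s.\ under $\P$ does not follow from the Poisson criterion you invoke. The argument can be repaired by first proving divergence $\n$-a.e.\ and then transferring to $\excm{1}$ via scaling, exactly as the paper does at the end of Section~\ref{section proof phase transition}; but that final step is non-trivial (it uses monotonicity of $x\mapsto g(x\sigma_{r,s})$ or $u\mapsto h(u)$) and is missing from your write-up.

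Finally, your $p$-th moment paragraph is too sketchy and the citation of Proposition~\ref{general functional} is circular (that proposition is about convergence of the discrete functionals and itself relies on the moment bounds). The paper's argument (Lemma~\ref{lem:finite moment delta}) is a direct H\"older inequality on $\psij_\rdtree$, bootstrapping the first-moment formula; a Rosenthal-type bound along the spine could conceivably be made to work but would require more than a sentence.
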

Thanks to  Duquesne and Wang \cite{duquesne2017decomposition},
$\ex{\H(\rdtree)^\beta}$ is finite for all $\beta\in \R$, so that the
right hand side of \eqref{eq:moment-psimh} is finite.

We conclude the introduction by giving a formula for the  distribution of
$\rdtree_y$, the subtree above $y$,  when $y$ is chosen according to the
length measure $\ell(\dd y)$ on the
stable Lévy tree $\rdtree$, see  Proposition
\ref{prop:EYf}. This 
is a key result for  the proof of
Proposition \ref{prop:moment-intro} and it is also interesting by itself (it is
in particular related to the additive coalescent and the uniform pruning
on the skeleton of the Lévy 
tree, see Remark \ref{rem:voisin} in this direction). Let $\n$ denote the
excursion measure of height process $H$ which codes the (unnormalized) stable Lévy tree $\rdtree_H$. (Notice that
$\rdtree$ under $\P$ is distributed as $\rdtree_H$ conditionally on
$\{\m(\rdtree_H)=1\}$ under $\n$.) 
\begin{proposition}
   \label{prop:EYf-intro}
	Let $\rdtree$ be the stable Lévy tree with branching mechanism
    $\psi(\lambda) = \kappa \lambda^{\gamma}$ where $\kappa >0$ and
    $\gamma \in (1,2]$.  Let $f$ be a nonnegative measurable function
    defined on $\T$. We have:
\[
\E\left[
\int_{\rdtree} f(\rdtree_y) \, \ell(\dd y)
\right] =
 \n    \left[        (1-\m(\rdtree_H))^{-1/\gamma}    
 \ind_{\{\m(\rdtree_H)<1\}}\, f(\rdtree_H)\right].
\]
\end{proposition}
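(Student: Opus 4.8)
The plan is to rewrite the left‑hand side as an integral over the levels of the tree and then to evaluate it on the excursion measure $\n$, using the special Markov property of the Lévy tree at a fixed level together with the known law of the total mass $\m(\rdtree_H)$ under $\n$. As a first step I would apply Proposition~\ref{length measure} with the nonnegative function $(\tree,r)\mapsto f(\tree)/\m(\tree)$ (well defined $\m\otimes\dd r$‑almost everywhere, since $\m(\rdtree_{r,x})>0$ whenever $r<H(x)$), observe that $\rdtree_y=\rdtree_{H(y),x}$ when $y$ lies on the branch from the root to $x$, and then use Tonelli's theorem together with the fact that $\{x\in\rdtree:H(x)>r\}$ is, up to an $\m$‑null set, the disjoint union of the subtrees above level $r$, to obtain
\[
\int_\rdtree f(\rdtree_y)\,\ell(\dd y)=\int_0^\infty\Big(\sum_{\tau}f(\tau)\Big)\,\dd r ,
\]
where the inner sum runs over the connected components $\tau$ of $\{H>r\}$, each rooted at the point where it meets level $r$ and equipped with the restriction of $\m$. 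Taking expectations and applying Tonelli again, it then suffices to compute $\E[\sum_\tau f(\tau)]$ for each fixed $r>0$, $\rdtree$ being the unit‑mass stable Lévy tree.

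Next I would pass to $\n$. Set $\sigma:=\m(\rdtree_H)$, let $Z_r$ be the total exit local time of $H$ at level $r$ (so $(Z_r)_{r\ge0}$ is the continuous‑state branching process associated with the Lévy tree under $\n$), and let $A_r:=\int_0^rZ_u\,\dd u$ be the mass of $\rdtree_H$ at heights $\le r$. From $\n[1-\e^{-\lambda\sigma}]=\psi^{-1}(\lambda)=(\lambda/\kappa)^{1/\gamma}$ one has $\n(\sigma\in\dd m)=c_\gamma\,m^{-1-1/\gamma}\,\dd m$ with $c_\gamma=(\kappa^{1/\gamma}|\Gammaeuler(-1/\gamma)|)^{-1}$, and $\rdtree$ is $\rdtree_H$ conditioned on $\{\sigma=1\}$ under $\n$. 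By the special Markov property at level $r$, conditionally on $\rdtree_H$ truncated at height $r$ the subtrees above level $r$ form a Poisson point process with intensity $(\text{exit local time measure})\otimes\n(\dd\tau)$; moreover the total mass $M$ above level $r$ satisfies $\E[\e^{-\lambda M}\mid Z_r]=\e^{-Z_r\psi^{-1}(\lambda)}$, is conditionally independent of $A_r$ given $Z_r$, and $\sigma=A_r+M$. Applying Mecke's formula to this Poisson point process, integrating out $M$, and using the disintegration of $\n$ along $\sigma$ in order to condition on $\{\sigma=1\}$, one obtains
\[
c_\gamma\,\E\Big[\sum_{\tau}f(\tau)\Big]=\int\n(\dd\tau)\,f(\tau)\;\n\!\Big[Z_r\,q_{Z_r}\big(1-A_r-\m(\tau)\big)\,\ind_{\{A_r+\m(\tau)<1\}}\Big],
\]
where $q_z$ is the density of the one‑sided stable law with Laplace transform $\e^{-z\psi^{-1}(\lambda)}$ (equivalently, the law of $M$ given $Z_r=z$).

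Integrating this identity over $r\in(0,\infty)$ reduces the statement to computing $h(s):=\int_0^\infty\n[Z_r\,q_{Z_r}(s-A_r)\,\ind_{\{A_r<s\}}]\,\dd r$ for $s>0$, which I would carry out via the Laplace transform in $s$. Using $\dd A_r=Z_r\,\dd r$ and $\int_0^\infty\e^{-ps'}q_z(s')\,\dd s'=\e^{-z\psi^{-1}(p)}$ one gets $\int_0^\infty\e^{-ps}h(s)\,\dd s=\int_0^\infty\n[Z_r\,\e^{-pA_r-\psi^{-1}(p)Z_r}]\,\dd r$. Since $\n[Z_r\,\e^{-\nu Z_r-pA_r}]=\partial_\nu u_r(\nu,p)$, where $u_r(\nu,p):=\n[1-\e^{-\nu Z_r-pA_r}]$ solves $\partial_r u_r=p-\psi(u_r)$ with $u_0=\nu$, and since differentiating this flow in its initial condition and evaluating along the fixed point $\nu=\psi^{-1}(p)$ (where $u_r\equiv\psi^{-1}(p)$ for all $r$) gives $\partial_\nu u_r=\e^{-r\psi'(\psi^{-1}(p))}$, we get $\int_0^\infty\e^{-ps}h(s)\,\dd s=1/\psi'(\psi^{-1}(p))=(\gamma\kappa^{1/\gamma})^{-1}p^{-1+1/\gamma}$. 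Inverting the Laplace transform yields $h(s)=(\gamma\kappa^{1/\gamma}\Gammaeuler(1-1/\gamma))^{-1}s^{-1/\gamma}$ for almost every $s>0$; substituting $s=1-\m(\tau)$ (licit because $\m(\tau)$ has a density under $\n$) and using $|\Gammaeuler(-1/\gamma)|=\gamma\,\Gammaeuler(1-1/\gamma)$ makes all constants cancel, giving exactly $\E[\int_\rdtree f(\rdtree_y)\,\ell(\dd y)]=\n[(1-\m(\rdtree_H))^{-1/\gamma}\ind_{\{\m(\rdtree_H)<1\}}f(\rdtree_H)]$.

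The main obstacle is the bookkeeping in the second step: invoking the special Markov property correctly and tracking the conditional independence among $A_r$, the continuation mass $M$ and the ``Palm'' subtree produced by Mecke's formula, and then legitimately replacing the almost‑everywhere‑in‑$m$ identity by its value at $m=1$ (the recurring delicate point of conditioning on the null event $\{\sigma=1\}$, handled by a continuity argument in $m$ or by absolute continuity of the laws involved). The reduction to levels and the Laplace computation are routine by comparison; since all integrands are nonnegative, Tonelli's theorem justifies every interchange, so the identity holds in $[0,\infty]$ with no a priori integrability hypothesis.
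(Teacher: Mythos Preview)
Your approach is correct and genuinely different from the paper's. The paper does not slice by levels; instead it proves a Bismut-type decomposition (its Lemma~\ref{lem:H+,H-}, established via the exploration process $(\rho,\eta)$) stating that under $\n[\,\cdot\,\dd s\,\dd r]$ the excursion pieces $H^+_{r,s}$ and $H^-_{r,s}$ are independent, each with law $\n[\sigma\,\cdot\,]$. This immediately yields $\n[\e^{-\lambda\sigma}\Psi_{\rdtree_H}(\tilde f)]=\n[\sigma\e^{-\lambda\sigma}]\,\n[\sigma\e^{-\lambda\sigma}f(\rdtree_H)]$, which is the same Laplace identity you reach (after the cosmetic replacement $f\mapsto \m\cdot f$, and noting $\n[\sigma\e^{-\lambda\sigma}]=1/\psi'(\psi^{-1}(\lambda))$). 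Both proofs then face the identical task of passing from an a.e.-in-$\sigma$ identity to the value at $\sigma=1$; the paper does this by a continuity argument for $r\mapsto\Psi_{\rdtree_{H^r}}(\tilde f)$ restricted to bounded continuous $f$ supported on $\{\m<1-\varepsilon\}$, then extends by monotone convergence. Your route trades the exploration-process lemma for more standard ingredients (special Markov property, Mecke's formula, the CSBP/Feynman--Kac flow $\partial_r u_r=p-\psi(u_r)$), at the cost of more bookkeeping; the paper's route is shorter once the two-sided independence lemma is in hand, but that lemma itself is the technical investment. One minor point: your claim ``$u_0=\nu$'' is not literally true under $\n$ (where $Z_0=A_0=0$), but the identity $\n[1-\e^{-\nu Z_r-pA_r}]=u_r(\nu,p)$ does hold for $r>0$ via the Poisson superposition with $\P_x$, which is all the $\dd r$-integral needs; and you could streamline by keeping a test function $g(\sigma)$ (or $\e^{-p\sigma}$) throughout and conditioning on $\{\sigma=1\}$ only once at the end, rather than at each fixed level.
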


The paper is organized as follows. Section \ref{Sect def} establishes
notation and defines the main objects used in this paper (discrete trees
using Neveu's formalism, real trees, Gromov-Hausdorff-Prokhorov
topology). In Section \ref{Sect measure}, we give properties of the
measure $\Psi_\tree$ and prove its continuity with respect to
$\tree$. Section \ref{Sect BGW} introduces the setting of BGW trees and
stable Lévy trees and gives a first convergence result for continuous
functions. We gather some technical results in Section \ref{Sect
  technical}. Section \ref{Sect Levy measure} is devoted to the study of
functionals of the mass and height on the stable Lévy tree and Section
\ref{phase transition section} presents the general convergence result
for functions that may blow up and describes the phase change. Appendix
\ref{append} introduces a space of measures and studies random elements
thereof; its results are used in the proofs of Proposition \ref{general
  functional} and Theorem \ref{mass+height convergence}.

\section{Definitions and notations}\label{Sect def}
\subsection{Weak convergence in a Polish space}
Let $(S,\rho)$ be a Polish metric space. We denote by $\cb(S)$
(resp. $\cb_+(S)$, resp. $\cb_b(S)$) the set of measurable
functions defined on $S$ and taking values in $[-\infty , +\infty ]$ (resp. in $[0, +\infty
]$, resp. in $\R$ and bounded) and by $\cc(S)$ (resp. $\cc_+(S)$, resp. $\cc_b(S)$) the set of continuous real-valued
functions defined on $S$ (resp. nonnegative, resp.  bounded). For $f\in \cb(S)$, we set
$\norm{f}_{\infty} = \sup_{x \in S} |f(x)| $. 
For  $f\in \cc_b(S)$, we define its Lipschitz  and bounded  Lipschitz norm:
\begin{align*}
\norm{f}_{\mathrm{L}} = \sup_{x\neq y} \frac{|f(x) - f(y)|}{\rho(x,y)} 
\quad \text{and} \quad
\norm{f}_{\mathrm{BL}}= \norm{f}_{\infty} + \norm{f}_{\mathrm{L}}. 
\end{align*}
We denote by $\mathcal{M}(S)$ the set of nonnegative finite measures on
$S$. For every $\mu \in \mathcal{M}(S)$ and  $f\in \cb_+(S)$, we write $\mu(f) = \int f(x) \, \mu(\dd
x)$. The set $\mathcal{M}(S)$ is endowed with the topology of weak
convergence which can be metrized (see \cite[Section 8.3 and Theorem
8.3.2]{bogachev2007measure}) by the bounded Lipschitz distance (also
known as the Kantorovich-Rubinstein distance): if $\mu, \nu \in \M(S)$,
set 
\[
d_{\mathrm{BL}}(\mu, \nu) = \sup\left\{ |\mu(f) - \nu(f)|, \, f \in \cc_b(S) \text{ such that } \norm{f}_{\mathrm{BL}}
  \leqslant 1\right\}.
\]
Moreover, the space $(\M(S), d_{\mathrm{BL}})$ is Polish by
\cite[Theorem 8.9.4]{bogache
v2007measure}. We also recall the total
variation norm given by 
\[
d_{\mathrm{TV}}(\mu,\nu) =  \frac{1}{2}\sup\left\{ |\mu(f) - \nu(f)|, \, f \in
  \cb(S) \text{ such that } \norm{f}_{\infty }   \leqslant 1\right\}.
\]

\subsection{Discrete trees}
We recall Neveu's formalism for rooted ordered discrete trees. Let $\mathcal{U} = \cup_{n \geqslant 0} (\mathbb{N}^*)^n$ be the set of labels with the convention $(\mathbb{N}^*)^0 = \{\emptyset\}$. If $v  = (v^1,\ldots,v^n)\in \mathcal{U}$, we denote by $H(v) = n$. By convention, we set $H(\emptyset) = 0$. If $v = (v^1,\ldots,v^n), w = (w^1,\ldots,w^m) \in \mathcal{U}$, we write $vw = (v^1,\ldots,v^n,w^1,\ldots,w^m)$ for the concatenation of $v$ and $w$. In particular, $v \emptyset = \emptyset v = v$. We say that $v$ is an ancestor of $w$ and write $v \preccurlyeq w$ if there exists $u \in \mathcal{U}$ such that $w = vu$. If $v \preccurlyeq w$ and $v \neq w$ then we shall write $v \prec w$. The mapping $\mathrm{pr} \colon \mathcal{U} \setminus \{\emptyset\} \to \mathcal{U}$ is defined by $\mathrm{pr}(v^1,\ldots,v^n) = (v^1,\ldots,v^{n-1})$ ($\mathrm{pr}(v)$ is the parent of $v$). A finite rooted ordered tree $\mathbf{t}$ is a finite subset of $\mathcal{U}$ such that
\begin{enumerate}[label=(\roman*),leftmargin=*]
	\item $\emptyset \in \mathbf{t}$,
	\item $v \in \mathbf{t} \setminus \{\emptyset\} \Rightarrow \mathrm{pr}(v) \in \mathbf{t}$,
	\item for every $v \in \mathbf{t}$, there exists a finite integer $k_v(\mathbf{t}) \geqslant 0$ such that, for every $j \in \mathbb{N}^*$, $vj \in \mathbf{t}$ if and only if $1 \leqslant j \leqslant k_u(\mathbf{t})$.
\end{enumerate}
The number $k_v(\mathbf{t})$ is interpreted as the number of children of
the vertex $v$ in $\mathbf{t}$, $H(v)$ is its generation,
$\mathrm{pr}(v)$ is its parent and more generally, the vertices $v,
\mathrm{pr}(v), \mathrm{pr}^2(v), \ldots \mathrm{pr}^{H(v)}(v)=
\emptyset$ are its ancestors. The vertex $v$ is called a leaf
(resp. internal vertex) if $k_v(\mathbf{t}) = 0$ (resp. $k_v(\mathbf{t})
>0$). The vertex $\emptyset$ is called the root of $\mathbf{t}$. We
denote the set of leaves by $\operatorname{Lf}(\mathbf{t})$ and the set
of internal vertices by $\mathbf{t}^\circ$.  If $v \in \mathbf{t}$, we
define the subtree $\mathbf{t}_v$ of $\mathbf{t}$ above $v$ as 
\[
\mathbf{t}_v = \{w \in \mathcal{U} \colon \, vw \in \mathbf{t} \}.
\]
Moreover, for every $0 \leqslant k \leqslant H(v)$, we define the 
subtree $\mathbf{t}_{k\vir v}$ of $\mathbf{t}$ above level $k$ containing $v$ as
\[
\mathbf{t}_{k\vir v} = \mathbf{t}_{\mathrm{pr}^{H(v)-k}(v)}
\]
where $\mathrm{pr}^{H(v)-k}(v)$ is the unique ancestor of $v$ with
height $k$, with the convention that $\mathrm{pr}^{0}(v)=v$. We denote by $|\mathbf{t}| = \text{Card} (\mathbf{t})$ the number of vertices of $\mathbf{t}$ and by $\H(\mathbf{t}) = \sup_{v \in \mathbf{t}}H(v)$ the height of $\mathbf{t}$.


\subsection{Real trees}\label{real trees}
We recall the formalism of real trees, see \cite{evans2007probability}. A metric space $(\tree,d)$ is a real tree if the following two properties hold for every $x,y \in \tree$.
\begin{enumerate}[label=(\roman*),leftmargin=*]
	\item (Unique geodesics). There exists a unique isometric map $f_{x,y} \colon [0,d(x,y)] \to \tree$ such that $f_{x,y}(0) = x$ and $f_{x,y}(d(x,y)) = y$.
	\item (Loop-free). If $\phi$ is a continuous injective map from
      $[0,1]$ into $\tree$ such that $\phi(0) = x$ and $\phi(1) = y$,
      then we have $\phi([0,1]) = f_{x,y}\left([0,d(x,y)]\right)$.
\end{enumerate}
For a rooted real tree $(\tree,\root,d)$, that is a real tree with a
distinguished vertex $\root \in \tree$ called the root, we define the set of leaves by
\[
\operatorname{Lf}(\tree) = \left\lbrace x \in \tree \setminus
  \{\root\} \colon \, \tree \setminus \{x\} \text{ is connected}
\right\rbrace,
\]
with the convention that $\operatorname{Lf}(\tree) =\{\root\}$ if 
 $\tree=\{\root\}$. 
A weighted rooted real tree $(\tree, \root, d , \mu)$ is a rooted real tree
$(\tree,\root, d)$  equipped with a nonnegative finite measure $\mu$. 
In what follows, real trees will always be weighted and rooted and we
will simply call them  real trees.

Let us
consider a real tree $(\tree,\root,d,\mu)$.   The total
mass  of the  tree $\tree$  is 
defined   by    $\m(\tree)   =    \mu(\tree)$   and   its    height   by
$\H(\tree)   =   \sup_{x   \in    \tree}H(x)   \in   [0,\infty]$,   with
$H(x) =  d(\root, x)$  the height  of $x$. Note  that if  $(\tree,d)$ is
compact, then $\H(\tree) < \infty$.
The range of
the mapping $f_{x,y}$ described in (i) above is denoted by $\llbracket
x,y\rrbracket$ (this is the line segment between $x$ and $y$ in the
tree). We also write $\llbracket
x,y\llbracket=\llbracket
x,y\rrbracket\setminus \{y\}$.
In particular, $\llbracket\root,x\rrbracket$ is the path going
from the root to $x$ which we will interpret as the ancestral line of
vertex $x$. We define a partial order on the tree by setting $x
\preccurlyeq y$ ($x$ is an ancestor of $y$) if and only if $x \in
\llbracket\root,y\rrbracket$. If $x,y \in \tree$, there is a unique $z
\in \tree$ such that $\llbracket\root,x\rrbracket \cap \llbracket\root,
y\rrbracket = \llbracket\root,z \rrbracket$. We write $z = x \wedge y$
and call it the most recent common ancestor of $x$ and $y$. Let $x \in
\tree$ be a vertex.  Let
$r\in [0, H(x)]$. We denote by $x_r \in \tree$ be the unique ancestor of
$x$ with height $H(x_r) = r$. As in the discrete case, we also define the subtree $\tree_x$ of $\tree$ above $x$ as
\[
\tree_x = \left\{y \in \tree \colon \, x \preccurlyeq y\right\},
\]
and the subtree $\tree_{r\vir x}=\tree_{x_r}$ of $\tree$ above level $r$ containing $x$ as
\[
\tree_{r\vir x} = \left\{y \in \tree \colon \, H(x \wedge y) \geqslant
  r\right\}=T_{x_r}.
\]
Then  $\tree_x$  (resp. $\tree_{r\vir x}$)  can  be  naturally viewed  as  a
real tree, rooted at  $x$ (resp. at $x_r$)  and endowed
with       the       distance       $d$      and       the       measure
$\mu_{|\tree_x}  =   \mu(\cdot  \cap   \tree_x)$  (resp.    the  measure
$\mu_{|   \tree_{r\vir x}}$).   Note   that   $\tree_{0\vir x}   =  \tree$   and
$\tree_{H(x) \vir x}  = \tree_x$. 

\begin{remark}
   \label{rem:coding}
We recall the construction of a real tree from an excursion path, see
\emph{e.g.} \cite[Example 3.14]{evans2007probability} or \cite[Section
2.1]{duquesne2005probabilistic}. Let $e$ be a positive excursion path,
that is $e\in \cc_+(\R_+)$  such that $e(0) = 0$,
$e(s) >0$ for $0<s<\sigma$ and $e(s) = 0$ for $s\geqslant \sigma$ where
$\sigma \coloneqq \inf\{s>0\colon \, e(s) = 0\} \in (0,\infty)$ is the
duration of the excursion. Set $d_e(t,s) = e(t) + e(s) - 2\inf_{[t\wedge
  s, t\vee s]}e$ for every $t,s \in [0,\sigma]$ and define an
equivalence relation on $[0,\sigma]$ by letting $t\sim_e s$ if and only
if $d_e(t,s) = 0$.  The real tree $\tree_e$ coded by $e$
is defined as the quotient space $[0,\sigma]/\sim_e$ rooted at $p(0)$
where $p \colon [0,\sigma]  \to \tree_e$ is the quotient map and
equipped with the distance $d_e$ and the pushforward measure $\lambda
\circ p^{-1}$ where $\lambda$ is the Lebesgue measure on
$[0,\sigma]$. This defines a compact weighted rooted real tree. Notice
that the mass and height of $\tree_e$ are given by $\m(\tree_e)=\sigma$
and $\H(\tree_e)=\norm{e}_\infty $. 
\end{remark}

We will need to view discrete trees as real trees. Let $\mathbf{t}$ be a
finite rooted ordered tree and let $a > 0$. Suppose that $\mathbf{t}$ is
embedded into the plane such that the edges are straight lines with
length $a$ that only intersect at their incident vertices. Denote by
$\pi_{\mathbf{t},a} \colon \mathbf{t} \to \real^2$ the embedding and by
$a\mathbf{t} = \pi_{\mathbf{t},a}(\mathbf{t}) \subset \real^2$ the
embedded set. Moreover, for a vertex $v \in \mathbf{t}$, we denote by
$av = \pi_{\mathbf{t},a}(v)$ the corresponding vertex in
$a\mathbf{t}$. Then $a\mathbf{t}$ can be considered as a compact real
tree $(a\mathbf{t},d_{\mathbf{t}},\mu_{\mathbf{t}})$: the distance
$d_{\mathbf{t}}(x,y)$ between two points $x,y \in a\mathbf{t}$ is
defined as the shortest length of a curve that connects $x$ and $y$, and
the measure $\mu_{\mathbf{t}}$ is the pushforward of the uniform
probability measure on $\mathbf{t}$ by the embedding
$\pi_{\mathbf{t},a}$. In other words, $a\mathbf{t}$ is obtained from
$\mathbf{t}$ by connecting every vertex to its children in such a way
that all edges have length $a$ and is equipped with the measure
$\mu_{\mathbf{t}}$ supported on the set $\{av \colon \, v\in
\mathbf{t}\}$ and satisfying $\mu_{\mathbf{t}}(\{av\}) =
1/|\mathbf{t}|$ for every $v \in \mathbf{t}$. The tree $a\mathbf{t}$ is
naturally rooted at $a\emptyset$ (also denoted $\emptyset$). Notice that vertices of the form
$av$ with $v \in \mathbf{t}$ are precisely those vertices in
$a\mathbf{t}$ whose height is an integer multiple of $a$. Finally, to
simplify notation, for
every $v \in \mathbf{t}$, we will write $a\mathbf{t}_v$ instead of
$(a\mathbf{t})_{av}$ for the subtree of
$a\mathbf{t}$ above $av$. We stress that, unless $v=\emptyset$, the  measure
of the compact real tree
 $a\mathbf{t}_v$ has mass less than one, whereas the  measure
of the compact real tree
$a(\mathbf{t}_v)$ is by definition a probability measure. 

\subsection{Gromov-Hausdorff-Prokhorov topology}\label{GHP section}
Denote by $\T$ the set of measure-preserving and root-preserving
isometry classes of compact real trees. We will often
identify a class with  an element of this class. So we shall write that
$(\tree, \root, d, \mu) 
\in \T$ if  $(\tree,\root,d)$  is a rooted compact real tree and  $\mu$  is
a nonnegative finite measure on $ \tree$. 
 When there is no ambiguity,
we may write $\tree$ for $(\tree, \root, d, \mu) $. 

We start by giving the standard definition of the
Gromov-Hausdorff-Prokhorov distance. Let $(E,\delta)$ be a metric
space. Given a non-empty subset $A \subset E$ and $\epsilon >0$, the
$\epsilon$-neighborhood of $A$ is $A^\epsilon = \{x \in E\colon \,
d(x,A) < \epsilon\}$. The Hausdorff distance $\delta_{\mathrm{H}}$ between
two non-empty subsets $A,B \subset E$ is defined by
\[
\delta_{\mathrm{H}}(A,B) = \inf \{\epsilon >0 \colon A \subset B^\epsilon
\text{ and } B \subset A^\epsilon\}.
\]
Next, denoting by $\mathcal{B}(E)$ the Borel $\sigma$-field on $(E,\delta)$, the Lévy-Prokhorov distance between two finite nonnegative measures $\mu, \nu$ on $(E,\mathcal{B}(E))$ is
\[\delta_{\mathrm{P}}(\mu,\nu) = \inf\{\epsilon>0 \colon \, \mu(A)\leqslant \nu(A^\epsilon) + \epsilon \text{ and } \nu(A)\leqslant \mu(A^\epsilon) + \epsilon, \, \forall A \in \mathcal{B}(E)\}.\]
We can now give the standard distance used to define the Gromov-Hausdorff-Prokhorov topology. For two compact real trees $(\tree,\root,d,\mu),(\tree',\root',d',\mu') \in \T$, set
\begin{equation}
\ghp^\circ(\tree,\tree') = \inf \left\lbrace \delta(\phi(\root),\phi'(\root')) \vee \delta_{\mathrm{H}}(\phi(\tree), \phi'(\tree')) \vee \delta_{\mathrm{P}}(\mu\circ \phi^{-1}, {\mu'}\circ{\phi'}^{-1}) \right\rbrace,
\end{equation}
where the infimum is taken over all isometries $\phi \colon \tree \to E$ and $\phi'\colon \tree' \to E$ into a common metric space $(E,\delta)$. This defines a metric which induces the Gromov-Hausdorff-Prokhorov topology on $\T$.

It will be convenient for our purposes to define another metric which
induces the same topology on $\T$. Let
$(\tree,\root,d,\mu),(\tree',\root',d',\mu') \in \T$. 
Recall that a correspondence between $\tree$ and $\tree'$ is a subset $\mathcal{R} \subset \tree \times \tree'$ such that for every $x \in \tree$, there exists $x' \in \tree'$ such that $(x,x') \in \mathcal{R}$, and conversely, for every $x'\in \tree'$, there exists $x \in \tree$ such that $(x,x') \in \mathcal{R}$. In other words, if we denote by $p \colon \tree \times \tree'\to \tree$ (resp. $p'\colon \tree \times \tree' \to \tree'$) the canonical projection on $\tree$ (resp. on $\tree'$), a correspondence is a subset $\mathcal{R} \subset \tree \times \tree'$ such that $p(\mathcal{R}) = \tree$ and $p'(\mathcal{R}) = \tree'$. If $\mathcal{R}$ is a correspondence between $\tree$ and $\tree'$, its distortion is defined by
\[\operatorname{dis}(\mathcal{R}) = \sup \left\{\left|d(x,y) - d'(x',y')\right| \colon \, (x,x'), (y,y') \in \mathcal{R}\right\}.\]
Next, for any nonnegative finite measure $m$ on $\tree \times \tree'$, we define its discrepancy with respect to $\mu$ and $\mu'$ by
\[
\operatorname{D}(m; \mu, \mu') = d_{\mathrm{TV}}(m\circ p^{-1}
  ,\mu) + d_{\mathrm{TV}}(m \circ {p'}^{-1},\mu').
\]
Then the
Gromov-Hausdorff-Prokhorov distance between $\tree$ and $\tree'$ is
defined as 
\begin{equation}\ghp(\tree,\tree') = \inf\left\{\frac{1}{2}\operatorname{dis}(\mathcal{R})\vee \operatorname{D}(m; \mu,\mu') \vee m(\mathcal{R}^c)\right\}, \label{ghp}
\end{equation}
where the infimum is taken over all correspondences $\mathcal{R}$
between $\tree$ and $\tree'$ such that $(\root, \root') \in \mathcal{R}$
and all nonnegative finite measures $m$ on $\tree \times \tree'$. It can
be verified that $\ghp$ is indeed a distance on $\T$ which is equivalent
to $\ghp^\circ$ and that the space $(\T,\ghp)$ is a Polish metric space,
see \cite{addario2017scaling}.  

We gather some facts about the Gromov-Hausdorff-Prokhorov distance that will be useful later. We refer the reader to \cite{addario2017scaling} or \cite{ross2018scaling}. We have that
\begin{equation}\label{ghp inequality}
\frac{1}{2} \left|\H(\tree) - \H(\tree')\right| \vee \left|\m(\tree) -
  \m(\tree')\right| \leqslant \ghp(\tree, \tree') \leqslant \left(
  \H(\tree) +\H(\tree')\right) \vee \left(\m(\tree) +
  \m(\tree')\right). 
\end{equation}
When $\tree' = \{\emptyset\}$ is the trivial tree consisting only of the root with mass $0$, we have
\begin{equation}\label{ghp trivial tree}
\frac{1}{2}\H(\tree)\vee \m(\tree) \leqslant \ghp(\tree, \{\emptyset\}) \leqslant \H(\tree)\vee \m(\tree).
\end{equation}
We consider the subset of $\T$ of trees with either height or mass equal
to $0$:
\begin{equation}\TT = \left\{\tree \in \T\colon\, \m(\tree) = 0\text{ or }  \H(\tree) = 0 \right\}.\label{T_0}
\end{equation}
Note that $\TT\subset \T$ is a closed subset since the mappings $\m
\colon \T \to \real$ and $\H \colon \T \to \real$ are continuous with
respect to the Gromov-Hausdorff-Prokhorov topology, thanks to \eqref{ghp
  inequality}. We now give bounds 
for the distance of a tree $T$ to $\TT$ which are similar to \eqref{ghp
  trivial tree}. 
\begin{lemma}\label{distance to F}
	Let $\tree \in \T$. Then we have
	\begin{equation}\frac{1}{2}\H(\tree)\wedge \m(\tree)\leqslant \ghp(\tree,\TT)\leqslant \H(\tree)\wedge \m(\tree).
	\end{equation}
\end{lemma}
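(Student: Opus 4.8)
The plan is to prove the two bounds separately: the upper bound by exhibiting two explicit elements of $\TT$ close to $\tree$ (one close in ``mass'', one close in ``height''), and the lower bound by a one‑line application of the left inequality in \eqref{ghp inequality}.

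For the upper bound I would work directly from the definition \eqref{ghp} of $\ghp$, since the right‑hand side of \eqref{ghp inequality} is too crude here. First, let $\tree_1$ be $\tree$ with its mass measure replaced by the null measure, so that $\m(\tree_1)=0$ and $\tree_1\in\TT$; choosing the identity correspondence $\mathcal{R}=\{(x,x)\colon x\in\tree\}$ (zero distortion, and containing $(\root,\root)$) together with the null measure $m$ on $\tree\times\tree$, \eqref{ghp} yields $\ghp(\tree,\tree_1)\le d_{\mathrm{TV}}(\mu,0)=\tfrac12\m(\tree)\le\m(\tree)$. Second, let $\tree_2$ be the one‑point tree carrying total mass $\m(\tree)$, with single point $v_0$, so that $\H(\tree_2)=0$ and $\tree_2\in\TT$; choosing the correspondence $\mathcal{R}=\tree\times\{v_0\}$, whose distortion equals $\diam\tree\le 2\H(\tree)$ since $d(x,y)\le H(x)+H(y)\le 2\H(\tree)$, together with $m$ the pushforward of $\mu$ by $x\mapsto v_0$ (so the marginals of $m$ are exactly $\mu$ and $\m(\tree)\delta_{v_0}$, whence the discrepancy term is $0$, and $m(\mathcal{R}^c)=0$ because $\mathcal{R}$ is the entire product), \eqref{ghp} yields $\ghp(\tree,\tree_2)\le\H(\tree)$. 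Hence $\ghp(\tree,\TT)\le\ghp(\tree,\tree_1)\wedge\ghp(\tree,\tree_2)\le\m(\tree)\wedge\H(\tree)$.

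For the lower bound, let $\tree'\in\TT$ be arbitrary. By definition of $\TT$, either $\m(\tree')=0$, in which case the left inequality in \eqref{ghp inequality} gives $\ghp(\tree,\tree')\ge|\m(\tree)-\m(\tree')|=\m(\tree)$, or $\H(\tree')=0$, in which case it gives $\ghp(\tree,\tree')\ge\tfrac12|\H(\tree)-\H(\tree')|=\tfrac12\H(\tree)$. In either case $\ghp(\tree,\tree')\ge\tfrac12\H(\tree)\wedge\m(\tree)$, and taking the infimum over $\tree'\in\TT$ finishes the proof.

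This is essentially bookkeeping once the two candidates in $\TT$ have been found. The only point that needs a little care is the upper bound, where one must return to the definition \eqref{ghp} of $\ghp$ rather than use the cited two‑sided estimate \eqref{ghp inequality}, and track the factor $2$ arising from $\diam\tree\le 2\H(\tree)$; I do not foresee a genuine obstacle.
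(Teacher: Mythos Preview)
Your proof is correct and follows the same approach as the paper: the same two comparison trees for the upper bound (the tree with erased mass, and the one-point tree carrying the full mass), the same correspondences and coupling measures, and the same use of the left inequality in \eqref{ghp inequality} for the lower bound. One tiny wording slip: the measure $m$ in the second comparison should be the pushforward of $\mu$ by $x\mapsto(x,v_0)$ (a measure on $\tree\times\tree_2$), not by $x\mapsto v_0$; your parenthetical about the marginals shows you have the right object in mind.
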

\begin{proof}
Let $(\tree, d, \root, \mu)\in \T$ and  $\delta>
\ghp(\tree,\TT)$. Then there exists $\tree' \in \TT$ such that
$\ghp(\tree,\tree') \leqslant \delta$. By \eqref{ghp inequality}, we get 
\[
\frac{1}{2} \left|\H(\tree) - \H(\tree')\right| \vee
    \left|\m(\tree) - \m(\tree')\right|\leqslant \delta.
\]
	But since $\tree' \in \TT$, either $\H(\tree') = 0$ or $\m(\tree') =
    0$. Therefore, either $\H(\tree)\leqslant 2\delta$ or $\m(\tree)
    \leqslant \delta$. Since $\delta >  \ghp(\tree,\TT)$ is arbitrary,
    this yields the lower bound. 
	
	To prove the upper bound, let $\tree' = \tree$ endowed with the zero measure $\mu'= 0$, and take $\mathcal{R} = \{(x,x) \colon \, x \in \tree\}$ and $m$ the zero measure on $\tree \times \tree'$. Then $\operatorname{dis}(\mathcal{R}) = 0$, $m(\mathcal{R}^c) = 0$ and $\operatorname{D}(m; \mu, \mu') = \mu(\tree) = \m(\tree)$. It follows that $\ghp(\tree, \tree') \leqslant \m(\tree)$. Note that $\tree' \in \TT$, therefore
	\[  \ghp(\tree,\TT)\leqslant \ghp(\tree,\tree')\leqslant \m(\tree).\]
	
	Next, let $\tree'' = \{\emptyset\}$ be the trivial tree consisting
    only of the root with mass $\m(\tree)$, \emph{i.e.} endowed with the
    measure $\mu'' = \m(\tree) \delta_{\emptyset}$. Take $\mathcal{R} =
    \tree \times \{\emptyset\}$ and $m(A \times B) =
    \mu(A)\delta_\emptyset(B)$. Then, we have  $\mathcal{R}^c = \emptyset$, so
    $m(\mathcal{R}^c) = 0$. Moreover, we have
\[
\operatorname{dis}(\mathcal{R}) = \sup\left\{|d(x,y)|\colon \, x,y \in
  \tree\right\} \leqslant 2\H(\tree).
\]
	 Since $m\circ p^{-1} = \mu$ and $m \circ {p''}^{-1} = \m(\tree)\delta_\emptyset = \mu''$, we get $\operatorname{D}(m,\mu,\mu'') = 0$. It follows that $\ghp(\tree,\tree'') \leqslant \H(\tree)$. Since $\tree'' \in \TT$, we deduce that
	\[ \ghp(\tree,\TT) \leqslant \ghp(\tree,\tree'')\leqslant \H(\tree).\]
	This finishes the proof of the upper bound.
\end{proof}
\section{A finite measure indexed by a tree}\label{Sect measure}
Let $(\tree,\emptyset,d, \mu)$ be a compact real tree. Let $x\in \tree$
and $r\in [0, H(x)]$, where $H(x)=d(\root, x)$. Recall that
$\tree_{r\vir x} = \{y \in \tree : \, H(x \wedge y) \geqslant r\}$ is the
subtree containing $x$ and starting at height $r$, endowed with the
distance $d$ and the measure $ \mu_{| \tree_{r\vir x}}$. It is
straightforward to check that $\tree_{r\vir x}$ is a compact real
tree and thus belongs to $\T$. Define a nonnegative measure $\Psi_\tree$
on $\T \times \R_+$ by, 
for every $f\in \cb_+(\T\times \R_+)$, 
\begin{equation}
\label{definition pi}
 \Psi_{\tree}(f) = \int_{\tree} \mu(\dd x)\int_{0}^{H(x)}f\left(\tree_{r\vir x},r\right)\dd r.
\end{equation}
As we will consider functions depending only on the mass and height of
the subtrees, we introduce the measure $\psij_\tree$ on $\R_+^2$ defined
by, for every  $f\in \cb_+(\R_+^2)$,
\begin{equation}
\label{psi joint}
\psij_\tree(f) = \int_\tree \mu(\dd x) \int_0^{H(x)} f\left(\m(\tree_{r\vir x}), \H(\tree_{r\vir x})\right) \, \dd r.
\end{equation}

\begin{lemma}\label{psi well defined}
	Let $\tree$ be a compact real tree. The mapping $(r,x) \mapsto
    \tree_{r\vir x}$ from $\{(r,x) \in \R_+\times \tree \colon\, r \leqslant
    H(x)\}$ to $\T$ is measurable with respect to the Borel
    $\sigma$-fields. Furthermore, the measure $\Psi_\tree$ is finite and
    does not depend on the choice of representative in the equivalence
    class in $\T$  of $\tree$.
\end{lemma}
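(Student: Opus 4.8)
The plan is to settle the measurability claim first; once $(r,x)\mapsto\tree_{r\vir x}$ is known to be Borel, the finiteness of $\Psi_\tree$ and the independence of the representative are routine. \emph{Reduction.} Recall from Section~\ref{real trees} that $\tree_{r\vir x}=\tree_{x_r}$, the subtree above the ancestor $x_r$ of $x$ at height $r$, rooted at $x_r$ and equipped with $d$ and $\mu_{|\tree_{x_r}}$. I would first observe that $(r,x)\mapsto x_r$ is continuous on $D:=\{(r,x)\in\R_+\times\tree\colon r\le H(x)\}$: for $(r,x),(r',x')\in D$ with $r\le r'$ one has $d(x'_{r'},x'_r)=r'-r$, and distinguishing whether $H(x\wedge x')\ge r$ gives $d(x_r,x'_r)\le d(x,x')$, whence $d(x_r,x'_{r'})\le (r'-r)+d(x,x')$. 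Since $\tree_{r\vir x}=G(x_r)$ for $(r,x)\in D$, where $G\colon z\mapsto\tree_z$ maps $\tree$ to $\T$, it suffices to prove that $G$ is Borel measurable: then $(r,x)\mapsto\tree_{r\vir x}$ is a Borel map after a continuous one.

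\emph{Measurability of $G$.} This map is genuinely discontinuous (it jumps at branch points), so I would argue softly, by realising the subtrees inside the fixed ambient compact tree $\tree$. Note that $\{(z,y)\in\tree\times\tree\colon z\preccurlyeq y\}=\{d(\root,z)+d(z,y)=d(\root,y)\}$ is closed. First, $z\mapsto\tree_z$ is Borel as a map into the space $\ck(\tree)$ of nonempty compact subsets of $\tree$ with the Hausdorff metric: for $U\subseteq\tree$ open,
\[
\{z\in\tree\colon\tree_z\cap U\neq\emptyset\}=\bigcup_{\epsilon\in\Q_{>0}}\Big\{z\in\tree\colon\ \sup_{y\colon z\preccurlyeq y} d(y,U^c)\ \ge\ \epsilon\Big\},
\]
and $z\mapsto\sup_{y\colon z\preccurlyeq y}d(y,U^c)$ is upper semicontinuous (by compactness of $\tree$ and closedness of $\{z\preccurlyeq y\}$), so each set on the right is closed; such sets generate the Borel $\sigma$-field of $\ck(\tree)$. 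Second, $z\mapsto\mu_{|\tree_z}$ is Borel into $\M(\tree)$ with the weak topology, since for $g\in\cc_b(\tree)$ the map $z\mapsto\int g(y)\ind_{\{z\preccurlyeq y\}}\,\mu(\dd y)$ is Borel, the integrand being jointly Borel in $(z,y)$ and bounded. Finally, the ``reassembly'' map sending a triple $(K,z_0,\nu)$ — with $K\in\ck(\tree)$, $z_0\in K$, and $\nu\in\M(\tree)$ carried by $K$ — to the weighted rooted compact real tree $(K,d_{|K},z_0,\nu_{|K})\in\T$ is continuous for the $\ghp$ topology: from Hausdorff-convergent $K_n$, convergent $z_0^n$, and weakly convergent $\nu_n$, one builds a correspondence out of pairs of nearby points together with the pair of roots, with vanishing distortion, and couples $\nu_n$ with its limit to control the discrepancy and the mass off the correspondence. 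Composing, $G(z)=(\tree_z,d,z,\mu_{|\tree_z})$ is Borel from $\tree$ to $\T$; hence $(r,x)\mapsto\tree_{r\vir x}$ is Borel on $D$.

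\emph{Finiteness and independence of the representative.} Taking $f\equiv1$ in \eqref{definition pi} gives $\Psi_\tree(\T\times\R_+)=\int_\tree H(x)\,\mu(\dd x)\le\H(\tree)\,\m(\tree)<\infty$, because $\tree$ is compact and $\mu$ finite; combined with the measurability of $(r,x)\mapsto(\tree_{r\vir x},r)$, this shows $B\mapsto\Psi_\tree(\ind_B)$ is a finite Borel measure, countable additivity following from monotone convergence. For the independence of the representative, let $\phi$ be a root- and measure-preserving isometry from $(\tree,\root,d,\mu)$ onto $(\tree',\root',d',\mu')$. Then $\phi$ preserves heights, so $\phi(x_r)=\phi(x)_r$, and $\phi$ restricts to a root- and measure-preserving isometry of $\tree_{x_r}$ onto $\tree'_{\phi(x)_r}$, which therefore coincide in $\T$. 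Substituting $x'=\phi(x)$ in $\Psi_{\tree'}(f)$ and using $\phi_\ast\mu=\mu'$, $H'\circ\phi=H$, and $f(\tree'_{\phi(x)_r},r)=f(\tree_{x_r},r)$ yields $\Psi_{\tree'}(f)=\Psi_\tree(f)$ for every $f\in\cb_+(\T\times\R_+)$.

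The delicate point is precisely the Borel measurability of $G\colon z\mapsto\tree_z$: because this map is not continuous one cannot reason by continuity, and must instead exploit that all the subtrees sit inside the single compact tree $\tree$, reducing the task to the measurability of the set-valued map $z\mapsto\tree_z\in\ck(\tree)$ and of the measure-valued map $z\mapsto\mu_{|\tree_z}\in\M(\tree)$, together with the (routine but slightly technical) continuity of reassembly. Everything else is either elementary or a direct change of variables.
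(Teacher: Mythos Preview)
Your proof is correct, and the overall structure matches the paper: you prove continuity of $(r,x)\mapsto x_r$ the same way, and the finiteness and representative-independence arguments are identical in spirit.

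Where you differ is in the measurability of $G\colon y\mapsto\tree_y$. The paper instead proves that $G$ is \emph{continuous from below} (that is, $\ghp(\tree_z,\tree_y)\to 0$ whenever $z\to y$ with $z\preccurlyeq y$) via an explicit correspondence, and then simply asserts that this implies measurability. Your route---splitting $G$ into the set-valued map $y\mapsto\tree_y\in\ck(\tree)$, the measure-valued map $y\mapsto\mu_{|\tree_y}\in\M(\tree)$, and a continuous reassembly---is more explicit on this point: you actually exhibit why the relevant preimages are Borel, whereas the paper's ``continuity from below $\Rightarrow$ Borel'' step needs further justification that is not spelled out (for instance, one can discretise heights, use that on each level set $G$ takes only countably many values on measurable pieces, and pass to a pointwise limit). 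One small caveat on your side: the reassembly map $(K,z_0,\nu)\mapsto(K,d_{|K},z_0,\nu)$ does not land in $\T$ for arbitrary $K\in\ck(\tree)$; the cleanest fix is to use the $\ghp^\circ$ embedding formulation (both subtrees already sit isometrically in $\tree$, so $\ghp^\circ\le d(z,z')\vee\delta_{\mathrm H}(\tree_z,\tree_{z'})\vee\delta_{\mathrm P}(\mu_{|\tree_z},\mu_{|\tree_{z'}})$) and observe that the image of your Borel map lies in $\T$. What your approach buys is a self-contained measurability argument that does not rely on tree-specific monotonicity; what the paper's approach buys is a direct GHP estimate between $\tree_z$ and $\tree_y$ that is reused later in the continuity proof of $\Psi$.
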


\begin{proof}
  Let  $(\tree,\root,   d,\mu)$  be   a  compact   real  tree   and  set
  $A   \coloneqq  \{(r,x)   \in   \R_+\times  \tree\colon\,   r\leqslant
  H(x)\}$.
  For every $(r,x) \in A$, recall that $x_r \in \tree$ is the unique ancestor
  of $x$ with height $H(x_r) = r$.  We start by showing that the mapping
  $(r,x)  \mapsto   x_r$  is  continuous   from  $A$  to   $\tree$.  Let
  $(r,x), (s,y) \in A$. Without loss of generality, we can assume that
  $r\geq s$. If $H(x \wedge y) \geq s$, then we have $y_s \preccurlyeq
  x$ and thus $y_s \preccurlyeq
  x_r$. This implies that $d(x_r, y_s)= r-s$. If
  $H(x\wedge y)  < s$, then  we have  $x_r\in \llbracket x\wedge  y,x\rrbracket$
 and $y_s\in \llbracket x\wedge  y,y\rrbracket$. This implies that $x_r$
 and $y_s$ belong to  $\llbracket x, y\rrbracket$, and thus
 $d(x_r,y_s) \leqslant d(x,y)$. In 
  all cases, we have
\begin{equation*}
d(x_r,y_s) \leqslant d(x,y) + |r-s|.
\end{equation*}
This proves  that $(r,x) \mapsto x_r$ is continuous.

The mapping $y \mapsto \tree_y$ from $\tree$ to $\T$ is continuous from
below, in the sense that for $y\in \tree$
\begin{equation}\label{continuity from below}
\lim_{\substack{z\to y\\ z\preccurlyeq y}} 
\ghp(\tree_z,  \tree_y)=0.
\end{equation}
To see this, let $\delta >0$,  $y \in \tree$ and $(y_n, \, n \in
\mathbb{N})$ be a sequence in $\tree$ converging to $y$ such that $y_n
\preccurlyeq y$ for every $n \in \mathbb{N}$. Notice that since $\tree$
is compact, it holds that there is a finite number of subtrees with
height larger than $\delta$ attached to the branch $\llbracket
\root,y\rrbracket$. Thus, there are no subtrees with height larger than
$\delta$ attached to $\llbracket y_n, y\llbracket$ for $n$ larger than some $n_0$. Moreover, since $\tree_y = \bigcap_{n\in \mathbb{N}}
\tree_{y_n}$, we get that $\lim_{n\to \infty} \mu(\tree_{y_n}) =
\mu(\tree_y)$ implying that the mass of the subtrees attached to
$\llbracket y_n,y\llbracket$ goes to $0$ as $n$ goes to infinity. 

Define a correspondence between $\tree_{y_n}$ and $\tree_y$ by
\begin{equation*}
\mathcal{R} \coloneqq \left\{(z,z) \colon\, z \in \tree_y\right\} \bigcup \left\{(z,y) \colon \, z \in \tree_{y_n}\setminus \tree_y\right\}.
\end{equation*}
It is straightforward to check that $\operatorname{dis}(\mathcal{R})
\leqslant 2 (\delta + d(y_n,y))$ for $n\ge n_0$.  Consider the measure on
$\tree_{y_n}\times \tree_y$ defined by $m(\dd x , \, \dd z)= \mu_{| \tree_y} ( \dd z) \delta
_z (\dd x) =\mu_{| \tree_y} ( \dd x) \delta
_x (\dd z) $. 
 Then we have $\operatorname{D}(m;\mu_{|\tree_{y_n}}, \mu_{|\tree_y})
 \leqslant \mu(\tree_{y_n}) - \mu(\tree_y)$ and $m(\mathcal{R}^c) =
 0$. It follows from \eqref{ghp} that 
\begin{equation*}
\limsup_{n\to \infty} \ghp(\tree_{y_n},\tree_y) \leqslant \limsup_{n\to \infty} \left( \delta + d(y_n,y) + \mu(\tree_{y_n}) - \mu(\tree_y) \right)= \delta.
\end{equation*}
Since $\delta >0$ is arbitrary, \eqref{continuity from below} readily follows.

Now it is not difficult to see that the continuity from below \eqref{continuity from below} of the mapping $y \mapsto \tree_y$ implies its measurability. By composition, it follows that the mapping $(r,x) \mapsto \tree_{r\vir x}=T_{x_r}$ from $A$ to $\T$ is measurable.

Next, notice that $\Psi_{\tree}$ is finite since
\[ \Psi_{\tree}(1) = \int_{\tree} H(x)\mu(\dd x) \leqslant \H(\tree) \m(\tree) < \infty. \]

Finally, let $f\in \cb_+(\T \times \R_+ )$ and 
$(\tree,\root,d,\mu),(\tree',\root',d',\mu')$ be two compact real trees
such that there is a measure-preserving and root-preserving isometry
$\phi \colon \tree \to \tree'$. This means that $\phi$ is an isometry
satisfying $\mu' = \mu \circ \phi^{-1}$ and $\phi(\root) = \root'$. 
Moreover, for every $x, y \in \tree$, since $H(x\wedge y) = 2^{-1}
\left(d(\root, x) + d(\root, y) - d(x,y)\right)$, we deduce that
\[
H(x\wedge y) = H(\phi(x)\wedge \phi(y)).
\]
Using this and the definitions of $\tree_{r\vir x}$ and
$\tree'_{r\vir\phi(x)}$, it is easy to see that, for every $x \in \tree$
and  $r\in [0,  H(x)]$, $\phi$ induces a
measure-preserving and root-preserving isometry from $\tree_{r\vir x}$
to $\tree'_{r\vir \phi(x)}$ and therefore $f(\tree_{r\vir x},r) =
f(\tree'_{r \vir \phi(x)},r)$. Since $H(x) = H(\phi(x))$, it follows that 
\begin{align*}
\Psi_\tree(f) 
&= \int_\tree \mu(\dd x ) \int_0^{H(x)} f(\tree_{r\vir x},r)\, \dd r \\
&= \int_\tree \mu(\dd x ) \int_0^{H(\phi(x))} f(\tree'_{r\vir \phi(x)},r)\, \dd r \\
&= \int_{\tree'} \mu\circ \phi^{-1}(\dd y) \int_0^{H(y)} f(\tree'_{r\vir
  y},r)\, \dd r\\
&= \Psi_{\tree'}(f).
\end{align*}
This proves that $\Psi_\tree$ does not depend on the choice of
representative in the equivalence class of $\tree$ which completes the  proof.
\end{proof}

Recall that $\operatorname{Lf}(\tree) $ is the set of leaves of $\tree$. It
is well known that there exists a unique $\sigma$-finite measure $\ell$
on $(\tree, \mathcal{B}(\tree))$, called the length measure, such that
$\ell(\operatorname{Lf}(\tree)) = 0$ and $\ell(\llbracket x,y
\rrbracket) = d(x,y)$, see \emph{e.g.} \cite[Chapter 4,
§4.3.5]{evans2007probability}.  
The next result gives an alternative expression for $\Psi_\tree$ in terms of the length measure.
\begin{proposition}
Let $(\tree,\root,d,\mu) $ be a compact real tree. For every
$f\in \cb_+(\T \times \R_+)$, we have
	\begin{equation}\label{length measure}
	\Psi_\tree(f)  = \int_\tree \mu(\tree_y)f(\tree_y,H(y))  \, \ell(\dd y).
	\end{equation}
\end{proposition}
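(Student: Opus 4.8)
The plan is to compare the two integral representations \eqref{definition pi} and \eqref{length measure} by means of a Fubini-type interchange, using the following bijective change of variables: a pair $(x,r)$ with $x\in\tree$ and $r\in[0,H(x)]$ determines the ancestor $y=x_r$ of $x$ at height $r$; conversely, a vertex $y\in\tree$ together with a point $x$ of the subtree $\tree_y$ recovers the pair via $r=H(y)$. The point is that integrating $x$ over $\tree$ against $\mu$ and then $r$ over $[0,H(x)]$ against Lebesgue measure is the same as integrating $y$ over $\tree$ against the length measure $\ell$ and then $x$ over the subtree $\tree_y$ against $\mu_{|\tree_y}$, because $\ell$ is exactly the measure that assigns length $d(\root,y)$ to the ancestral segment $\llbracket\root,y\rrbracket$. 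So the heart of the argument is a disintegration formula
\[
\int_\tree \mu(\dd x)\int_0^{H(x)} g(x_r,r)\,\dd r
= \int_\tree \ell(\dd y)\int_{\tree_y}\mu(\dd x)\, g(y,H(y)),
\]
valid for every $g\in\cb_+(\tree\times\R_+)$, after which one specializes to $g(y,r)=f(\tree_{r\vir x},r)$, noting that $\tree_{r\vir x}=\tree_{x_r}=\tree_y$ depends on $x$ only through $y=x_r$, and that $\{H(x\wedge y)\geq r\}$ is precisely $\{x\in\tree_{y}\}$ so $\mu(\tree_{r\vir x})=\mu(\tree_y)$.

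First I would reduce to the case $g(y,r)=h(y)\ind_{[0,c)}(r)$ for $h\in\cc_b(\tree)$ and $c>0$ by a monotone class / Dynkin argument, since products of such functions generate $\cb_+(\tree\times\R_+)$; the height coordinate is then harmless and the statement becomes the identity
\[
\int_\tree \mu(\dd x)\int_0^{H(x)\wedge c} h(x_r)\,\dd r
= \int_{\{H(y)<c\}} \mu(\tree_y)\,h(y)\,\ell(\dd y),
\]
which after letting $c\to\infty$ (monotone convergence, both sides finite by Lemma \ref{psi well defined}) is the desired disintegration. To prove this I would invoke the intrinsic construction of the length measure: $\ell$ is the unique $\sigma$-finite measure with $\ell(\Lf(\tree))=0$ and $\ell(\llbracket x,y\rrbracket)=d(x,y)$, and it can be built by approximating $\tree$ by its finite subtrees spanned by countably many points, on each of which $\ell$ restricts to one-dimensional Lebesgue measure along the edges. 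On such a finite skeletal approximation the identity is the elementary observation that $\int_0^{H(x)}h(x_r)\,\dd r$ is the integral of $h$ along the ancestral segment $\llbracket\root,x\rrbracket$ with respect to arc length, and Fubini over the finitely many edges gives exactly the right-hand side with $\mu(\tree_y)$ counting the mass sitting above $y$. Passing to the limit uses continuity from below of $y\mapsto \mu(\tree_y)$ (already established in the proof of Lemma \ref{psi well defined}) and dominated convergence, the bound $\mu(\tree_y)\le \m(\tree)$ and $h$ bounded furnishing the domination.

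Alternatively — and this may be cleaner to write — one can use the coding of $\tree$ by an excursion-type parametrization or, more elementarily, Fubini directly in the form $\int_\tree\mu(\dd x)\int_0^{H(x)}(\cdot)\,\dd r=\int_0^\infty\dd r\int_{\{H(x)>r\}}\mu(\dd x)(\cdot)$ and then recognize, for fixed $r$, that the map $x\mapsto x_r$ pushes $\mu_{|\{H>r\}}$ onto the measure $y\mapsto\mu(\tree_y)$ carried by the level set $\{H(y)=r\}$, so that integrating this over $r$ reconstitutes $\ell$ via its layer-cake description along branches. The main obstacle I anticipate is making the interchange of the $\mu$-integral, the $\dd r$-integral and the length measure fully rigorous at the level of the (possibly complicated) compact real tree rather than a finite tree: one must be careful that the "ancestor map'' $(r,x)\mapsto x_r$ is measurable (this is exactly the continuity statement proved in Lemma \ref{psi well defined}) and that the disintegration of $\ell$ over the height coordinate is legitimate. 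Everything else — the reduction to product functions, the identification $\tree_{r\vir x}=\tree_{x_r}$, the finiteness needed for the limiting arguments — is routine given the machinery already set up in Section \ref{Sect measure}.
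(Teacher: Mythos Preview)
Your proposal would work, but it is considerably more elaborate than necessary. The paper's proof bypasses all of the approximation-by-finite-trees, the monotone class reduction to product functions, and the level-set disintegration you sketch. Instead it applies Fubini \emph{directly} to the product measure $\mu\otimes\ell$ on $\tree\times\tree$: writing $\mu(\tree_y)=\int_\tree \ind_{\{y\preccurlyeq x\}}\,\mu(\dd x)$ gives
\[
\int_\tree \mu(\tree_y)\,f(\tree_y,H(y))\,\ell(\dd y)
=\int_\tree \mu(\dd x)\int_{\llbracket\root,x\rrbracket} f(\tree_y,H(y))\,\ell(\dd y),
\]
and then one only needs the single fact that $\ell$ restricted to the segment $\llbracket\root,x\rrbracket$ is the pushforward of Lebesgue measure on $[0,H(x)]$ by the isometry $r\mapsto x_r=f_{\root,x}(r)$. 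This converts the inner $\ell$-integral into $\int_0^{H(x)} f(\tree_{x_r},r)\,\dd r$, and since $\tree_{x_r}=\tree_{r\vir x}$ the identity follows.

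The only measurability point that needs checking is that $\{(x,y):y\preccurlyeq x\}$ is a Borel subset of $\tree^2$ (it is closed, being cut out by $d(\root,x)=d(\root,y)+d(y,x)$) and that $y\mapsto\tree_y$ is measurable, which was already done in Lemma~\ref{psi well defined}. Your ``main obstacle'' about making the disintegration of $\ell$ over the height coordinate rigorous simply does not arise, because the argument never slices $\ell$ by height: it slices it along ancestral geodesics, where the restriction of $\ell$ is explicit by the very definition of the length measure.
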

\begin{proof}Let $(\tree,\root,d,\mu)$ be a compact real tree and $f \in
  \cb_+ (\T \times \R_+ )$. Notice that $\{(x,y) \in \tree^2\colon\, y
  \preccurlyeq x\} = \{(x,y)\in \tree^2\colon\, d(\root, x)=d(\root, y)+d(x,y)\}$ is
  closed in $\tree^2$ and thus measurable. Moreover, the mapping $y
  \mapsto \tree_y$ is measurable from $\tree$ to $\T$ by the proof of
  Lemma \ref{psi well defined}. Thus the mapping $(x,y) \mapsto  \ind_{\{y
  \preccurlyeq x\}} f( \tree_y, H(y))$ is measurable. By Fubini's theorem, it follows that 
	\begin{align*}
	\int_\tree \mu(\tree_y) f\left(\tree_y, H(y)\right) \, \ell(\dd y) &= \int_\tree \mu(\dd x) \int_\tree \ind_{\{y \preccurlyeq x\}} f\left(\tree_y, H(y)\right)\, \ell(\dd y) \\
	&= \int_\tree \mu(\dd x) \int_{\llbracket \root,x\rrbracket} f\left(\tree_y, H(y)\right) \, \ell(\dd y).
	\end{align*}
	Let $x \in \tree$ and let $f_{\root,x} \colon [0,H(x)] \to \llbracket \root,x\rrbracket$ be the unique isometry such that $f_{\root,x}(0) = \root$ and $f_{\root,x}(H(x)) = x$. Using that $\ell_{|\llbracket \root,x\rrbracket} = \lambda \circ f_{\root,x}^{-1}$ where $\lambda$ is the Lebesgue measure on $[0,H(x)]$, we get that
	\begin{equation*}
	\int_{\llbracket \root,x\rrbracket} f\left(\tree_y, H(y)\right) \, \ell(\dd y) = \int_0^{H(x)} f\left(\tree_{f_{\root,x}(r)}, H(f_{\root,x}(r))\right) \, \dd r.
	\end{equation*}
	Since $f_{\root,x}$ is an isometry, for every $r \in [0,H(x)]$,
    $f_{\root,x}(r)$ is the unique ancestor of $x$ at height $r$, that
    is $x_r$,  and  $H(f_{\root, x}(r))=r$. As
    $\tree_{f_{\root,x}(r)}=\tree_{x_r}=\tree_{r\vir x}$  for every $r
    \in [0,H(x)]$, it follows that 
	\begin{equation*}
	\int_\tree \mu(\tree_y) f\left(\tree_y, H(y)\right) \, \ell(\dd y) =
    \int_\tree \mu(\dd x) \int_0^{H(x)} f\left(\tree_{r\vir x}, r \right) \,
    \dd r. 
	\end{equation*}
This  concludes the proof.
	\end{proof}

The main result of this section concerns the continuity of the mapping $\Psi \colon \tree \mapsto \Psi_\tree$. 
\begin{proposition}
\label{continuity of the measure}
	The mapping $\Psi \colon \tree \mapsto \Psi_\tree$, from $\T$
    endowed with the Gromov-Hausdorff-Prokhorov topology to
    $\mathcal{M}(\T\times \R_+)$ endowed with the topology of weak
    convergence, is well defined and continuous. 
\end{proposition}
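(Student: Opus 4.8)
The plan is to deduce well-definedness and finiteness of $\Psi_\tree$ directly from Lemma~\ref{psi well defined}, and to prove continuity sequentially (recall $\T$ is metrizable): given $\tree_n\to\tree$ in the Gromov--Hausdorff--Prokhorov topology we must show $\Psi_{\tree_n}(g)\to\Psi_\tree(g)$ for every $g\in\cc_b(\T\times\R_+)$, and it is enough to treat $g\ge 0$. First I would invoke the standard embedding characterization of GHP convergence to place all the $\tree_n$ and $\tree$ isometrically (and root-almost-preservingly) into a common Polish space $(E,\delta)$, so that $\tree_n\to\tree$ for the Hausdorff distance on compact subsets of $E$, $\mu_n\to\mu$ weakly on $E$ (in particular $\m(\tree_n)\to\m(\tree)$ and $(\mu_n)$ is tight), and $\root_n\to\root$. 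If $\m(\tree)=0$ then $\Psi_\tree=0$ while $\Psi_{\tree_n}(1)\le\H(\tree_n)\m(\tree_n)\to 0$, so the claim is trivial; assume $\m(\tree)>0$.

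Writing $H_n(x)=\delta(\root_n,x)$, $H(x)=\delta(\root,x)$, we have $\Psi_{\tree_n}(g)=\int_{\tree_n}\mu_n(\dd x)\,\phi_n(x)$ and $\Psi_\tree(g)=\int_\tree\mu(\dd x)\,\phi(x)$ with $\phi_n(x)=\int_0^{H_n(x)}g(\tree_{n,r\vir x},r)\,\dd r$ and $\phi(x)=\int_0^{H(x)}g(\tree_{r\vir x},r)\,\dd r$, where $\tree_{n,r\vir x}$ is the subtree of $\tree_n$ above level $r$ containing $x$. Since $0\le\phi_n\le\|g\|_\infty\sup_m\H(\tree_m)<\infty$, a Skorokhod coupling (sample $X_n$ from $\mu_n/\m(\tree_n)$ and $X$ from $\mu/\m(\tree)$ with $X_n\to X$ a.s.\ in $E$, so $X_n\in\tree_n$, $X\in\tree$) together with dominated convergence reduces everything to the statement: \textbf{if $x_n\in\tree_n$ and $x_n\to x$ in $E$, then $x\in\tree$ and $\phi_n(x_n)\to\phi(x)$.} As $H_n(x_n)\to H(x)<\infty$ and $g$ is bounded, applying dominated convergence in the variable $r$ on $[0,H(x))$ shows this follows once we establish that for Lebesgue-almost every $r\in[0,H(x))$ one has $\tree_{n,r\vir x_n}\to\tree_{r\vir x}$ in $\T$.

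This last convergence is the crux. Let $D_x\subset[0,H(x))$ be the (countable) set of heights of the branch points of $\tree$ lying on the spine $\llbracket\root,x\rrbracket$, together with the (countable) set $\{r:\mu(\{x_r\})>0\}$; I claim $\tree_{n,r\vir x_n}\to\tree_{r\vir x}$ for every $r\notin D_x$. Three ingredients. (i) $(x_n)_r\to x_r$ in $E$: any subsequential limit $z$ lies in $\tree$ (Hausdorff convergence), has $\delta(\root,z)=r$ and $\delta(z,x)=H(x)-r$, hence lies on the unique geodesic $\llbracket\root,x\rrbracket$ of the real tree $\tree$ at height $r$, so $z=x_r$. (ii) Hausdorff convergence $(\tree_n)_{(x_n)_r}\to\tree_{x_r}$ in $E$: the ``upper'' inclusion is immediate by passing to the limit in the identity $\delta(\root,\cdot)=r+\delta((x_n)_r,\cdot)$ valid on $(\tree_n)_{(x_n)_r}$; for the ``lower'' inclusion one checks that if a point of $\tree_{x_r}$ were \emph{not} a limit of points of $(\tree_n)_{(x_n)_r}$, then $\tree_n$ would carry a subtree hanging off the spine of $x_n$ at a height tending to $r$ from below whose limit is a positive-height subtree of $\tree$ attached exactly at $x_r$ — i.e.\ $r\in D_x$, a contradiction. (iii) Weak convergence in $\M(E)$ of the restricted mass measures $\mu_n\big|_{(\tree_n)_{(x_n)_r}}\to\mu\big|_{\tree_{x_r}}$: this is where $\mu(\{x_r\})=0$ is used, ruling out $\mu_n$-mass concentrating near level $r$ along the spine and jumping across it in the limit. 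Combining (i)--(iii) with the definition \eqref{ghp} of $\ghp$ — using the correspondence matching $(\tree_n)_{(x_n)_r}$ with $\tree_{x_r}$, with $(x_n)_r\leftrightarrow x_r$, and a near-optimal coupling of the restricted measures — gives $\ghp(\tree_{n,r\vir x_n},\tree_{r\vir x})\to 0$.

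The main obstacle is exactly this crux: the map $(r,x)\mapsto\tree_{r\vir x}$ is only continuous ``from below'' in general (compare \eqref{continuity from below} in the proof of Lemma~\ref{psi well defined}), so one cannot hope for $\tree_{n,r\vir x_n}\to\tree_{r\vir x}$ at \emph{every} $r$; the whole point is to isolate the countable exceptional set $D_x$ and to exploit that the outer integration over $r$ is against Lebesgue measure. Everything else — the common-embedding reduction, tightness of $(\mu_n)$, the two applications of dominated convergence, the degenerate case $\m(\tree)=0$, and assembling the final $\ghp$-bound from (i)--(iii) — is routine.
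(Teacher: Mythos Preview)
Your approach is correct and the key insight---that the branch-point heights along the spine form a countable, hence Lebesgue-null, exceptional set---is exactly the one the paper uses. The execution, however, is genuinely different. The paper never embeds into a common space; instead it works directly with a single near-optimal correspondence $\mathcal{R}$ and coupling $m$ between $\tree$ and $\tree'$, proves a quantitative estimate (Lemma~\ref{continuity lemma}) bounding $\ghp(\tree_{r\vir x},\tree'_{r\vir x'})$ by $\epsilon$ plus the mass and height of the ``annulus'' $\tree_{[r-6\epsilon,r+6\epsilon]\vir x}$, and then splits $\Psi_\tree(f)-\Psi_{\tree'}(f)$ into four pieces $A_1,\dots,A_4$ controlled by these quantities; dominated convergence on the annulus height (via countability of branch points) finishes the job. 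Your route trades this explicit bound for the common-embedding characterization of GHP convergence plus two layers of Skorokhod/dominated convergence. What you gain is a cleaner separation of the three ingredients (spine-point convergence, Hausdorff convergence of subtrees, weak convergence of restricted masses) and a transparent description of $D_x$; what you lose is self-containedness (the simultaneous isometric embedding is an external lemma) and any quantitative control. One minor point worth making precise in your write-up: step~(iii) really needs $\mu(\{y:H(y\wedge x)=r\})=0$, not just $\mu(\{x_r\})=0$, but since you have already excluded branch-point heights this set reduces to $\{x_r\}$, so your $D_x$ is exactly right.
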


The end of this section is devoted to the proof of Proposition
\ref{continuity of the measure}. For $\tree$ a compact real tree, $x\in \tree$, $s\in
[0, +\infty ]$, $r\in 
[0, s \wedge H(x)]$, we define the
following set of elements of 
$T$ such that their common ancestor with $x$ has height in $[r, s]$
\[
\tree_{[r,s]\vir x}=
\{y \in \tree\colon \, H(y \wedge x) \in [r,s]\}.
\]
Recall that $x_r$ is  the ancestor of $x$ at height  $r$ in $\tree$, and
is  also seen  as  the root  of  the tree  $\tree_{r\vir x}$.  We shall  see
$\tree_{[r,s]\vir x}$  as   a compact  real   tree  rooted   at  $x_r$   with  measure
$\mu_{|\tree_{[r,s]\vir x}}   =   \mu(\cdot   \cap   \tree_{[r,s]\vir x})$
and thus $\tree_{[r,s]\vir x}\in \T$.    Recall that
$\m(\tree_{[r,s]\vir x})=\mu(\tree_{[r,s]\vir x})$    denotes     its    mass    and
$\H(\tree_{[r,s]\vir x})=\sup\{          H(y)           \colon\,          y\in
\tree_{[r,s]\vir x}\subset{\tree}\}-r                                        $
its        height.        Notice       in        particular       that
$\tree_{[r, +\infty] \vir x}=\tree_{r\vir x}$ for $r\in [0, H(x)]$.

We first establish an estimate for the Gromov-Hausdorff-Prokhorov distance between subtrees of two real trees in terms of the distance between the trees themselves.

\begin{lemma}\label{continuity lemma}
	Let $\tree, \tree'$ be compact real trees and let $\delta > \ghp(\tree,\tree
    ')$. Let $\mathcal{R}$ be a correspondence between $\tree$ and
    $\tree'$ such that $(\root,\root')\in \mathcal{R}$ and let $m$ be a
    measure on $\tree \times \tree'$ such that 
\[ 
\frac 1 2 \operatorname{dis} (\mathcal{R})\vee \operatorname{D}(m;
\mu,\mu') \vee m(\mathcal{R}^c)  \leqslant \delta. 
\]
	Then for every $(x,x')$ in $\mathcal{R}$ and every $r \geqslant 0$
    such that $6\delta \leqslant r \leqslant H(x) \wedge H(x')$, we have 
\begin{equation}
 \label{continuity lemma t}
	\ghp(\tree_{r\vir x},\tree'_{r\vir x'} )\leqslant 8 \delta +
    2\m\left(\tree_{[r-6\delta, r+6\delta]\vir x}  \right)
+ 2\H(\tree_{[r-3\delta, r+6\delta]\vir x}).
\end{equation}
\end{lemma}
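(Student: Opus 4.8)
The goal is to estimate $\ghp(\tree_{r\vir x},\tree'_{r\vir x'})$ by exhibiting a good correspondence and a good coupling measure between the two subtrees, starting from the given near-optimal data $(\mathcal{R},m)$ for $\ghp(\tree,\tree')$. The natural candidate is the restriction $\mathcal{R}_r \coloneqq \mathcal{R}\cap(\tree_{r\vir x}\times \tree'_{r\vir x'})$, augmented with the pair of roots $(x_r,x'_r)$ so that it is root-preserving, and the restriction $m_r$ of $m$ to $\tree_{r\vir x}\times \tree'_{r\vir x'}$. Since $\operatorname{dis}(\mathcal{R}_r)\leq \operatorname{dis}(\mathcal{R})\leq 2\delta$, the only delicate points are: (1) checking $\mathcal{R}_r$ is genuinely a correspondence, i.e. every point of $\tree_{r\vir x}$ has a partner in $\tree'_{r\vir x'}$ and conversely; (2) adding $(x_r,x'_r)$ may increase the distortion, which must be controlled; (3) bounding $\operatorname{D}(m_r;\mu_{|\tree_{r\vir x}},\mu'_{|\tree'_{r\vir x'}})$ and $m_r(\mathcal{R}_r^c)$.

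For (1), the issue is that a point $y\in\tree_{r\vir x}$ (so $H(y\wedge x)\geq r$) has some $\mathcal{R}$-partner $y'$, but $y'$ need not lie in $\tree'_{r\vir x'}$: the ancestral structure is only preserved up to the distortion $2\delta$. The plan is to show that if $y\in \tree_{r\vir x}$ lies deep enough — specifically if $H(y\wedge x)\geq r+6\delta$, equivalently $y\in \tree_{(r+6\delta)\vir x}$ — then its partner $y'$ does satisfy $H(y'\wedge x')\geq r$, using that distances between the relevant points ($y$, $x$, and the branch point) are preserved up to $2\delta$ and that $H(u\wedge v)=\frac12(H(u)+H(v)-d(u,v))$; pushing the branch point up or down by a bounded multiple of $\delta$ is what forces the $6\delta$ threshold. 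The points of $\tree_{r\vir x}$ that are \emph{not} in $\tree_{(r+6\delta)\vir x}$, namely those in the ``collar'' $\tree_{[r,r+6\delta]\vir x}=\tree_{r\vir x}\setminus\tree_{(r+6\delta)\vir x}$, will be handled by collapsing: map every such point to $x_r$ in the correspondence. This contributes extra distortion bounded by twice the height of that collar, i.e. $2\H(\tree_{[r,r+6\delta]\vir x})\leq 2\H(\tree_{[r-3\delta,r+6\delta]\vir x})$, and analogously on the $\tree'$ side — which after translating back to $\tree$ via the correspondence (heights preserved up to $2\delta$, hence the $[r-3\delta,\cdot]$ left endpoint) is again controlled by $\H(\tree_{[r-3\delta,r+6\delta]\vir x})$. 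Collapsing also forces us to add $(x_r,x'_r)$ to the correspondence, and one checks $d(x_r, y)$ for $y$ in the collar is at most the collar height plus lower-order terms, keeping the total distortion at most $\operatorname{dis}(\mathcal{R})+2\H(\cdots)+O(\delta)\le 8\delta + 2\H(\tree_{[r-3\delta,r+6\delta]\vir x})$ after taking halves as in \eqref{ghp}.

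For the mass term (3): $m_r\circ p^{-1}$ is $m\circ p^{-1}$ restricted to $\tree_{r\vir x}$, which is within $d_{\mathrm{TV}}\leq \delta$ (from $\operatorname{D}(m;\mu,\mu')\le 2\delta$, so each summand $\le 2\delta$; in fact we only lose the mass that $m$ assigns outside the product, bounded by $m(\mathcal{R}^c)\le\delta$ plus the discrepancy) of $\mu_{|\tree_{r\vir x}}$; the symmetric bound holds on the primed side, but one must account for the mismatch between $\tree_{r\vir x}$ on one side and $\tree'_{r\vir x'}$ on the other — the ``leaked'' mass is exactly the mass $m$ places on pairs $(y,y')$ with exactly one coordinate in its respective subtree, and such $y$ (or $y'$) lies in a collar $\tree_{[r-6\delta,r+6\delta]\vir x}$ by the branch-point estimate of step (1). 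This is where the term $2\m(\tree_{[r-6\delta,r+6\delta]\vir x})$ enters. Finally $m_r(\mathcal{R}_r^c)\leq m(\mathcal{R}^c)\le \delta$. Assembling the three quantities into the infimum defining $\ghp(\tree_{r\vir x},\tree'_{r\vir x'})$ in \eqref{ghp} and collecting constants gives the claimed bound $8\delta+2\m(\tree_{[r-6\delta,r+6\delta]\vir x})+2\H(\tree_{[r-3\delta,r+6\delta]\vir x})$, where the hypothesis $r\geq 6\delta$ guarantees all the shifted levels $r\pm 6\delta$, $r-3\delta$ are nonnegative so the collars are well defined.

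The main obstacle I expect is step (1)–(2): carefully tracking how the correspondence distorts the partial order (ancestor relation) and the heights of branch points, and pinning down the exact numerical thresholds ($6\delta$ for membership, $3\delta$ vs $6\delta$ for the two collar endpoints) so that all the collapsing arguments are simultaneously valid. Everything else is a bookkeeping exercise with the triangle inequality and the identity $H(u\wedge v)=\tfrac12(H(u)+H(v)-d(u,v))$ together with the standing estimates on $\operatorname{dis}(\mathcal{R})$, $\operatorname{D}(m;\mu,\mu')$, and $m(\mathcal{R}^c)$.
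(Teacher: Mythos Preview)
Your plan matches the paper's proof: restrict $\mathcal{R}$ and $m$ to $\tree_{r\vir x}\times\tree'_{r\vir x'}$, pair unmatched collar points with the \emph{opposite} root $x'_r$ (resp.\ $x_r$---not $x_r$ on the same side as you wrote), bound distortion, discrepancy, and $m$-mass of the complement separately, then transfer the $\tree'$-collar quantities back to $\tree$ via the key estimate $|H(t\wedge x)-H(t'\wedge x')|\le 3\delta$ for $(t,t')\in\mathcal{R}$. One adjustment: that same estimate shows $3\delta$, not $6\delta$, is the sharp membership threshold (if $H(t\wedge x)\ge r+3\delta$ then $H(t'\wedge x')\ge r$); using $6\delta$ would push the height-collar endpoint to $r+9\delta$ after the $\tree'\to\tree$ conversion and miss the exact constants in~\eqref{continuity lemma t}, though this is immaterial for the application in Proposition~\ref{continuity of the measure}.
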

\begin{proof}
  Similarly to $x_r$, we denote  by $x'_r$ the ancestor of $x'$  at
  height $r$ in $\tree'$, which is also seen as the root of $\tree'_{r, x'}$.
	 We shall bound $\ghp(\tree_{r\vir x}, \tree'_{r\vir x'})$ from above by
\[
\frac{1}{2}\operatorname{dis}(\widetilde{\mathcal{R}}) \vee
    \operatorname{D}(\widetilde{m}; \widetilde{\mu}, \widetilde{\mu}')
    \vee \widetilde{m}(\widetilde{\mathcal{R}}^c)
\]
	where $\widetilde{\mathcal{R}}$ is a well chosen correspondence
    between $\tree_{r\vir x}$ and $\tree'_{r \vir x'}$ and
    $\widetilde{m}$ (resp. $\widetilde{\mu}$, $\widetilde{\mu}'$) is the
    restriction of the measure $m$ (resp. $\mu$, $\mu'$) to
    $\tree_{r\vir x}\times \tree'_{r\vir x'}$ (resp. $\tree_{r\vir x}$,
    $\tree'_{r\vir x'}$). We begin by noticing that, for every
    $(t,t'),(s,s') \in \mathcal{R}$, we have 
	\begin{equation}\left| d(t,s) - d'(t',s') \right| \leqslant \operatorname{dis}(\mathcal{R}) \leqslant 2 \delta. \label{distances}
	\end{equation}
	In particular, taking $(s,s') = (\root,\root')\in \mathcal{R}$ yields
	\begin{equation}
\label{distances 2}
\left|H(t) - H(t') \right| \leqslant 2 \delta.
	\end{equation}
	Using this, we get that for $(t,t') \in \mathcal{R}$
	\begin{align}H(t'\wedge x') &= \frac 1 2 \left( H(t') + H(x') - d'(t',x')\right) \notag\\ 
	&\geqslant \frac 1 2 \left(H(t) - 2 \delta + H(x) - 2 \delta - d(t,x) - 2  \delta \right) \notag \\ 
	&= H(t \wedge x) - 3 \delta.  \label{heights}
	\end{align}
	

\noindent
\textbf{Step 1:} we construct a correspondence  between $\tree_{r\vir x}$ and
$\tree'_{r\vir x'}$ and give an upper bound of its distortion. 
    Let        $(t,t')\in \mathcal{R}$. Assume  that $H(t \wedge x)
    \geqslant r +  3 \delta$. Then, we get that $t        \in
    \tree_{r\vir x}$ and  that 
    $H(t'\wedge x') \geqslant r$ by  \eqref{heights}, that is $t' \in
    \tree'_{r\vir x'}$. This gives that  $(t,t')\in  \tree_{r\vir x}\times
    \tree'_{r\vir x'}$. Similarly, if $H(t' \wedge x')
    \geqslant r +  3 \delta$, we get $(t,t')\in  \tree_{r\vir x}\times
    \tree'_{r\vir x'}$. Therefore, the following set
\[
\widetilde{\mathcal{R}}=
\{(t,t')\in \mathcal{R} \colon \max (H(t \wedge x), H(t' \wedge x'))\geq
r+ 3\delta\}
\bigcup  \left(\tree_{[r, r+3\delta]\vir x}\times \{x'_r\}\right)
\bigcup  \left(\{x_r\} \times \tree'_{[r, r+3\delta]\vir x'}\right)
\]
 is a correspondence
    between $\tree_{r\vir x}$ and $\tree'_{r\vir x'}$.
 We give a bound of its  
    distortion. 
Let $(t,t'), (s,s') \in
    \widetilde{\mathcal{R}}$.  \\
	\textbf{Case 1:} Assume that $(t,t') \in \mathcal{R}$ and $(s,s')
    \in \mathcal{R}$, then by \eqref{distances} we have 
	\[ \left|d(t,s) - d'(t',s')\right| \leqslant 2 \delta.\]
	\textbf{Case 2:} Assume that $(t,t') \in \mathcal{R}$ and $(s,s')
    \notin \mathcal{R}$. Without loss of generality, we may assume that
    $s=x_r$ and thus $H(s'\wedge x') \in [r, r+3\delta]$. Let $y' \in \tree'$ such that $(x_r,y') \in \mathcal{R}$, then using \eqref{distances} and the triangle inequality, we get
	\begin{align*}
	\left|d(t,s) - d'(t',s')\right| &\leqslant \left| d(t,x_r) - d'(t',y')\right| + \left|d'(t',y') - d'(t',s') \right| \\
	&\leqslant 2 \delta +  d'(y',s') \\
	&\leqslant 2 \delta + d'(y',x'_r) + d'(x'_r,s').
	\end{align*}
	Notice that by \eqref{heights}, we have $H(y'\wedge x') \geqslant
    H(x_r \wedge x) - 3 \delta = r-3\delta$, so either $H(y' \wedge x')
    \geqslant r$ or $H(y'\wedge x') \in [r-3\delta,r)$. In the first
    case, $x'_r$ is necessarily an ancestor of $y'$ and we have
    $H(y'\wedge x'_r) = r$. In the second case, we have $y'\wedge x' =
    y'\wedge x'_r$ and $H(y'\wedge x'_r) \geqslant r-3\delta$. Thus, in
    all cases we have $H(y'\wedge x'_r) \geqslant r-3\delta$ and then 
\[
d'(y',x'_r) = H(y') + H(x'_r) - 2H(y'\wedge x'_r) \leqslant H(x_r) + 2
\delta + r - 2(r-3\delta) = 8 \delta.
\]
	On the other hand, since we assumed that $H(s'\wedge x') \in [r, r+3\delta]$, we get  that $x'_r$ is an ancestor of $s'$ and   $s'\in
    \tree'_{[r, r+3\delta]\vir x'}$. We deduce that 
	\begin{equation}\label{ancestor}
	d'(x'_r,s') = H(s') - H(x'_r) = H(s') - r
\leq  \H(\tree'_{[r,
  r+3\delta]\vir x'}).
	\end{equation}
	It follows that
\[	\left|d(t,s) - d(t',s')\right| \leqslant 10 \delta +\H(\tree'_{[r,
  r+3\delta]\vir x'}).
\]
	\textbf{Case 3:} Assume that $(t,t'),(s,s') \notin \mathcal{R}$. \\
	\textbf{Case 3a.} If $t=s=x_r$, then necessarily $H(t'\wedge x'),
    H(s'\wedge x') \in [r,r+3\delta)$. Arguing as in \eqref{ancestor}, we have 
\[
\left| d(t,s)-d'(t',s')\right| = d'(t',s') \leqslant d'(t',x'_r)+d'(x'_r,s')
\leqslant 2 \H(\tree'_{[r,
  r+3\delta]\vir x'}).
\]
	\textbf{Case 3b.} If $s=x_r$ and $t' = x'_r$, then by the same
    argument we used to
    get \eqref{ancestor}, we have
\[
\left| d(t,s)-d(t',s')\right| \leqslant d(t,x_r) + d(x'_r,s') 
\leq    \H(\tree_{[r,  r+3\delta]\vir x})+ \H(\tree'_{[r,  r+3\delta]\vir x'}).
\]

	It follows that
	\begin{equation}\label{dis}
	\operatorname{dis} ( \widetilde{\mathcal{R}}) \leqslant 10 \delta +
    2 \H(\tree_{[r,  r+3\delta]\vir x})+ 2\H(\tree'_{[r,  r+3\delta]\vir x'}). 
	\end{equation}
	
\noindent
\textbf{Step 2:} we define a measure on  $\tree_{r\vir x}\times 
\tree'_{r\vir x'}$ and give an upper bound of its discrepancy. 	Denote by
$\widetilde{m}$ the restriction of the measure $m$ to $\tree_{r\vir x}\times
\tree'_{r\vir x'}$. Let  $A \subset \tree_{r\vir x}$ be a Borel set. We have $\widetilde{m} \circ \widetilde{p}^{-1}(A) = \widetilde{m}(A \times \tree'_{r\vir x'}) = m(A \times\tree'_{r\vir x'})$ where $\widetilde{p} \colon \tree_{r\vir x} \times \tree'_{r\vir x'} \to \tree_{r\vir x}$ is the canonical projection. Notice that
	\begin{align*}
	m\left(A \times \tree'\right) - m\left(A \times \tree'_{r\vir x'}\right) &= m\left(A \times(\tree'\setminus\tree'_{r\vir x'})\right) \\
	&=m\left(A \times(\tree'\setminus\tree'_{r\vir x'}) \cap \mathcal{R}\right) + m\left(A \times(\tree'\setminus\tree'_{r\vir x'})\cap \mathcal{R}^c\right) \\
	&\leqslant m\left(A \times(\tree'\setminus\tree'_{r\vir x'}) \cap \mathcal{R}\right) + \delta.
	\end{align*}
	For  $(t,t') \in \big( A \times(\tree'\setminus\tree'_{r\vir x'})\big) \cap \mathcal{R}$, using \eqref{heights} and the fact that $A \subset \tree_{r\vir x}$, we get
	\[H(t'\wedge x')\geqslant H(t\wedge x) - 3 \delta \geqslant r- 3\delta.\]
	Moreover, we have $H(t'\wedge x') < r <r +3 \delta$ since $t' \notin
    \tree'_{r\vir x'}$. This gives the inclusion 
	$\big( A \times(\tree'\setminus\tree'_{r\vir x'})\big) \cap \mathcal{R} \subset
    \tree\times \tree'_{[r-3\delta, r+3\delta]\vir x'}$. As $d_{\mathrm{TV}}(m\circ
      {p'}^{-1} ,\mu') \leq \operatorname{D}(m; \mu,
    \mu')  \leq  \delta$, we deduce that
	\begin{align*}
	m\left(A \times \tree'\right) - m\left(A \times \tree'_{r\vir x'}\right) 
&\leqslant m\left( \tree\times \tree'_{[r-3\delta, r+3\delta]\vir x'}\right) + \delta \\
	&\leqslant \mu'\left(\tree'_{[r-3\delta, r+3\delta]\vir x'}\right) +
     d_{\mathrm{TV}}(m\circ {p'}^{-1} ,\mu') + \delta\\ 
	&\leqslant \mu'\left(\tree'_{[r-3\delta, r+3\delta]\vir x'}\right) + 2
      \delta. 
	\end{align*}
Recall  that $\widetilde{\mu}$ is the restriction of the measure $\mu$ to
    $\tree_{r, x}$. 
It follows that
	\begin{align*}\left|\widetilde{m} \circ \widetilde{p}^{-1}(A) -
   \widetilde{\mu}(A)\right| 
&= \left|m\left(A \times \tree'_{r\vir x'}\right)  - \mu(A)\right| \\
	&\leqslant \left|m\left(A \times \tree'_{r\vir x'}\right)  - m(A \times
      \tree')\right| + \left|m\left(A \times \tree'\right)  -
      \mu(A)\right| \\ 
	&\leqslant \left|m\left(A \times \tree'_{r\vir x'}\right)  - m(A \times
      \tree')\right| + \operatorname{D}(m;\mu, \mu') \\ 
	&\leqslant \mu'\left(\tree'_{[r-3\delta, r+3\delta]\vir x'}\right)  + 3 \delta.
	\end{align*}
	By symmetry, we deduce that 
	\begin{equation}\label{mass}
	\operatorname{D}(\widetilde{m}; \widetilde{\mu}, \widetilde{\mu}') 
\leqslant \m\big(\tree_{[r-3\delta, r+3\delta]\vir x}\big) +
\m\big(\tree'_{[r-3\delta, r+3\delta]\vir x'}\big) + 6 \delta. 
	\end{equation}
	
\noindent
\textbf{Step 3}: we give an upper  bound of
$\widetilde{m}(\widetilde{\mathcal{R}}^c)$. Let $(t,t') \in \tree_{r\vir x}
\times \tree_{r\vir x'}\setminus \widetilde{\mathcal{R}}$. If $H(t\wedge x)
> r + 3\delta$ then necessarily $(t,t') \notin \mathcal{R}$ by
our construction of $\widetilde{\mathcal{R}}$. Therefore, we have 
\begin{align}
\label{discrepancy}
	m(\widetilde{\mathcal{R}}^c)= m(\tree_{r\vir x}\times \tree'_{r\vir x'}
      \setminus \widetilde{\mathcal{R}}) 
&= m\left((t,t') \in \tree_{r\vir x} \times \tree'_{r\vir x'}\setminus
  \widetilde{\mathcal{R}} \colon \, H(t\wedge x) >r + 3
  \delta\right) \\
 & \quad\quad\quad + m\left((t,t') \in \tree_{r\vir x} \times \tree'_{r\vir x'} \colon \, H(t\wedge x) \in
   [r,r+3\delta]\right)\notag \\ 
	&\leqslant m(\mathcal{R}^c) + \mu\left(\tree_{[r, r+3\delta]\vir  x}\right)
      + d_{\mathrm{TV}}(m\circ {p}^{-1} ,\mu)
\notag\\ 
	&\leqslant \m\big(\tree_{[r, r+3\delta]\vir x}\big) + 2\delta. \notag
\end{align}

\noindent
\textbf{Step 4}: we can now conclude. Combining \eqref{dis}, \eqref{mass} and
\eqref{discrepancy} and using the definition of the
Gromov-Hausdorff-Prokhorov distance, we get 
\begin{equation}
\label{continuity lemma t t'}
\ghp(\tree_{r\vir x},\tree'_{r\vir x'} )\leqslant 6 \delta + 
\m\big(\tree_{[r-3\delta, r+3\delta]\vir x}\big)  + 
\m\big(\tree'_{[r-3\delta, r+3\delta]\vir x'}\big)  + 
\H\big(\tree_{[r, r+3\delta]\vir x}\big) +
\H\big(\tree'_{[r, r+3\delta]\vir x'}\big). 
\end{equation}

First, notice that
	\begin{align*}
\m\big(\tree'_{[r-3\delta, r+3\delta]\vir x'}\big) 
& =	\mu'\left(t'\colon \, H(t'\wedge x') \in
  [r-3\delta,r+3\delta]\right) \\
&\leqslant m\left((t,t') \colon \,  H(t'\wedge x') \in [r-3\delta,r+3\delta]\right) +d_{\mathrm{TV}} (m\circ {p'}^{-1} ,\mu')\\
	&\leqslant  m\left((t,t') \in \mathcal{R}\colon \,  H(t'\wedge x')
      \in [r-3\delta,r+3\delta]\right) +
      m\left(\mathcal{R}^c\right)+\delta \\ 
	&\leqslant  m\left((t,t') \in \mathcal{R}\colon \,  H(t'\wedge x')
      \in [r-3\delta,r+3\delta]\right) + 2 \delta. 
	\end{align*}
Using \eqref{heights}, we get by symmetry that, for 	 $(t,t') \in
\mathcal{R}$, 
\begin{equation}
   \label{eq:Ht-Ht'-comparaison}
H(t'\wedge x') - 3 \delta \leqslant H(t\wedge x) \leqslant H(t'\wedge x') + 3 \delta.
 \end{equation}	
	We deduce that
	\begin{align}
\m\big(\tree'_{[r-3\delta, r+3\delta]\vir x'}\big) 
&\leqslant m\left((t,t') \in \mathcal{R}\colon \,  H(t\wedge x) \in
  [r-6\delta,r+6\delta]\right) + 2 \delta \notag\\ 
	&\leqslant m\left((t,t') \colon \,  H(t\wedge x) \in [r-6\delta,r+6\delta]\right) + 2 \delta \notag\\
	&\leqslant \mu\left(t \colon \,  H(t\wedge x) \in [r-6\delta,r+6\delta]\right) + d_{\mathrm{TV}}(m\circ p^{-1} ,\mu)+ 2\delta \notag \\
	&\leqslant 	\m\big(\tree_{[r-6\delta, r+6\delta]\vir x}\big)  + 3 \delta.\label{mu mu'}
	\end{align}
	
Secondly, let $t'\in \tree'_{[r, r+3\delta]\vir  x'}$. We have $H(t' \wedge x')\in [r,
r+3\delta]$. Let $t\in T$ such that  $(t,t') \in \mathcal{R}$. 
Thanks to
\eqref{eq:Ht-Ht'-comparaison}, we get  $H(t\wedge x) \in [r-3\delta,
    r+6\delta]$. Since  $(t,t') \in \mathcal{R}$, we also have $|H(t')
    -H(t)|\leq  2\delta$ by\eqref{distances 2}. We deduce that
\begin{align}
\notag
 \H\big(\tree'_{[r, r+3\delta]\vir x'}\big)
&=   \sup \left\{H(t') \colon \, t'\in \tree', H(t'\wedge x') \in [r,
  r+3\delta]\right\} - r\\
\notag
&\leq   \sup\left\{H(t)\colon \, t \in \tree, H(t\wedge x) \in
  [r-3\delta,r+6\delta] \right\} - r + 2\delta\\
&= \H\big(\tree_{[r-3\delta , r+6\delta]\vir x}\big) -\delta. 
\label{H H'}
\end{align}
	Using \eqref{mu mu'} and \eqref{H H'} in conjunction with
    \eqref{continuity lemma t t'} yields the result. 
\end{proof}

\begin{proof}[Proof of Proposition \ref{continuity of the measure}]
	Fix a compact real tree $\tree=(\tree, d, \root, \mu)$. We will show that
    $\Psi_{\tree'}\to\Psi_{\tree}$ weakly as $\tree'  \to \tree$ for
    $\ghp$. Let $\epsilon >0$ and let $\tree'=(\tree', d', \root',
    \mu')$ be a compact real tree  such that
    $\ghp(\tree,\tree') \leqslant \epsilon$. Then there exist a
    correspondence $\mathcal{R}$ between $\tree$ and $\tree'$ and a
    measure $m$ on $\tree \times \tree '$ such that $(\root,\root')\in
    \mathcal{R}$, $m(\mathcal{R}^c) \leqslant \epsilon$,
    $\operatorname{dis} (\mathcal{R})  \leqslant 2\epsilon$ and
    $\operatorname{D}(m; \mu, \mu ') \leqslant \epsilon$. In particular,
    we will make constant use of the inequalities $|m(\tree \times \tree
    ') - \m(\tree)| \leqslant \epsilon$ and $|H(x) - H(x')| \leqslant 2
    \epsilon$ for $(x,x') \in \mathcal{R}$. Let $f\in \cc_b(\T \times \R_+
    )$ be  Lipschitz. Write 
	\[\Psi_\tree (f) - \Psi_{\tree'}(f) = A_1 + A_2 + A_3 + A_4, \]
	where
	\begin{align*} 
A_1 
&= \int_\tree \mu(\dd x) \int_0^{H(x)} f(\tree_{r\vir x},r)\dd r -
      \int_{\tree} m\circ p^{-1}(\dd x) \int_0^{H(x)} f(\tree_{r\vir x},r)
      \dd r \\ 
A_2
&= \int_{\mathcal{R}} m(\dd x, \dd x') \left( \int_0^{H(x)}
  f(\tree_{r\vir x},r) \dd r -   \int_0^{H(x')} f(\tree'_{r\vir x'},r) \dd
  r\right) \\ 
A_3
&= \int_{\mathcal{R}^c} m(\dd x, \dd x') \left( \int_0^{H(x)}
  f(\tree_{r\vir x},r) \dd r -   \int_0^{H(x')} f(\tree'_{r\vir x'},r) \dd
  r\right) \\ 
A_4 
&= \int_{ \tree '} m\circ {p'}^{-1}( \dd x') \int_0^{H(x')}
  f(\tree'_{r\vir x'},r) \dd r - \int_{\tree'} \mu(\dd x') \int_0^{H(x')}
  f(\tree'_{r\vir x'},r)\dd r. 
	\end{align*}
	
	Notice that
	\begin{equation}|A_1| \leqslant 2d_{\mathrm{TV}}(m \circ p^{-1} ,\mu) \sup_{x \in \tree} \int_0^{H(x)} f(\tree_{r\vir x},r) \dd r \leqslant 2\H(\tree) \norm{f}_\infty \epsilon. \label{A1}
	\end{equation}
	Similarly, we have
	\begin{equation}|A_4| \leqslant 2\H(\tree ') \norm{f}_\infty \epsilon \leqslant 2(\H(\tree) + 2\epsilon) \norm{f}_\infty \epsilon, \label{A4}
	\end{equation}
	where in the second inequality we used that $\H(\tree') \leqslant \H(\tree) + 2\ghp(\tree,\tree') \leqslant \H(\tree) + 2\epsilon$ by \eqref{ghp inequality}. Next, we have
	\begin{equation} |A_3| \leqslant m(\mathcal{R}^c)(\H(\tree) + \H(\tree '))\norm{f}_\infty \leqslant 2(\H(\tree) + \epsilon)\norm{f}_\infty \epsilon.\label{A3}
	\end{equation}
	
	We now provide a bound for $A_2$. We have
\begin{multline}
A_2 
= \int_{\mathcal{R}} \ind_{\lbrace H(x)\geqslant H(x')\rbrace}m(\dd x,
\dd x') \left( \int_0^{H(x)} f(\tree_{r\vir x},r) \dd r -
  \int_0^{H(x')} f(\tree'_{r\vir x'},r) \dd r\right) 
\\ + \int_{\mathcal{R}} \ind_{\lbrace H(x)<H(x')\rbrace}m(\dd x, \dd x')
\left( \int_0^{H(x)} f(\tree_{r\vir x},r) \dd r -   \int_0^{H(x')}
  f(\tree'_{r\vir x'},r) \dd r\right). 
	\end{multline}
	We only treat the first term, the second one being similar. We have
	\begin{multline*}\int_{\mathcal{R}} \ind_{\lbrace H(x)\geqslant
        H(x')\rbrace}m(\dd x, \dd x') \left( \int_0^{H(x)}
        f(\tree_{r\vir x},r) \dd r -   \int_0^{H(x')} f(\tree'_{r\vir x'},r) \dd
        r\right) 
\\= \int_{\mathcal{R}} \ind_{\lbrace H(x)\geqslant H(x')\rbrace}m(\dd x,
\dd x') \left( \int_0^{H(x')} \left(f(\tree_{r\vir x},r) - f(\tree
    '_{r\vir x'},r) \right) \dd r +   \int_{H(x')}^{H(x)} f(\tree_{r\vir x},r)
  \dd r\right). 
	\end{multline*}
On the one hand, we get
	\begin{align}
	\left|\int_{\mathcal{R}} \ind_{\lbrace H(x)\geqslant H(x')\rbrace} m(\dd x, \dd x')\int_{H(x')}^{H(x)} f(\tree_{r\vir x},r) \dd r\right|  &\leqslant \int_{\mathcal{R}} \norm{f}_\infty|H(x) - H(x')| m(\dd x, \dd x') \notag\\
	& \leqslant \norm{f}_\infty m (\tree \times \tree ')\operatorname{dis} (\mathcal{R} ) \notag\\
	& \leqslant 2\norm{f}_\infty (\m(\tree) + \epsilon)\epsilon.\label{A2'}
	\end{align}
	On the other hand, we have
	\begin{align}
	&\left|\int_{\mathcal{R}} \ind_{\lbrace H(x)\geqslant
      H(x')\rbrace}m(\dd x, \dd x') \int_0^{H(x')}
      \left(f(\tree_{r\vir x},r) - f(\tree '_{r\vir x},r) \right) \dd r \right|
      \notag\\ 
	& \leqslant  \L{f} \int_{\mathcal{R}} \ind_{\lbrace H(x)\geqslant
      H(x')\rbrace}m(\dd x, \dd x') \int_{0}^{H(x')}\!\!
      \ghp\left(\tree_{r\vir x},\tree '_{r\vir x}\right) \ind_{\lbrace r
      \geqslant 6 \epsilon\rbrace} \dd r + \int_{\mathcal{R}} m(\dd x,
      \dd x') \int_{0}^{6\epsilon} 2 \norm{f}_\infty \dd r \notag\\ 
      &\leqslant 2\L{f} \int \!m(\dd x, \dd x') \int_{0}^{H(x)}
      \left( \m( \tree_{[r-3\varepsilon, r+ 6\varepsilon]\vir x})+\H(
      \tree_{[r-6\varepsilon, r+ 6\varepsilon]\vir x}) \right)\, \ind_{\lbrace r
      	\geqslant 6 \epsilon\rbrace} \, \dd r \notag \\
	&\hspace{4cm}  + 8\L{f} \H(\tree)(\m(\tree) + \epsilon) \epsilon +
      12\norm{f}_\infty(\m(\tree) + \epsilon)\epsilon.\label{A2} 
	\end{align}
	where we used \eqref{continuity lemma t} for the last
    inequality. Using Fubini's theorem, we get 
	\begin{multline}\label{A21}
\int \!m(\dd x, \dd x') \int_{0}^{H(x)} \!\m(
      \tree_{[r-6\varepsilon, r+ 6\varepsilon]\vir x}) 
      \ind_{\lbrace r \geqslant 6\epsilon\rbrace}\, \dd r \\
\begin{aligned}[b]
&= 	\int \!m(\dd x, \dd x') \int_{0}^{H(x)} \!\mu(t \colon \, H(t\wedge
x) \in [r-6\epsilon,r+6\epsilon]) \ind_{\lbrace r \geqslant 6
  \epsilon\rbrace}\, \dd r \\ 
&= \int \!m(\dd x, \dd x') \int_\tree \mu(\dd t) \int_{0}^{H(x)}
\!\ind_{\lbrace H(t\wedge x) \in
  [r-6\epsilon,r+6\epsilon]\rbrace}\ind_{\lbrace r \geqslant 6
  \epsilon\rbrace} \,\dd r \\ 
&\leqslant 12\,  \m(\tree)(\m(\tree) + \epsilon) \epsilon. \end{aligned} 
\end{multline}
	Moreover, since $\tree$ is compact, it holds that for every $x \in
    \tree$ and every $\delta >0$, there is a finite number of subtrees
    with height larger than $\delta$ attached to the branch $\llbracket
    \root , x \rrbracket$. Let  $r \in (0,H(x))$. Recall  that $x_r$ is 
    the unique ancestor of $x$ with height $H(x_r) = r$. Assume that
    $x_r$ is not a branching point. Then, for every $\delta >0$ and for
    $\epsilon >0$ small enough (depending on $\delta$), there are no
    subtrees with height larger than $\delta$ attached to $\llbracket
    x_{r-3\epsilon}, x_{r+6\epsilon}\rrbracket$. (To be precise, if $y \in \llbracket
    x_{r-3\epsilon}, x_{r+6\epsilon}\rrbracket$ is a branching point, the tree attached at $y$ is $\tree_{[s,s]\vir x}$ with $s = H(y)$). Therefore, we have $
\H(T_{[r-3\varepsilon, r+6\varepsilon]\vir x}) \leqslant \delta+9\epsilon$.
	This proves that,	for every $r \in (0,H(x))$ such that $x_r$ is
    not a branching point, 
	\begin{equation}
\label{branching points}
\lim_{\epsilon \to 0}\H(T_{[r-3\varepsilon, r+6\varepsilon]\vir x}) =0 . 
	\end{equation}
But since $\tree$ is compact, there are (at most) countably many  $r \in
(0,H(x))$ such that $x_r$ is a branching point. It follows that
\eqref{branching points} holds for every $x \in \tree$ and $\dd r$-a.e. $r \in [0,H(x)]$.
Notice that  	$\H(T_{r-3\varepsilon, r+6\varepsilon\vir x})\leq
\H(\tree)$ and the measure $\ind_{\lbrace 0 \leqslant r \leqslant
  H(x)\rbrace}\,\mu(\dd x) \dd r$ is finite as its total mass is less
than $\H(\tree) \m(\tree) $ which is finite. We get by the dominated convergence theorem that
\[
\lim_{\epsilon \to 0}\int_{\tree} \mu(\dd x) \int_{0}^{H(x)}
\!\H(T_{[r-3\varepsilon, r+6\varepsilon]\vir x}) \ind_{\lbrace r \geqslant 6\epsilon\rbrace} \,\dd r = 0.
\]
	Since
\[\left|\int_{\tree} \left(m \circ p^{-1}(\dd x)-\mu(\dd x)\right)
  \int_{0}^{H(x)} \!\H(T_{[r-3\varepsilon, r+6\varepsilon]\vir x})
  \ind_{\lbrace r \geqslant 6\epsilon\rbrace} \,\dd r \right| 
 \leqslant 2\H(\tree)^2 d_{\mathrm{TV}}(m \circ p^{-1},
  \mu) \leqslant 2\H(\tree)^2 \epsilon, 
\]
	it follows that
	\begin{equation}\lim_{\epsilon \to 0}\int m(\dd x, \dd x')
      \int_{0}^{H(x)} \!\H(T_{[r-3\varepsilon, r+6\varepsilon]\vir x})
      \ind_{\lbrace r \geqslant 6 \epsilon\rbrace}\,\dd r  =
      0.\label{A22} 
	\end{equation}
	Thus, by
	\crefrangemultiformat{equation}{(#3#1#4)--(#5#2#6)}{ and~(#3#1#4)--
      (#5#2#6)}{, ~(#3#1#4)--(#5#2#6)}{
      and~(#3#1#4)--(#5#2#6)}\cref{A1,A4,A3,A2',A2,A21,A22}, we deduce
    that 
	\[\lim_{\epsilon \to 0}\sup_{\substack{\ghp(\tree,\tree')< \epsilon}}\Psi_{\tree'}(f) = \Psi_{\tree}(f)\]
	for every  Lipschitz function $f\in \cc_b(\T \times \R_+)$. This proves that $\Psi \colon \T \to \M(\T \times \R_+)$ is continuous which concludes the proof.
\end{proof}

\section{Bienaymé-Galton-Watson trees and stable Lévy trees}\label{Sect BGW}
Throughout this work, we fix a random variable $\xi$ whose distribution is critical and belongs to the domain of attraction of a stable distribution with index $\gamma \in (1,2]$. More precisely, we assume that $\xi$ takes values in $\mathbb{N} = \{0,1,2,\ldots\}$ and that it satisfies the following conditions:
\begin{enumerate}[label=($\xi$\arabic*),leftmargin=*]
	\item $\xi$ is critical, \emph{i.e.} $\ex{\xi} = 1$, and nondegenerate, \emph{i.e.} $\pr{\xi = 0} >0$,\label{xi1}
	\item $\xi$ belongs to the domain of attraction of a stable distribution with index $\gamma \in (1,2]$, \emph{i.e.} $\ex{\xi^2 \ind_{\{\xi \leqslant n\}}} = n^{2-\gamma}L(n)$, where $L \colon \R_+ \to \R_+$ is a slowly varying function. \label{xi2}
\end{enumerate}
By \cite[Theorem XVII.5.2]{feller1971introduction} or \cite[Theorem
5.2]{janson2011stable}, assumption \ref{xi2} is equivalent to the
existence of a positive sequence $(b_n, \, n \geqslant 1)$ such that, if
$(\xi_n,\, n \geqslant 1)$ is a sequence of independent random variables with the same distribution as $\xi$, then
\begin{equation} 
\label{stable CLT}
\frac{1}{b_n}\left( \sum_{k=1}^n \xi_k - n \right) \cvlawd X_1,
\end{equation}
where $(X_t, \, t \geqslant 0)$ is a strictly stable spectrally positive
Lévy process with Laplace transform $\ex{\exp(-\lambda X_t)} = \exp(t\kappa \lambda^\gamma)$ where $\gamma \in (1,2]$ and $\kappa >0$. Note that we have automatically $b_n/n \to 0$ as $n \to \infty$. In most of our results, we make the following stronger assumption on $\xi$:
\begin{enumerate}[start=2, label = ($\xi$\arabic*)$'$,leftmargin=*]
	\item $\ex{\xi^2 \ind_{\{\xi \leqslant n\}}} = n^{2-\gamma}L(n) $
      where $L \colon \R_+ \to \R_+$ is a slowly varying function which
      is  bounded away
      from zero and infinity. \label{xi3} 
\end{enumerate}
Assumption \ref{xi3} is equivalent to  the normalizing
sequence $(b_n, \, n \geqslant 1)$ which appears in \eqref{stable CLT}
satisfying 
\begin{equation}\label{b_n bounded}
\underline{b}\:\! n^{1/\gamma} \leqslant  b_n \leqslant \overline{b}\:\!  n^{1/\gamma}, \quad \forall n \geqslant 1,
\end{equation}
for some constants $0 <\underline{b} < \overline{b}< \infty$. Indeed, if
$\gamma = 2$, we have the convergence of $nb_n^{-2} L(b_n)$ to some
positive constant by \cite[Theorem 5.2 and
Eq. (5.44)]{janson2011stable}. Similarly, if $\gamma \in (1,2)$,  using
\cite[Theorem 5.3 and Eq. (5.7)]{janson2011stable}, we have as $n \to
\infty$ that 
\[
n \pr{\xi > b_n}\sim \frac{2-\gamma}{\gamma}n b_n^{-\gamma}L(b_n).
\] 
On the other hand, \cite[Eq. (5.10)]{janson2011stable} entails the
convergence of $n \pr{\xi > b_n}$ to some positive constant. Therefore,
for $\gamma \in (1,2]$, the sequence $n^{1/\gamma} b_n^{-1}
L(b_n)^{1/\gamma}$ converges to some positive constant. Thus, if  $L$ is
bounded away from 0 and infinity, then \eqref{b_n bounded} follows. The proof of the
converse (which we shall not use) is left for the reader. 

\subsection{Results on conditioned Bienaymé-Galton-Watson trees}
Recall that the span of the integer-valued random variable $\xi$ is the largest integer $\spn$ such that a.s. $\xi \in a + \spn \mathbb{Z}$ for some $a \in \mathbb{Z}$. As we only consider $\xi$ with $\pr{\xi = 0} >0$, the span is the largest integer $\spn$ such that a.s. $\xi \in \spn \mathbb{Z}$, \emph{i.e.} the greatest common divisor of $\{k \geqslant 1\colon \, \pr{\xi = k} >0\}$.

Assume that $\xi$ satisfies \ref{xi1} and \ref{xi2} and denote by $\mathfrak{g}$ the density of the random variable $X_1$ appearing in \eqref{stable CLT}. Then the function $\mathfrak{g}$ is continuous on $\real$ (in fact infinitely differentiable) and satisfies
\begin{equation}
\label{g(0)} 
\mathfrak{g}(0) = \frac{1}{\kappa^{1/\gamma}
  \left|\Gammaeuler(-1/\gamma)\right|},
\end{equation}
where $\Gamma$ is Euler's gamma function, see \cite[Lemma XVII.6.1]{feller1971introduction} or \cite[Example 3.15 and Eq. (4.6)]{janson2011stable}. In particular, when $\gamma = 2$, $\mathfrak{g}$ is the density of a centered Gaussian distribution with variance $2 \kappa$ and we have
\begin{equation}
\label{g(0),2} 
\mathfrak{g}(0) = \frac{1}{2\sqrt{\kappa \pi}}\cdot
\end{equation}
Recall that $(\xi_n, \,  n \geqslant 1)$ is a sequence  of independent random
variables   with   the   same   distribution   as   $\xi$   and   define
$S_n = \sum_{k=1}^n \xi_k$. The following result is a direct consequence
of the local  limit theorem, see \emph{e.g.}  \cite[Chapter 4, Theorem
4.2.1]{ibragimov}.

\begin{lemma}[Local limit theorem]
\label{local limit}
Assume that $\xi$ satisfies \ref{xi1} and \ref{xi2} and denote its span by $\spn$. We have
\[ 
\lim_{n\to \infty}\sup_{k \geqslant 0} \left| \frac{b_n}{\spn} \pr{S_n = \spn k} -
  \mathfrak{g}\left( \frac{\spn k-n}{b_n}\right) \right| = 0, 
\]
	where $\mathfrak{g}$ is the density of the random variable $X_1$
    defined in \eqref{stable CLT}. In particular, for any fixed $k
    \geqslant 0$, we have as $n \to \infty$ with $n \equiv k\, 
    (\mathrm{mod} \ \spn)$,
\begin{equation}
   \label{eq:Sn--sim}
 \pr{S_n = n-k} \sim \frac{\spn \mathfrak{g}(0) }{b_n}\cdot 
\end{equation}
\end{lemma}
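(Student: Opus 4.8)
The plan is to read off both assertions from the classical local limit theorem for sums of independent identically distributed lattice random variables in the domain of attraction of a stable law, in the form of \cite[Chapter~4, Theorem~4.2.1]{ibragimov} (Gnedenko's local limit theorem). First I would verify its hypotheses for $(\xi_n)_{n\geqslant 1}$. By \cite[Theorem~XVII.5.2]{feller1971introduction}, assumption \ref{xi2} is precisely the statement that $\xi$ belongs to the domain of attraction of a stable law of index $\gamma$; since $\gamma\in(1,2]$ the mean $\ex{\xi}=1$ is finite, so the centering in \eqref{stable CLT} is $A_n=n\,\ex{\xi}=n$, the norming sequence is $b_n$, and the limit has the continuous (indeed $C^\infty$) density $\mathfrak{g}$. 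The relevant lattice span of $\xi$ is exactly $\spn$: since $\pr{\xi=0}>0$, $\xi$ is supported on $\spn\,\N=\{0,\spn,2\spn,\dots\}$ and $\spn$ is maximal with this property (equivalently, one may first pass to the span-one variables $\xi_k/\spn$, whose partial sums are $S_n/\spn$, with norming $b_n/\spn$ and centering $n/\spn$).

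With these checks done, the local limit theorem gives
\[
\lim_{n\to\infty}\ \sup_{N\in\Z}\ \left|\,\frac{b_n}{\spn}\,\pr{S_n=\spn N}-\mathfrak{g}\!\left(\frac{\spn N-n}{b_n}\right)\right|=0.
\]
Restricting the supremum to $N=k\geqslant 0$ can only make it smaller, which yields the first displayed claim of the lemma; moreover $\pr{S_n=m}=0$ unless $m\in\spn\,\Z$, so nothing is lost by indexing the supremum by $k\geqslant 0$.

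For the equivalence \eqref{eq:Sn--sim}, I would fix $k\geqslant 0$ and let $n\to\infty$ through integers with $n\equiv k\ (\mathrm{mod}\ \spn)$, so that $n-k=\spn m$ with $m=(n-k)/\spn\in\Z$. Evaluating the uniform estimate above at $N=m$ gives
\[
\frac{b_n}{\spn}\,\pr{S_n=n-k}=\mathfrak{g}\!\left(\frac{\spn m-n}{b_n}\right)+o(1)=\mathfrak{g}\!\left(\frac{-k}{b_n}\right)+o(1).
\]
Since $b_n$ is regularly varying of index $1/\gamma\in[1/2,1)$, as is classical for norming sequences in a stable domain of attraction, we have $b_n\to\infty$, hence $-k/b_n\to 0$; continuity of $\mathfrak{g}$ at $0$ then gives $\mathfrak{g}(-k/b_n)\to\mathfrak{g}(0)$, so that $\tfrac{b_n}{\spn}\pr{S_n=n-k}\to\mathfrak{g}(0)$, which is \eqref{eq:Sn--sim}. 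There is no genuine obstacle, the statement being essentially a quotation; the only points needing care are the identification of the centering sequence as $n$ (using criticality and $\gamma>1$), the matching of the span in the cited theorem with $\spn$ as defined here, and the observation that $b_n\to\infty$, which is what lets $-k/b_n\to0$ in the final step.
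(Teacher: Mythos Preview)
Your proposal is correct and matches the paper's approach exactly: the paper simply states that the lemma is a direct consequence of the local limit theorem in \cite[Chapter~4, Theorem~4.2.1]{ibragimov}, without spelling out the verification of hypotheses or the derivation of \eqref{eq:Sn--sim}. You have filled in precisely those details (identifying the centering as $n$ via criticality, matching the span, and using $b_n\to\infty$ together with continuity of $\mathfrak{g}$ at $0$), which is all that is needed.
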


Let $\rddtree$  be a BGW($\xi$) tree,  see \emph{e.g.}  Athreya and  Ney \cite{Athreya}.  By the
well-known Otter-Dwass formula, we have, for every $n \geqslant 1$,
\begin{equation}\label{otter dwass}
\pr{|\rddtree| = n} = \frac{1}{n}\pr{S_n = n-1}.
\end{equation}
In particular, we get $\pr{|\rddtree| = n} = 0$ if $n \centernot\equiv
1\ (\mathrm{mod} \ \spn)$ while $\pr{|\rddtree| = n} >0$ for all large
$n$ with $n \equiv 1 \ (\mathrm{mod} \ \spn)$ by Lemma \ref{local
  limit}. We denote by $\supp$ the support of the random variable
$|\rddtree|$ when $\rddtree$ is not reduced to the root, that is
\begin{equation}\label{support}
\supp = \left\lbrace n \geqslant 2 \colon \, \pr{|\rddtree| = n} >0 \right\rbrace.
\end{equation}
In particular, the previous discussion implies that $\supp \subset 1 +\spn \mathbb{N}$ and conversely, $1+\spn n \in \supp$ for all large $n$. In what follows, we only consider $n \in \supp$ and convergences should be understood along the set $\supp$.

We will also need the following sub-exponential tail bounds for the height of conditioned BGW trees, see \cite[Theorem 2]{kortchemski2017sub} and the discussion thereafter. For every $n \in \supp$, $\rddtree^n$ will denote a BGW($\xi$) tree conditioned to have $n$ vertices, that is $\rddtree^n$ is distributed as $\rddtree$ conditionally on $\{|\rddtree| = n\}$.
\begin{lemma}\label{subgaussian}
	Assume that $\xi$ satisfies \ref{xi1} and \ref{xi2}. For every
    $\alpha \in (0,\gamma/(\gamma - 1))$ and every $\beta \in
    (0,\gamma)$, there exist two finite constants $C_0, c_0 >0$ such that for every $y \geqslant 0$ and  $n \in \supp$, we have
	\begin{align}
	\pr{\frac{b_n}{n}\H(\rddtree^n) \leqslant y } &\leqslant C_0  \exp\left(-c_0y^{-\alpha}\right),\label{subgaussian 0}\\ 
	\pr{\frac{b_n}{n}\H(\rddtree^n) \geqslant y} &\leqslant C_0 \exp\left(-c_0y^\beta\right). \label{subgaussian infinity}
	\end{align}
\end{lemma}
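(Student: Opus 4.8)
The plan is to derive both inequalities from the uniform sub-exponential tail bounds of Kortchemski, see \cite[Theorem~2]{kortchemski2017sub} and the discussion that follows it, which hold exactly under assumptions \ref{xi1} and \ref{xi2}. Rephrased in our notation, those results provide constants $C, c > 0$ and an exponent $\theta \geqslant \gamma/(\gamma-1)$ such that, for every $n \in \supp$ and every $y > 0$,
\[
\pr{\tfrac{b_n}{n}\,\H(\rddtree^n) \geqslant y} \leqslant C\exp\!\left(-c\,y^{\gamma/(\gamma-1)}\right)
\quad\text{and}\quad
\pr{\tfrac{b_n}{n}\,\H(\rddtree^n) \leqslant y} \leqslant C\exp\!\left(-c\,y^{-\theta}\right) .
\]
(The case $y = 0$ is trivial since $\H(\rddtree^n) \geqslant 1$ for $n \geqslant 2$.) If the small-ball bound is not available in this exact form, I would obtain it from the companion tail estimate for the width $W(\rddtree^n)$ of $\rddtree^n$ (the largest number of its vertices at a fixed generation) proved in the same paper, together with the elementary pigeonhole inclusion $\{\H(\rddtree^n) \leqslant h\} \subseteq \{W(\rddtree^n) \geqslant n/(h+1)\}$, which is what makes an exponent $\theta \geqslant \gamma/(\gamma-1)$ admissible.

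The first step is to check that the normalizing sequence used in \cite{kortchemski2017sub} coincides with $(b_n, \, n \geqslant 1)$ up to a bounded multiplicative factor. Both are chosen so that $b_n^{-1}\bigl(\sum_{k=1}^{n}\xi_k - n\bigr)$ --- respectively its analogue built from Kortchemski's sequence --- converges in distribution to the stable law appearing in \eqref{stable CLT}; by the convergence of types theorem the ratio of any two such sequences tends to a finite positive limit, so the two sequences are comparable uniformly in $n$. The displayed bounds then transfer to $(b_n/n)\,\H(\rddtree^n)$ at the cost of adjusting $C$ and $c$. Only \ref{xi1} and \ref{xi2} enter here; the stronger \ref{xi3} is not needed.

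It then remains to weaken the exponents to the announced ranges, which is elementary. Since $\gamma \in (1,2]$ we have $\gamma \leqslant \gamma/(\gamma-1) \leqslant \theta$. Fix $\beta \in (0,\gamma)$: for $y \geqslant 1$ one has $y^{\gamma/(\gamma-1)} \geqslant y^{\beta}$, hence $\exp(-c\,y^{\gamma/(\gamma-1)}) \leqslant \exp(-c\,y^{\beta})$, whereas for $0 \leqslant y < 1$ the probability is at most $1 \leqslant e^{c}\exp(-c\,y^{\beta})$; so \eqref{subgaussian infinity} holds with $c_0 = c$ and $C_0 = C \vee e^{c}$. Fix $\alpha \in (0,\gamma/(\gamma-1))$: for $0 < y \leqslant 1$ one has $y^{-\theta} \geqslant y^{-\alpha}$, hence $\exp(-c\,y^{-\theta}) \leqslant \exp(-c\,y^{-\alpha})$, whereas for $y > 1$ the probability is at most $1 \leqslant e^{c}\exp(-c\,y^{-\alpha})$; so \eqref{subgaussian 0} holds with the same constants.

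The argument carries no genuine probabilistic difficulty: all the substance is contained in \cite{kortchemski2017sub}. The main obstacle, to the extent there is one, is the bookkeeping above --- identifying the two normalizations through the convergence of types theorem, extracting the small-ball estimate from the width estimate should it not be stated verbatim, and checking that the periodic case (we claim the bounds only for $n \in \supp \subseteq 1 + \spn\N$) is covered by, or reduces to, the aperiodic case treated in that reference.
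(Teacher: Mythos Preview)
Your approach matches the paper's: the paper does not prove this lemma at all, it simply cites \cite[Theorem~2]{kortchemski2017sub} and the discussion thereafter, exactly as you do. So there is nothing to compare at the level of strategy.

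That said, your rephrasing of Kortchemski's output is not quite accurate. Kortchemski does not give a single upper-tail exponent $\gamma/(\gamma-1)$ nor a single lower-tail exponent $\theta \geqslant \gamma/(\gamma-1)$ that one then weakens; rather, for each fixed $\alpha \in (0,\gamma/(\gamma-1))$ and each fixed $\beta \in (0,\gamma)$ he produces constants $C_0,c_0$ (depending on $\alpha$, resp.\ $\beta$) so that \eqref{subgaussian 0} and \eqref{subgaussian infinity} hold directly. The paper's own Appendix~\ref{Kortchemski} confirms this reading: reaching $\alpha = \gamma/(\gamma-1)$ in the lower tail requires the extra hypothesis \ref{xi3} and a separate argument, so under \ref{xi1}--\ref{xi2} alone one genuinely cannot claim $\theta \geqslant \gamma/(\gamma-1)$. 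Your ``weakening'' step is therefore unnecessary, and the width/pigeonhole detour you sketch would not yield the stronger exponent you hope for without additional assumptions. None of this affects the validity of the lemma as stated --- the citation already delivers exactly the ranges announced --- but you should not overstate what the reference provides.
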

\begin{remark}\label{remark subgaussian}
	\begin{enumerate}[label=(\roman*),leftmargin=*]
		\item[]
		\item If moreover $\xi$ satisfies \ref{xi3}, then we can take $\alpha = \gamma/(\gamma-1)$ in \eqref{subgaussian 0}, see Appendix \ref{Kortchemski}.
		\item If $\xi$ has finite variance $\sigmaxi^2 \in (0,\infty)$ (in
          which case \ref{xi3} is satisfied), we have $\gamma = 2$ and
          we can take $b_n =\sigmaxi\sqrt{n}$ in \eqref{stable CLT} with
          $\kappa=1/2$ (this is just the central limit theorem). Then both \eqref{subgaussian 0} and \eqref{subgaussian infinity} hold with $\alpha = \beta = 2$, see \cite[Theorem 1.1 and Theorem 1.2]{addario2013sub}.
	\end{enumerate}
\end{remark}
An immediate consequence of Lemma \ref{subgaussian} is the following estimate for the moments of $\H(\rddtree^n)$ which extends \cite[Corollary 1.3]{addario2013sub}.
\begin{lemma}
\label{height finite moments}
	Assume that $\xi$ satisfies \ref{xi1} and \ref{xi2}. For every $p \in \real$, we have
	\[\sup_{n \in \supp} \ex{\left(\frac{b_n}{n}\H(\rddtree^n)\right)^p\,} < \infty.\]
\end{lemma}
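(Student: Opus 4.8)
The plan is to deduce the claim directly from the sub-exponential tail estimates of Lemma~\ref{subgaussian}, treating nonnegative and negative powers separately by means of the layer-cake formula. Write $Y_n = \frac{b_n}{n}\H(\rddtree^n)$ for $n\in\supp$; note that since $n\geq 2$ we have $Y_n>0$ a.s., so all moments are well defined in $[0,\infty]$. The case $p=0$ is trivial as $\ex{Y_n^0}=1$, so we may assume $p\neq 0$.

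First I would treat $p>0$. Fix once and for all some $\beta\in(0,\gamma)$, for instance $\beta=1$ (recall $\gamma>1$), and let $C_0,c_0\in(0,\infty)$ be the constants provided by \eqref{subgaussian infinity}, which do not depend on $n$. By the layer-cake formula and \eqref{subgaussian infinity},
\[
\ex{Y_n^p}=\int_0^\infty p\,y^{p-1}\,\pr{Y_n\geq y}\,\dd y
\leq \int_0^1 p\,y^{p-1}\,\dd y + \int_1^\infty p\,y^{p-1}\,C_0\,\expp{-c_0 y}\,\dd y = 1 + C,
\]
where $C<\infty$ does not depend on $n$. Hence $\sup_{n\in\supp}\ex{Y_n^p}<\infty$.

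Next I would treat $p<0$, setting $q=-p>0$. Here I would fix $\alpha\in(0,\gamma/(\gamma-1))$, say $\alpha=1$ (admissible since $\gamma/(\gamma-1)\geq 2$ for $\gamma\in(1,2]$), and let $C_0,c_0$ now be the constants of \eqref{subgaussian 0}. Applying the layer-cake formula to $Y_n^{-1}$ and performing the change of variable $u=1/y$ yields the representation $\ex{Y_n^{-q}}=\int_0^\infty q\,u^{-q-1}\,\pr{Y_n\leq u}\,\dd u$. Splitting the integral at $u=1$, the contribution of $(1,\infty)$ is bounded by $\int_1^\infty q\,u^{-q-1}\,\dd u=1$, while on $(0,1)$ one bounds $\pr{Y_n\leq u}\leq C_0\,\expp{-c_0 u^{-1}}$ by \eqref{subgaussian 0} and, after the substitution $v=1/u$, gets $\int_1^\infty q\,v^{q-1}\,C_0\,\expp{-c_0 v}\,\dd v<\infty$; all bounds are uniform in $n$. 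Thus $\sup_{n\in\supp}\ex{Y_n^{-q}}<\infty$, which completes the argument.

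There is no genuine obstacle here: the statement is a routine corollary of Lemma~\ref{subgaussian}. The only points that require a little care are the choice of admissible exponents $\alpha,\beta$ (any fixed values in the allowed ranges work, the ranges being nonempty precisely because $\gamma>1$) and the elementary change of variables that converts negative moments of $Y_n$ into an integral against the lower-tail bound \eqref{subgaussian 0}.
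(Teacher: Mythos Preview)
Your proof is correct and follows essentially the same approach as the paper: both apply the layer-cake formula together with the tail bounds of Lemma~\ref{subgaussian}, handling positive and negative powers separately. The only cosmetic differences are that the paper applies the exponential tail bound over the whole range $[0,\infty)$ rather than splitting at $1$, and leaves $\alpha,\beta$ generic instead of fixing $\alpha=\beta=1$.
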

\begin{proof}
	Let $p >0$. Fix $\beta \in (0,\gamma)$. By Lemma \ref{subgaussian}, we have for every $n \in \supp$
	\[\ex{ \left(\frac{b_n}{n}\H(\rddtree^n)\right)^p \,} = p\int_0^\infty  y^{p-1} \pr{ \frac{b_n}{n}\H(\rddtree^n) > y} \, \dd y \leqslant C_0 p\int_0^\infty  y^{p-1} \e^{-c_0 y^\beta} \, \dd y < \infty.\]
	Similarly, fix $\alpha \in (0,\gamma/(\gamma-1))$ and apply Lemma \ref{subgaussian} to get
\[
\ex{\left(\frac{b_n}{n}\H(\rddtree^n)\right)^{-p}\,} =
    p\int_0^\infty y^{p-1} \pr{\frac{b_n}{n}\H(\rddtree^n)< \frac{1}{y}}
    \, \dd y \leqslant C_0 p\int_0^\infty y^{p-1} e^{-c_0 y^\alpha} \,
    \dd y < \infty.
\]
This  proves the result.
\end{proof}

We end this section with the following lemma used in the proof of Remark
\ref{rem:main}-\ref{local regime remark}. 
\begin{lemma}\label{local regime lemma}
	Assume that $\xi$ has finite variance $\sigmaxi^2 \in (0,\infty)$. Let
    $\alpha', \beta \in \real$ such that $2\alpha' + \beta < 0$ and set
    $f_{\alpha',\beta}(\mathbf{t})= |\mathbf{t}|^{\alpha'}\,
    \H(\mathbf{t})^\beta\ind_{\{|\mathbf{t}|>1\}}$.  Then we have 
\[
\ex{f_{\alpha',\beta}(\rddtree)} < \infty, 
\quad
	\lim_{n\to \infty} \ex{f_{\alpha',\beta}(\rddtree^n)^2} = 0 
\quad    \text{and} \quad 
\sum_{n \in      \supp}\frac{\sqrt{\ex{f_{\alpha',\beta}(\rddtree^n)^2}}}{n} <
    \infty. 
\]
\end{lemma}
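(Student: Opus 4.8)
The plan is to reduce the three claims to two facts: the size $|\rddtree^n|=n$ is deterministic, and, for every $p\in\real$, there is a finite constant $C_p$ with $\ex{\H(\rddtree^n)^p}\leq C_p\, n^{p/2}$ uniformly in $n\in\supp$. The second fact is just Lemma~\ref{height finite moments}: finite variance forces $\gamma=2$ and one may take $b_n=\sigmaxi\sqrt n$, so the bound $\sup_{n\in\supp}\ex{\left(\frac{b_n}{n}\H(\rddtree^n)\right)^p}<\infty$ rewrites as $\ex{\H(\rddtree^n)^p}\leq C_p\, n^{p/2}$. Observe also that $f_{\alpha',\beta}$ is well defined even for $\beta<0$, since $\H(\mathbf{t})^\beta$ is evaluated only on the event $\{|\mathbf{t}|>1\}$, where $\H(\mathbf{t})\geq 1$; and because $n\in\supp$ forces $n\geq 2$, one has $f_{\alpha',\beta}(\rddtree^n)=n^{\alpha'}\,\H(\rddtree^n)^\beta$ almost surely.

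The second and third claims are then immediate. Squaring, $\ex{f_{\alpha',\beta}(\rddtree^n)^2}=n^{2\alpha'}\,\ex{\H(\rddtree^n)^{2\beta}}\leq C_{2\beta}\,n^{2\alpha'+\beta}$, which tends to $0$ because $2\alpha'+\beta<0$. Taking square roots, $\sqrt{\ex{f_{\alpha',\beta}(\rddtree^n)^2}}/n\leq \sqrt{C_{2\beta}}\;n^{(2\alpha'+\beta)/2-1}$, and since $(2\alpha'+\beta)/2-1<-1$ the corresponding series over $n\in\supp$ converges.

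For the first claim I would condition on $|\rddtree|$. As $f_{\alpha',\beta}$ is nonnegative and vanishes on the tree reduced to the root,
\[
\ex{f_{\alpha',\beta}(\rddtree)}=\sum_{n\in\supp}\pr{|\rddtree|=n}\,\ex{f_{\alpha',\beta}(\rddtree^n)}=\sum_{n\in\supp}\pr{|\rddtree|=n}\,n^{\alpha'}\,\ex{\H(\rddtree^n)^\beta}.
\]
Using the Otter--Dwass formula~\eqref{otter dwass} in the crude form $\pr{|\rddtree|=n}\leq 1/n$ together with $\ex{\H(\rddtree^n)^\beta}\leq C_\beta\, n^{\beta/2}$, I get $\ex{f_{\alpha',\beta}(\rddtree)}\leq C_\beta\sum_{n\in\supp}n^{\alpha'+\beta/2-1}<\infty$, the exponent $\alpha'+\beta/2-1=(2\alpha'+\beta)/2-1$ being $<-1$. (One could instead invoke the sharper estimate $\pr{|\rddtree|=n}\leq C/(n\,b_n)\asymp n^{-3/2}$ coming from Lemma~\ref{local limit}, but it is not needed.)

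I do not expect any genuine difficulty here: once one records that $|\rddtree^n|$ is deterministic, the statement is a direct consequence of the polynomial height-moment bounds of Lemma~\ref{height finite moments} and of the elementary size estimate from Otter--Dwass. The only points deserving attention are the well-definedness of $f_{\alpha',\beta}$ when $\beta<0$ and the bookkeeping that the exponents $2\alpha'+\beta$ and $(2\alpha'+\beta)/2-1$ are all negative under the single hypothesis $2\alpha'+\beta<0$.
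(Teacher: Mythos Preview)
Your proof is correct and follows essentially the same approach as the paper: condition on $|\rddtree|$, bound $\ex{\H(\rddtree^n)^p}$ via Lemma~\ref{height finite moments}, and check the resulting exponents. The only difference is that for the first claim you use the crude bound $\pr{|\rddtree|=n}\leq 1/n$ from Otter--Dwass, whereas the paper invokes the local limit theorem to get $\pr{|\rddtree|=n}\asymp n^{-3/2}$; as you note, the cruder bound already suffices under the hypothesis $2\alpha'+\beta<0$, so your version is a slight simplification.
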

\begin{proof}
	We have
	\begin{equation*}
	\ex{f_{\alpha',\beta}(\rddtree)} = \sum_{n \in \supp} n^{\alpha'} \ex{\H(\rddtree^n)^\beta} \pr{|\rddtree| = n} .
	\end{equation*}
Using \eqref{otter dwass} and \eqref{eq:Sn--sim}, 
\eqref{g(0),2} with $b_n = \sigmaxi \sqrt{n}$, we have as
$n\to \infty$ that
	\begin{equation*}
	\pr{|\rddtree|= n}  \sim \frac{\spn}{\sqrt{2\pi\sigmaxi^2}} n^{-3/2}.
	\end{equation*}
	Since $\ex{\H(\rddtree^n)^\beta} = O(n^{\beta/2})$ as $n \to \infty$
    by Lemma \ref{height finite moments}, we get that 
\[
	\ex{f_{\alpha',\beta}(\rddtree)} \leqslant C \sum_{n \in \supp}
    n^{-3/2+\alpha' + \beta/2} < \infty. 
\]
	 Applying Lemma \ref{height finite moments} again gives 
	$\ex{f_{\alpha',\beta}(\rddtree^n)^2} = n^{2\alpha'}
    \ex{\H(\rddtree^n)^{2\beta}}\ind_{\{n>1\}}\leqslant M
    n^{2\alpha'+\beta}$ for some finite constant $M>0$, and the last
    term converges to $0$ as $n \to \infty$. Finally, we have 
\[
	\sum_{n\in\supp}
    \frac{\sqrt{\ex{f_{\alpha',\beta}(\rddtree^n)^2}}}{n}
 \leqslant \sqrt{M} \sum_{n\in\supp} n^{-1+\alpha' + \beta/2} < \infty.
\]
\end{proof}

\subsection{Stable Lévy trees}\label{levy tree}
Let us briefly recall the definition of the height process and the
associated Lévy tree, see \emph{e.g.}
\cite{le1998branching, duquesne2002random, duquesne2003limit, kortchemski2013simple}. Recall that $(X_t, \, t\geqslant 0 )$ is a strictly stable Lévy process with Laplace exponent $\psi(\lambda) = \kappa \lambda^\gamma$ where $\gamma \in (1,2]$ and $\kappa >0$. For $\gamma \in (1,2)$, denote by $\pi$ the associated Lévy measure
\begin{equation}
\label{eq:def-pi}
\pi(\dd x) = \frac{\kappa \gamma (\gamma -1)}{\Gammaeuler(2-\gamma)}
\frac{\dd x}{x^{ 1+\gamma}}\cdot
\end{equation}
Le Gall and Le Jan \cite{le1998branching} proved that there exists a continuous process $(H(t), \, t \geqslant 0)$ called the $\psi$-height process such that for every $t \geqslant 0$, we have the following convergence in probability
\begin{equation*}
H(t) = \lim_{\epsilon \to 0} \frac{1}{\epsilon} \int_0^t \ind_{\{X_s < I_t^s + \epsilon\}}\, \dd s,
\end{equation*}
where $I_t^s = \inf_{[s,t]} X$. In the Brownian case, $H$ is a (scaled) reflected Brownian motion. Let $\n$ be the excursion measure of $H$ above $0$ and set
\begin{equation}\sigma = \inf \left\{s > 0 \colon \, H(s) = 0\right\} \quad \text{and} \quad \Hexc = \sup_{s \geqslant 0} H(s)
\end{equation} 
for the duration of the excursion and its maximum. We choose to normalize the excursion measure $\n$ such that the distribution of $\sigma$ under $\n$ is $\pi_*$ given by
\begin{equation}
\label{density duration excursion}
\pi_*(\dd x) =\n\left[\sigma \in \dd x\right]= \mathfrak{g}(0) \frac{\dd
  x}{x^{1+1/\gamma}},  
\end{equation}
with  $\mathfrak{g}(0)$  given in \eqref{g(0)}. Furthermore, by
\cite[Eq. (14)]{duquesne2005probabilistic}, the distribution of $\H$
under $\n$ is given by  
\begin{equation}\label{density height excursion}
 \n\left[\H >x\right] = \left(\kappa (\gamma-1)x\right)^{-1/(\gamma -1)}.
\end{equation}

We have 
 the following equality in ``distribution''   for the height process,
 see \emph{e.g.} \cite[Eq. (40)]{duquesne2017decomposition}, 
\begin{equation*}
\left(H(xt), \, t \geqslant 0\right) \quad \text{under} \ x^{1/\gamma}\n \quad \lawd  \quad x^{1-1/\gamma} H \quad \text{under}\ \n.
\end{equation*}
Using this, one can make sense of the conditional probability measure $\excm{x}[\bullet] = \n[\bullet | \sigma = x]$ such that $\excm{x}$-a.s., $\sigma = x$ and 
\[\n[\bullet] = \int_0^\infty \excm{x}[\bullet]\, \pi_*(\dd x) .\]
Informally, $\excm{x}$ can be seen as the distribution of the excursion of $H$ with duration $x$. Moreover, the height process $H$ has the following scaling property
\begin{equation}
\left(H(s), \, s \in [0,x]\right) \ \text{under} \ \excm{x} \quad \lawd \quad \left(x^{1-1/\gamma} H(s/x),\, s\in [0,1]\right) \ \text{under} \ \excm{1}. \label{scaling H}
\end{equation}
See also Lemma \ref{lemma scaling} for the scaling property of $H$ and
related processes. 

We call the stable Lévy tree with branching mechanism $\psi(\lambda) =
\kappa \lambda^\gamma$, the compact real tree $\rdtree$ coded by the
$\psi$-height process $H$ under $\excm{1}$. See Remark \ref{rem:coding}
 for the coding of real trees by excursion paths.
 
\begin{remark}Notice that $\sigma=\m(\rdtree_H)$ and
  $\Hexc=\H(\rdtree_H)$ are the mass and the height of the tree
  $\rdtree_H$ coded by the height process $H$ under $\n$. Furthermore,
  for $s \in [0,\sigma]$, the notation $H(s)$ is consistent with the one
   introduced in Section \ref{real trees} since $H(s)$ is the height of
   $s$ in the tree coded by $H$ under $\n$. 
\end{remark}

\subsection{Convergence of continuous functionals}
For every $n \in \supp$, we let $\rddtree^n$ be a BGW($\xi$) tree conditioned to have $n$ vertices, and let $\rdtree^n = (b_n/n)\rddtree^n$ be the associated real tree rescaled so that all edges have length $b_n/n$. Duquesne \cite{duquesne2003limit} (see also \cite{kortchemski2013simple}) showed that the convergence in distribution 
\begin{equation}\label{BGW trees convergence}
\rdtree^n \cvlawd \rdtree
\end{equation}
 holds in the space $\T$ where $\rdtree$ is the stable Lévy tree with
 branching mechanism $\psi(\lambda) = \kappa \lambda^\gamma$. 

The following result is an immediate consequence of Proposition
\ref{continuity of the measure}. Recall from \eqref{definition pi} and
\eqref{psi joint} the definitions of the measures $\Psi_{\tree}$ and
$\psij_\tree$. 
\begin{corollary}\label{convergence for continuous functions}
	 Assume that $\xi$ satisfies \ref{xi1} and \ref{xi2}. Let
     $\rddtree^n$ be a BGW($\xi$) tree conditioned to have $n$ vertices
     and let $\rdtree^n = (b_n/n)\rddtree^n$ be the associated real tree
     rescaled so that all edges have length $b_n/n$ (where $b_n$ is the
     normalizing sequence in \eqref{stable CLT}). Then we have the
     convergence in distribution $\Psi_{\rdtree^n} \cvlaw
     \Psi_{\rdtree}$ in $\mathcal{M}(\T \times \R_+)$, where $\rdtree$
     is the stable Lévy tree with branching mechanism $\psi(\lambda) =
     \kappa \lambda^\gamma$. In particular, we have $\psij_{\rdtree^n}
     \cvlaw \psij_{\rdtree^n}$ in $\M(\R_+^2)$. 
\end{corollary}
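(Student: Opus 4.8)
The plan is to obtain both convergences as direct applications of the continuous mapping theorem, the real content being Proposition \ref{continuity of the measure}. First I would recall Duquesne's theorem \cite{duquesne2003limit} (see also \cite{kortchemski2013simple}), stated in \eqref{BGW trees convergence}: under \ref{xi1} and \ref{xi2}, the rescaled trees $\rdtree^n = (b_n/n)\rddtree^n$ converge in distribution to $\rdtree$ in the Polish space $(\T,\ghp)$. Next, Proposition \ref{continuity of the measure} asserts that $\Psi\colon \T\to \M(\T\times\R_+)$ is well defined and continuous, the target carrying the (Polish) topology of weak convergence. Since $\rdtree^n$ and $\rdtree$ are bona fide random elements of $\T$, the continuous mapping theorem then yields $\Psi_{\rdtree^n}\cvlaw \Psi_{\rdtree}$ in $\M(\T\times\R_+)$; observe that $\Psi_\tree$ is a finite measure for every compact real tree $\tree$ (Lemma \ref{psi well defined}), so that no additional integrability or boundedness assumption on $\rddtree^n$ is needed.

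For the assertion on $\psij$, I would introduce the map $\Theta\colon \T\times\R_+\to \R_+^2$ given by $\Theta(\tree',r)=(\m(\tree'),\H(\tree'))$, which is continuous because the mass $\m$ and the height $\H$ are continuous on $\T$ by \eqref{ghp inequality}. Comparing the defining formulas \eqref{definition pi} and \eqref{psi joint}, one reads off that $\psij_\tree = \Psi_\tree\circ\Theta^{-1}$ is the pushforward of $\Psi_\tree$ by $\Theta$, for every compact real tree $\tree$. The pushforward operation $\nu\mapsto \nu\circ\Theta^{-1}$ from $\M(\T\times\R_+)$ to $\M(\R_+^2)$ is continuous for the weak topologies (test against $g\circ\Theta$ for $g\in\cc_b(\R_+^2)$), so one further application of the continuous mapping theorem gives $\psij_{\rdtree^n}\cvlaw \psij_{\rdtree}$.

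I expect no genuine obstacle: everything difficult has been absorbed into Proposition \ref{continuity of the measure}, whose proof takes up most of Section \ref{Sect measure}. The only items needing a line of care are confirming that the continuous mapping theorem applies (i.e.\ measurability/continuity of $\Psi$, already granted, and that $\rdtree^n,\rdtree$ are random variables with values in $\T$), and checking the elementary identity $\psij_\tree = \Psi_\tree\circ\Theta^{-1}$ together with the continuity of the pushforward map.
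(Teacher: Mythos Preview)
Your proposal is correct and matches the paper's approach: the paper simply states that the corollary is ``an immediate consequence of Proposition \ref{continuity of the measure}'' without writing out a proof, and your argument via the continuous mapping theorem applied to Duquesne's convergence \eqref{BGW trees convergence} together with the continuity of $\Psi$ is exactly what underlies that remark. Your treatment of the $\psij$ part via the continuous pushforward $\Theta$ is also the natural (and correct) way to deduce the second claim.
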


The convergence  in distribution obtained in  Corollary \ref{convergence
  for continuous  functions} is unsatisfactory  to study the asymptotics
of additive functionals of large BGW trees as it involves  the real
tree $\rdtree^n$  instead of  the (discrete)  BGW tree  $\rddtree^n$. To
remedy  this, we  shall  introduce  a discrete  version  of the  measure
$\Psi_\tree$  when $\tree$  is  associated with  a  discrete tree.   Let
$\mathbf{t}$  be a  discrete tree  and $a  >0$.  Recall that $a
\mathbf{t}$ denotes the real tree associated to $\mathbf{t}$ where the branches
have length $a$, and that for $v\in \mathbf{t}$, $av$ denotes the
corresponding vertex in $a\mathbf{t}$, see Section \ref{real
  trees} for the definitions.   We define  two
nonnegative     measures      $\mathcal{A}^\circ_{\mathbf{t},a}$     and
$\mathcal{A}_{\mathbf{t},a}$     on      $\T\times \R_+$     by,      for     every
$f\in \cb_+(\T\times \R_+)$,
\begin{equation}
\label{A general}
\boxed{\mathcal{A}^\circ_{\mathbf{t},a}(f) 
= \frac{a}{|\mathbf{t}|} \sum_{w \in \mathbf{t}^\circ}|\mathbf{t}_w|
  f\left(a\mathbf{t}_{w}, aH(w)\right)}
\quad\text{and}\quad 
\boxed{\mathcal{A}_{\mathbf{t},a}(f) 
= \frac{a}{|\mathbf{t}|} \sum_{w \in \mathbf{t}}|\mathbf{t}_w| f\left(
  a\mathbf{t}_{w},aH(w)\right)}, 
\end{equation}
 where $a\mathbf{t}_w$
is the subtree of $a\mathbf{t}$ above $aw$. Note that the sum is
over all internal vertices of $\mathbf{t}$ for
$\mathcal{A}^\circ_{\mathbf{t},a}$, while for 
$\mathcal{A}_{\mathbf{t},a}$
the sum  extends over all vertices including the
leaves. In other words, the measure $\mathcal{A}_{\mathbf{t},a}^\circ$
ignores the subtrees rooted at a leaf of $\mathbf{t}$ (which are trivial
trees consisting only of a root equipped with a scaled Dirac
measure). Let us take a moment to explain why we introduce the measure
$\mathcal{A}_{\mathbf{t},a}^\circ$. While $\mathcal{A}_{\mathbf{t},a}$
seems more natural, the measure $\mathcal{A}_{\mathbf{t},a}^\circ$ has
the advantage of putting no mass on  the set 
\[
\TT\times\R_+ = \left\{\tree \in \T\colon \, \m(\tree)
  = 0 \text{ or } \H(\tree) = 0\right\} \times \R_+.
\]  
This will be  useful as we are  interested in sums of  the form \eqref{A
  general}  where   the  function  $f$   may  blow  up   on  $\TT\times
\R_+$.
 We now give estimates for the distances between
the three measures
$\mathcal{A}_{\mathbf{t},a}^\circ$, $\mathcal{A}_{\mathbf{t},a}$ and
$\Psi_{a\mathbf{t}}$,  on $\T\times \R_+$, which are associated with the  discrete tree $\mathbf{t}$ and  $a >0$.

\begin{lemma}\label{distance between measures}
	Let $\mathbf{t}$ be a discrete tree and let $a > 0$.  We have
	\begin{align}
	d_{\mathrm{BL}}\left(\Psi_{a\mathbf{t}},\mathcal{A}_{\mathbf{t},a}\right) 
&\leqslant a\left( \frac{3}{4}\mathcal{A}_{\mathbf{t},a}(1) + 1
  \right),
\label{distance psi a}\\ 
d_{\mathrm{TV}}(\mathcal{A}_{\mathbf{t},a},
      \mathcal{A}^\circ_{\mathbf{t},a}) 
&\leqslant \frac{1}{2}a. 
\label{distance a a}
	\end{align}
\end{lemma}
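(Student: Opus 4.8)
The plan is to reduce the statement to an explicit edge‑by‑edge rewriting of $\Psi_{a\mathbf{t}}$. Since the mass measure of $a\mathbf{t}$ charges only the vertices $\{av:v\in\mathbf{t}\}$, each with mass $1/|\mathbf{t}|$, and the height of $av$ in $a\mathbf{t}$ equals $aH(v)$, \eqref{definition pi} reads $\Psi_{a\mathbf{t}}(f)=\frac1{|\mathbf{t}|}\sum_{v\in\mathbf{t}}\int_0^{aH(v)}f\big((a\mathbf{t})_{r\vir av},r\big)\,\dd r$. First I would split each integral into the intervals $\big(a(k-1),ak\big)$, $1\le k\le H(v)$: writing $u:=\mathrm{pr}^{\,H(v)-k}(v)$, a vertex of $\mathbf{t}$ at height $k$, for $r$ in such an interval the ancestor of $av$ at height $r$ lies in the interior of the edge of $a\mathbf{t}$ joining $a\,\mathrm{pr}(u)$ to $au$, so $(a\mathbf{t})_{r\vir av}=(a\mathbf{t})_{r\vir au}$ and depends only on $u$ and $r$, not on the descendant $v$. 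Reindexing the double sum by the upper endpoint $u$ of each edge and using $\#\{v\in\mathbf{t}:u\preccurlyeq v\}=|\mathbf{t}_u|$ gives the key identity
\[
\Psi_{a\mathbf{t}}(f)=\frac1{|\mathbf{t}|}\sum_{u\in\mathbf{t}\setminus\{\emptyset\}}|\mathbf{t}_u|\int_{a(H(u)-1)}^{aH(u)}f\big((a\mathbf{t})_{r\vir au},r\big)\,\dd r,
\]
where for $r\in[a(H(u)-1),aH(u)]$ the tree $(a\mathbf{t})_{r\vir au}$ is rooted at the point of $\llbracket a\,\mathrm{pr}(u),au\rrbracket$ at height $r$, and $(a\mathbf{t})_{aH(u)\vir au}=a\mathbf{t}_u$.

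For \eqref{distance psi a} I would then compare this identity, term by term over $u\ne\emptyset$, with $\mathcal{A}_{\mathbf{t},a}(f)$ from \eqref{A general}, after writing $a\,f(a\mathbf{t}_u,aH(u))=\int_{a(H(u)-1)}^{aH(u)}f(a\mathbf{t}_u,aH(u))\,\dd r$; the only unmatched contribution is the $w=\emptyset$ term $a\,f(a\mathbf{t},0)$ of $\mathcal{A}_{\mathbf{t},a}$. The input needed is a stub estimate: for $r\in[a(H(u)-1),aH(u)]$ the tree $(a\mathbf{t})_{r\vir au}$ is $a\mathbf{t}_u$ with a pendant segment of length $aH(u)-r\le a$ glued at its root and re‑rooted at the free end, and since no vertex lies in the interior of that segment the two trees carry the same mass measure; feeding into \eqref{ghp} the correspondence sending the root of $a\mathbf{t}_u$ to every point of the segment (identity elsewhere) together with $m(\dd x,\dd x')=\mu_{a\mathbf{t}_u}(\dd x)\,\delta_x(\dd x')$, one gets $\ghp\big((a\mathbf{t})_{r\vir au},a\mathbf{t}_u\big)\le\frac12(aH(u)-r)$. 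Hence, for $f\in\cc_b(\T\times\R_+)$ with $\norm{f}_{\mathrm{BL}}\le1$, bounding the metric of $\T\times\R_+$ by $\ghp+|\cdot|$,
\[
\big|\Psi_{a\mathbf{t}}(f)-\mathcal{A}_{\mathbf{t},a}(f)\big|\le a\norm{f}_\infty+\frac1{|\mathbf{t}|}\sum_{u\ne\emptyset}|\mathbf{t}_u|\int_{a(H(u)-1)}^{aH(u)}\norm{f}_{\mathrm{L}}\,\tfrac32(aH(u)-r)\,\dd r\le a+\frac{3a^2}{4}\cdot\frac1{|\mathbf{t}|}\sum_{u\ne\emptyset}|\mathbf{t}_u|.
\]
Since $\mathcal{A}_{\mathbf{t},a}(1)=\frac a{|\mathbf{t}|}\sum_{w\in\mathbf{t}}|\mathbf{t}_w|=a+\frac a{|\mathbf{t}|}\sum_{u\ne\emptyset}|\mathbf{t}_u|$ (as $\mathbf{t}_\emptyset=\mathbf{t}$), we have $\frac1{|\mathbf{t}|}\sum_{u\ne\emptyset}|\mathbf{t}_u|=\frac1a\mathcal{A}_{\mathbf{t},a}(1)-1$, and substituting gives $\big|\Psi_{a\mathbf{t}}(f)-\mathcal{A}_{\mathbf{t},a}(f)\big|\le a+\frac{3a}{4}\mathcal{A}_{\mathbf{t},a}(1)-\frac{3a^2}{4}\le a\big(\frac34\mathcal{A}_{\mathbf{t},a}(1)+1\big)$; taking the supremum over such $f$ yields \eqref{distance psi a}.

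The bound \eqref{distance a a} is then immediate: for $f\ge0$ one has $\mathcal{A}_{\mathbf{t},a}(f)-\mathcal{A}^\circ_{\mathbf{t},a}(f)=\frac a{|\mathbf{t}|}\sum_{w\in\operatorname{Lf}(\mathbf{t})}|\mathbf{t}_w|\,f(a\mathbf{t}_w,aH(w))\ge0$, so $\mathcal{A}^\circ_{\mathbf{t},a}\le\mathcal{A}_{\mathbf{t},a}$ as measures; as $|\mathbf{t}_w|=1$ for a leaf $w$ and $\#\operatorname{Lf}(\mathbf{t})\le|\mathbf{t}|$, the difference has total mass $\frac a{|\mathbf{t}|}\#\operatorname{Lf}(\mathbf{t})\le a$, hence $d_{\mathrm{TV}}(\mathcal{A}_{\mathbf{t},a},\mathcal{A}^\circ_{\mathbf{t},a})=\frac12(\mathcal{A}_{\mathbf{t},a}-\mathcal{A}^\circ_{\mathbf{t},a})(\T\times\R_+)\le\frac a2$. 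The one delicate point in the whole argument is the edge‑by‑edge identity for $\Psi_{a\mathbf{t}}$ — in particular verifying that $(a\mathbf{t})_{r\vir av}$ depends only on the upper vertex of the relevant edge and on $r$; everything after that is a short Gromov–Hausdorff–Prokhorov computation and bookkeeping, and the only slight subtlety there (the precise product metric on $\T\times\R_+$) is harmless since any standard choice is dominated by $\ghp+|\cdot|$.
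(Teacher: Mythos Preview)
Your proof is correct and follows essentially the same approach as the paper's. The only difference is organizational: you reindex the integral representation of $\Psi_{a\mathbf{t}}$ by the upper edge-vertex $u\ne\emptyset$ \emph{first} and then compare to $\mathcal{A}_{\mathbf{t},a}$, whereas the paper keeps the double sum $\sum_{v}\sum_{k=1}^{H(v)}$, identifies the endpoint values $\sum_v\sum_k f(\tree_{ak\vir av},ak)=\sum_{w\ne\emptyset}|\mathbf{t}_w|f(\tree_{aw},aH(w))$, and bounds the residual integral-minus-endpoint term; both routes use the same stub estimate $\ghp\le\tfrac12 a(k-r)$ and the same combinatorial identity, and both arrive at the same bound (you make the $-\tfrac{3a^2}{4}$ correction explicit before discarding it, the paper absorbs it directly into $\mathcal{A}_{\mathbf{t},a}(1)$).
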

\begin{proof}
	Let $f \in \cc_b( \T\times \R_+ )$ be Lipschitz.  Recall that
    $\tree = a\mathbf{t}$ is the real tree associated
    with $\mathbf{t}$, rescaled so that all edges have length $a$ and
    equipped with the uniform probability measure on the set of vertices
    whose height is an integer multiple of $a$. Recall also that for  $v
    \in\mathbf{t}$, $av$ denotes the corresponding vertex in $\tree =
    a\mathbf{t}$. In particular, $H(av) = aH(v)$, where $H(av)$ is the
    height of $av$ in the real tree $ a\mathbf{t}$ and $H(v)$ is the height of
    $v$ in the discrete tree $\mathbf{t}$. Thus, we have
	\begin{align*}
\Psi_{\tree}(f)
	= \frac{1}{|\mathbf{t}|} \sum_{v \in \mathbf{t}}
      \int_0^{H(av)}f(\tree_{r\vir av},r) \, \dd r  
&= \frac{1}{|\mathbf{t}|} \sum_{v \in \mathbf{t}}
  \int_0^{aH(v)}f(\tree_{r\vir av},r) \, \dd r \\ 
&= \frac{a}{|\mathbf{t}|} \sum_{v \in \mathbf{t}} \sum_{k=1}^{H(v)}
  \int_{k-1}^k f\left( \tree_{ar\vir av},ar\right) \, \dd r. 
	\end{align*}
	On the other hand, note that for every $1 \leqslant k \leqslant
    H(v)$, we have $\tree_{ak\vir av} = \tree_{aw}$ where $w \in \mathbf{t}$
    is the unique ancestor of $v$ with height $k$. Thus, we have
\[
\sum_{v \in \mathbf{t}} \sum_{k=1}^{H(v)} f\left( \tree_{ak\vir av},ak
	\right) 
=\sum_{v \in \mathbf{t}} 
\sum_{{w \preccurlyeq
  v}\atop{w\ne\emptyset}} f\left( \tree_{aw},aH(w)\right)  
=\sum_{w \neq \emptyset}|\mathbf{t}_w| f\left( \tree_{aw}, aH(w)\right) 
= \frac{|\mathbf{t}|}{a}\mathcal{A}_{\mathbf{t},a}(f) - |\mathbf{t}|f\left( \tree,0\right).
\]
	Therefore, we deduce that
\begin{align}\label{int dist}
	\left|\Psi_{\tree}(f) - \mathcal{A}_{\mathbf{t},a}(f) \right| 
&\leqslant \frac{a}{|\mathbf{t}|} \sum_{v \in \mathbf{t}}
  \sum_{k=1}^{H(v)} \int_{k-1}^{k} \left| f \left( \tree_{ar\vir av},ar
  \right) - f\left( \tree_{ak\vir av},ak\right) \right|\, \dd r +
  a\norm{f}_\infty\notag\\ 
	&\leqslant\frac{a}{|\mathbf{t}|} \sum_{v \in \mathbf{t}}
      \sum_{k=1}^{H(v)} \int_{k-1}^{k}
      \norm{f}_{\mathrm{L}}\big(\ghp\left( \tree_{ar\vir av} ,
      \tree_{ak\vir av}\right)+a(k-r)\big)\, \dd r + a\norm{f}_\infty. 
\end{align}
	Since for $k-1<r \leqslant k$, the tree $\tree_{ar\vir av}$ is
    obtained by grafting $\tree_{ak\vir av}$ on top of a branch of height
    $a(k-r)$ and no mass, it is straightforward to check that
    $\ghp\left( \tree_{ar\vir av} , \tree_{ak\vir av}\right) \leqslant
    a(k-r)/2$. It follows that 
\[
	\left|\Psi_{\tree}(f) - \mathcal{A}_{\mathbf{t},a}(f) \right| 
\leqslant \frac{a}{|\mathbf{t}|} \sum_{v \in \mathbf{t}}
\sum_{k=1}^{H(v)} \frac{3a}{4} \norm{f}_{\mathrm{L}}+ a\norm{f}_\infty 
\leqslant\frac{3a}{4}\norm{f}_{\mathrm{L}} \mathcal{A}_{\mathbf{t},a}(1)+ a \norm{f}_\infty.
\]
By definition of the distance $d_{\mathrm{BL}}$, we deduce that
	\[d_{\mathrm{BL}}\left(\Psi_{\tree},\mathcal{A}_{\mathbf{t},a}\right) \leqslant a\left( \frac{3}{4}\mathcal{A}_{\mathbf{t},a}(1) + 1 \right).\]

	Next, let $f \in \cb_b(\T\times\R_+)$.  We have
\begin{align*}
	\left| \mathcal{A}_{\mathbf{t},a}(f) -
      \mathcal{A}_{\mathbf{t},a}^\circ(f) \right|
&= \frac{a}{|\mathbf{t}|}\left|\sum_{w \in
  \operatorname{Lf}(\mathbf{t})} |\mathbf{t}_{w}|
  f\left(\tree_{a{w}},aH(w)\right)\right| \leqslant
  \frac{a}{|\mathbf{t}|} \left| \operatorname{Lf}(\mathbf{t})
  \right|\norm{f}_\infty \leqslant  a \norm{f}_\infty. 
\end{align*}
Taking the supremum over all   $f\in \cb_b( \T\times\R_+)$ such
    that $\norm{f}_\infty \leqslant 1$ yields $d_{\mathrm{TV}}\left( \mathcal{A}_{\mathbf{t},a}, \mathcal{A}_{\mathbf{t},a}^\circ \right) \leqslant \frac{1}{2}a$.
\end{proof}

We now restate the convergence of Corollary \ref{convergence for
  continuous functions} in terms of the discrete trees $\rddtree^n$. To
avoid cumbersome notations, we write
\[
\boxed{\mathcal{A}_n^\circ = \mathcal{A}_{\rddtree^n,b_n/n}^\circ}
\quad\text{and}\quad
\boxed{\mathcal{A}_n = \mathcal{A}_{\rddtree^n,b_n/n}}.
\]
Recall that for a discrete tree $\mathbf{t}$,  $w\in \mathbf{t}$ and 
$a>0$, we have  that  $\H(a \mathbf{t}_w)=a \H(\mathbf{t}_w)$ and
$\m(a\mathbf{t}_w)=|\mathbf{t}_w|/|\mathbf{t}|$. We shall also consider the following variant of the measure
$\mathcal{A}_n^\circ$ for functions depending only on the mass and
height: for every measurable function $f$ belonging to $ \cb_+( [0,1]\times
\R_+)$,
\begin{equation}
\label{mass+height measure}
\boxed{\costj_n(f) = \frac{b_n}{n^2} \sum_{w \in
    \rddtree^{n,\circ}}|\rddtree_w^n| f\left(\frac{|\rddtree_w^n|}{n},
    \frac{b_n}{n}\H(\rddtree_w^n)\right)}.
\end{equation}

We have the following upper bound of their total mass. 
\begin{lemma}
   \label{lem:bound-An(1)}
We have:
	\begin{equation}
\label{eq:mass-A_n}
\mathcal{A}_n^\circ(1)\leq  \frac{b_n}{n}
     \H(\rddtree^n)
\quad\text{and}\quad
\mathcal{A}_n(1)  \leqslant \frac{b_n}{n}
      \left(\H(\rddtree^n)+1\right).
	\end{equation}
\end{lemma}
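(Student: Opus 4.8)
The plan is to derive both inequalities from the two general identities
$\sum_{w\in\mathbf{t}}|\mathbf{t}_w|=\sum_{v\in\mathbf{t}}(H(v)+1)$ and
$\sum_{w\in\mathbf{t}^\circ}|\mathbf{t}_w|=\sum_{v\in\mathbf{t}}\big(H(v)+\ind_{\{v\in\mathbf{t}^\circ\}}\big)$,
valid for any finite rooted ordered tree $\mathbf{t}$, and then to specialize to $\mathbf{t}=\rddtree^n$ with $a=b_n/n$, using that $\mathcal{A}_{\mathbf{t},a}(1)=\frac{a}{|\mathbf{t}|}\sum_{w\in\mathbf{t}}|\mathbf{t}_w|$ and $\mathcal{A}^\circ_{\mathbf{t},a}(1)=\frac{a}{|\mathbf{t}|}\sum_{w\in\mathbf{t}^\circ}|\mathbf{t}_w|$ by \eqref{A general}.

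To obtain these identities I would write $|\mathbf{t}_w|=\sum_{v\in\mathbf{t}}\ind_{\{w\preccurlyeq v\}}$ and exchange the order of summation, so that $\sum_{w\in\mathbf{t}}|\mathbf{t}_w|=\sum_{v\in\mathbf{t}}\#\{w\in\mathbf{t}\colon w\preccurlyeq v\}$ and likewise with $\mathbf{t}^\circ$ in place of $\mathbf{t}$. For a fixed $v$, its ancestors are the $H(v)+1$ vertices $\mathrm{pr}^k(v)$, $0\leq k\leq H(v)$, which gives the first identity. Among these, the strict ancestors $\mathrm{pr}^k(v)$ with $1\leq k\leq H(v)$ each have a child (namely $\mathrm{pr}^{k-1}(v)$) and hence are internal, whereas $\mathrm{pr}^0(v)=v$ is internal precisely when $v\in\mathbf{t}^\circ$; this gives the second identity.

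For $\mathcal{A}_{\mathbf{t},a}$ it then suffices to use $H(v)\leq\H(\mathbf{t})$ for every $v\in\mathbf{t}$, whence $\sum_{w\in\mathbf{t}}|\mathbf{t}_w|\leq|\mathbf{t}|\,(\H(\mathbf{t})+1)$ and $\mathcal{A}_{\mathbf{t},a}(1)\leq a(\H(\mathbf{t})+1)$; taking $\mathbf{t}=\rddtree^n$ and $a=b_n/n$ is the second bound in \eqref{eq:mass-A_n}. For $\mathcal{A}^\circ_{\mathbf{t},a}$ the point is that one gains a unit: if $v\in\mathbf{t}^\circ$, then $v$ has a child sitting at height $H(v)+1$, so $H(v)+\ind_{\{v\in\mathbf{t}^\circ\}}=H(v)+1\leq\H(\mathbf{t})$, while if $v$ is a leaf then $H(v)+\ind_{\{v\in\mathbf{t}^\circ\}}=H(v)\leq\H(\mathbf{t})$; in both cases the summand is at most $\H(\mathbf{t})$, so $\sum_{w\in\mathbf{t}^\circ}|\mathbf{t}_w|\leq|\mathbf{t}|\,\H(\mathbf{t})$ and $\mathcal{A}^\circ_{\mathbf{t},a}(1)\leq a\,\H(\mathbf{t})$, which is the first bound in \eqref{eq:mass-A_n}.

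This is elementary double counting and I do not expect any genuine obstacle; the only subtlety worth flagging is that for $\mathcal{A}^\circ_{\mathbf{t},a}$ one should not merely invoke $\mathcal{A}^\circ_{\mathbf{t},a}(1)\leq\mathcal{A}_{\mathbf{t},a}(1)$ (which would only give the weaker bound $a(\H(\mathbf{t})+1)$), but instead exploit that an internal vertex lies at height at most $\H(\mathbf{t})-1$.
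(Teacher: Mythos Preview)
Your proof is correct and follows essentially the same double-counting argument as the paper: write $|\mathbf{t}_w|=\sum_{v}\ind_{\{w\preccurlyeq v\}}$, swap sums, and bound the number of (internal) ancestors of each $v$ by $\H(\mathbf{t})$ (respectively $\H(\mathbf{t})+1$). The only cosmetic difference is that for $\mathcal{A}_n(1)$ the paper reuses the bound on $\mathcal{A}_n^\circ(1)$ together with $\mathcal{A}_n(1)=\mathcal{A}_n^\circ(1)+\frac{b_n}{n^2}|\operatorname{Lf}(\rddtree^n)|$, whereas you derive it directly from the identity $\sum_{w\in\mathbf{t}}|\mathbf{t}_w|=\sum_{v}(H(v)+1)$; both routes are equally short.
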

\begin{proof}
   The proof is elementary as 
\begin{align*}
\mathcal{A}_n^\circ(1)
&= \frac{b_n}{n^2}\sum_{w \in
        \rddtree^{n,\circ}} |\rddtree^n_{w}| = \frac{b_n}{n^2}\sum_{w \in
        \rddtree^{n,\circ}} \sum_{w \preccurlyeq v} 1 
\leq \frac{b_n}{n^2}\sum_{v \in
        \rddtree^{n}} \H(\rddtree^n)\leq   \frac{b_n}{n}
     \H(\rddtree^n),\\
\mathcal{A}_n(1) 
& =\frac{b_n}{n^2}\sum_{w \in
        \rddtree^n} |\rddtree^n_{w}| = \mathcal{A}_n^\circ(1) +
      \frac{b_n}{n^2}  \left| \operatorname{Lf}(\mathbf{t}) \right|
\leq   \frac{b_n}{n}
      \left(\H(\rddtree^n)+1\right). 
\end{align*}
\end{proof}
We have the following convergence of  $\mathcal{A}_n^\circ$ as $n$ goes
to infinity. 
\begin{corollary}\label{convergence for continuous functions discrete}
  Assume  that   $\xi$  satisfies   \ref{xi1}  and  \ref{xi2}   and  let
  $\rddtree^n$   be  a   BGW($\xi$)   tree  conditioned   to  have   $n$
  vertices. Then for every $f\in \cc_b( \T \times \R_+)$, we have
  the convergence in distribution and of all positive moments
\begin{equation}
\label{convergence for continuous functions discrete, general}
\mathcal{A}_n^\circ(f)=\frac{b_n}{n^{2}} \sum_{w \in
  \rddtree^{n,\circ}}|\rddtree^n_{w}|f\left(\frac{b_n}{n}
  \rddtree^n_{w},\frac{b_n}{n}H(w)\right)\xrightarrow[n  
  \to \infty]{(d) + \mathrm{moments}} \Psi_{\rdtree}(f), 
\end{equation}
	where $\rdtree$ is the stable Lévy tree with branching mechanism $\psi(\lambda) = \kappa \lambda^\gamma$.
	In particular, for every   $f\in \cc_b( [0,1] \times \R_+)$, we have
	\begin{equation}
\label{convergence for continuous functions discrete, mass+height}
\costj_n(f) =\frac{b_n}{n^{2}} \sum_{w \in
  \rddtree^{n,\circ}}|\rddtree^n_{w}|f\left(\frac{|\rddtree^n_{w}|}{n},\frac{b_n}{n}\H(\rddtree^n_{w})\right)\xrightarrow[n
\to \infty]{(d) + \mathrm{moments}} \psij_{\rdtree}(f). 
	\end{equation}
\end{corollary}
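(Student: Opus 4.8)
The plan is to deduce the corollary from Corollary~\ref{convergence for continuous functions} by showing that the discrete measures $\mathcal{A}_n^\circ$ are asymptotically close, for the bounded Lipschitz distance, to the measures $\Psi_{\rdtree^n}$ associated with the rescaled real trees $\rdtree^n=(b_n/n)\rddtree^n$, and then to promote the resulting convergence in distribution to a convergence of moments using the uniform moment bounds for $\frac{b_n}{n}\H(\rddtree^n)$ provided by Lemma~\ref{height finite moments}.

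\emph{Step 1: convergence in distribution of the measures.} Set $a_n=b_n/n\to 0$. Applying Lemma~\ref{distance between measures} with $\mathbf{t}=\rddtree^n$ and $a=a_n$, together with the bounds of Lemma~\ref{lem:bound-An(1)}, gives
\[
d_{\mathrm{BL}}\bigl(\Psi_{\rdtree^n},\mathcal{A}_n\bigr)\leq a_n\Bigl(\tfrac34\,\mathcal{A}_n(1)+1\Bigr),\qquad \mathcal{A}_n(1)\leq a_n\bigl(\H(\rddtree^n)+1\bigr).
\]
Since $a_n\bigl(\H(\rddtree^n)+1\bigr)=\frac{b_n}{n}\H(\rddtree^n)+a_n$ is bounded in probability by Lemma~\ref{height finite moments} and $a_n\to0$, the right-hand side tends to $0$ in probability. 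Moreover $d_{\mathrm{BL}}\leq 2\,d_{\mathrm{TV}}$ (as $\norm{f}_{\mathrm{BL}}\leq1$ forces $\norm{f}_\infty\leq1$), so by \eqref{distance a a} we have $d_{\mathrm{BL}}(\mathcal{A}_n,\mathcal{A}_n^\circ)\leq a_n\to0$. By the triangle inequality, $d_{\mathrm{BL}}(\Psi_{\rdtree^n},\mathcal{A}_n^\circ)\to0$ in probability; since $\Psi_{\rdtree^n}\cvlaw\Psi_{\rdtree}$ in $\M(\T\times\R_+)$ by Corollary~\ref{convergence for continuous functions}, a standard ``converging together'' argument in the Polish space $(\M(\T\times\R_+),d_{\mathrm{BL}})$ yields $\mathcal{A}_n^\circ\cvlaw\Psi_{\rdtree}$ in $\M(\T\times\R_+)$.

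\emph{Step 2: the real-valued functional and its moments.} For fixed $f\in\cc_b(\T\times\R_+)$, the evaluation $\mu\mapsto\mu(f)$ is continuous on $\M(\T\times\R_+)$, so $\mathcal{A}_n^\circ(f)\cvlaw\Psi_{\rdtree}(f)$. Since $\mathcal{A}_n^\circ$ is a nonnegative measure, $\lvert\mathcal{A}_n^\circ(f)\rvert\leq\norm{f}_\infty\,\mathcal{A}_n^\circ(1)\leq\norm{f}_\infty\,\frac{b_n}{n}\H(\rddtree^n)$ by \eqref{eq:mass-A_n}, and by Lemma~\ref{height finite moments} we get $\sup_{n}\ex{\lvert\mathcal{A}_n^\circ(f)\rvert^q}<\infty$ for every $q>0$. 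Hence, for every $p>0$, the family $\bigl(\lvert\mathcal{A}_n^\circ(f)\rvert^p\bigr)_n$ is bounded in $L^2$, hence uniformly integrable, and $\Psi_{\rdtree}(f)$ has finite moments of all orders (by Fatou applied along the convergence in distribution); combining uniform integrability with the convergence in distribution gives the convergence of moments, which proves \eqref{convergence for continuous functions discrete, general}. Finally, for $f\in\cc_b([0,1]\times\R_+)$ set $\widehat f(T,u)=f\bigl(\m(T)\wedge1,\H(T)\bigr)$; this lies in $\cc_b(\T\times\R_+)$ since $\m$ and $\H$ are continuous on $\T$ by \eqref{ghp inequality}. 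Using that $\m\bigl(\tfrac{b_n}{n}\rddtree^n_w\bigr)=\lvert\rddtree^n_w\rvert/n\leq1$, $\H\bigl(\tfrac{b_n}{n}\rddtree^n_w\bigr)=\tfrac{b_n}{n}\H(\rddtree^n_w)$ and $\m(\rdtree_{r\vir x})\leq\m(\rdtree)=1$, one checks from \eqref{A general}, \eqref{mass+height measure}, \eqref{definition pi} and \eqref{psi joint} that $\mathcal{A}_n^\circ(\widehat f)=\costj_n(f)$ and $\Psi_{\rdtree}(\widehat f)=\psij_{\rdtree}(f)$; applying \eqref{convergence for continuous functions discrete, general} to $\widehat f$ then gives \eqref{convergence for continuous functions discrete, mass+height}.

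The argument is essentially bookkeeping, so there is no serious obstacle; the two points that require a little care are the ``converging together'' step, which hinges on the tightness of $\frac{b_n}{n}\H(\rddtree^n)$ (i.e.\ on Lemma~\ref{height finite moments}), and the fact that $\costj_n$ records the height $\H(\rddtree^n_w)$ of the \emph{subtree} above $w$ rather than the height $H(w)$ of its root, so that $\costj_n$ is obtained from $\mathcal{A}_n^\circ$ by composing with the continuous map $(T,u)\mapsto(\m(T),\H(T))$ and not by simply reading off a marginal.
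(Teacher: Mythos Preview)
Your proof is correct and follows essentially the same route as the paper's: both combine Lemma~\ref{distance between measures} with Corollary~\ref{convergence for continuous functions} via a Slutsky-type argument, and then upgrade to moment convergence using the uniform $L^p$ bounds on $\frac{b_n}{n}\H(\rddtree^n)$ from Lemma~\ref{height finite moments}. Your treatment of \eqref{convergence for continuous functions discrete, mass+height} via the explicit continuous lift $\widehat f(T,u)=f(\m(T)\wedge1,\H(T))$ is in fact more carefully spelled out than in the paper, which leaves this step implicit.
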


\begin{remark}
	By \eqref{distance a a}, we have that a.s. and in $L^1$
	\[d_{\mathrm{TV}}\left(\mathcal{A}_n,\mathcal{A}_n^\circ \right) \xrightarrow[n\to \infty]{} 0.\]
	In particular, the convergences of Corollary \ref{convergence for continuous functions discrete} still hold if we sum over $\rddtree^n$ instead of $\rddtree^{n,\circ}$.
\end{remark}
\begin{remark}
\label{rem:polya}
Another model  of random  trees is  the class of  Pólya trees  which are
random  uniform   unordered  trees.    In  \cite{panagiotou2018scaling},
Panagiotou and Stufler show that the scaling limit of Pólya trees is the
Brownian  tree  and  that  the  sub-exponential  tail  bounds  of  Lemma
\ref{subgaussian}  hold in  this case  with $\alpha  = \beta  = 2$.  Let
$\Omega\subset \N$ be such that $\Omega\cap \{0, 1\} \neq \Omega$ and
let $\mathsf{T}^n$  denote the  uniform random  unordered tree  with $n$
vertices and vertex outdegree in $\Omega$.  Then there exists a finite constant
$c_\Omega  >0$  such  that  $(c_\Omega/\sqrt{n})\mathsf{T}^n\label{key}$
converges in distribution to the  Brownian tree $\rdtree$ with branching
mechanism $\psi(\lambda)  = 2\lambda^2$.  Thus, the  result of Corollary
\ref{convergence   for   continuous   functions  discrete}   holds   for
$\mathsf{T}^n$ and the proof is exactly the same as in the BGW case: for
every $f\in \cc_b(\T \times \R_+)$,
\[
\frac{c_\Omega}{n^{3/2}} \sum_{w \in
  \mathsf{T}^{n,\circ}}|\mathsf{T}^n_{w}|f\left(\frac{c_\Omega}{\sqrt{n}}\mathsf{T}^n_{w},\frac{c_\Omega}{\sqrt{n}}H(w)\right)\xrightarrow[n
\to \infty]{(d) + \mathrm{moments}} 
\Psi_{\rdtree}(f).
\]
\end{remark}
\begin{proof}[Proof of Corollary \ref{convergence for continuous functions discrete}]
	Denote by $\rdtree^n =(b_n/n)\rddtree^n$ the real tree associated
    with $\rddtree^n$ rescaled so that all edges have length $b_n/n$ and
    equipped with the uniform probability measure on the set of vertices
    whose height is an integer multiple of $b_n/n$. By Lemma
    \ref{distance between measures}, we have 
\[
d_{\mathrm{BL}}\left(\Psi_{\rdtree^n},\mathcal{A}_n^\circ\right)
\leqslant
d_{\mathrm{BL}}\left(\Psi_{\rdtree^n},\mathcal{A}_n\right)
+ 2d_{\mathrm{TV}}(\mathcal{A}_n, \mathcal{A}_n^\circ) 
\leqslant
\frac{b_n}{n}\left(\frac{3}{4} \mathcal{A}_n(1) + 2\right).
\]
Thanks to \eqref{eq:mass-A_n} and Lemma \ref{height finite moments}, we
have that $M =\sup_{n \in \supp} \ex{\mathcal{A}_n(1)} $ is finite. 
	It follows that
\[
\limsup_{n \to
  \infty}\ex{d_{\mathrm{BL}}\left(\Psi_{\rdtree^n},\mathcal{A}^\circ_n\right)}
\leqslant \lim_{n\to \infty}\frac{b_n}{n}\left( \frac{3M}{4} + 2\right)
= 0.
\]
	Thus, using that $\Psi_{\rdtree^n} \cvlaw \Psi_\rdtree$ in
    $\mathcal{M}(\T\times \R_+)$ by Corollary \ref{convergence for
      continuous functions}, Slutsky's lemma yields the convergence in
    distribution $\mathcal{A}^\circ_n \cvlaw \Psi_\rdtree$ in
    $\M(\T\times \R_+)$ which proves \eqref{convergence for continuous
      functions discrete, general}. 
	
	Let $f\in \cc_b( \T\times\R_+)$. Using Skorokhod's representation
    theorem, we may assume that the convergence \eqref{convergence for
      continuous functions discrete, general} holds almost surely. To
    prove the convergence of positive moments, it suffices to show that
    the family $(\mathcal{A}_n^\circ(f), \,{n \in \supp})$ is bounded in
    $L^p$ for every $p \in [1,\infty)$. This is the case as  by \eqref{eq:mass-A_n},
    we have $\mathcal{A}_n^\circ(f)\leqslant \norm{f}_\infty
\mathcal{A}_n^\circ(1)\leqslant \norm{f}_\infty \frac{b_n}{n}
\H(\rddtree^n)$,
and  the family $(\frac{b_n}{n} \H(\rddtree^n) , \,{n \in \supp})$ is bounded in $L^p$ for every $p \in [1,\infty)$ by Lemma \ref{height finite moments}. This completes the proof.
\end{proof}

	The Gromov-Hausdorff-Prokhorov convergence \eqref{BGW trees
      convergence} allowed us to derive an invariance principle
    \eqref{convergence for continuous functions discrete, general} for a
    certain class of additive functionals on BGW trees, namely those
    associated with real-valued continuous bounded functions $f$ defined on $\T
    \times\R_+$. In the sequel, we will be looking at a 
    similar invariance principle when $f$ blows up on $\TT\times\R_+$. It
    is not surprising  that the Gromov-Hausdorff-Prokhorov convergence alone
    does not allow us to say anything about the convergence of
    $\Psi_{\rdtree^n}(f)$ in this case as the next remark illustrates.

\begin{remark}\label{non generic phase transition}
  Let $\rddtree^{\nn}$ be a Catalan tree with $n$ vertices, where
  $n\in \supp = 2\N+1$. In other
  words,  $\rddtree^{\nn}$ is  uniformly distributed  among the  set of
  full binary ordered trees with $\nn$ vertices, which corresponds to a
  BGW($\xi$)          tree         with     
  $\pr{\xi  =  0}  = \pr{\xi  =  2}  =  1/2$  conditioned to  have  size
  $\nn$. Notice  that $\xi$  has finite variance  $\sigmaxi^2 =  1$. Take
  $b_\nn =   \sqrt{\nn} /2 $ so  that by  \eqref{BGW trees
    convergence},    $\rdtree^{\nn}=(1/2\sqrt{\nn})    \rddtree^{\nn}$
  converges in  distribution in  $\T$ to  the Brownian  continuum random
  tree   $\rdtree$  with   branching   mechanism   $\psi(\lambda)  =   2
  \lambda^2$.
 In    fact,   it    is    well   known,    see   \emph{e.g.}    \cite[Theorem
 7.9]{pitman2006combinatorial},  that  there  is  a  representation  of
 $\rdtree^{\nn}$  such that  the  almost sure  convergence holds.
  Denote  by  $\rdtree^{\nn}_{\varepsilon}$  the   real  tree  obtained  from
  $\rdtree^{\nn}$   by  stretching   the   leaves  by   a  distance   of
  $\varepsilon\geq 0$  and equip it  with the uniform probability  measure on
  the set of branching points and leaves. Fix $0 < \alpha < 1/2$ and set
  $\varepsilon_\nn  = \nn^{-\alpha}$.  It is  clear from  this construction  that
  $\rdtree^{\nn}_{\varepsilon_\nn}$ is a $\T$-valued  random variable and that
  a.s.
\[ 
\ghp\left(\rdtree^{\nn}_{\varepsilon_\nn}, \rdtree^{\nn}\right)
    \leqslant \varepsilon_\nn.
\]
	So it follows that $\rdtree^{\nn}_{\varepsilon_\nn}$ converges to $\rdtree$
    a.s. in the sense of the Gromov-Hausdorff-Prokhorov distance. 
We consider $f(\tree, r)=\m(\tree)^{-\alpha}$ and if $\nu\in \cm(\T\times \R_+)$ we write
$\nu(x^{-\alpha})$ for $\nu(f)$. 
According to \cite[Theorem 3.1]{delmas2018},  we have the following
a.s. convergence 
$\mathcal{A}_{\nn}(x^{-\alpha}) \xrightarrow[n\to\infty]{}
\Psi_{\rdtree}(x^{-\alpha})$. 
	In conjunction with the identity $\Psi_{\rdtree^\nn}(x^{-\alpha}) =
    \mathcal{A}_{\nn}(x^{-\alpha} ) - 1/(2\sqrt{\nn})$ this proves the a.s. convergence
\[
\Psi_{\rdtree^{\nn}}(x^{-\alpha}) \xrightarrow[n\to
\infty]{}\Psi_\rdtree(x^{-\alpha}).
\]
	On the other hand, we have
\[
\Psi_{\rdtree^{\nn}_{\varepsilon_\nn}}(x^{-\alpha}) -
\Psi_{\rdtree^{\nn}}(x^{-\alpha})  
= \frac{1}{|\rddtree^{\nn}|}
\sum_{w\in \operatorname{Lf}(\rddtree^{\nn})}
\int_{(2\sqrt{\nn})^{-1} H({w})}^{(2\sqrt{\nn})^{-1}  H({w}) + \varepsilon_\nn}
\left(\frac{\left|\rddtree^{\nn}_w\right|}{\left|\rddtree^{\nn}\right|}\right)^{-\alpha}\,
\dd r = \frac{n+1}{2} \nn^{\alpha - 1} 
\varepsilon_\nn 
\]
	since $|\rddtree^{\nn}| = \nn$ and
    $|\operatorname{Lf}(\rddtree^{\nn})| = (n+1)/2$. Thus, we
    get 
\[
\Psi_{\rdtree^{\nn}_{\varepsilon_\nn}}(x^{-\alpha}) -
    \Psi_{\rdtree^{\nn}}(x^{-\alpha}) \xrightarrow[n\to\infty]{}
    \frac{1}{2}\cdot
\]  

	In conclusion, even though we have the a.s. convergence
    $\rdtree^{\nn}_{\varepsilon_\nn}$ towards $ \rdtree$ in $\T$,
    $\Psi_{\rdtree^{\nn}_{\varepsilon_\nn}}(x^{-\alpha})$ does not converge to $
    \Psi_{\rdtree}(x^{-\alpha})$ for $\alpha\in (0, 1/2)$.
	This proves that the continuity of $\Psi_{\tree}(f)$ in $\tree$ when
    $f$ blows up on $\TT$, which has been  observed in
    \cite{delmas2018}, is indeed specific to BGW trees. 
\end{remark}

\section{Technical lemmas}\label{Sect technical}
In this section, we gather some technical results that will be used later.
The next lemma,  which gives sufficient conditions for boundedness in
$L^1$ of functionals of the mass and height on BGW trees, will be a key
ingredient in proving our convergence results. Recall that $\rddtree$ is
a BGW($\xi$) tree and $\rddtree^n$ is a BGW($\xi$) conditioned to have
$n$ vertices. Recall from \eqref{mass+height measure} the definition of the measure $\costj_n$ and notice that $ \costj_n( [0, 1]\times
\R_+ \setminus (0, 1]\times \R_+^*)=0$. For this reason, we also see $\costj_n$ as a measure on $(0,1]\times \R_+^*$.
By convention, we write
$\costj_n(g(x) h(u))$ for $\costj_n(f)$ where $f(x,u) = g(x)h(u)$, and
we see $g$ as a function of the mass and $h$ as a function of the
height. 

\begin{lemma}\label{mass+height bounded in L1}
	Assume that $\xi$ satisfies \ref{xi1} and \ref{xi3}. Suppose that
    $f\in \cb_+( (0,1] \times \R_+^*)$ satisfies one of the
    following assumptions:  
	\begin{enumerate}[label = (\roman*),leftmargin=*]
		\item $f$ is of the form $f(x,u) = g(x) u^\beta$ or $f(x,u) =
          x^\alpha h(u)$ where $\alpha, \beta \in \real$ and $g,h$ are
          nonincreasing and 
\begin{equation}
\label{int test bounded}
\int_0 f(x^{\gamma/(\gamma-1)},x) \, \dd x < \infty. 
\end{equation}
		\item $f(x,u) = g(x)\e^{u^\eta}\ind_{ [1,\infty)}(u)$ where
          $\eta \in (0,\gamma)$ and $g \in \cb_+( (0,1] )$ is
          nonincreasing and satisfies $\int_0 g(x) \e^{-x^{-r_0}} \,
          \dd x < \infty$ for some $r_0 \in (0,\gamma
          -1)$. 
	\end{enumerate}
	Then, we have
	\begin{equation*}\sup_{n \in \supp} \ex{\costj_n(f)} < \infty.
	\end{equation*}
\end{lemma}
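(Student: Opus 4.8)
The plan is to turn $\ex{\costj_n(f)}$ into an explicit sum over subtree sizes, estimate the coefficients by the local limit theorem, and then match the resulting deterministic sum against the integral test \eqref{int test bounded}. For a finite rooted ordered tree $\mathbf{t}$ and $w\in\mathbf{t}$, write $\mathbf{t}^{(w)}$ for the tree obtained by pruning the subtree above $w$ (so $w$ becomes a leaf of $\mathbf{t}^{(w)}$, which has $|\mathbf{t}|-|\mathbf{t}_w|+1$ vertices). From $\pr{\rddtree=\mathbf{t}}=\prod_{v\in\mathbf{t}}\pr{\xi=k_v(\mathbf{t})}$ one gets $\pr{\rddtree=\mathbf{t}}=\pr{\xi=0}^{-1}\pr{\rddtree=\mathbf{t}_w}\pr{\rddtree=\mathbf{t}^{(w)}}$, and summing over pairs $(\mathbf{t},w)$ with $|\mathbf{t}|=n$, $|\mathbf{t}_w|=m$ via the bijection $(\mathbf{t},w)\leftrightarrow(\mathbf{t}_w,\mathbf{t}^{(w)},w)$, where $w$ now ranges over the leaves of $\mathbf{t}^{(w)}$, gives, for any nonnegative functional $\phi$,
\[
\ex{\sum_{w\in\rddtree^n}\ind_{\{|\rddtree^n_w|=m\}}\phi(\rddtree^n_w)}
=\frac{\pr{|\rddtree|=m}\,\ex{\phi(\rddtree^m)}\;\pr{|\rddtree|=n-m+1}\,\ex{|\operatorname{Lf}(\rddtree^{n-m+1})|}}{\pr{\xi=0}\,\pr{|\rddtree|=n}}.
\]
Taking $\phi(\mathbf{s})=|\mathbf{s}|\,f(|\mathbf{s}|/n,(b_n/n)\H(\mathbf{s}))$ and summing over $m$ (only $2\le m\le n$ with $\pr{|\rddtree|=m}>0$ contribute, since internal vertices carry subtrees of size $\ge 2$) gives a closed expression for $\ex{\costj_n(f)}$.

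Since the subtrees have size at most $n$ and, under (i) or (ii), $f$ is finite on the relevant set, $\ex{\costj_n(f)}<\infty$ for each $n$, so it suffices to bound it for large $n$. By \eqref{otter dwass}, Lemma~\ref{local limit} and \eqref{b_n bounded}, $\pr{|\rddtree|=j}\asymp j^{-1-1/\gamma}$ uniformly over $j$ with $\pr{|\rddtree|=j}>0$, while $\ex{|\operatorname{Lf}(\rddtree^k)|}\le k$ and $b_n\asymp n^{1/\gamma}$. Plugging these in and writing $\theta:=1-1/\gamma\in(0,\tfrac12]$ (so $\theta-1=-1/\gamma$), one reduces (the $m=n$ term being $O(n^{-\theta})$) to bounding in $n$ the quantity $\frac1n\sum_{m=2}^{n-1}(\tfrac mn)^{\theta-1}(1-\tfrac mn)^{\theta-1}\,\ex{f(\tfrac mn,\tfrac{b_n}{n}\H(\rddtree^m))}$. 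Write $\tfrac{b_n}{n}\H(\rddtree^m)=\rho_{n,m}Z_m$ with $Z_m:=\tfrac{b_m}{m}\H(\rddtree^m)$ and $\rho_{n,m}:=\tfrac{b_n/n}{b_m/m}\asymp(m/n)^\theta\le 1$. If $f(x,u)=g(x)u^\beta$, Lemma~\ref{height finite moments} gives $\ex{f(\tfrac mn,\tfrac{b_n}{n}\H(\rddtree^m))}\le C\,g(\tfrac mn)(m/n)^{\beta\theta}$. If $f(x,u)=x^\alpha h(u)$ with $h$ nonincreasing, write $h(y)=h(\infty)+\mu_h((y,\infty))$ with $\mu_h=-\dd h\ge 0$; using the lower tail \eqref{subgaussian 0}, which under \ref{xi3} holds with exponent $\gamma/(\gamma-1)=1/\theta$ (Remark~\ref{remark subgaussian}), $\pr{Z_m\le y}\le C_0\e^{-c_0 y^{-1/\theta}}$, whence $\ex{h(\rho_{n,m}Z_m)}\le h(\infty)+C_0\int_{(0,\infty)}\min\!\big(1,\e^{-c(m/n)s^{-1/\theta}}\big)\mu_h(\dd s)$. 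Finally, under (ii) with $\eta\in(0,\gamma)$: choosing an exponent in $(\eta,\gamma)$ in \eqref{subgaussian infinity} gives $\ex{\e^{(\tfrac{b_n}{n}\H(\rddtree^m))^\eta}}\le C$ for all $m$; and since $\rho_{n,m}^{-\gamma}\asymp(n/m)^{\gamma-1}$, a direct computation with \eqref{subgaussian infinity} on $\{Z_m\ge 1/\rho_{n,m}\}$ yields $\ex{\e^{(\rho_{n,m}Z_m)^\eta}\ind_{\{\rho_{n,m}Z_m\ge 1\}}}\le C\,\e^{-c'(n/m)^{\gamma-1}}$ once $m\le\epsilon_0 n$.

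Inserting these bounds, one compares each sum $\frac1n\sum_m\psi(m/n)$ with $\int_0^1\psi$, using that $g,h$ are nonincreasing, an index shift to absorb any increasing power factor, and a split at $x=\tfrac12$ to treat the singularities $x^{\theta(1+\beta)-1}$ and $(1-x)^{\theta-1}$; in the $x^\alpha h(u)$ case one also applies Fubini and splits $m\le\tfrac n2$ from $m>\tfrac n2$ (and $s\le1$ from $s>1$), and in case (ii) one splits $m\le\epsilon_0 n$ from $m>\epsilon_0 n$ (the latter contributing a bounded amount). This bounds $\ex{\costj_n(f)}$, uniformly in $n$, by a constant times $\int_0^1 x^{\theta(1+\beta)-1}(1-x)^{\theta-1}g(x)\,\dd x$ in the first case, $h(\infty)+\int_0^1 s^{\alpha/\theta}h(s)\,\dd s$ in the second, and $\int_0 x^{-1/\gamma}g(x)\,\e^{-c'x^{1-\gamma}}\,\dd x$ in the third. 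Each is finite: $(1-x)^{\theta-1}$ is integrable since $\theta>0$; the change of variables $y=x^{1/\theta}$ turns \eqref{int test bounded} into $\int_0 x^{\theta(1+\beta)-1}g(x)\,\dd x<\infty$ in the first case and directly into $\int_0 s^{\alpha/\theta}h(s)\,\dd s<\infty$ in the second (which also forces $\theta+\alpha>0$, as used above); and in case (ii) the inequality $x^{-1/\gamma}\e^{-c'x^{-(\gamma-1)}}\le C\,\e^{-x^{-r_0}}$ for small $x$ — valid precisely because $r_0<\gamma-1$ — reduces finiteness to the hypothesis $\int_0 g(x)\e^{-x^{-r_0}}\,\dd x<\infty$. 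Hence $\sup_{n\in\supp}\ex{\costj_n(f)}<\infty$.

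The only delicate point is this last step: each estimate is routine, but one must verify that the polynomial prefactors produced by the local limit theorem and by the scaling $Z_m\leftrightarrow\tfrac{b_n}{n}\H(\rddtree^m)$ combine so that the two singularities along a branch — mass $\to 0$, and, for case (ii), height below $1$ (i.e.\ $\rho_{n,m}Z_m<1$, which happens for small $m$) — are exactly the ones tamed by \eqref{int test bounded}; this is also where the threshold $r_0<\gamma-1$ in (ii) is forced, through the competition between the sub-Weibull$(\gamma)$ upper tail of the rescaled height and the growth $\e^{u^\eta}$ with $\eta<\gamma$.
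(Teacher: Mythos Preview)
Your approach is essentially the same as the paper's: the decomposition formula you derive combinatorially is exactly the content of \cite[Lemma~5.1]{janson2016asymptotic} (which the paper cites) once one uses the identity $\ex{|\operatorname{Lf}(\rddtree)|\ind_{\{|\rddtree|=k\}}}=\pr{\xi=0}\pr{S_{k-1}=k-1}$, and the subsequent case analysis via Lemma~\ref{height finite moments} (for $g(x)u^\beta$), the lower tail \eqref{subgaussian 0} with exponent $\gamma/(\gamma-1)$ (for $x^\alpha h(u)$, where the paper phrases this as stochastic domination by an explicit random variable $Y$ rather than via the Stieltjes measure $\mu_h=-\dd h$), and the upper tail \eqref{subgaussian infinity} (for case (ii)) mirrors the paper's proof closely.

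One minor imprecision in your treatment of case~(ii): the bound $\ex{\e^{(\rho_{n,m}Z_m)^\eta}\ind_{\{\rho_{n,m}Z_m\ge 1\}}}\le C\,\e^{-c'(n/m)^{\gamma-1}}$ cannot be obtained with exponent exactly $\gamma-1$, since \eqref{subgaussian infinity} only gives $\pr{Z_m\ge y}\le C_0\e^{-c_0y^\beta}$ for $\beta<\gamma$, and hence an exponent $\theta\beta=(1-1/\gamma)\beta<\gamma-1$ in $(n/m)$. This does not affect the argument: given $r_0<\gamma-1$, choose $\beta\in(\eta,\gamma)$ with $\theta\beta>r_0$ (as the paper does explicitly), and then the final comparison $x^{-1/\gamma}\e^{-c'x^{-\theta\beta}}\le C\,\e^{-x^{-r_0}}$ for small $x$ still holds.
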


\begin{proof}[Proof of Lemma \ref{mass+height bounded in L1}]
  Here $c$, $C$  and $M$ denote positive finite constants  that may vary
  from  expression  to  expression  (but are  independent  of  $n$ and $x$). Let
  $n \in \supp$ so  that
  $\pr{S_n  =  n-1}  >0$.  Observe  that
  $w \in \rddtree^{n,\circ}$  if and only if $|\rddtree^{n}_w|  > 1$ and
  that the root $\emptyset$ is the only vertex in $\rddtree^n$ such that
  $|\rddtree^{n}_w|   =  n$.   Thus,  for   every  $f\in \cb_+(
  [0,1]\times\R_+)$, we have the decomposition 
\begin{align*}
\ex{\costj_n(f)}
&=	\frac{b_n}{n^2}\ex{\sum_{w \in \rddtree^{n,\circ}} |\rddtree^n_{w}|
      f\left( \frac{|\rddtree^n_{w}|}{n}, \frac{b_n}{n}
        \H(\rddtree^n_{w})\right)} \\
&= \frac{b_n}{n^2}\ex{\sum_{w \in \rddtree^n}\ind_{\{1<|\rddtree^n_{w}|
    <n\}} |\rddtree^n_{w}| f\left( \frac{|\rddtree^n_{w}|}{n},
    \frac{b_n}{n} \H(\rddtree^n_{w})\right)} +
\frac{b_n}{n}\ex{f\left(1,\frac{b_n}{n}\H(\rddtree^n)\right)}. 
\end{align*}
	By \cite[Lemma 5.1]{janson2016asymptotic}, we have
	\begin{multline} \frac{b_n}{n^2}\ex{\sum_{w \in \rddtree^n}\ind_{\{1<|\rddtree^n_{w}| <n\}} |\rddtree^n_{w}| f\left( \frac{|\rddtree^n_{w}|}{n}, \frac{b_n}{n} \H(\rddtree^n_{w})\right)} 
	\\= \frac{b_n}{n} \sum_{k=1}^n \frac{\pr{S_k = k-1} \pr{S_{n-k} = n-k}}{\pr{S_n = n-1}} \ex{ f\left(\frac{k}{n}, \frac{b_n}{n} \H(\rddtree^k)\right)}\ind_{\{1<k <n\}}, \label{decomp 2}
	\end{multline}
	where by convention the summand is zero for $k \notin \supp$. Using
    Lemma \ref{local limit} and \eqref{b_n bounded}, we get for every $n
    \in \supp$ and every $1 < k < n$
	\[b_n\frac{\pr{S_k = k-1} \pr{S_{n-k} = n-k}}{\pr{S_n = n-1}} \leqslant C \frac{b_n^2}{b_k b_{n-k}}\leqslant C \left(\frac{n^2}{k(n-k)}\right)^{1/\gamma}.\]
	We deduce that
	\begin{align}
 \label{decomposition} 
	\frac{b_n}{n^2}\ex{\sum_{w \in \rddtree^{n,\circ}} |\rddtree^n_{w}|
      f\left( \frac{|\rddtree^n_{w}|}{n}, \frac{b_n}{n}
      \H(\rddtree^n_{w})\right)} 
&\leqslant \frac{C}{n}\sum_{k=1}^n g_n(k)+
  \frac{b_n}{n}\ex{f\left(1,\frac{b_n}{n}\H(\rddtree^n)\right)} \notag
      \\ 
	&= C\int_0^1 g_n(\lceil nx\rceil)\, \dd x +
      \frac{b_n}{n}\ex{f\left(1,\frac{b_n}{n}\H(\rddtree^n)\right)}, 
	\end{align}
	where we set
	\begin{equation}\label{g_n}g_n(k) =
      \left(\frac{n^2}{k(n-k)}\right)^{1/\gamma}
      \ex{f\left(\frac{k}{n}, \frac{b_n}{n}
          \H(\rddtree^k)\right)}\ind_{\{1<k <n\}} \quad  \text{for all }
      k \in \supp, 
	\end{equation}
	and $g_n(k) = 0$ for $k \notin \supp$. We will constantly make use of the following inequality
	\begin{equation}\label{b_n b_k}
	c \left(\frac{k}{n}\right)^{1-1/\gamma} \leqslant \frac{b_n}{n}
    \frac{k}{b_k} \leqslant C \left(\frac{k}{n}\right)^{1-1/\gamma}
    \quad \text{for all } 1\leqslant k \leqslant n,
	\end{equation}
	which follows easily from \eqref{b_n bounded}.
	
\noindent
	\textbf{First  case.}	Assume (i). First, we consider the case
    $f(x,u) = g(x)u^\beta$.  
	Since $b_n/n \to 0$, we deduce from Lemma \ref{height finite moments} that
\begin{equation}\label{integral test, part 3}
\lim_{n\to \infty}\frac{b_n}{n}\ex{f\left(1,\frac{b_n}{n}\H(\rddtree^n)\right)}
=	g(1)\lim_{n\to \infty}\frac{b_n}{n}\ex{\left(\frac{b_n}{n}
        \H(\rddtree^n) \right)^\beta \, } =0. 
\end{equation}
For every $1/n < x \leqslant (n-1)/n$, it holds that $x \leqslant \lceil
nx \rceil /n \leqslant 2x$ and $n - \lceil nx \rceil \geqslant
n(1-x)/2$. Thus, for every $x \in (0,1)$, using Lemma \ref{height
      finite moments} for the last inequality, we have
	\begin{align*}
	g_n(\lceil nx \rceil) 
&\leqslant M x^{-1/\gamma} (1-x)^{-1/\gamma} g\left(\frac{\lceil nx
  \rceil}{n}\right) \ex{\left(\frac{b_n}{n} \H(\rddtree^{\lceil nx
  \rceil})\right)^\beta\, }\ind_{\{1 < nx \leqslant n-1\}} \\ 
&\leqslant M x^{-1/\gamma}(1-x)^{-1/\gamma} g(x) \left(\frac{b_n}{n}
      \frac{\lceil nx \rceil}{b_{\lceil nx \rceil}}\right)^\beta \sup_{k
      \in \supp}
      \ex{\left(\frac{b_k}{k}\H(\rddtree^k)\right)^\beta\,}\ind_{\{1 <
      nx \leqslant n-1\}}  \\ 
&\leqslant M x^{(\beta+1)(1-1/\gamma)-1}(1-x)^{-1/\gamma} g(x). 
	\end{align*}
	It follows that
	\begin{equation}
	\label{integral test, part 4} \int_0^1 g_n(\lceil nx \rceil)\, \dd x
    \leqslant M \, \int_0^1 g(x) x^{(\beta+1)(1-1/\gamma) - 1} (1-x)^{-1/\gamma} \, \dd x,
	\end{equation}
	where the right-hand side is finite by   \eqref{int test bounded} as $\gamma
    >1$. Combining \eqref{integral test, part 3} and \eqref{integral
      test, part 4}, it follows from \eqref{decomposition} that 
\[
\sup_{n \in \supp} \ex{\costj_n(f)}= \sup_{n \in \supp} \frac{b_n}{n^2}\ex{\sum_{w \in \rddtree^{n,\circ}}
  |\rddtree^n_{w}| f\left( \frac{|\rddtree^n_{w}|}{n}, \frac{b_n}{n}
    \H(\rddtree^n_{w})\right)} < \infty.
\]
	
	Next, we consider the case $f(x,u) = x^\alpha h(u)$. By Lemma
    \ref{subgaussian} and (i) from Remark \ref{remark subgaussian}, we
    have, for every $k \in \supp$, 
	\begin{equation}
\label{stochastic order}
\pr{\frac{b_k}{k}\H(\rddtree^k) \leqslant y} \leqslant 1 \wedge \left( C_0
      \exp\left(-c_0 y^{-\gamma/(\gamma-1)}\right)\right).
	\end{equation}
	Denoting by $Y$ a random variable whose cdf is given by the right-hand side and using \eqref{b_n b_k}, we get, for every $2 \leqslant k \leqslant n$,
	\begin{equation}\frac{b_n}{n}\H(\rddtree^k) \geqslant_{\mathrm{st}} \frac{b_n}{n}\frac{k}{b_k} Y\geqslant c\left(\frac k n \right)^{1-1/\gamma} Y,\label{ordre sto} 
	\end{equation}
	where $\geqslant_{\mathrm{st}}$ denotes the usual stochastic order. In particular, since $Y$ has density \[y \mapsto  C y^{-(2\gamma-1)/(\gamma-1)} \exp\left(-c_0 y^{-\gamma/(\gamma-1)}\right)\ind_{[0,a]}(y)\]
	for some $a >0$, the first inequality in \eqref{ordre sto} applied with $k = n$ gives, for every $n \in \supp$,
	\begin{equation}
\label{integral test, part 1}
\ex{h\left(\frac{b_n}{n}\H(\rddtree^n)\right)} \leqslant \ex{h(Y)}
\leqslant C \int_0^\infty h(y) \e^{-c_0 y^{-\gamma/(\gamma-1)}}\,
\frac{\dd y}{y^{(2\gamma-1)/(\gamma-1)}}\cdot 
	\end{equation}
	Note that the last integral is finite: indeed, since $h$ is nonincreasing, we have
	\begin{equation*}\int_1^\infty h(y) \e^{-c_0 y^{-\gamma/(\gamma-1)}} \frac{\dd y}{y^{(2\gamma-1)/(\gamma-1)}} \leqslant h(1) \int_1^\infty \frac{\dd y}{y^{(2\gamma-1)/(\gamma-1)}} < \infty, \label{finite integral 1}
	\end{equation*}
	and by \eqref{int test bounded}
	\begin{equation}\int_0^1 h(y) \e^{-c_0 y^{-\gamma/(\gamma-1)}} \frac{\dd y}{y^{(2\gamma-1)/(\gamma-1)}} \leqslant \sup_{0 <y \leqslant 1}\frac{\e^{-c_0 y^{-\gamma/(\gamma-1)}}}{y^{1+(\alpha +1)\gamma/(\gamma-1)}} \int_0^1 h(y) y^{\alpha \gamma/(\gamma-1)} \, \dd y < \infty.\label{finite integral 2}
	\end{equation}
Then, applying \eqref{ordre sto} with $k = \lceil nx \rceil$ and using
the fact that $h$ is nonincreasing, we get for every $x \in (0,1)$ 
	\begin{align*}g_n(\lceil nx \rceil)& \leqslant M x^{-1/\gamma}(1-x)^{-1/\gamma}\left(\frac{\lceil nx \rceil }{n} \right)^\alpha\ex{ h\left(\frac{b_n}{n}\H(\rddtree^{\lceil nx \rceil})\right)}\ind_{\{1 < nx \leqslant n-1\}}\\
	& \leqslant M x^{\alpha-1/\gamma}(1-x)^{-1/\gamma}\ex{h\left(c x^{1-1/\gamma}Y\right)}\\
	&\leqslant Mx^{\alpha-1/\gamma}(1-x)^{-1/\gamma} \int_0^a h\left(c x^{1-1/\gamma} y \right)\e^{-c_0y^{-\gamma/(\gamma-1)}} \, \frac{\dd y}{y^{(2\gamma-1)/(\gamma-1) }} \\
	&\leqslant Mx^{1+\alpha-1/\gamma}(1-x)^{-1/\gamma} \int_0^{acx^{1-1/\gamma}} h(u) \e^{-r xu^{-\gamma/(\gamma-1)}} \, \frac{\dd u}{u^{(2\gamma-1)/(\gamma-1)}},
	\end{align*}
	for some positive constant $r >0$, where in the last inequality
    we made the change of variable $u = c x^{1-1/\gamma} y$. Therefore
    we have 
	\begin{equation}\int_0^1 g_n\left(\lceil nx \rceil \right)\, \dd x \leqslant M \int_0^1 x^{1+\alpha-1/\gamma}(1-x)^{-1/\gamma} \, \dd x \int_0^{acx^{1-1/\gamma}} h(u)\e^{-rxu^{-\gamma/(\gamma-1)}} \,\frac{\dd u}{u^{(2\gamma-1)/(\gamma-1)}}\cdot \label{integral test, part 2}
	\end{equation}
	It remains to check that the last integral is finite. But, arguing
    as in     \eqref{finite integral 2} with $r$ instead of $c_0$, we have 
	\begin{multline*}\int_{1/2}^1 x^{1+\alpha-1/\gamma}(1-x)^{-1/\gamma} \, \dd x \int_0^{acx^{1-1/\gamma}} h(u)\e^{-rxu^{-\gamma/(\gamma-1)}} \,\frac{\dd u}{u^{(2\gamma-1)/(\gamma-1)}}\\ \leqslant M\int_{1/2}^1 (1-x)^{-1/\gamma} \, \dd x \int_0^{ac} h(u)\e^{-ru^{-\gamma/(\gamma-1)}/2} \,\frac{\dd u}{u^{(2\gamma-1)/(\gamma-1)}} < \infty.
	\end{multline*}
Let $\delta=\gamma/(\gamma-1)$. Making the change of variable $y =
xu^{-\delta}$ with $u$ fixed, we have, thanks to \eqref{int test bounded},
	\begin{multline*}
	\int_0^{1/2} x^{1+\alpha-1/\gamma}(1-x)^{-1/\gamma} \, \dd x
    \int_0^{acx^{1-1/\gamma}} h(u)\e^{-rxu^{-\delta}} \,\frac{\dd
      u}{u^{1+\delta}} \\ 
\begin{aligned}
&\leq  \int_{(ac)^{-\delta}}^\infty y^{1+\alpha - 1/\gamma} \e^{-ry} \, \dd
y \int_0^\infty h(u) u^{\alpha \delta} \ind_{\{yu^\delta \leqslant
  1/2\}} \, \dd u \\ 
&\leqslant \int_{(ac)^{-\delta}}^\infty y^{1+\alpha - 1/\gamma}
    \e^{-ry} \, \dd y \int_0^{ac} h(u) u^{\alpha \delta} \, \dd u <
    \infty. 
	\end{aligned}
	\end{multline*}
	The right-hand side of \eqref{integral test, part 1} and
    \eqref{integral test, part 2} being finite and $(b_n/n, \, n\geq 1)$
    being bounded, we deduce from \eqref{decomposition} that
\[
\sup_{n \in \supp} \ex{\costj_n(f)}=\sup_{n \in \supp} \frac{b_n}{n^2}\ex{\sum_{w \in \rddtree^{n,\circ}}
  |\rddtree^n_{w}| f\left( \frac{|\rddtree^n_{w}|}{n}, \frac{b_n}{n}
    \H(\rddtree^n_{w})\right)} < \infty. 
\]

\noindent
	\textbf{Second case.}  Assume (ii). Fix $\eta\in (0,\gamma)$ and
    set $h(u) = \e^{u^{\eta}}\ind_{\{u \geqslant 1\}}$. Choose $\beta
    \in (\eta, \gamma)$ such that $\beta(1-1/\gamma) >r_0$. By
    \eqref{subgaussian infinity} and \eqref{b_n b_k}, we have, for every
    $k \in \supp$ such that $2 \leqslant k \leqslant n$, 
	\begin{equation}\label{ordre sto 2}
	\frac{b_n}{n} \H(\rddtree^k) \leqslant_{\mathrm{st}}\frac{b_n}{n}\frac{k}{b_k}Z\leqslant C \left(\frac k n\right)^{1-1/\gamma}Z,
	\end{equation}
	where $Z$ has density $z \mapsto  M z^{\beta -1} \e^{-c_0 z^\beta}\ind_{[a,\infty)}(z)$ for some $a >0$.
	So,  we get for $x\in (0, 1)$
	\begin{align*}
g_n(\lceil n x \rceil)  
&\leqslant M x^{-1/\gamma} (1-x)^{-1/\gamma} g\left(\frac{\lceil nx
  \rceil}{n}\right) \ex{h\left(\frac{b_n}{n}\H(\rddtree^{\lceil n
  x\rceil})\right)} \\ 
	&\leqslant M x^{-1/\gamma} (1-x)^{-1/\gamma} g\left(x\right)
      \ex{h\left(Cx^{1-1/\gamma} Z\right)}\\ 
	&\leqslant  M x^{-1/\gamma} (1-x)^{-1/\gamma} g\left(x\right)
      \int_a^\infty h\left(Cx^{1-1/\gamma}z\right) z^{\beta- 1} \e^{-c_0
      z^\beta} \, \dd z\\ 
	&\leqslant  M x^{-1/\gamma} (1-x)^{-1/\gamma} g\left(x\right)
      \int_a^\infty  z^{\beta- 1} \e^{c_1z^\eta-c_0 z^\beta}
      \ind_{\lbrace Cx^{1-1/\gamma} z \geqslant 1\rbrace}\, \dd z, 
	\end{align*}
 where we used  \eqref{b_n b_k} for
    the first and second inequalities,  the monotonicity of $g$ and $h$
    for the second and the fact that $\left(C x^{1-1/\gamma} z\right)^\eta\leq  c_1
    z^\eta $	for  some finite constant $c_1 >0$ for the last.  Notice that if $r < c_0$, then the function $z \mapsto \e^{c_1 z^\eta - (c_0-r) z^\beta}$ is bounded on $\R_+$ as $\beta > \eta$. It follows that
	\begin{align}
	\int_0^1 g_n(\lceil nx \rceil) \, \dd x 
&\leqslant M \int_0^1 x^{-1/\gamma} (1-x)^{-1/\gamma} g(x)\, \dd x
  \int_0^\infty z^{\beta- 1} \e^{-r z^\beta} \ind_{\lbrace
  Cx^{1-1/\gamma} z \geqslant 1\rbrace}\, \dd z \notag\\ 
	&\leqslant M \int_0^1 x^{-1/\gamma}(1-x)^{-1/\gamma} \e^{-r
      C^{-\beta}x^{-\beta(1-1/\gamma)}}g(x) \, \dd x \notag\\ 
	&\leqslant M \int_0^1 (1-x)^{-1/\gamma} \e^{- x^{-r_0}}g(x) \, \dd
      x< \infty,
\label{exponential growth, part 1}
	\end{align}
	where in the last inequality we used that the function $x \mapsto
    x^{-1/\gamma} \e^{x^{-r_0}-rC^{-\beta}x^{-\beta(1-1/\gamma)}}$ is
    bounded on $(0,1]$ as $\beta(1-1/\gamma) > r_0$. On the other
    hand,  we have 
	\begin{align}\label{exponential growth, part 2}
	 \frac{b_n}{n} \ex{f\left(1,\frac{b_n}{n} \H(\rddtree^n)\right)} \leqslant \frac{b_n}{n} g(1)\ex{h(Z)}\leqslant M\frac{b_n}{n}\int_1^\infty z^{\beta -1}\e^{cz^\eta - c_0 z^\beta}\, \dd z \leqslant M,
	 \end{align}
	where we used the first inequality from \eqref{ordre sto 2} with $k=n$  and
    the fact that $h$ in nondecreasing for the first inequality and that
    $b_n/n$ converges to $0$ as $n \to \infty$ for the last.  Combining \eqref{exponential growth, part 1} and \eqref{exponential growth, part 2}, we deduce from \eqref{decomposition} that
\[
\sup_{n \in \supp} \ex{\costj_n(f)}= \sup_{n \in \supp} \frac{b_n}{n^2}\ex{\sum_{w \in \rddtree^{n,\circ}}
  |\rddtree^n_{w}| f\left( \frac{|\rddtree^n_{w}|}{n}, \frac{b_n}{n}
    \H(\rddtree^n_{w})\right)} < \infty.
\]
\end{proof}

As a consequence of the following lemma, we get that $(\costj_n(x^\alpha u^\beta),
\,\allowbreak n \in \supp)$ is bounded in $L^p$ for some $p >1$.

\begin{lemma}
\label{lem:Amh^p}
	Let  $\alpha, \beta \in \real$ such that $\gamma \alpha + (\gamma -
    1)(\beta+1) >0$. For  every  $p \geq 1$ such that $p(\gamma
    \alpha + (\gamma-1)\beta)>1-\gamma$ and $\delta \in \real$, we have:
\begin{equation}
\label{1}
	\sup_{n\in \supp}\ex{\left(\frac{b_n}{n}\H(\rddtree^n)\right)^\delta
      \costj_n(x^\alpha u^\beta)^p} < \infty.  
\end{equation}
\end{lemma}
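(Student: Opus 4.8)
The plan is to first reduce to the case $p=1$ by a convexity inequality for the random measure $\costj_n$, and then to prove that case by rerunning the argument of Lemma~\ref{mass+height bounded in L1}, the only new work being to push the factor $(b_n\H(\rddtree^n)/n)^{\delta}$ through the spinal decomposition of \cite[Lemma 5.1]{janson2016asymptotic}. For the reduction: since $t\mapsto t^p$ is convex for $p\ge1$, Jensen's inequality for the finite measure $\costj_n$ gives $\costj_n(g)^p\le\costj_n(1)^{p-1}\costj_n(g^p)$ for $g\in\cb_+((0,1]\times\R_+^*)$ (trivial when $\costj_n(1)=0$); taking $g(x,u)=x^\alpha u^\beta$ and using $\costj_n(1)=\mathcal{A}_n^\circ(1)\le(b_n/n)\H(\rddtree^n)$ from \eqref{eq:mass-A_n} yields pathwise
\[
\Bigl(\tfrac{b_n}{n}\H(\rddtree^n)\Bigr)^{\delta}\costj_n(x^\alpha u^\beta)^p\le\Bigl(\tfrac{b_n}{n}\H(\rddtree^n)\Bigr)^{\delta+p-1}\costj_n\bigl(x^{p\alpha}u^{p\beta}\bigr),
\]
and the pair $(p\alpha,p\beta)$ satisfies $\gamma(p\alpha)+(\gamma-1)(p\beta+1)=p(\gamma\alpha+(\gamma-1)\beta)+(\gamma-1)>0$ by hypothesis. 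Hence it suffices to prove \eqref{1} for $p=1$ and \emph{arbitrary} $\alpha,\beta,\delta\in\R$ with $\gamma\alpha+(\gamma-1)(\beta+1)>0$. When $\delta=0$ this is exactly Lemma~\ref{mass+height bounded in L1}\,(i): use $f(x,u)=x^\alpha h(u)$ with $h(u)=u^\beta$ if $\beta\le0$; $f(x,u)=g(x)u^\beta$ with $g(x)=x^\alpha$ if $\beta>0$ and $\alpha\le0$; and if $\alpha,\beta>0$ bound $\costj_n(x^\alpha u^\beta)\le\costj_n(u^\beta)$ and apply the second case with $g\equiv1$. In each instance the integral test \eqref{int test bounded} reduces to $\int_0x^{\alpha\gamma/(\gamma-1)+\beta}\,\dd x<\infty$ (resp.\ $\int_0x^\beta\,\dd x<\infty$), which is precisely $\gamma\alpha+(\gamma-1)(\beta+1)>0$ (resp.\ $\beta>0$).

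For the case $p=1$ with general $\delta$, I would use the spinal decomposition in the form that records, together with a marked vertex $w$ and its subtree $\rddtree^n_w$, the tree $\widehat\rddtree^{(w)}$ obtained from $\rddtree^n$ by collapsing $\rddtree^n_w$ to a single vertex; conditionally on $|\rddtree^n_w|=k$ the two are independent, with laws those of $\rddtree^k$ and $\rddtree^{n-k+1}$. Writing $u=\tfrac{b_n}{n}\H(\rddtree^n_w)$, one has $\H(\widehat\rddtree^{(w)})\le\H(\rddtree^n)\le\H(\widehat\rddtree^{(w)})+\H(\rddtree^n_w)$. If $\delta\ge0$, then $(\tfrac{b_n}{n}\H(\rddtree^n))^{\delta}\le C\bigl(u^{\delta}+(\tfrac{b_n}{n}\H(\widehat\rddtree^{(w)}))^{\delta}\bigr)$: the $u^{\delta}$ part turns $\costj_n(x^\alpha u^\beta)$ into $\costj_n(x^\alpha u^{\beta+\delta})$, covered above since $\gamma\alpha+(\gamma-1)(\beta+\delta+1)\ge\gamma\alpha+(\gamma-1)(\beta+1)>0$, while the other part factors, given $k$, against $\ex{(\tfrac{b_n}{n}\H(\rddtree^{n-k+1}))^{\delta}}\le C\bigl(\tfrac{n-k+1}{n}\bigr)^{(1-1/\gamma)\delta}\le C$ (by Lemma~\ref{height finite moments} and \eqref{b_n bounded}), again reducing to $\delta=0$. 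If $\delta<0$, I would split the vertices $w$ according to $|\rddtree^n_w|<n/2$ or $|\rddtree^n_w|\ge n/2$. On the first event, $(\tfrac{b_n}{n}\H(\rddtree^n))^{\delta}\le(\tfrac{b_n}{n}\H(\widehat\rddtree^{(w)}))^{\delta}$ and, as $\widehat\rddtree^{(w)}$ then has at least $n/2$ vertices, the conditional expectation of this factor is $\le C$ (Lemma~\ref{height finite moments}, \eqref{b_n bounded}), reducing to $\delta=0$. On the second event, $(\tfrac{b_n}{n}\H(\rddtree^n))^{\delta}\le u^{\delta}$, and the spinal decomposition bounds the corresponding part of $\ex{\costj_n(x^\alpha u^{\beta+\delta})}$ by
\[
\frac{C}{n}\sum_{n/2\le k<n}\Bigl(\tfrac{n^2}{k(n-k)}\Bigr)^{1/\gamma}(k/n)^\alpha\,\ex{\bigl(\tfrac{b_n}{n}\H(\rddtree^k)\bigr)^{\beta+\delta}}+\frac{b_n}{n}\,\ex{\bigl(\tfrac{b_n}{n}\H(\rddtree^n)\bigr)^{\beta+\delta}};
\]
since $k/n\in[1/2,1)$, every factor except $(\tfrac{n}{n-k})^{1/\gamma}$ is bounded (Lemma~\ref{height finite moments} and \eqref{b_n bounded}), and $\tfrac1n\sum_{n/2\le k<n}(\tfrac{n}{n-k})^{1/\gamma}\le C$ because $\sum_{1\le j\le M}j^{-1/\gamma}\le CM^{1-1/\gamma}$ for $\gamma>1$.

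The main obstacle is this last step, specifically the regime $\delta<0$ with $|\rddtree^n_w|$ comparable to $n$: there the pruned tree $\widehat\rddtree^{(w)}$ is tiny with height of order one, so $(\tfrac{b_n}{n}\H(\widehat\rddtree^{(w)}))^{\delta}$ is of order $(b_n/n)^{\delta}\to\infty$ and the global height cannot be passed to the pruned side; it must be absorbed entirely into $u^{\delta}=(\tfrac{b_n}{n}\H(\rddtree^n_w))^{\delta}$, which is harmless only because $\rddtree^n_w$ is then of size comparable to $n$ and $\tfrac{b_k}{k}\H(\rddtree^k)$ has moments of every signed order, uniformly in $k$, by Lemma~\ref{height finite moments}. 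Tuning the threshold at $|\rddtree^n_w|=n/2$ keeps both regimes summable uniformly in $n$, which gives \eqref{1} for $p=1$ and, via the reduction above, in general.
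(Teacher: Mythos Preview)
Your reduction to $p=1$ via Jensen on $\costj_n$ and the bound $\costj_n(1)\le M_n:=\tfrac{b_n}{n}\H(\rddtree^n)$ is correct and is exactly the paper's first move. The divergence is in how you then handle the remaining factor $M_n^{\delta'}$. The paper does not reopen the spinal decomposition at all: it simply chooses $p_0>p$ still satisfying $p_0(\gamma\alpha+(\gamma-1)\beta)>1-\gamma$, writes
\[
M_n^{\delta}\,\costj_n(x^\alpha u^\beta)^p
= M_n^{\delta+p/q_0}\cdot\bigl(M_n^{-p/q_0}\costj_n(x^\alpha u^\beta)^p\bigr),
\qquad \tfrac1{p_0}+\tfrac1{q_0}=1,
\]
and applies H\"older in expectation with exponents $(s,r)$, $r=p_0/p$. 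The second factor raised to the $r$th power is $M_n^{-p_0/q_0}\costj_n(x^\alpha u^\beta)^{p_0}\le \costj_n(x^{p_0\alpha}u^{p_0\beta})$ by the same pathwise convexity bound you use, so the $M_n$-power cancels and one is left with $\ex{M_n^{s(\delta+p/q_0)}}^{1/s}\ex{\costj_n(x^{p_0\alpha}u^{p_0\beta})}^{1/r}$, finite by Lemmas~\ref{height finite moments} and~\ref{mass+height bounded in L1}. This two-line H\"older decoupling replaces your entire case analysis on $\delta$ and the pruned-tree argument.

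Your longer route is salvageable but contains a genuine error: the pruned tree $\widehat\rddtree^{(w)}$, conditionally on $|\rddtree^n_w|=k$ with $w$ uniform, is \emph{not} distributed as $\rddtree^{n-k+1}$. Its law is that of $\rddtree^{n-k+1}$ \emph{size-biased by the number of leaves} (the collapsed vertex must land on a leaf). A small example with, say, $\pr{\xi=0}=\tfrac12$, $\pr{\xi=1}=\tfrac14$, $\pr{\xi=3}=\tfrac14$ already shows the two laws differ. The fix is that the Radon--Nikodym derivative $|\mathrm{Lf}(\cdot)|/\ex{|\mathrm{Lf}(\rddtree^{m})|}$ is bounded uniformly in $m\in\supp$ (since $|\mathrm{Lf}|\le m$ and $\ex{|\mathrm{Lf}(\rddtree^{m})|}\sim m\,\pr{\xi=0}$ by the local limit theorem), so the height-moment bounds of Lemma~\ref{height finite moments} transfer with a constant. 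With this correction your argument goes through, but it is substantially more work than the paper's H\"older trick.
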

\begin{proof}
	Set $M_n =\frac{b_n}{n}\H(\rddtree^n) $ for $n\in \supp$. 
Let $p_0, q_0 \in (1,\infty)$ such that $1/p_0 + 1/q_0 = 1$. By Hölder's
inequality and thanks to
    \eqref{eq:mass-A_n}, we have 
	\begin{equation}\label{3}
	\costj_n(x^\alpha u^\beta)^{p_0} 
\leqslant    M_n^{p_0/q_0}\costj_n(x^{p_0\alpha}u^{p_0\beta}). 
	\end{equation}
Assume that $p_0>p$ satisfies $p_0(\gamma
    \alpha + (\gamma-1)\beta)>1-\gamma$. Set $r=p_0/p$ and $s$  such
    that $1/r+1/s=1$. We deduce that
	\begin{align*}
	\ex{M_n^\delta \costj_n(x^\alpha u^\beta)^p} 
&= \ex{M_n^{\delta+p/q_0}M_n^{-p/q_0} \costj_n(x^\alpha u^\beta)^p} \\
	&\leqslant \ex{M_n^{s(\delta + p/q_0)}}^{1/s} \ex{M_n^{-p_0/q_0}
      \costj_n(x^\alpha u^\beta)^{p_0}}^{1/r} \\ 
	&\leqslant \ex{M_n^{s(\delta + p/q_0)}}^{1/s}
      \ex{\costj_n(x^{p_0\alpha}u^{p_0\beta})}^{1/r}, 
	\end{align*}
where we used Hölder's inequality for the first inequality and
\eqref{3} for the second. Since $p_0(\gamma
    \alpha + (\gamma-1)\beta)>1-\gamma$, the function $f(x,u) = x^{p_0\alpha}
    u ^{p_0\beta}$ satisfies assumption (i) of Lemma \ref{mass+height bounded
      in L1}. We deduce that  $\sup_{n\in\supp}
    \ex{\costj_n(x^{p_0\alpha}u^{p_0\beta})}  <  \infty$. 
Then use Lemma \ref{height finite moments} to get \eqref{1}.
\end{proof}

\section{Functionals of the mass and height on the stable Lévy
  tree}\label{Sect Levy measure} 
In this  section, our goal  is to study  the finiteness and  compute the
first moment  of the random variable  $\psij_\rdtree(f)$ where $\rdtree$
is the  stable Lévy tree and  $f$ is a measurable  function. Recall from
Section \ref{levy tree} that $H$ denotes the $\psi$-height process under
its excursion measure $\n$, $\sigma$ is the duration of an excursion and
$\H$ is its height.  Notice that $\sigma$  and $\H$ are the mass and the
height of  the tree  $\rdtree_H$ coded by  $H$. Furthermore,  the stable
Lévy tree $\rdtree$  (under $\P$) is the real tree  $\rdtree_H$ coded by
$H$,         see          Remark         \ref{rem:coding},         under
$\n^{(1)}[\bullet]= \n[\bullet \, |\, \sigma=1]$.

\subsection{On the fragmentation (on the skeleton)
 of  Lévy trees}
\label{sec:frag}
In this section only we consider a general  continuous height process $H$
under its excursion measure $\n$
 associated with a branching mechanism
$\psi(\lambda)= a\lambda + \beta (\lambda^2/2)+ \int
\pi(\dd r) (\e^{-\lambda r} -1 + \lambda r)$ with $a, \beta \geq 0$,
$\pi$ a $\sigma$-finite measure on $(0, \infty )$ such that $\int
\pi(\dd r) \, (r \wedge r^2)<\infty$ and such that
$\int^\infty  \dd \lambda/ \psi(\lambda)<\infty $. We refer to  \cite[Section~1]{duquesne2002random} for a complete presentation of the subject.

We will present a decomposition of a general Lévy tree
using Bismut's decomposition. 
Define the length and height of the excursion of $H$
    above level $r$ that straddles $s$ 
\begin{equation}
	\sigma_{r,s} = \int_0^\sigma \ind_{\lbrace m(s,t)\geqslant
      r\rbrace}\, \dd t 
=\ttt_{r,s}^+-\ttt_{r,s}^-
\quad \text{and}\quad 
\H_{r,s} = \sup_{t \in [\ttt_{r,s}^{\scriptscriptstyle
    -},\ttt_{r,s}^{\scriptscriptstyle +}]}H(t)-r, 
\end{equation}
	where $m(s,t) = \inf_{[s\wedge t , s \vee t]}H$ is the minimum of
    $H$ between times $s,t$ and $\ttt_{r,s}^{-} = \sup\lbrace t < s\colon
    \, H(t)=r\rbrace$ and $\ttt_{r,s}^{+} = \inf\lbrace t>s \colon \,
    H(t)=r\rbrace$ are the beginning and the end of the excursion of $H$
    above level $r$ that straddles time $s$, see Figure
    \ref{excursion}. Then, we consider $H^+_{r,s}=(H^+_{r,s}(t), t\geq
    0)$ the excursion of $H$ 
    above level $r$ that straddles $s$ defined as:
\[
H^+_{r,s}(t)=H\left((t+ \ttt_{r,s}^{-} ) \wedge \ttt_{r,s}^{+} \right) -r,
\]
and  $H^-_{r,s}=(H^-_{r,s}(t), t\geq
    0)$ the excursion of $H$ below defined as $H^-_{r,s}(t)=H(t)$ for
    $t\in [0, \ttt_{r,s}^{-} ]$ and 
$H^-_{r,s}(t+ \sigma_{r,s})$ for $t>\ttt_{r,s}^{-}$. Notice that the
duration and height of the excursion $H^+_{r,s}$ are given by
$\sigma^+_{r, s}=\sigma_{r, s}$ and $H_{r,s}$; that the duration of the excursion
$H^-_{r,s}$ is given by $\sigma^-_{r, s}=\sigma- \sigma_{r,s}$; and 
that 
\begin{equation}
   \label{eq:decomp-sigma}
\sigma= \sigma^+_{r, s}+\sigma^-_{r, s}. 
\end{equation}

	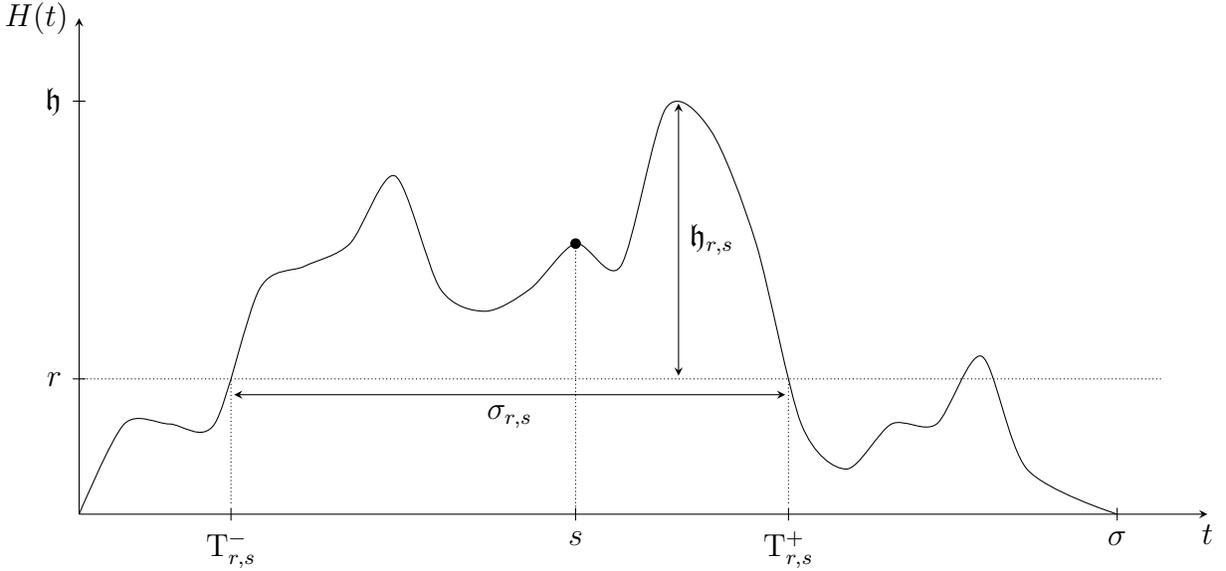
\begin{figure}[ht]
		\centering
		\begin{tikzpicture}[scale=0.6,>=stealth]
		\draw [->] (0,0) -- (25,0) node[below] {$t$};
		\draw [->] (0,0) -- (0,11) node[left] {$H(t)$};
		\draw [name path=graph] plot [smooth] coordinates {(0,0)  (1,2) (2,2) (3,2) (4,5) (5,5.5) (6,6) (7,7.5) (8,5) (9,4.5) (10,5) (11,6) (12,5.5) (13,9) (14,8.5) (15,6) (16,2) (17,1) (18,2) (19,2) (20,3.5) (21,1) (23,0)};
		\draw [thin] (11,-4pt) node[below] {$s$} -- (11,4pt);
		\draw [thin] (23,-4pt) node[below] {$\sigma$} -- (23,4pt);
		\draw ($(-4pt,9)+(0,4.5pt)$) node[left] {$\H$} -- ($(4pt,9)+(0,4.5pt)$);
		\draw (-4pt,3) node[left] {$r$} -- (4pt,3);
		\draw [densely dotted] (11,0) -- (11,6);
		\draw [densely dotted] (0,3) -- (24,3);
		\filldraw[fill=black] (11,6) circle [radius=3pt];
		\path [name path=hor] (0,3)--(25,3);
		\draw [name intersections={of=graph and hor, by={x,y}},<->] ($(x) + (2pt,-10pt)$) --($(y) + (-2pt,-10pt)$) node[midway, below] {$\sigma_{r,s}$};
		\draw [<->] ($(13,9)+(8pt,3pt)$) --  ($(13,3)+(8pt,2pt)$) node[midway,right] {$\H_{r,s}$};
		\node (minus) at (x|- 0,0){};
		\draw ($(minus)+(0,-4pt)$) node[below]{$\ttt_{r,s}^{-}$} --
        ($(minus)+(0,4pt)$); 
		\draw[densely dotted] (minus) -- (x);
		\node (plus) at (y|- 0,0){};
		\draw ($(plus)+(0,-4pt)$) node[below]{$\ttt_{r,s}^{+}$} --
        ($(plus)+(0,4pt)$); 
		\draw[densely dotted] (plus) -- (y);
		\end{tikzpicture}
		\caption{The duration $\sigma_{r,s}$ and the height $\H_{r,s}$ of the excursion of $H$ above level $r$ that straddles time $s$.}
		\label{excursion}
	\end{figure}

    Recall notations from Remark \ref{rem:coding}.    For
    $s\in [0,  \sigma]$ and $r\in  [0, H(s)]$, the  function $H^+_{r,s}$
    codes     the   subtree   $\rdtree_{r\vir s}:=(\rdtree_H)_{r\vir p(s)}$   and
    $H^-_{r,s}$          codes                  the          subtree
    $\rdtree_{r\vir s}^-:=(\rdtree_H\setminus             \rdtree_{r\vir
      s})\cup 
    \{x_{r,s}\}$,
    where  $x_{r,s}$ is  the ancestor  of $p(s)$,  the image  of $s$  on
    $\rdtree_H$, at  distance $r$  from the root  of $\rdtree_H$.   The next
    lemma  says that  when $s$  and $r$  are chosen  ``uniformly'' under
    $\n$, then  the random  trees $\rdtree_{r\vir s}$  and
    $\rdtree_{r\vir s}^-$
    are    independent    and    distributed    as    $\rdtree_H$    under
    $\n[\sigma\bullet]$. This  result is  a consequence of Bismut's
    decomposition of the excursion of the height process.
\begin{lemma}
   \label{lem:H+,H-}
	Let $H$ be a continuous height process associated with a general  branching mechanism
      under
its excursion measure $\n$. Then for every nonnegative measurable functions
$f_+$ and $f_-$ defined on $\cc_+(\R_+)$, we have:
\[
\n\left[ \int_0^\sigma \dd s \int_0^{H(s)} f_+(H^+_{r,s}) \, f_- (H^-_{r,s})
  \, \dd r\right]
= \n\left[ \sigma f_+(H)\right]\, \n\left[ \sigma f_-(H)\right].
\]
\end{lemma}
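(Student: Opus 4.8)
The plan is to slice the excursion of $H$ at a fixed level $r$, exploit the Poissonian structure of the excursions of $H$ above that level, and then integrate the resulting identity over $r$.

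First, by Fubini (measurability of $(r,s)\mapsto(H^+_{r,s},H^-_{r,s})$ being obtained as in the proof of Lemma~\ref{psi well defined}), I would write
\[
\n\!\left[\int_0^\sigma\!\dd s\int_0^{H(s)}\!f_+(H^+_{r,s})\,f_-(H^-_{r,s})\,\dd r\right]
=\int_0^\infty\!\dd r\;\n\!\left[\int_0^\sigma\!\ind_{\{H(s)>r\}}\,f_+(H^+_{r,s})\,f_-(H^-_{r,s})\,\dd s\right].
\]
For fixed $r>0$, the open set $\{t\in(0,\sigma)\colon H(t)>r\}$ is a countable disjoint union of excursion intervals $(\ttt_i^-,\ttt_i^+)$ of $H$ above level $r$, and on each of them the pair $(H^+_{r,s},H^-_{r,s})$ is constant, equal to $(H^i,\check H^i)$, where $H^i=H((\,\cdot\,+\ttt_i^-)\wedge\ttt_i^+)-r$ is the $i$-th excursion above level $r$, of duration $\sigma_i=\ttt_i^+-\ttt_i^-$, and $\check H^i$ is the excursion obtained from $H$ by deleting the interval $[\ttt_i^-,\ttt_i^+]$. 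Hence the inner integral equals $\sum_i\sigma_i\,f_+(H^i)\,f_-(\check H^i)$.

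Next I would invoke the special Markov property of the height process (Section~1 of \cite{duquesne2002random}, with the local time of $H$ normalized as therein): letting $\mathcal F_r$ be the $\sigma$-field generated by the excursion $H$ truncated below level $r$ and $L^r_\sigma$ the total local time of $H$ at level $r$ (which is $\mathcal F_r$-measurable), conditionally on $\mathcal F_r$ the point measure $\sum_i\delta_{(x_i,H^i)}$ — with $x_i\in[0,L^r_\sigma]$ the local time at level $r$ accumulated by time $\ttt_i^-$ — is a Poisson point measure on $[0,L^r_\sigma]\times\cc_+(\R_+)$ with intensity $\ind_{[0,L^r_\sigma]}(x)\,\dd x\otimes\n(\dd e)$; moreover $H$ is recovered from $\mathcal F_r$ together with this point measure, and $\check H^i$ is obtained from the same data after deleting the atom $(x_i,H^i)$. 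The Palm (Mecke) formula then gives, $\n$-a.e.,
\[
\n\!\left[\sum_i\sigma_i\,f_+(H^i)\,f_-(\check H^i)\,\middle|\,\mathcal F_r\right]
=\left(\int_0^{L^r_\sigma}\!\dd x\int\n(\dd e)\,\sigma(e)\,f_+(e)\right)\n\!\left[f_-(H)\,\middle|\,\mathcal F_r\right]
=L^r_\sigma\,\n[\sigma f_+(H)]\,\n\!\left[f_-(H)\,\middle|\,\mathcal F_r\right],
\]
where $\sigma(e)$ denotes the duration of $e$; the point is that compensating the added atom returns the original configuration, so $f_-$ is evaluated at the full excursion $H$ itself. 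Taking $\n$-expectations yields $\n[\sum_i\sigma_i f_+(H^i)f_-(\check H^i)]=\n[\sigma f_+(H)]\,\n[L^r_\sigma f_-(H)]$. Plugging this back and integrating over $r$, the left-hand side of the lemma becomes $\n[\sigma f_+(H)]\cdot\n\!\big[f_-(H)\int_0^\infty L^r_\sigma\,\dd r\big]$, and the occupation density formula for the local times of $H$ gives $\int_0^\infty L^r_\sigma\,\dd r=\int_0^\sigma\dd t=\sigma$, which is the announced product $\n[\sigma f_+(H)]\,\n[\sigma f_-(H)]$.

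The main obstacle I anticipate is quoting and using the special Markov property in exactly this form — conditionally on everything below level $r$, the excursions above level $r$ form a Poisson point measure whose intensity is Lebesgue measure on $[0,L^r_\sigma]$ tensored with $\n$ itself — together with the Palm-formula bookkeeping that turns the excised-subtree excursion $\check H^i$, after the compensation step, into the full excursion $H$ (which is what produces $\n[f_-(H)\mid\mathcal F_r]$ rather than some further-truncated object), and with checking that the normalization of local time used in the special Markov property is the one for which the occupation formula $\int_0^\infty L^r_\sigma\,\dd r=\sigma$ holds, so that no spurious constant appears. Everything else — Fubini, the decomposition of $\{H>r\}$ into excursion intervals, and the occupation identity — is routine.
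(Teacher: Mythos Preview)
Your approach is correct and genuinely different from the paper's. The paper argues ``vertically'': it fixes the leaf (picks $s$ uniformly on $[0,\sigma]$ via the Bismut identity $\n[\int_0^\sigma \dd s\,F(\rho_s,\eta_s)]=\int\MM(\dd\mu,\dd\nu)\,F(\mu,\nu)$ for the exploration process $(\rho,\eta)$), then uses the Markov property of $\rho$, time reversal, and the convolution structure $\MM_I*\MM_J=\MM_{I\cup J}$ to split the spine at height $r$ and factorise. Your argument instead slices ``horizontally'' at a fixed level $r$: you invoke the branching property to make the excursions above level $r$ a Poisson point measure with intensity $\dd x\otimes\n$ on $[0,L^r_\sigma]\times\cc_+(\R_+)$, apply the reduced Mecke formula (so that $\check H^i$ becomes the full $H$ after compensation), and close with the occupation identity $\int_0^\infty L^r_\sigma\,\dd r=\sigma$. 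Your route is more geometric and avoids the exploration-process algebra; the paper's route is heavier but stays entirely within the $(\rho,\eta)$ framework of \cite{duquesne2002random} and so needs no separate citation of the branching property or of the reconstruction map.

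The caveat you flag is the right one: for a general branching mechanism the height process is not Markov, so ``$\mathcal F_r$'' must be the $\sigma$-field generated by the time-changed process below level $r$ (excursions above $r$ removed and gaps closed), not by $H\wedge r$ itself---otherwise the excursion durations $\sigma_i$ would already be $\mathcal F_r$-measurable and the Poisson description would fail. With that reading, $L^r_\sigma$ is $\mathcal F_r$-measurable, the reconstruction map $\Phi(\tilde H^r,N)=H$ and $\Phi(\tilde H^r,N-\delta_{(x_i,H^i)})=\check H^i$ are well defined, and your Mecke computation goes through verbatim. The branching property in this form, and the associated local-time normalisation compatible with the occupation formula, are available in \cite{duquesne2002random,duquesne2005probabilistic}; once that is pinned down, the rest of your argument is routine as you say.
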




\begin{remark}
   \label{rem:voisin}
   Lemma  \ref{lem:H+,H-} allows  to recover directly 
   the
   distribution of the size of the two fragments given by the 
   fragmentation   measure  $q^{ske}(\dd s,\! \dd r)=2 \beta
   \sigma_{r,s}^{-1} \ind_{[0,
     H(s)]}(r) \, \dd s \, \dd r $  on   the  skeleton    in   \cite[Lemma
   5.1]{voisin}. The Brownian case ($\pi=0$ and $\beta>0$) appears
   already in \cite{ap:sac} and then in \cite{as:psf}. 
\end{remark}

\begin{proof}
We follow the proof of \cite[Lemma 3.4]{duquesne2005probabilistic} and use notations from \cite{duquesne2002random} on the càd-làg Markov process
process $\left(\rho_s, \eta_s; \, s\in [0, \sigma]\right)$ under
$\n$, which is an $\M(\R_+)^2$-valued process.  The  process
$(\rho, \eta)$ is a Markov process which allows to recover the (\emph{a priori}
non-Markovian) height process as a.s.  $[0,
H(t)]=\Supp(\rho_t)=\Supp(\eta_t)$. (The process $\rho$ is called the
exploration process associated with $H$ and is strong Markov.) 
Thanks to \cite[Proposition~3.1.3]{duquesne2002random}, we have that:
\begin{equation}
   \label{eq:rho-eta=M}
\n\left[\int_0 ^\sigma \dd s \, F( \rho_s, \eta_s)\right]
=\int \MM( \dd \mu, \! \dd \nu) \, F(\mu, \nu),
\end{equation}
where
$\MM=\int_0^\infty  \dd t \, \e^{-at} \, \MM_{[0, t]}$ and, for any
interval $I$,  $\MM_I$ is the law on $\M(\R_+)^2$ of the pair $(\mu_I,
\nu_I)$ defined by:
\begin{align*}
   \mu_I(f)
&= \int \cn (\dd r, \! \dd \ell, \! \dd x)\, \ind_I(r)\, x f(r) 
+ \beta \int_I \dd r \, f(r),\\
   \nu_I(f)
&= \int \cn (\dd r, \! \dd \ell, \! \dd x) \, \ind_I(r) (\ell- x) f(r) 
+ \beta \int_I \dd r \, f(r),
\end{align*}
with $\cn  (\dd r, \! \dd  \ell, \! \dd  x)$ a Poisson point  measure on
$(\R_+)^3$                         with                        intensity
$ \dd  r \,  \pi(\dd \ell)\,  \ind_{[0, \ell]} (x)\,  \dd x$. 
We write $\tilde \rho=(\rho, \eta)$ and $\tilde \eta=(\eta, \rho)$.
We recall
that the process $\left( \rho_s;    \,    s\in    [0,    \sigma]\right)$
is strong Markov under $\n$, see
\cite[Proposition~1.2.3]{duquesne2002random}, and the time reversal
property of $(\rho, \eta)$, see \cite[Corollary~3.1.6]{duquesne2002random},  that is 
$\left(\tilde \rho_{s};     \,    s\in     [0,
  \sigma]\right)$  and $\left(\tilde \eta_{(\sigma-s)-};     \,    s\in     [0,
  \sigma]\right)$ have the same distribution under $\n$.

For a measure $\mu$ on $\R_+$ and $u>0$ we define the measure
$\mu^{[u]}$, the measure $\mu$ erased up to level $u$ and shifted
by $u$, by $\mu^{[u]} (f)=\int f(r-u)\ind_{\{r>u\}} \,
\mu(\dd r)$ for $f\in \cb_+(\R_+)$. We write
 $\tilde \rho^{[u]}=(\rho^{[u]}, \eta^{[u]})$
and  similarly for $\tilde \eta$. 
 Let $F^\varepsilon_i$, for
$\varepsilon\in\{+, -\}$ and $i\in \{{\rm g}, {\rm d}\}$, be measurable nonnegative
functionals defined on the set of càd-làg $\M(\R_+)^2$-valued functions. We
shall compute:
\begin{multline*}
A= \n\Big[ \int_0^\sigma \dd s \int_0^{H(s)}\dd r\, 
F_{\rm d}^+ \left( \tilde \rho_{s+t}^{[r]}; t\in [0, \ttt^+_{r,s} -s])\right)
F_{\rm g}^+ \left( \tilde \eta_{(s-t)-}^{[r]}; t\in [0, \ttt^-_{r,s} -s]\right)\\
F_{\rm d}^- \left( \tilde \rho_{\ttt^+_{r,s}+t}; t\in
  [0,\sigma-  \ttt^+_{r,s}]\right) 
F_{\rm g}^- \left(\tilde \eta_{(\ttt^-_{r,s}-t)-}; t\in [0,
  \ttt^-_{r,s}]\right) 
\Big].
  \end{multline*}
We write
$\ind_{[0, r]} \tilde \rho=(\ind_{[0, r]} \rho, \ind_{[0, r]} \eta)$. Using the  Markov property of $\tilde \rho$ at time $s$, the time
reversal property,  again the  Markov property of $\tilde \rho$ at
time $s$, \eqref{eq:rho-eta=M}
and 
the transition kernel of $\tilde \rho$ given in
\cite[Proposition~3.1.2]{duquesne2002random}, we get that:
\[
A= \n\left[ \int_0^\sigma \dd s \int_0^{H(s)}\dd r\, 
G^+ \left( \tilde \rho_{s}^{[r]}\right)
G^- \left(\ind_{[0, r]}\tilde \rho_{s}\right)\right],
\]
for some measurable nonnegative functions $G^-$ and $G^+$ such  that
 for $\varepsilon\in \{+, -\}$
\begin{equation}
   \label{eq:MMG}
\MM[G^\varepsilon]=\n\left[\int_0^\sigma \dd s \, 
F_\text{d}^\varepsilon( \tilde \rho_{s+t}, t\in [0, \sigma-s])
F_\text{g}^\varepsilon( \tilde \rho_{(s-t)-}, t\in [0, s])
\right].
\end{equation}
 Then using
\eqref{eq:rho-eta=M} and the definition of $\MM$, we get, with $\tilde \mu=(\mu, \nu)$:
\begin{align*}
A
&= \int_0^\infty  \dd t \, \e^{-at} \int_0^t \dd r\,\, \MM_{[0, t]}(\dd
  \tilde \mu) \,  
G^+ \left( \tilde \mu^{[r]}\right)
G^- \left(\ind_{[0, r]}\tilde \mu\right)   \\
&= \int_0^\infty  \dd t \, \e^{-at} \int_0^t \dd r\,\, \MM_{[0, t-r]}[G^+]
  \, \MM_{[0, r]}[G^-]  \\
&= \left(\int_0^\infty  \dd r \, \e^{-ar} \, \MM_{[0, r]}[G^+]\right)
\left(\int_0^\infty  \dd r \, \e^{-ar} \, \MM_{[0, r]}[G^-]\right)\\
&= \MM[G^+]\, \MM[G^-],
\end{align*}
where we used the independence  property, that is $\MM_I*\MM_J=\MM_{I\cup
  J}$ when $I$ and $J$ are disjoint, for the second equality. 
We deduce from \eqref{eq:MMG} and the monotone class
theorem that for any measurable nonnegative
functionals $F^+$ and $F^-$ defined on the set of càd-làg
$\M(\R_+)^2$-valued functions, we have:
\begin{multline*}
   \n\left[\int_0^\sigma \dd s \int_0^{H(s)}\dd r\, 
F^+(\tilde \rho_{t+ T^-_{r, s}}; t\in [0, \sigma_{r,s}])
F^-(\tilde \rho_{t+ 
  \sigma_{r,s}\ind_{\{t>T^-_{r,s}\}}}; t\in [0, \sigma-\sigma_{r,s}]) 
\right]\\
\begin{aligned}
&= \n\left[\int_0^\sigma \dd s\, F^+(\tilde \rho_t; t\in [0, \sigma])\right]
 \n\left[\int_0^\sigma \dd s\, F^-(\tilde \rho_t; t\in [0,
  \sigma])\right].\\
&= \n\left[\sigma F^+(\tilde \rho_t; t\in [0, \sigma])\right]
 \n\left[\sigma  F^-(\tilde \rho_t; t\in [0,
  \sigma])\right].   
\end{aligned}
\end{multline*}
Then use that $H$  is a measurable functional of the
exploration process $\tilde \rho$ to conclude. 
\end{proof}

\subsection{First moment of $\Psi_\rdtree$}
We start with the main result of this section which gives the first
moment of functionals of the stable Lévy tree. 
Recall that $\rdtree_H$ is the real tree coded by $H$, see Remark
\ref{rem:coding}.

\begin{proposition}
   \label{prop:EYf}
	Let $\rdtree$ be the stable Lévy tree with branching mechanism
    $\psi(\lambda) = \kappa \lambda^{\gamma}$ where $\kappa >0$ and
    $\gamma \in (1,2]$. Let $f\in \cb_+(\T)$, and set $\tilde f(T,r)=f(T)$ for $T\in \T$ and
$r\in \R_+$. We have:
\begin{equation}
   \label{eq:EYf}
\E\left[\Psi_{\rdtree}(\tilde f)\right] =
 \n    \left[    \sigma     (1-\sigma)^{-1/\gamma}    f
  (\rdtree_H)\ind_{\{\sigma<1\}}\right].
\end{equation}
\end{proposition}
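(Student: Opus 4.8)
The plan is to rewrite $\E[\Psi_\rdtree(\tilde f)]$ via the alternative expression \eqref{length measure} for $\Psi_\tree$, disintegrate the conditioning $\{\sigma = 1\}$ defining $\rdtree$, and then apply Bismut's decomposition from Lemma~\ref{lem:H+,H-}. First I would recall that $\rdtree$ under $\P$ is distributed as $\rdtree_H$ under $\n^{(1)}[\bullet] = \n[\bullet \mid \sigma = 1]$, so that by \eqref{length measure},
\[
\E[\Psi_\rdtree(\tilde f)] = \n^{(1)}\!\left[\int_{\rdtree_H} \m\big((\rdtree_H)_y\big)\, f\big((\rdtree_H)_y\big)\, \ell(\dd y)\right].
\]
Rewriting the length-measure integral back over the coding interval: for $y \in \rdtree_H$ ranging over the skeleton, $y = x_{r,s}$ (the ancestor at height $r$ of $p(s)$) and the correct way to integrate against $\ell(\dd y)$ along ancestral lines is via $\int_0^\sigma \dd s \int_0^{H(s)}(\cdots)\,\dd r$ after dividing by the local multiplicity $\sigma_{r,s}$ (the mass of the subtree above $x_{r,s}$, which is exactly the $\dd s$-measure of the set of $s'$ with the same ancestor at height $r$). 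Concretely, using the identity $\m((\rdtree_H)_y) = \sigma_{r,s} = \sigma^+_{r,s}$ for $y = x_{r,s}$, one gets
\[
\int_{\rdtree_H} \m\big((\rdtree_H)_y\big)\, f\big((\rdtree_H)_y\big)\, \ell(\dd y)
= \int_0^\sigma \dd s \int_0^{H(s)} \frac{1}{\sigma^+_{r,s}}\cdot \sigma^+_{r,s}\cdot f(\rdtree_{r\vir s})\,\dd r
= \int_0^\sigma \dd s \int_0^{H(s)} f(\rdtree_{r\vir s})\,\dd r,
\]
since the subtree above $x_{r,s}$ is coded by $H^+_{r,s}$, i.e. $\rdtree_{r\vir s} = \rdtree_{H^+_{r,s}}$. (This is essentially the same Fubini computation proving Proposition~\ref{length measure}, carried out on the coding interval rather than the tree.)

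Next I would remove the conditioning. Writing $g(H) := \int_0^\sigma \dd s \int_0^{H(s)} f(\rdtree_{H^+_{r,s}})\,\dd r$, I need $\n^{(1)}[g(H)]$. The difficulty is that $g$ is not a function of the rescaled excursion alone in a trivial way, but $f(\rdtree_{H^+_{r,s}})$ depends on $H^+_{r,s}$ as an unscaled tree. The standard device is to integrate against the duration distribution: by \eqref{density duration excursion}, $\n[\,\cdot\,] = \int_0^\infty \excm{x}[\,\cdot\,]\,\pi_*(\dd x)$ with $\pi_*(\dd x) = \mathfrak{g}(0)\,x^{-1-1/\gamma}\,\dd x$, so it suffices to compute $\n[\,\sigma\,\ind_{\{\sigma \in A\}}\,g(H)\,]$ for the relevant functional (the extra $\sigma$ weight appears naturally from Lemma~\ref{lem:H+,H-}) and then read off the conditional law at $\sigma = 1$ by a scaling/change-of-variables argument. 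Here I apply Lemma~\ref{lem:H+,H-} with $f_+(H) = f(\rdtree_H)$ and $f_-(H) = \ind_{\{\sigma^- \in \cdot\}}$: since $\sigma = \sigma^+_{r,s} + \sigma^-_{r,s}$ by \eqref{eq:decomp-sigma}, one gets for a test function $\phi$,
\[
\n\!\left[\int_0^\sigma \dd s \int_0^{H(s)} f(\rdtree_{H^+_{r,s}})\,\phi\big(\sigma^+_{r,s} + \sigma^-_{r,s}\big)\,\dd r\right]
\]
and by first conditioning on $\sigma^+, \sigma^-$ via Lemma~\ref{lem:H+,H-}, this equals a convolution-type integral
\[
\int_0^\infty \int_0^\infty \n\!\big[\sigma\, f(\rdtree_H)\,\big|\,\sigma = u\big]\,\n[\sigma \in \dd u]\;\n[\sigma \in \dd v]\;\phi(u+v)\,\cdot\,\frac{1}{\text{(normalization)}},
\]
which after substituting $\pi_*$ and changing variables to $(u, t)$ with $u + v = t$ gives a Beta-type kernel. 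Choosing $\phi = \ind_{\{t = 1\}}$ heuristically (rigorously: test against $\phi$ supported near $1$ and use continuity of densities) isolates the value at total mass $1$, and the factor $v^{-1/\gamma}$ from $\pi_*(\dd v) \propto v^{-1-1/\gamma}\dd v$ combined with $v = 1 - u$ produces exactly the claimed $(1-\sigma)^{-1/\gamma}\ind_{\{\sigma < 1\}}$ weight on the subtree side.

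The main obstacle I anticipate is the bookkeeping in the disintegration: Lemma~\ref{lem:H+,H-} is stated as an identity of $\n$-integrals, so extracting from it the conditional statement at $\{\sigma = 1\}$ requires carefully tracking how the $\sigma$-weights and the density $\pi_*$ interact, and justifying the "evaluation at $\sigma = 1$" step — most cleanly done by integrating both sides of the target identity against an arbitrary bounded continuous function of $\sigma$, reducing to an identity of measures on $(0,\infty)$ in the total-mass variable, and then invoking that $\sigma$ admits a continuous positive density under $\n$ (from \eqref{density duration excursion}) together with the scaling relation \eqref{scaling H} to identify the conditional laws $\excm{u}$. Once the identity of measures
\[
\n^{(1)}[\sigma\, g(H)] = \n\!\big[\sigma\,(1-\sigma)^{-1/\gamma}\,f(\rdtree_H)\,\ind_{\{\sigma < 1\}}\big]\,\big/\,\n[\sigma = 1]
\]
is set up correctly, the remaining computation is a routine Beta-integral verification using $\pi_*(\dd x) = \mathfrak{g}(0)x^{-1-1/\gamma}\dd x$ and $\mathfrak{g}(0) = 1/(\kappa^{1/\gamma}|\Gammaeuler(-1/\gamma)|)$, and I would present only its outline rather than every manipulation.
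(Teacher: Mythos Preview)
Your approach is essentially the same as the paper's: rewrite $\Psi_{\rdtree_H}(\tilde f)$ as $\int_0^\sigma\dd s\int_0^{H(s)} f(\rdtree_{H^+_{r,s}})\,\dd r$ (the paper does this directly from definition \eqref{definition pi} rather than via the length-measure detour, but the result is the same), apply Lemma~\ref{lem:H+,H-} under $\n$ to factor the subtree above from the subtree below, and then disintegrate with respect to $\sigma$ using $\pi_*$ and a continuity argument to pass from an a.e.\ identity to the identity at $\sigma=1$.

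One comment on execution: the paper uses the Laplace weight $\phi(\sigma)=\e^{-\lambda\sigma}$ rather than a general test function, which turns the convolution into a product and makes the computation very clean---you get an equality of Laplace transforms in the total-mass variable and hence an a.e.\ identity in $r$ immediately, without any ``Beta-integral verification''. Your last displayed formula is garbled: $\n[\sigma=1]$ vanishes (the law of $\sigma$ under $\n$ is absolutely continuous), and the extra $\sigma$ on the left is redundant since $\sigma=1$ under $\n^{(1)}$. The correct way to finish, as in the paper, is to obtain the identity for Lebesgue-a.e.\ $r>0$, then restrict to bounded continuous $f$ vanishing for $\m>1-\varepsilon$ so that both sides are continuous in $r$ at $r=1$ (using Proposition~\ref{continuity of the measure} on the left and dominated convergence on the right), and finally extend to general $f\in\cb_+(\T)$ by monotone convergence.
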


\begin{proof}
Let $f\in \cb_+(\T)$ and set $\tilde f(T,r)=f(T)$ for $T\in \T$ and
$r\in \R_+$. Using notations from Section \ref{sec:frag}, we have 
$\Psi_{\rdtree_H}(\tilde f)=\int_0^\sigma\dd s \int_0^{H(s)}
f(\rdtree_{H^+_{r,s}})\, \dd r$. Thus, on the one hand, we get 
for $\lambda>0$
\begin{align}
   \n\left[\expp{-\lambda \sigma} \Psi_{\rdtree_H}(\tilde f)\right]
&= \n\left[
\int_0^\sigma\dd s \int_0^{H(s)}  \expp{-\lambda \sigma^+_{r,s}}
  f(\rdtree_{H^+_{r,s}})\,  \expp{-\lambda 
  \sigma^-_{r,s}}\, \dd r\right]\notag\\
&= \n\left[\sigma \expp{-\lambda \sigma}\right]
 \n\left[\sigma \expp{-\lambda \sigma} f(\rdtree_H)\right]\notag\\
&= \mathfrak{g}(0)^2 \int_0^\infty \expp{-\lambda u} \excm{u}
  \left[f(\rdtree_H)\right]\frac{\dd   u}{u^{1/\gamma}} \,
\int_0^\infty \expp{-\lambda y} \, \frac{\dd
  y}{y^{1/\gamma}} \notag\\ 
&= \mathfrak{g}(0)^2 \int_0^\infty \expp{-\lambda r} \dd r\, 
\int_0^r  \excm{u}
  \left[f(\rdtree_H)\right]\frac{\dd   u}{(u(r-u))^{1/\gamma}} ,\label{laplace1}
\end{align}
where we used  \eqref{eq:decomp-sigma} for the first  equality, Lemma
\ref{lem:H+,H-} for  the second, \eqref{density duration  excursion} for
the third  and the change of variable  $r=u+y$ for the last.   On the other
hand, we  consider the random variable   $H^r=(r^{1-1/\gamma} H(s/r),
s\in [0, r])$ for $r>0$. According to
\eqref{scaling H}, $H^r$ under $\excm{1}$ is distributed as $H$ under $\excm{r}$. Then, we have for $\lambda>0$
\begin{equation}\label{laplace2}
   \n\left[\expp{-\lambda \sigma} \Psi_{\rdtree_H}(\tilde f)\right]
= \mathfrak{g}(0) \int_0^\infty \expp{-\lambda r}
  \E\left[\Psi_{\rdtree_{H^r}}(\tilde f)\right] \, \frac{\dd   r}{r^{1+1/\gamma}} \cdot
\end{equation}
Comparing \eqref{laplace1} and \eqref{laplace2}, we deduce that $\dd r$-a.e., for $r>0$
\begin{equation}
   \label{eq:Er=Nr}
\E\left[\Psi_{\rdtree_{H^r}}(\tilde f)\right] 
= r^{1+ 1/\gamma} \mathfrak{g}(0) \int_0^r  
\frac{\excm{u}
  \left[f(\rdtree_H)\right]}{(r-u)^{1/\gamma}}\, \frac{\dd   u}{u^{1/\gamma}} 
= r^{1+ 1/\gamma} \n \left[ \sigma (r-\sigma)^{-1/\gamma} f
  (\rdtree_H)\ind_{\{\sigma<r\}}\right] .
\end{equation}

From now  on, we  assume that  $f\in \cc_+(\T)$ is bounded and that there exists $\epsilon >0$ such that $f(\tree)=0$ if
$\m(\tree)>1-\varepsilon$.  As
$\m(\rdtree_H)=\sigma$,  the  map
$r    \mapsto     \n    \left[    \sigma     (r-\sigma)^{-1/\gamma}    f
  (\rdtree_H)\ind_{\{\sigma<r\}}\right]$
is continuous at $r=1$ by dominated convergence.  By definition of $H^r$
and  the  continuity  of  the  height function,  we  get  that a.s. $\lim_{r\rightarrow 1} \norm{H^r -H^1}_\infty =0$. 
Following \cite[Proposition 2.10]{abraham2013note}, we get that the
$\T$-valued function 
$r \mapsto \rdtree_{H^r}$ is then a.s. continuous at $r=1$. We deduce from 
Proposition \ref{continuity of the measure} that $r\mapsto
\Psi_{\rdtree_{H^r}}(\tilde f)$ is continuous at $r=1$. We also have
\[
\Psi_{\rdtree_{H^r}}(\tilde f)\leq  \m (\rdtree_{H^r})  \H
(\rdtree_{H^r}) \norm{f}_\infty 
\leq r ^{2- 1/\gamma} \H(H^1) \norm{f}_\infty .
\]
Since $\H(H^1)$ is integrable, we deduce by dominated
convergence that 
the map $r \mapsto \E\left[\Psi_{\rdtree_{H^r}}(\tilde f)\right]  $ is
continuous at $r=1$. We deduce from \eqref{eq:Er=Nr} that for all
$f\in \cc_+(\T)$ 
bounded and such that  there exists $\epsilon >0$ for which $f(\tree)=0$ if
$\m(\tree)>1-\varepsilon$, we have:
\[
\E\left[\Psi_{\rdtree_{H^1}}(\tilde f)\right] =
 \n    \left[    \sigma     (1-\sigma)^{-1/\gamma}    f
  (\rdtree_H)\ind_{\{\sigma<1\}}\right].
\]
By monotone convergence, this equality holds if  $f\in \cc_+(\T)$ is
bounded. Then use that $\rdtree_{H^1}$ is distributed as $\rdtree$ to
get \eqref{eq:EYf}. 
\end{proof}

The next result is a direct consequence of Proposition \ref{prop:EYf},
using that $\pi_*$, defined in \eqref{density duration excursion}, 
is the distribution of $\sigma$ under $\n$. 
Recall the notation $\psij_\rdtree(g(x)h(u))$ which means that $g$ is a function of the mass and $h$ a function of the height.
\begin{corollary}
\label{cor:expectation functional on Levy tree}
	Let $\rdtree$ be the stable Lévy tree with branching mechanism
    $\psi(\lambda) = \kappa \lambda^{\gamma}$ where $\kappa >0$ and
    $\gamma \in (1,2]$. Then we have for every  $f\in \cb_+([0,1]\times\R_+)$ 
\begin{equation}
   \label{expectation functional mass+height}
\ex{\psij_\rdtree(f)} 
= \mathfrak{g}(0) \int_0^1 x^{-1/\gamma}(1-x)^{-1/\gamma}
  \ex{f\left(x,x^{1-1/\gamma}\H(\rdtree)\right)}\, \dd x,
\end{equation}
where $\mathfrak{g}(0)$ is given in \eqref{g(0)}. In particular, we have
for every $g\in \cb_+([0,1])$
\[
\ex{\psij_\rdtree(g(x))} 
=\mathfrak{g}(0)\int_0^1 x^{-1/\gamma}(1-x)^{-1/\gamma}g(x)\, \dd x.
\]
\end{corollary}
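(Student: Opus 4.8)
The plan is to derive \eqref{expectation functional mass+height} from Proposition~\ref{prop:EYf} by a change of variables in the excursion measure. The first step is to recognize $\psij_\rdtree(f)$ as a special case of $\Psi_\rdtree$: given $f\in\cb_+([0,1]\times\R_+)$, set $g(\tree)=f\big(\m(\tree),\H(\tree)\big)$, which belongs to $\cb_+(\T)$ since $\m$ and $\H$ are continuous on $\T$ by \eqref{ghp inequality}. Then, with the notation $\tilde g(\tree,r)=g(\tree)$ of Proposition~\ref{prop:EYf}, comparing the defining formulas \eqref{definition pi} and \eqref{psi joint} gives $\Psi_\rdtree(\tilde g)=\psij_\rdtree(f)$ pathwise. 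Applying \eqref{eq:EYf} to $g$ and using that the mass and height of the tree $\rdtree_H$ coded by $H$ are $\m(\rdtree_H)=\sigma$ and $\H(\rdtree_H)=\Hexc$, we obtain
\[
\ex{\psij_\rdtree(f)}=\n\!\left[\sigma\,(1-\sigma)^{-1/\gamma}\,f(\sigma,\Hexc)\,\ind_{\{\sigma<1\}}\right].
\]

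Next I would disintegrate $\n$ with respect to $\sigma$. Using $\n[\bullet]=\int_0^\infty\excm{x}[\bullet]\,\pi_*(\dd x)$ together with \eqref{density duration excursion}, namely $\pi_*(\dd x)=\mathfrak{g}(0)\,x^{-1-1/\gamma}\,\dd x$, and the fact that $\sigma=x$ holds $\excm{x}$-a.s., the right-hand side above equals
\[
\mathfrak{g}(0)\int_0^1 x^{-1/\gamma}(1-x)^{-1/\gamma}\,\excm{x}\!\left[f(x,\Hexc)\right]\dd x.
\]
Finally, the scaling property \eqref{scaling H} shows that $\Hexc$ under $\excm{x}$ has the same law as $x^{1-1/\gamma}\Hexc$ under $\excm{1}$; since $\rdtree$ is the tree coded by $H$ under $\excm{1}$, we have $\H(\rdtree)\law\Hexc$ under $\excm{1}$, whence $\excm{x}[f(x,\Hexc)]=\ex{f\big(x,x^{1-1/\gamma}\H(\rdtree)\big)}$. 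Plugging this in yields \eqref{expectation functional mass+height}, and the displayed special case follows at once by taking $f(x,u)=g(x)$, since then $f\big(x,x^{1-1/\gamma}\H(\rdtree)\big)=g(x)$.

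There is no substantial obstacle here: the entire analytic content lies in Proposition~\ref{prop:EYf}, and all integrands above are nonnegative, so Tonelli's theorem justifies the disintegration and change of variables with no integrability hypothesis needed. The only points deserving a line of care are the measurability of $g$ (covered by the continuity of $\m$ and $\H$ recalled above) and the identification of the law of $\Hexc$ under $\excm{x}$ via scaling; both are routine.
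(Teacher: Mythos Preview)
Your proof is correct and follows exactly the approach the paper indicates: the paper states that the corollary is a direct consequence of Proposition~\ref{prop:EYf} together with the fact that $\pi_*$ is the distribution of $\sigma$ under $\n$, and you have simply written out those details (disintegration over $\sigma$ and scaling of $\Hexc$).
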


\begin{remark}
	An equivalent way to state \eqref{expectation functional mass+height} is the following equality of measures
	\[\ex{\psij_\rdtree(f)} = C(\gamma,\kappa)\ex{f\left(V,V^{1-1/\gamma}\H(\rdtree)\right)} \quad \text{with} \quad C(\gamma,\kappa) = \mathrm{B}(1-1/\gamma,1-1/\gamma)\mathfrak{g}(0), \]
	where $V$ is a random variable with distribution
    Beta$(1-1/\gamma,1-1/\gamma)$, independent of $\H(\rdtree)$ and $\mathrm{B}$ is the beta function. Using
    \eqref{length measure}, this can be interpreted in the following way
    where we recall that $\ell$ denotes the length measure on a real
    tree: taking a stable Lévy tree $\rdtree$ under $\mathbb{P}$ and
    simultaneously choosing a vertex $y \in \rdtree$ uniformly according
    to the measure $C(\gamma,\kappa)^{-1}\mu(\rdtree_y) \ell (\dd y)$,
    then the mass and height of the subtree $\rdtree_y$ are  jointly
    distributed as $V$ and  $V^{1-1/\gamma} \H(\rdtree)$.  
\end{remark}

While the measure $\ex{\psij_\rdtree(\bullet)}$ is not known explicitly, its moments can be expressed in terms of the moments of $\H(\rdtree)$.
\begin{corollary}\label{moments height}
  Let  $\rdtree$  be  the  stable Lévy  tree  with  branching  mechanism
  $\psi(\lambda)     =     \kappa    \lambda^\gamma$.      For     every
  $\alpha,       \beta       \in       \mathbb{C}$       such       that
  $\Re(\gamma\alpha + (\gamma-1)(\beta+1)) >0$, we have
	\begin{equation}\ex{\psij_\rdtree(x^\alpha u^\beta)} = \mathfrak{g}(0) \mathrm{B}\!\left(\alpha + (\beta+1)(1-1/\gamma),1-1/\gamma\right)\ex{\H(\rdtree)^\beta},
	\end{equation}
	where $\mathrm{B}$ is the beta function.
\end{corollary}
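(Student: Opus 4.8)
The plan is to substitute the specific function $f(x,u)=x^\alpha u^\beta$ into the formula of Corollary~\ref{cor:expectation functional on Levy tree} and to recognize the resulting one–dimensional integral as a beta integral. First I would treat the case $\alpha,\beta\in\real$ with $\gamma\alpha+(\gamma-1)(\beta+1)>0$. Taking $f(x,u)=x^\alpha u^\beta\in\cb_+([0,1]\times\R_+)$, and noting that $x$ is deterministic inside the integral in \eqref{expectation functional mass+height} so that $\ex{f(x,x^{1-1/\gamma}\H(\rdtree))}=x^{\alpha+\beta(1-1/\gamma)}\,\ex{\H(\rdtree)^\beta}$, equation \eqref{expectation functional mass+height} gives
\[
\ex{\psij_\rdtree(x^\alpha u^\beta)}=\mathfrak{g}(0)\,\ex{\H(\rdtree)^\beta}\int_0^1 x^{\alpha+\beta(1-1/\gamma)-1/\gamma}(1-x)^{-1/\gamma}\,\dd x .
\]
Then it suffices to match exponents: since $\alpha+\beta(1-1/\gamma)-1/\gamma=\bigl(\alpha+(\beta+1)(1-1/\gamma)\bigr)-1$ and $-1/\gamma=(1-1/\gamma)-1$, the integral equals $\mathrm{B}\bigl(\alpha+(\beta+1)(1-1/\gamma),\,1-1/\gamma\bigr)$ by definition of the beta function.

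This beta integral converges: its second parameter $1-1/\gamma$ is positive because $\gamma>1$, and its first parameter is positive because $\alpha+(\beta+1)(1-1/\gamma)=\gamma^{-1}\bigl(\gamma\alpha+(\gamma-1)(\beta+1)\bigr)>0$ by hypothesis; moreover $\ex{\H(\rdtree)^\beta}<\infty$ by Duquesne and Wang~\cite{duquesne2017decomposition}, as recalled after Proposition~\ref{prop:moment-intro}. This yields the stated identity when $\alpha,\beta$ are real. For general $\alpha,\beta\in\mathbb{C}$ with $\Re(\gamma\alpha+(\gamma-1)(\beta+1))>0$, I would first extend \eqref{expectation functional mass+height} to integrable complex-valued $f$ by the usual decomposition of the real and imaginary parts into positive and negative parts. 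Here $\lvert x^\alpha u^\beta\rvert=x^{\Re\alpha}u^{\Re\beta}$, and since $\gamma\,\Re\alpha+(\gamma-1)(\Re\beta+1)>0$ the real case already gives $\ex{\psij_\rdtree(x^{\Re\alpha}u^{\Re\beta})}<\infty$; hence $x^\alpha u^\beta$ is $\psij_\rdtree$-integrable a.s., and each of the four nonnegative pieces of $x^\alpha u^\beta$ is dominated by $x^{\Re\alpha}u^{\Re\beta}$, so \eqref{expectation functional mass+height} applies to each and, upon recombining,
\[
\ex{\psij_\rdtree(x^\alpha u^\beta)}=\mathfrak{g}(0)\,\mathrm{B}\bigl(\alpha+(\beta+1)(1-1/\gamma),\,1-1/\gamma\bigr)\,\ex{\H(\rdtree)^\beta},
\]
the beta integral still converging because both parameters have positive real part.

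There is no genuine obstacle: everything reduces to a direct substitution into Corollary~\ref{cor:expectation functional on Levy tree}. The only two points requiring a line of care are the passage to complex exponents — a routine positive/negative-part argument relying on the integrability furnished by the real case — and the elementary rearrangement identifying $x^{\alpha+\beta(1-1/\gamma)-1/\gamma}(1-x)^{-1/\gamma}$ with the standard beta density $x^{a-1}(1-x)^{b-1}$. One may also remark that, by \eqref{g(0)}, $\mathfrak{g}(0)=\kappa^{-1/\gamma}\lvert\Gammaeuler(-1/\gamma)\rvert^{-1}$, so this corollary is exactly the content of formula \eqref{eq:moment-psimh} with the constant written in closed form.
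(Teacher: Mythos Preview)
Your proposal is correct and follows exactly the approach the paper intends: the corollary is presented in the paper without an explicit proof, as an immediate consequence of Corollary~\ref{cor:expectation functional on Levy tree}, and your substitution of $f(x,u)=x^\alpha u^\beta$ into \eqref{expectation functional mass+height} together with the beta-integral identification is precisely that direct consequence. Your handling of the complex exponents via domination by $x^{\Re\alpha}u^{\Re\beta}$ is a routine but welcome addition, since the paper states the result for complex parameters without commenting on the extension.
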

Observe that $\H(\rdtree)$ has finite moments of all order. This can be seen as a consequence of the convergence in distribution
$\frac{b_n}{n} \H(\rddtree^n)\cvlaw \H(\rdtree)$
together with the fact that $\left(\frac{b_n}{n}\H(\rddtree^n), \, n \in \mathbb{N}\right)$ is bounded in $L^p$ for every $p \in \real$ by Lemma \ref{height finite moments}. The first moment of $\H(\rdtree)$ is given in \cite[Proposition 3.4]{duquesne2017decomposition}. We shall discuss the other moments in  a future work.

Note that taking $\beta = 0$, we recover \cite[Lemma 4.6]{delmas2018}.
Heuristically, the condition $\Re(\gamma\alpha + (\gamma-1)(\beta+1)
)>0$ is due to the fact that under the excursion measure $\n$, the
height $\Hexc$ scales as $ \sigma^{1-1/\gamma}$ (see also Lemma \ref{lemma scaling}
below), implying that for $\alpha, \beta\in \R$
\[
\ex{\int_\rdtree \mu(\dd x)
    \int_0^{H(x)}\!\! \m(\rdtree_{r\vir x})^\alpha\, \H(\rdtree_{r\vir x})^\beta\,
    \dd r}<\infty  \Longleftrightarrow  \ex{\int_\rdtree
    \mu(\dd x)
    \int_0^{H(x)}\!\!\m(\rdtree_{r\vir x})^{\alpha+\beta(1-1/\gamma)}\, \dd
    r}<\infty. 
\]
Thus, the condition on $\alpha, \beta$ corresponds to the phase
transition observed in \cite[Lemma 4.6 and Remark 4.8]{delmas2018} for 
 functionals depending only on the mass (that is $\beta=0$). 

In the Brownian case, $\H(\rdtree)$ is the maximum of the (scaled) Brownian excursion whose moments are known explicitly. Therefore we get an explicit formula for the moments of the measure $\ex{\psij_\rdtree(\bullet)}$.
\begin{corollary}\label{moments brownian height}
	Let $\rdtree$ be the Brownian tree with branching mechanism $\psi(\lambda) = \kappa \lambda^2$. 
For every $\alpha, \beta \in \mathbb{C}$ such that $\Re(2\alpha + \beta+1) >0$, we have
	\begin{equation}\ex{\psij_\rdtree(x^\alpha u^\beta)} = \frac{1}{\sqrt{\pi\kappa}}\left(\frac{\pi}{\kappa}\right)^{\beta/2}\xi(\beta)\mathrm{B}\!\left(\alpha + \frac{\beta+1}{2},\frac{1}{2}\right),
	\end{equation}
	where $\xi$ is the Riemann xi function defined by $\xi(s) = \frac{1}{2} s(s-1)\pi^{-s/2} \Gammaeuler(s/2)\zeta(s)$ for every $s \in \mathbb{C}$ and $\zeta$ is the Riemann zeta function.
\end{corollary}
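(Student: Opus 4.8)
The plan is to specialize Corollary~\ref{moments height} to the quadratic case $\gamma=2$ and then to use the classical fact that, for the Brownian tree, $\H(\rdtree)$ is a deterministic multiple of the maximum of the standard normalized Brownian excursion, whose moments are known in closed form in terms of the Riemann zeta function. First I would invoke Corollary~\ref{moments height} with $\gamma=2$: then $1-1/\gamma=\tfrac12$ and, by \eqref{g(0),2}, $\mathfrak{g}(0)=1/(2\sqrt{\pi\kappa})$, so that for every $\alpha,\beta\in\mathbb{C}$ with $\Re(2\alpha+\beta+1)>0$ one has
\[
\ex{\psij_\rdtree(x^\alpha u^\beta)}=\frac{1}{2\sqrt{\pi\kappa}}\;\mathrm{B}\!\left(\alpha+\tfrac{\beta+1}{2},\tfrac12\right)\ex{\H(\rdtree)^\beta}.
\]
This reduces the statement to proving $\ex{\H(\rdtree)^\beta}=2\,(\pi/\kappa)^{\beta/2}\,\xi(\beta)$ for all $\beta\in\mathbb{C}$; recall from the discussion following Corollary~\ref{moments height} that $\H(\rdtree)$ has finite moments of every real order, so $\beta\mapsto\ex{\H(\rdtree)^\beta}$ is an entire function and it suffices to verify the identity for real $\beta$.

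Next I would identify the law of $\H(\rdtree)$. It is classical (Aldous) that in the Brownian case the $\psi$-height process under $\n^{(1)}$ is a deterministic constant times the standard normalized Brownian excursion $\Br$ on $[0,1]$, so that $\H(\rdtree)\law c\,M$, where $M=\max_{[0,1]}\Br$ and $c>0$ is a scaling constant. I would fix $c$ by matching first moments, using only data from the excerpt: combining $\n[\H>x]=1/(\kappa x)$ from \eqref{density height excursion}, the law \eqref{density duration excursion} of $\sigma$ under $\n$, and the scaling \eqref{scaling H} (which gives that $\H$ under $\excm{x}$ has the law of $x^{1/2}\H(\rdtree)$), one obtains, after the change of variables $v=x/\sqrt u$ in $\n[\H>x]=\int_0^\infty \pr{\sqrt u\,\H(\rdtree)>x}\,\pi_*(\dd u)$, the identity
\[
\frac{1}{\kappa x}=\frac{2\mathfrak{g}(0)}{x}\int_0^\infty\pr{\H(\rdtree)>v}\,\dd v=\frac{2\mathfrak{g}(0)}{x}\,\ex{\H(\rdtree)},
\]
whence $\ex{\H(\rdtree)}=\sqrt{\pi/\kappa}$ (this value also appears in \cite[Proposition~3.4]{duquesne2017decomposition}). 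Since $\ex{M}=\sqrt{\pi/2}$ classically, this forces $c=\sqrt{2/\kappa}$, i.e. $\H(\rdtree)\law\sqrt{2/\kappa}\,M$.

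Then I would insert the known Mellin transform of $M$. Starting from the explicit tail $\pr{M>x}=2\sum_{k\geq1}(4k^2x^2-1)\e^{-2k^2x^2}$ and integrating term by term against $\beta x^{\beta-1}\,\dd x$ — legitimate for $\Re\beta>1$ by absolute convergence, each term reducing to Gamma integrals — gives $\ex{M^\beta}=\beta(\beta-1)\,2^{-\beta/2}\,\Gamma(\beta/2)\,\zeta(\beta)$ on $\Re\beta>1$, and this extends to all of $\mathbb{C}$ because both sides are entire. By the definition of the Riemann xi function, $\beta(\beta-1)\Gamma(\beta/2)\zeta(\beta)=2\pi^{\beta/2}\xi(\beta)$, so $\ex{M^\beta}=2\,(\pi/2)^{\beta/2}\,\xi(\beta)$. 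Consequently $\ex{\H(\rdtree)^\beta}=(2/\kappa)^{\beta/2}\ex{M^\beta}=2\,(\pi/\kappa)^{\beta/2}\,\xi(\beta)$, using $(2/\kappa)^{\beta/2}(\pi/2)^{\beta/2}=(\pi/\kappa)^{\beta/2}$. Plugging this into the reduction display yields the claimed formula, the constant $\tfrac12\cdot 2=1$ cancelling to leave $\tfrac{1}{\sqrt{\pi\kappa}}(\pi/\kappa)^{\beta/2}\xi(\beta)\,\mathrm{B}(\alpha+\tfrac{\beta+1}{2},\tfrac12)$; since Corollary~\ref{moments height} and the identity for $\ex{M^\beta}$ are already valid for complex parameters, no further analytic continuation in $\alpha,\beta$ is needed.

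The main obstacle is the bookkeeping of normalization constants: the excursion measure $\n$ here is normalized through $\pi_*$ in \eqref{density duration excursion} rather than via a local time, so the constant $c$ (equivalently $\ex{\H(\rdtree)}$) must be computed from the data in the excerpt rather than borrowed from a convention-dependent source — matching first moments as above is the robust way to do it. A secondary, routine point is justifying the term-by-term Mellin transform of the tail of $M$ (valid only for $\Re\beta>1$) and the subsequent analytic continuation; note in particular that naive termwise integration breaks down at $\beta=1$ precisely because of the pole of $\zeta$, which is the analytic shadow of the theta-function identity hidden in the tail formula for $M$.
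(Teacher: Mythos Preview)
Your proof is correct and follows the same approach as the paper: specialize Corollary~\ref{moments height} to $\gamma=2$, identify $\H(\rdtree)\law\sqrt{2/\kappa}\,\max\Br$, and plug in the known moments of $\max\Br$ in terms of the Riemann xi function. The only difference is that the paper simply cites \cite{duquesne2002random} for the scaling constant $\sqrt{2/\kappa}$ and \cite{biane2001probability} for the identity $\ex{(\max\Br)^\beta}=2(\pi/2)^{\beta/2}\xi(\beta)$, whereas you derive the constant by a first-moment match against \eqref{density height excursion}--\eqref{density duration excursion} and the moment formula from Chung's tail expansion via Mellin transform --- so your argument is more self-contained but otherwise identical.
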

\begin{proof}
	The normalized excursion of the height process $H$ is distributed as $\sqrt{2/\kappa} \,\Br$ where $\Br$ is the normalized Brownian excursion, see \emph{e.g.} \cite{duquesne2002random}. Therefore we get the identity $\H(\rdtree)\law\sqrt{2/\kappa}\, \max \Br$. By \cite[Proposition 2.1 and Eq. (4.10)]{biane2001probability}, we have
	\[\ex{\left(\max {\Br}\right) ^\beta}= 2 \left(\frac{\pi}{2}\right)^{\beta/2} \xi(\beta), \quad\forall \beta \in \mathbb{C}.\]
	The result follows then from Corollary \ref{moments height} and the
    value of $\mathfrak{g}(0)$ 
given in \eqref{g(0),2} .
\end{proof}

\subsection{Finiteness of $\psij_{\rdtree}(f)$}
This section is devoted to the study of the finiteness of functionals of
the mass and height on the stable Lévy tree. Arguing as in the proof of Lemma \ref{lem:Amh^p} and using Corollary \ref{moments height} and the fact that $\H(\rdtree)$ has finite moments of all orders, we get the following result.
\begin{lemma}\label{lem:finite moment delta}
	Let $\rdtree$ be the stable Lévy tree with branching mechanism
    $\psi(\lambda) = \kappa \lambda^\gamma$ where $\kappa>0$ and $\gamma \in (1,2]$.
    Let  $\alpha, \beta \in \real$ such that $\gamma \alpha + (\gamma - 
    1)(\beta+1) >0$. For  and every  $p \geq 1$ such that $p(\gamma
    \alpha + (\gamma-1)\beta)>1-\gamma$ and $\delta \in \real$, we have:
\begin{equation}
\label{finite moment delta}
	\ex{\H(\rdtree)^\delta \psij_{\rdtree}(x^\alpha u^\beta)^p}< \infty.
\end{equation}
\end{lemma}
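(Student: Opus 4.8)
The statement says that under $\gamma\alpha+(\gamma-1)(\beta+1)>0$, for every $p\geq 1$ with $p(\gamma\alpha+(\gamma-1)\beta)>1-\gamma$ and every $\delta\in\real$, one has $\ex{\H(\rdtree)^\delta\,\psij_\rdtree(x^\alpha u^\beta)^p}<\infty$. The lemma itself tells us ``arguing as in the proof of Lemma \ref{lem:Amh^p}'', so I would mirror that argument, replacing the discrete objects $\costj_n$, $M_n=(b_n/n)\H(\rddtree^n)$ and Lemma \ref{mass+height bounded in L1} by their continuous counterparts $\psij_\rdtree$, $\H(\rdtree)$ and Corollary \ref{moments height}. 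The key inputs are: (a) the deterministic bound $\psij_\rdtree(1)=\int_\rdtree H(x)\,\mu(\dd x)\leq \H(\rdtree)\,\m(\rdtree)=\H(\rdtree)$ (since $\m(\rdtree)=1$), which is the analogue of \eqref{eq:mass-A_n}; (b) Corollary \ref{moments height}, which gives $\ex{\psij_\rdtree(x^{\alpha'}u^{\beta'})}<\infty$ whenever $\gamma\alpha'+(\gamma-1)(\beta'+1)>0$, with value proportional to $\ex{\H(\rdtree)^{\beta'}}$; and (c) the fact, noted just after Corollary \ref{moments height}, that $\H(\rdtree)$ has finite moments of all (real) orders.

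First I would handle the case $p>1$. Fix conjugate exponents $p_0,q_0\in(1,\infty)$ with $1/p_0+1/q_0=1$; by Hölder's inequality applied to the measure $\mu(\rdtree_y)\,\ell(\dd y)$ of total mass $\psij_\rdtree(1)\leq\H(\rdtree)$ (using the representation \eqref{length measure}), one gets
\[
\psij_\rdtree(x^\alpha u^\beta)^{p_0}\leq \H(\rdtree)^{p_0/q_0}\,\psij_\rdtree(x^{p_0\alpha}u^{p_0\beta}),
\]
exactly as in \eqref{3}. Now choose $p_0>p$ close enough to $p$ that the strict inequality $p(\gamma\alpha+(\gamma-1)\beta)>1-\gamma$ is still satisfied with $p_0$ in place of $p$, i.e. $p_0(\gamma\alpha+(\gamma-1)\beta)>1-\gamma$, equivalently $\gamma(p_0\alpha)+(\gamma-1)(p_0\beta+1)>0$; such a $p_0$ exists by openness of the condition. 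Set $r=p_0/p>1$ and let $s$ be its conjugate. Writing $\psij_\rdtree(x^\alpha u^\beta)^p=\H(\rdtree)^{p/q_0}\cdot\big(\H(\rdtree)^{-p/q_0}\psij_\rdtree(x^\alpha u^\beta)^p\big)$ and applying Hölder once more,
\[
\ex{\H(\rdtree)^\delta\psij_\rdtree(x^\alpha u^\beta)^p}
\leq \ex{\H(\rdtree)^{s(\delta+p/q_0)}}^{1/s}\,
\ex{\H(\rdtree)^{-p_0/q_0}\psij_\rdtree(x^\alpha u^\beta)^{p_0}}^{1/r}
\leq \ex{\H(\rdtree)^{s(\delta+p/q_0)}}^{1/s}\,
\ex{\psij_\rdtree(x^{p_0\alpha}u^{p_0\beta})}^{1/r}.
\]
The first factor is finite because $\H(\rdtree)$ has finite moments of every real order. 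The second factor is finite by Corollary \ref{moments height} applied with exponents $(p_0\alpha,p_0\beta)$, whose admissibility condition $\gamma(p_0\alpha)+(\gamma-1)(p_0\beta+1)>0$ holds by the choice of $p_0$; note its value involves $\ex{\H(\rdtree)^{p_0\beta}}$, again finite. This proves \eqref{finite moment delta} for $p>1$.

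For the boundary case $p=1$, I would argue directly: by the very same Hölder trick (or more simply by Corollary \ref{moments height} and $\psij_\rdtree(1)\leq\H(\rdtree)$), pick $p_0>1$ with $\gamma(p_0\alpha)+(\gamma-1)(p_0\beta+1)>0$ — possible since for $p=1$ the two hypotheses read $\gamma\alpha+(\gamma-1)(\beta+1)>0$ and $\gamma\alpha+(\gamma-1)\beta>1-\gamma$, the latter being equivalent to the former, so the strict condition is open — and then apply the displayed chain of inequalities with $p=1$, $r=p_0$, $s=p_0/(p_0-1)$. Everything goes through verbatim. The only point requiring a little care, and the place I would be most careful in writing the final version, is the bookkeeping of the two compatibility conditions: checking that the hypothesis $p(\gamma\alpha+(\gamma-1)\beta)>1-\gamma$ is exactly what is needed to choose $p_0>p$ with $\psij_\rdtree(x^{p_0\alpha}u^{p_0\beta})$ integrable, i.e. with $\gamma(p_0\alpha)+(\gamma-1)(p_0\beta+1)>0$ — these are the same inequality after rearranging, so the openness argument is immediate, but it is the one algebraic identity that must be stated cleanly. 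There is no serious obstacle here; the lemma is a routine transcription of the proof of Lemma \ref{lem:Amh^p} into the continuous setting, with Corollary \ref{moments height} and the all-order integrability of $\H(\rdtree)$ replacing Lemma \ref{mass+height bounded in L1} and Lemma \ref{height finite moments} respectively.
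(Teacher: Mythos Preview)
Your proposal is correct and follows exactly the route the paper indicates: transcribe the proof of Lemma~\ref{lem:Amh^p} to the continuous setting, replacing the bound $\mathcal{A}_n^\circ(1)\leq (b_n/n)\H(\rddtree^n)$ by $\psij_\rdtree(1)\leq\H(\rdtree)$, Lemma~\ref{mass+height bounded in L1} by Corollary~\ref{moments height}, and Lemma~\ref{height finite moments} by the all-order integrability of $\H(\rdtree)$. The only cosmetic remark is that the case split $p=1$ versus $p>1$ is unnecessary: as in the paper's proof of Lemma~\ref{lem:Amh^p}, the H\"older argument with $r=p_0/p>1$ and conjugate $s$ works uniformly for all $p\geq 1$ once $p_0>p$ is chosen close enough to $p$ that $p_0(\gamma\alpha+(\gamma-1)\beta)>1-\gamma$ still holds.
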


We now state the main result of this section which gives an integral
test for the finiteness of functionals of the mass and height on the
stable Lévy tree. 

\begin{proposition}
\label{joint phase transition}
 Let $\rdtree$ be the stable Lévy tree with branching mechanism
 $\psi(\lambda) = \kappa \lambda^\gamma$  where $\kappa>0$ and $\gamma \in (1,2]$. Let $f
 \in \cb_+( [0,1]\times \R_+)$ be of the form $f(x,u) = g(x)u^\beta$ or
 $f(x,u) = x^\alpha h(u)$ where $\alpha, \beta \in \real$, and $g,h$
 nonincreasing. Then we have 
\begin{equation}
\psij_{\rdtree}(f) \begin{cases}
< \infty & \text{a.s.,} \\
= \infty & \text{a.s.,}
\end{cases}
\end{equation}
according as
\begin{equation}\int_0 f(x^{\gamma/(\gamma-1)},x)\, \dd x \begin{cases}<\infty, \\
= \infty. \end{cases} \label{integral test}
\end{equation} 
Furthermore, if $\psij_\rdtree(f)$ is a.s. finite then we have
$\ex{\psij_\rdtree(f)} < \infty$. 
\end{proposition}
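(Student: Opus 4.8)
The plan is to prove the two implications of \eqref{integral test} separately and then read off the last assertion. Since the statement concerns only the stable Lévy tree, I may and do fix an offspring law $\xi$ satisfying \ref{xi1} and \ref{xi3} whose associated stable Lévy tree has branching mechanism $\psi(\lambda)=\kappa\lambda^\gamma$, so that Lemma \ref{mass+height bounded in L1} and Corollary \ref{convergence for continuous functions discrete} are available.

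\emph{The convergent case.} Assume $\int_0 f(x^{\gamma/(\gamma-1)},x)\,\dd x<\infty$. Since $g$ and $h$ are nonincreasing, $f$ meets assumption~(i) of Lemma \ref{mass+height bounded in L1}, so $\sup_{n\in\supp}\ex{\costj_n(f)}<\infty$. I would then approximate $f$ from below by bounded continuous functions: for $f(x,u)=g(x)u^\beta$ (the case $x^\alpha h(u)$ being symmetric), pick nonincreasing $g_m\in\cc_b((0,1])$ with $g_m\uparrow g$ (possible as $g$ has at most countably many jumps) and set $\tilde f_m(x,u)=g_m(x)\,(u^\beta\wedge m)$, extended by continuity to $[0,1]\times\R_+$; then $\tilde f_m\in\cc_b([0,1]\times\R_+)$, $\tilde f_m\le f$ and $\tilde f_m\uparrow f$. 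By Corollary \ref{convergence for continuous functions discrete}, $\ex{\psij_\rdtree(\tilde f_m)}=\lim_n\ex{\costj_n(\tilde f_m)}\le\sup_n\ex{\costj_n(f)}<\infty$, and monotone convergence gives $\ex{\psij_\rdtree(f)}<\infty$; a fortiori $\psij_\rdtree(f)<\infty$ a.s.

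\emph{The divergent case.} Assume $\int_0 f(x^{\gamma/(\gamma-1)},x)\,\dd x=\infty$; the claim is $\psij_\rdtree(f)=\infty$ a.s. By \eqref{psi joint}, $\psij_\rdtree(f)=\int_\rdtree\mu(\dd x)\,\Phi(x)$ with $\Phi(x)=\int_0^{H(x)}f\!\left(\m(\rdtree_{r\vir x}),\H(\rdtree_{r\vir x})\right)\dd r$, so it suffices that $\Phi(x)=\infty$ for $\mu$-a.e.\ $x$, i.e.\ $\Phi(U)=\infty$ $\P\otimes\mu$-a.s.\ for $U$ sampled from $\mu$. I would deduce this from the local structure of $\rdtree$ at a $\mu$-typical leaf: as $r\uparrow H(U)$ one always has $\H(\rdtree_{r\vir U})\ge H(U)-r$, while $\m(\rdtree_{r\vir U})$ is of order $(H(U)-r)^{\gamma/(\gamma-1)}$ — precisely the scaling $\sigma\asymp\Hexc^{\gamma/(\gamma-1)}$ under $\n$, read along the ancestral line of $U$. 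As $g,h$ are nonincreasing, this yields $\Phi(U)\gtrsim\int_0 f(s^{\gamma/(\gamma-1)},s)\,\dd s=\infty$. To make the comparison rigorous I would invoke the decomposition of Section \ref{sec:frag}: under $\n$, $\psij_{\rdtree_H}(f)=\int_0^\sigma\dd s\int_0^{H(s)}f(\sigma_{r,s},\H_{r,s})\,\dd r$ (as in the proof of Proposition \ref{prop:EYf}), the regenerative structure of the exploration process controls $(\sigma_{H(s)-\delta,s},\H_{H(s)-\delta,s})$ as $\delta\downarrow0$, and the scaling \eqref{scaling H} carries the conclusion from $\n$ to $\excm{1}=\P$. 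The last assertion of the proposition is then immediate: if $\psij_\rdtree(f)<\infty$ a.s., the divergent case forces $\int_0 f(x^{\gamma/(\gamma-1)},x)\,\dd x<\infty$, and the convergent case gives $\ex{\psij_\rdtree(f)}<\infty$.

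\emph{Main obstacle.} The hard part is the almost-sure divergence of $\Phi(U)$: Corollary \ref{cor:expectation functional on Levy tree} only yields $\ex{\psij_\rdtree(f)}=\infty$, which, since $\psij_\rdtree(f)\ge0$, does not exclude $\psij_\rdtree(f)<\infty$ a.s. One needs two-sided control of $\m(\rdtree_{r\vir U})$ and $\H(\rdtree_{r\vir U})$ along a set of $r$'s of positive lower density as $r\uparrow H(U)$ — equivalently, sharp estimates on the exploration process near a typical time — and one must check that this control survives the passage through the merely monotone (not regularly varying) functions $g,h$. An alternative would combine a $0$--$1$ law for $\{\psij_\rdtree(f)<\infty\}$, obtained by decomposing $\rdtree$ above a vanishing level into $N_\epsilon\to\infty$ conditionally independent rescaled Lévy subtrees, with a second-moment bound on the truncations $\psij_\rdtree(f\wedge K)$ to discard the a.s.-finite alternative; but this route rests on essentially the same near-leaf estimates.
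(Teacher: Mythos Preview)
Your convergent case is correct but takes a detour through the discrete trees. The paper works directly on the limit: for $f(x,u)=g(x)u^\beta$, formula~\eqref{expectation functional mass+height} gives $\ex{\psij_\rdtree(f)}=\mathfrak{g}(0)\,\ex{\H(\rdtree)^\beta}\int_0^1 g(x)\,x^{(\beta+1)(1-1/\gamma)-1}(1-x)^{-1/\gamma}\,\dd x$, finite by the test; for $f(x,u)=x^\alpha h(u)$ one combines~\eqref{expectation functional mass+height} with the stochastic lower bound on $\H(\rdtree)$ obtained by letting $k\to\infty$ in~\eqref{stochastic order}, and then repeats the estimate around~\eqref{integral test, part 2}. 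Your approximation argument via Lemma~\ref{mass+height bounded in L1} and Corollary~\ref{convergence for continuous functions discrete} is valid, but invoking the BGW machinery for a statement purely about the limit tree is unnecessary.

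Your divergent case has a genuine gap. You correctly point to Bismut and the near-leaf scaling, but the only concrete inequality you write, $\H(\rdtree_{r\vir U})\ge H(U)-r$, goes the wrong way for nonincreasing $h$ (and for $\beta<0$), and you do not supply the two-sided control you yourself flag as the obstacle. The paper's argument (Section~\ref{section proof phase transition}) runs as follows. Bismut identifies the spinal process $(\sigma_{t-r,U},\H_{t-r,U})$ under $\n[\sigma\,\cdot\,|\,H(U)=t]$ with the explicit pair $(\mass_r,\height_r)$ of~\eqref{def subordinator+record}, where $\mass$ is a stable subordinator of index $1-1/\gamma$ and $\height$ is the running record of a Poisson point process with intensity $\frac{\gamma}{\gamma-1}x^{-2}\,\dd x$. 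One then proves $\int_0 f(\mass_r,\height_r)\,\dd r=\infty$ a.s.\ by a Kochen--Stone argument on dyadic first-passage times: with $F(t)=\int_0^t\mass_s^\alpha\,\dd s$ and $\theight(a)=\inf\{t:\height_t\ge a\}$, a second application of Bismut together with scaling gives $\ex{F(\theight(a))}\asymp a^{1+\alpha\gamma/(\gamma-1)}$ and, crucially, $\ex{F(\theight(a))^2}\le C\,\ex{F(\theight(a))}^2$ uniformly in $a$ (Lemmas~\ref{mass polynomial} and~\ref{height polynomial}). The divergent integral test forces $\sum_i\ex{F(\theight(2^{-i}))}\,\Delta h_{i-1}=\infty$, Kochen--Stone then gives divergence with positive probability, and Blumenthal's $0$--$1$ law upgrades this to almost sure. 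The transfer from $\n$-a.e.\ to $\excm{1}$-a.s.\ uses scaling (Lemma~\ref{lemma scaling}) and the monotonicity in $x$ of $\excm{1}[Z_{f_x}<\infty]$. Your two sketched alternatives circle this idea but neither lands on the decisive step: the uniform second-moment bound $\ex{F(\theight(a))^2}\le C\,\ex{F(\theight(a))}^2$, which is precisely what makes the summation-by-parts against a merely monotone $h$ go through.
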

\begin{proof}
	We first prove that if $\int_0 f(x^{\gamma/(\gamma-1)},x)\, \dd x$
    is finite then $\ex{\psij_\rdtree(f)}$ is finite and thus
    $\psij_{\rdtree}(f)$ is a.s. finite. 

    Let $\beta \in \real$ and  $g\in \cb_+([0,1])$ be  such
    that   $\int_0  g(x^{\gamma/(\gamma-1)})   x^\beta   \,   \dd  x   <
    \infty$.
    Recall that $\H(\rdtree)$  has finite moments of  all orders.  Thus,
    by \eqref{expectation functional mass+height}, we have
	\begin{equation*}
	\ex{\psij_{\rdtree}(g(x)u^\beta)} =  \mathfrak{g}(0) \ex{\H(\rdtree)^\beta}\int_0^1 g(x) x^{(\beta+1)(1-1/\gamma)-1}(1-x)^{-1/\gamma} \, \dd x < \infty.
	\end{equation*}
	
	Next, let $\alpha \in \real$ and $h \in \cb_+(\R_+)$ be nonincreasing such that $\int_0 h(x)x^{\alpha \gamma/(\gamma - 1)}\, \dd x < \infty$. Again by \eqref{expectation functional mass+height}, we have
	\begin{equation*}
	\ex{\psij_\rdtree(x^\alpha h(u))} = \mathfrak{g}(0)  \int_0^1 x^{\alpha - 1/\gamma}(1-x)^{-1/\gamma} \ex{h\left(x^{1-1/\gamma}\H(\rdtree)\right)}\, \dd x.
	\end{equation*}
	Now, letting $k$ goes to infinity  in \eqref{stochastic order} and using the continuity of the cdf of $\H(\rdtree)$ (see \cite{duquesne2017decomposition}), we get that
	\begin{equation*}
	\pr{\H(\rdtree)\leqslant y } \leqslant 1 \wedge \left( C_0
      \exp\left(-c_0 y^{-\gamma/(\gamma-1)}\right)\right) \quad\text{for
      all}\quad  y \geqslant 0. 
	\end{equation*}
    We deduce that $\H(\rdtree)  \geqslant_{\mathrm{st}} Y$ where the
    cdf of the random variable $Y$ is
     given by the right-hand side of the inequality above.  Using
    that $h$ is nonincreasing and  repeating the same computations as in
    the    proof   of    Lemma   \ref{mass+height    bounded   in    L1}
    (cf. \eqref{integral test, part 2}), we deduce that
	\begin{align*}
	\ex{\psij_\rdtree(x^\alpha h(u))} &\leqslant \mathfrak{g}(0)\int_0^1 x^{\alpha - 1/\gamma}(1-x)^{-1/\gamma} \ex{h\left(x^{1-1/\gamma}Y\right)}\, \dd x < \infty.
	\end{align*}
	This finishes the proof of the finite case. The infinite case is more delicate and its proof is postponed to Section \ref{section proof phase transition}. 
\end{proof}

We end this section with a complete description of the behavior of
polynomial functionals of the mass and height on the stable Lévy tree,
which is a particular case of Proposition \ref{joint phase transition} (and Lemma \ref{lem:finite moment delta} for $\alpha> 0$ and $\beta> 0$). 
\begin{corollary}\label{joint phase transition poly}
	Let $\rdtree$ be the stable Lévy tree with branching mechanism
    $\psi(\lambda) = \kappa \lambda^\gamma$ with $\kappa>0$ and $\gamma \in (1,2]$, and let $\alpha, \beta \in \real$. Then we have
	\begin{align}\gamma \alpha + (\gamma-1)(\beta+1) > 0 \quad \Longleftrightarrow \quad \psij_\rdtree(x^\alpha u^\beta) < \infty \ \text{a.s.} \quad \Longleftrightarrow \quad\ex{\psij_\rdtree(x^\alpha u^\beta)} <\infty, \label{joint finite poly}\\
	\gamma \alpha + (\gamma-1)(\beta+1) \leqslant 0 \quad \Longleftrightarrow \quad \psij_\rdtree(x^\alpha u^\beta) = \infty \ \text{a.s.} \quad \Longleftrightarrow \quad\ex{\psij_\rdtree(x^\alpha u^\beta)} =\infty.\label{joint infinite poly}
	\end{align}
\end{corollary}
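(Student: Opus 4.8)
The plan is to obtain Corollary~\ref{joint phase transition poly} by specializing Proposition~\ref{joint phase transition} (together with Corollary~\ref{moments height}, or equivalently Lemma~\ref{lem:finite moment delta}) to $f(x,u)=x^\alpha u^\beta$, after matching this $f$ to one of the two admissible forms $g(x)u^\beta$ or $x^\alpha h(u)$ with $g,h$ nonincreasing. Throughout, set $c=\gamma\alpha+(\gamma-1)(\beta+1)$; since $\gamma>1$ one has $c/\gamma=\alpha+(\beta+1)(1-1/\gamma)$, so $c$ has the same sign as this beta-function argument, and $c-(\gamma-1)=\gamma\alpha+(\gamma-1)\beta$. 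First I would reduce the whole statement to the two implications: (a) if $c>0$ then $\ex{\psij_\rdtree(x^\alpha u^\beta)}<\infty$; (b) if $c\le 0$ then $\psij_\rdtree(x^\alpha u^\beta)=\infty$ a.s. Granting (a) and (b), the equivalences in \eqref{joint finite poly}--\eqref{joint infinite poly} follow by elementary logic: a nonnegative random variable with finite expectation is a.s.\ finite, an a.s.\ infinite random variable has infinite expectation, and the events $\{c>0\}$, $\{c\le 0\}$ partition $\real^2$ while ``$\psij_\rdtree(x^\alpha u^\beta)<\infty$ a.s.'' and ``$\psij_\rdtree(x^\alpha u^\beta)=\infty$ a.s.'' are mutually exclusive.

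For (a): when $c>0$, Corollary~\ref{moments height} gives the closed form $\ex{\psij_\rdtree(x^\alpha u^\beta)}=\mathfrak{g}(0)\,\mathrm{B}(c/\gamma,\,1-1/\gamma)\,\ex{\H(\rdtree)^\beta}$, which is finite because $c/\gamma>0$ and $1-1/\gamma>0$ make the beta function finite and $\H(\rdtree)$ has finite moments of all orders; in particular $\psij_\rdtree(x^\alpha u^\beta)<\infty$ a.s. This argument also covers the genuinely polynomial regime $\alpha>0,\beta>0$, where neither factor can be taken nonincreasing so Proposition~\ref{joint phase transition} does not apply directly but $c>0$ holds automatically; alternatively one may invoke Lemma~\ref{lem:finite moment delta} with $p=1$ and $\delta=0$, admissible since $c-(\gamma-1)>1-\gamma$ is exactly $c>0$.

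For (b): when $c\le 0$ we have $c-(\gamma-1)=\gamma\alpha+(\gamma-1)\beta\le-(\gamma-1)<0$, so $\alpha$ and $\beta$ cannot both be nonnegative. Assuming $\alpha\le 0$ (the case $\beta\le 0$ is symmetric, using the form $x^\alpha h(u)$ with $h(u)=u^\beta$ nonincreasing in place of $g(x)u^\beta$ with $g(x)=x^\alpha$ nonincreasing), we have $f(x,u)=x^\alpha u^\beta=g(x)u^\beta$ with $g(x)=x^\alpha$ nonincreasing on $(0,1]$, so Proposition~\ref{joint phase transition} applies, and $\int_0 f(x^{\gamma/(\gamma-1)},x)\,\dd x=\int_0 x^{\alpha\gamma/(\gamma-1)+\beta}\,\dd x=\infty$ since $\alpha\gamma/(\gamma-1)+\beta\le -1$ is precisely $c\le 0$. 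Hence $\psij_\rdtree(x^\alpha u^\beta)=\infty$ a.s.\ by Proposition~\ref{joint phase transition}, and so $\ex{\psij_\rdtree(x^\alpha u^\beta)}=\infty$.

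I do not expect a real obstacle here: the substance is entirely contained in Proposition~\ref{joint phase transition} (and its deferred ``infinite'' half) and in Corollary~\ref{moments height}. The only points needing care are that the two admissible forms of Proposition~\ref{joint phase transition} each constrain one factor to be monotone, so the case $\alpha>0,\beta>0$ must be peeled off and handled by (a), and that the boundary exponent $\alpha\gamma/(\gamma-1)+\beta=-1$ indeed produces a divergent integral $\int_0 x^{-1}\,\dd x$, so that the closed condition $c\le 0$ correctly matches the divergence of the integral test in \eqref{integral test}.
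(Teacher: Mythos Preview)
Your proposal is correct and follows exactly the approach the paper indicates: the corollary is stated as ``a particular case of Proposition~\ref{joint phase transition} (and Lemma~\ref{lem:finite moment delta} for $\alpha>0$ and $\beta>0$)'', and you have simply spelled out the case analysis that the paper leaves implicit, correctly noting that $c\le 0$ forces at least one of $\alpha,\beta$ to be nonpositive so that one of the two monotone forms in Proposition~\ref{joint phase transition} applies.
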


\subsection{Proof of the infinite case in Proposition \ref{joint phase
    transition}}\label{section proof phase transition} 
	Recall that $H$ denotes the height process under the excursion
    measure $\n$.  Recall that $\sigma_{r,s} $ and $\H_{r,s}$ are  the length and height of the excursion of $H$
    above level $r$ that straddles $s$, see Section \ref{sec:frag}.  Let $f\in \cb_+( [0,1]\times\R_+ )$. Set
	\begin{equation}
	Z_f = \int_0^\sigma\dd s \int_0^{H(s)} f(\sigma_{r,s},\H_{r,s})\, \dd r.
	\end{equation}
	Notice that under $\excm{1}$, the random variable $Z_f$ is
    distributed as $\psij_\rdtree (f)$ under $\P$. Using the scaling property
    \eqref{scaling H} of the height process, we have the following more
    general result which is partially given in \cite{delmas2018} (notice
    that there is a misprint in the first line of p.34 therein). 
\begin{lemma}
\label{lemma scaling}
      Let  $\psi(\lambda)=\kappa  \lambda  ^\gamma$ with $\kappa>0$ and $\gamma \in (1,2]$ and let $H$ be the $\psi$-height process. For every $x>0$, the random variable
\[
\big( (\H(s), \, s \in [0,x]), (\sigma_{r,s},H_{r,s}; \, r \in [0,H(s)],
  s \in [0,x])\big)
\]
under
      $\n^{(x)}$ 	is distributed as the following random variable 	under $\n^{(1)}$
\[
\Big( \!\left(x^{1-1/\gamma}H(s/x), \, s \in [0,x]\right),
\left(x\sigma_{x^{-1+1/\gamma}r,s/x},x^{1-1/\gamma}\H_{x^{-1+1/\gamma}r,s/x};
\,  r \in [0,x^{1-1/\gamma}H(s/x)], s \in [0,x]\right)\!\Big).
\]
In particular, the random variable
$\big(  \left(H(s), \, s\in [0,x]\right),Z_f \big)$
under  $\n^{(x)}$ is distributed as the random variable $
\Big( \left(x^{1-1/\gamma} H(s/x), \allowbreak\, s \in
    [0,x]\right),x^{2-1/\gamma} Z_{f_x}\Big)$ 
 under $\n^{(1)}$, where $f_x$ is defined by $f_x(y,u) = f(xy,x^{1-1/\gamma}u)$ for  $x >0$.
	\end{lemma}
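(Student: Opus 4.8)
The plan is to reduce everything to the deterministic rescaling map $\theta_x$ sending a normalized (duration-$1$) excursion $e$ to $\theta_x(e)(s)=x^{1-1/\gamma}e(s/x)$, $s\in[0,x]$, together with the identity \eqref{scaling H}, which says precisely that $(H(s),\,s\in[0,x])$ under $\n^{(x)}$ has the law of $\theta_x(H)$ where $H$ is sampled under $\n^{(1)}$. The point is that the lengths $\sigma_{r,s}$, the heights $H_{r,s}$, the hitting times $\ttt^{-}_{r,s},\ttt^{+}_{r,s}$ and the index ranges $r\in[0,H(s)]$, $s\in[0,\sigma]$ are all (jointly) measurable deterministic functionals of the excursion path, as in the set-up of Section \ref{sec:frag}; hence the whole tuple in the statement is a measurable image of the path, and the law is transported through $\theta_x$ once we compute how each functional behaves under $\theta_x$.

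First I would fix $x>0$, a continuous excursion $e$ of duration $1$, and set $\tilde e=\theta_x(e)$. Since $\inf_{[s\wedge t,\,s\vee t]}\tilde e=x^{1-1/\gamma}\inf_{[(s\wedge t)/x,\,(s\vee t)/x]}e$, the excursion of $\tilde e$ above level $r$ straddling time $s$ is, after the time change $t\mapsto t/x$, exactly the excursion of $e$ above level $x^{-1+1/\gamma}r$ straddling time $s/x$; in particular $\ttt^{\pm}_{r,s}(\tilde e)=x\,\ttt^{\pm}_{x^{-1+1/\gamma}r,\,s/x}(e)$, which gives
\[
\sigma_{r,s}(\tilde e)=x\,\sigma_{x^{-1+1/\gamma}r,\,s/x}(e),
\qquad
H_{r,s}(\tilde e)=x^{1-1/\gamma}\,H_{x^{-1+1/\gamma}r,\,s/x}(e),
\]
while the range of $r$ becomes $[0,\tilde e(s)]=[0,\,x^{1-1/\gamma}e(s/x)]$ and that of $s$ becomes $[0,x]$. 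Reading these identities with $e=H$ sampled under $\n^{(1)}$ and invoking \eqref{scaling H} yields the first distributional identity of the lemma (it suffices to check this on finite collections of indices $(r,s)$, which is what the phrasing of the lemma amounts to).

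For the statement on $Z_f$, under $\n^{(x)}$ one has $\sigma=x$, so $Z_f=\int_0^x\dd s\int_0^{H(s)}f(\sigma_{r,s},H_{r,s})\,\dd r$; plugging in the joint identity just obtained and then changing variables $s=xu$ with $u\in[0,1]$ and $r=x^{1-1/\gamma}v$ with $v\in[0,H(u)]$, and using $x^{-1+1/\gamma}\cdot x^{1-1/\gamma}=1$ to collapse the arguments, turns the integral into
\[
x^{2-1/\gamma}\int_0^1\dd u\int_0^{H(u)}f\bigl(x\,\sigma_{v,u},\,x^{1-1/\gamma}H_{v,u}\bigr)\,\dd v=x^{2-1/\gamma}\,Z_{f_x}
\]
under $\n^{(1)}$, with $f_x(y,u)=f(xy,\,x^{1-1/\gamma}u)$, which is the claim.

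I do not expect a genuine obstacle here: the argument is purely a careful transport of law through a deterministic scaling, so the only things demanding attention are (a) the joint measurability in $(r,s)$ of the functionals $\sigma_{r,s}$, $H_{r,s}$ and $\ttt^{\pm}_{r,s}$, which is needed to make the uncountably-indexed random elements meaningful and to justify transporting the law, and which follows from the continuity of $H$ exactly as in Section \ref{sec:frag}; and (b) the bookkeeping of the exponents $1-1/\gamma$ and $2-1/\gamma$, which is the only place an error could slip in. I would therefore state (a) in one sentence and devote the bulk of the proof to (b).
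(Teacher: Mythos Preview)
Your proposal is correct and follows exactly the approach the paper indicates: the paper does not give a detailed proof but simply states that the lemma follows from the scaling property \eqref{scaling H} of the height process (and refers to \cite{delmas2018} for a partial version). Your write-up makes explicit the deterministic transport through the rescaling map $\theta_x$ and the change of variables yielding the factor $x^{2-1/\gamma}$, which is precisely what the paper leaves to the reader.
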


	Conditionally on $H$, let $U$ be uniformly distributed on
    $[0,\sigma]$ under $\n[\sigma \bullet]$. Using Bismut's
    decomposition, see \emph{e.g.} \cite[Theorem
    4.5]{duquesne2005probabilistic} or \cite[Theorem
    2.1]{abraham2013forest}, we get that under $\n[\sigma \bullet]$, the
    random variable $H(U)$ has Lebesgue distribution on $(0,\infty)$
    and, conditionally on $\{H(U) =t\}$, the process
    $(\left(\sigma_{t-r,U},\H_{t-r,U}\right),\,\allowbreak 0 \leqslant r
    \leqslant t)$ is distributed as $(\left(\mass_r,\height_r\right),\,
    0\leqslant r \leqslant t)$ where 
	\begin{equation}\label{def subordinator+record}
	\mass_r = \sum_{s\leqslant r} \m(\ppp_s) 
\quad \text{and}\quad 
\height_r = \max_{s\leqslant r}\left(\H(\ppp_s) +r-s\right), \quad \forall
0\leqslant r\leqslant t, 
	\end{equation}
	where $\m(\ppp_s)$  (resp. $\H(\ppp_s)$)  stands for  the mass  (resp. the
    height) of  the real tree  $\ppp_s$, and  $\ppp=(\ppp_s, \, s\geq
    0)$ is a  $\T$-valued Poisson 
    point  process  on  $[0,t]$  whose intensity  is  given  below.   If
    $\gamma  =  2$,   the  Poisson  point  process   $\ppp$  has  intensity
    $2\kappa   \n$.    To   describe   the   intensity    of   $\ppp$   for
    $\gamma  \in  (1,2)$,  we  introduce  the  probability  distribution
    $\mathbf{P}_{\!a}$  on  $\T$ which  is  the  law  of a  random  tree
    obtained by  gluing a family of  trees  $(T_i,  i\in I)$ at
    their root, with $\sum_{i\in I}  \delta_{T_i}( \dd T)$ a $\T$-valued
    Poisson  point  measure  with  intensity $a  \n[\dd  T]$,  see  also
    \cite[Section     2.6]{abraham2013forest}     for  more   details     on
    $\mathbf{P}_{\!a}$. If $\gamma \in (1,2)$, the Poisson point process
    $\ppp$                           has                          intensity
    $\int_0^\infty  a \pi(\dd  a )\mathbf{P}_{\!a}(\dd  T)$ where  $\pi$
    is  the  Lévy  measure associated  with
    $\psi$ given  by  \eqref{eq:def-pi}.  In particular, we get the equality in law
\begin{equation}\label{bismut give H(U)=t}
	\int_0^{H(U)} f(\sigma_{r,U},\H_{r,U})\, \dd r \ \text{under} \
    \n\left[\sigma\bullet\middle| H(U)=t\right] \quad \lawd \quad
    \int_0^t f(\mass_r,\height_r)\, \dd r. 
\end{equation}
	In the  proof of  \cite[Lemma 4.6]{delmas2018}, see Section 8.6 and
    more precisely (8.20) therein, it is proven that 
 $\mass$  is a  stable subordinator  with Laplace  transform
    $\ex{\exp(-\lambda         \mass_1)}          =         \exp(-\gamma
    \kappa^{1/\gamma}\lambda^{1-1/\gamma})$.
    We  shall  determine the  intensity  of  the Poisson  point  process
    $\H(\ppp)=(\H(\ppp_s), \,  0\leqslant s \leqslant  t)$.  If $\gamma  = 2$,
    $\H(\ppp)$  has  intensity   $2\kappa  \n[\H  \in  \dd   x]$.  But,  by
    \cite[Eq.       (14)]{duquesne2005probabilistic},      we       have
    $\n[\H > x] = 1/(\kappa x)$. Differentiating with respect to $x$, we
    get $\n[\Hexc \in \dd  x] = \kappa^{-1}x^{-2}\ind_{\{x>0\}}\,\dd x$,
    so that $\H(\ppp)$ has intensity  $2 x^{-2}\ind_{\{x>0\}}\, \dd x$.  If
    $1< \gamma < 2$, $\H(\ppp)$ has intensity
\[
\int_0^\infty a \pi(\dd a) \, \mathbf{P}_{\! a} (\Hexc \in \dd x).
\]
Using \eqref{density height excursion} and the definition of $\mathbf{P}_{\! a}$, we have
		$\mathbf{P}_{\! a}\left(\H \leqslant x\right)
		= \e^{-a \n\left[\H > x\right]} = \e^{-C a x^{-1/(\gamma-1)}}$
		where $C = (\kappa (\gamma - 1))^{-1/(\gamma-1)}$. Differentiating with respect to $x$, we obtain
\[   
\mathbf{P}_{\!   a}   (\H   \in  \dd   x)   =   \frac{C   a
          x^{-\gamma/(\gamma-1)}}{\gamma-1}\e^{-Ca    x^{-1/(\gamma-1)}}
        \ind_{\{x>0\}}\,              \dd             x.              
\]
        Since $\pi(\dd a) =  C' a^{-1-\gamma} \, \dd a$ where
        $C'  =  \kappa  \gamma  (\gamma  -1)/\Gammaeuler(2-\gamma)$,  
        see \eqref{eq:def-pi}, we
        deduce  that for $x>0$
\begin{align*}
		\int_0^\infty a \pi(\dd a) \, \mathbf{P}_{\! a}(\H \in \dd x) 
&= \frac{C C'}{\gamma - 1}\left(\int_0^\infty
  a^{1-\gamma}x^{-\gamma/(\gamma-1)}\e^{-Cax^{-1/(\gamma-1)}} \, \dd
  a\right) \ind_{\{x>0\}}\,\dd x  \notag\\ 
&= \frac{C^{\gamma - 1} C' \Gammaeuler (2-\gamma)}{\gamma -
          1}\ind_{\{x>0\}}\, \frac{\dd x}{x^2} \\
&= \frac{\gamma}{\gamma - 1}\ind_{\{x>0\}}\,\frac{\dd x}{x^2}\cdot
\end{align*}
		In all cases, for $\gamma \in (1,2]$, we get that $\H(\ppp)$ is a
        Poisson point process with intensity $(\gamma/(\gamma-1))x^{-2}
        \ind_{\{x>0\}}\, \dd x$. Intuitively, this implies that
        $\mass_r$ is of order $r^{\gamma/(\gamma-1)}$ while $\height_r$
        is of order $r$ as $r \to 0$ which, together with \eqref{bismut
          give H(U)=t}, explains the form of the integral test
        \eqref{integral test}.

Our goal now is to show that
	\begin{equation*}
	\int_0 f(x^{\gamma/(\gamma-1)},x)\, \dd x = \infty \quad \implies \quad \int_0 f(\mass_t,\height_t)\, \dd t = \infty \quad \text{a.s.}
	\end{equation*}
	under the assumptions of Proposition \ref{joint phase transition}. To do this, we adapt the proof of Theorem 1 in \cite{erickson2007finiteness}  which gives a necessary and sufficient condition for the divergence of integrals of Lévy processes. We first consider the case $f(x,u) = x^\alpha h(u)$.
	\begin{lemma}\label{mass polynomial}
      Let $\alpha >-1+1/\gamma$ and  $h\in \cb_+(\R_+)$ be nonincreasing
      such                                                          that
      $\int_0  h(x)\allowbreak x^{\alpha  \gamma/(\gamma -  1)}\, \dd  x
      \allowbreak = \infty$. We have that a.s.
		\begin{equation*}
		\int_0 \mass_t^\alpha h(\height_t) \, \dd t = \infty.
		\end{equation*}
	\end{lemma}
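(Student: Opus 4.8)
The strategy is to prove divergence by reducing, via Blumenthal's zero--one law, to a lower bound of positive probability, and then to produce that bound through a block decomposition along a geometric grid, the scaling of Lemma \ref{lemma scaling}, and a second moment (Paley--Zygmund) estimate; this is the divergence half of the argument in \cite{erickson2007finiteness}, adapted to the present pair of processes $(\mass,\height)$.

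First I would note that $\int_0^1 \mass_t^\alpha h(\height_t)\,\dd t$ is finite if and only if $\int_0^\delta \mass_t^\alpha h(\height_t)\,\dd t$ is finite for every $\delta>0$: on $[\delta,1]$ the integrand is bounded, using $\mass_\delta>0$ and $\height_t\geq t\geq\delta$ together with the monotonicity of $h$. Hence the event $\{\int_0^1 \mass_t^\alpha h(\height_t)\,\dd t=\infty\}$ belongs to the germ $\sigma$-field of the point process $\ppp$, so by Blumenthal's zero--one law it has probability $0$ or $1$, and it suffices to show it has positive probability. Fixing $\rho\in(0,1)$ and setting $t_k=\rho^k$, $X_k=\int_{t_{k+1}}^{t_k}\mass_t^\alpha h(\height_t)\,\dd t$, one has $\int_0^1 \mass_t^\alpha h(\height_t)\,\dd t\geq\sum_{k\geq0}X_k$. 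Since $\mass$ and $\height$ are nondecreasing and $h$ is nonincreasing, on $[t_{k+1},t_k]$ one has $\mass_t^\alpha\geq\mass_{t_{k+1}}^\alpha\wedge\mass_{t_k}^\alpha$ (use $\mass_{t_{k+1}}$ when $\alpha\geq0$, $\mass_{t_k}$ when $\alpha<0$) and $h(\height_t)\geq h(\height_{t_k})\geq h\bigl(t_k+\max_{s\leq t_k}\H(\ppp_s)\bigr)$, since $\height_{t_k}\leq t_k+\max_{s\leq t_k}\H(\ppp_s)$. Fixing a large $\lambda>1$, let $G_k$ be the event that $\mass_{t_{k+1}}\in[\lambda^{-1}t_{k+1}^{\gamma/(\gamma-1)},\lambda t_{k+1}^{\gamma/(\gamma-1)}]$, $\mass_{t_k}\in[\lambda^{-1}t_k^{\gamma/(\gamma-1)},\lambda t_k^{\gamma/(\gamma-1)}]$ and $\max_{s\leq t_k}\H(\ppp_s)\leq\lambda t_k$; then on $G_k$ we get $X_k\geq c_k:=c\,t_k^{1+\alpha\gamma/(\gamma-1)}h\bigl((1+\lambda)t_k\bigr)$ for some $c=c(\alpha,\lambda,\rho)>0$.

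Next I would use scaling. The subordinator $\mass$ is stable with Laplace exponent $\lambda\mapsto\gamma\kappa^{1/\gamma}\lambda^{1-1/\gamma}$, and the height point process $\H(\ppp)$ is Poisson with the scale invariant intensity $(\gamma/(\gamma-1))x^{-2}\ind_{\{x>0\}}\,\dd x$; more conceptually, Lemma \ref{lemma scaling} gives $(\mass_{cr},\height_{cr})_r\lawd(c^{\gamma/(\gamma-1)}\mass_r,c\,\height_r)_r$. Consequently $\mathbb{P}(G_k)=p_0$ is independent of $k$, and $p_0>0$ for $\lambda$ large (for the height constraint, $\mathbb{P}(\max_{s\leq t_k}\H(\ppp_s)\leq\lambda t_k)=\exp(-\tfrac{\gamma}{(\gamma-1)\lambda})$, which does not depend on $k$). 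Because $\alpha>-1+1/\gamma$ forces $1+\alpha\gamma/(\gamma-1)>0$, a standard comparison of the series $\sum_k c_k$ with the integral $\int_0 x^{\alpha\gamma/(\gamma-1)}h(x)\,\dd x$ (using that $h$ is nonincreasing, together with an index shift absorbing the factor $1+\lambda$) shows $\sum_k c_k=\infty$; hence $\sum_k\mathbb{E}[c_k\ind_{G_k}]=p_0\sum_k c_k=\infty$.

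The main obstacle is the last step: upgrading this to $\sum_k X_k=\infty$ with positive probability. I would apply the Paley--Zygmund inequality to $S_n=\sum_{k\leq n}c_k\ind_{G_k}\leq\sum_{k\leq n}X_k$, for which one needs a uniform decorrelation bound $\mathbb{P}(G_j\cap G_k)\leq C_0\,\mathbb{P}(G_j)\mathbb{P}(G_k)$ for all $j\neq k$. This is precisely where the long memory of $\height$, through $\max_{s\leq t_k}\H(\ppp_s)$, enters: for $j<k$ one writes the record over $[0,t_j]$ as the maximum of the record over $[0,t_k]$ and the record over $(t_k,t_j]$ (the latter independent of $\mathcal{F}_{t_k}$), and combines this with the independence of the increments of $\mass$ over disjoint level intervals and the explicit form of the intensity, so that the relevant conditional probabilities are bounded by a fixed multiple of the unconditional ones. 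Granting this, $\mathbb{E}[S_n^2]=\sum_{j\neq k}c_jc_k\,\mathbb{P}(G_j\cap G_k)+\sum_k c_k^2\,\mathbb{P}(G_k)\leq C_0(\mathbb{E}[S_n])^2+p_0\sum_k c_k^2\leq(C_0+1/p_0)(\mathbb{E}[S_n])^2$, where I used $\sum_k c_k^2\leq(\sum_k c_k)^2$. Paley--Zygmund then gives $\mathbb{P}(S_n\geq\tfrac12\mathbb{E}[S_n])\geq\eta>0$ uniformly in $n$; since $(S_n)$ is nondecreasing with $\mathbb{E}[S_n]\to\infty$, reverse Fatou yields $\mathbb{P}(\lim_n S_n=\infty)\geq\eta>0$, whence $\mathbb{P}(\int_0^1\mass_t^\alpha h(\height_t)\,\dd t=\infty)>0$, and Blumenthal's zero--one law finishes the proof. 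I would also remark that the cases $\alpha\geq0$ and $\alpha<0$ differ only in which endpoint of $\mass$ is used in the block lower bound, and that $\rho$ may be fixed once and for all (say $\rho=1/2$).
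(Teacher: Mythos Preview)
Your argument is correct and complete in spirit, but it differs from the paper's in its decomposition. The paper works with the first passage times $\theight(a)=\inf\{t:\height_t\geq a\}$ and with $F(t)=\int_0^t \mass_s^\alpha\,\dd s$; it computes the first two moments of $F(\theight(a))$ \emph{exactly} via Bismut's decomposition (relating them to $\n[\sigma^{\alpha+1}\ind_{\{\H<a\}}]$ and $\n[\sigma^\alpha\ind_{\{\H<a\}}Z_\alpha^{\mathfrak m}]$), finds they are proportional to $a^{1+\alpha\gamma/(\gamma-1)}$ and $a^{2(1+\alpha\gamma/(\gamma-1))}$, applies Cauchy--Schwarz to get $\E[F(\theight(a))F(\theight(b))]\leq C\,\E[F(\theight(a))]\E[F(\theight(b))]$, and then runs the Kochen--Stone second-moment argument on $V_n=\sum_{i\leq n}F(\theight(2^{-i}))\,\Delta h_{i-1}$, finishing with a summation by parts linking $V_n$ to $\int h(\height_t)\,\dd F(t)$. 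Your route avoids the exact moment calculation via Bismut by using only the self-similarity of the pair $(\mass,\height)$ to make $\P(G_k)$ constant; this is more elementary and self-contained, while the paper's route integrates better with the surrounding machinery (Bismut is already in play, and the same template is reused verbatim for the companion Lemma~\ref{height polynomial}).

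Two remarks on your write-up. First, what you flag as ``the main obstacle'' is not one: because $\P(G_k)=p_0$ is \emph{constant}, the trivial bound $\P(G_j\cap G_k)\leq p_0=(1/p_0)\,p_0^2$ already gives your decorrelation with $C_0=1/p_0$ (this is exactly Cauchy--Schwarz on indicators, parallel to the paper's use of Cauchy--Schwarz on $F(\theight(\cdot))$). The increment-independence argument you sketch is therefore unnecessary. Second, your appeal to Lemma~\ref{lemma scaling} for the joint scaling of $(\mass,\max_{s\leq\cdot}\H(\ppp_s))$ is slightly oblique: that lemma is stated for the normalized excursion, whereas what you actually need is the self-similarity of the Poisson point process $\ppp$ itself (equivalently, of its $(\m,\H)$-projection), which is immediate from the explicit form of its intensity. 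A cleaner choice is to define $G_k$ via $\height_{t_k}\leq(1+\lambda)t_k$ directly and use only the scaling of $(\mass,\height)$.
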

	\begin{proof}
		Define the first passage time for $a>0$
		\begin{equation}
		\theight(a) \coloneqq \inf \left\lbrace t >0 \colon \, \height_t \geqslant a \right\rbrace.
		\end{equation}
		Since $t \mapsto \height_t$ is right-continuous, we have
\begin{equation}
\label{eq:T_H-at}
		\left \lbrace \theight(a) > t \right \rbrace = \left \lbrace
          \height_t< a \right \rbrace. 
\end{equation}
		Furthermore, since $\height_0 = 0$, it holds that a.s. $\theight(a)>0$ for every $a>0$.
		
		Set $F(t) = \int_0^t \mass_s^\alpha\, \dd s$. Clearly $F(t) <\infty$ a.s. if $\alpha \geqslant 0$. If $-1+1/\gamma<\alpha<0$, we have
		\begin{equation*}
		\ex{F(t)} = \int_0^t \ex{\mass_s^\alpha}\, \dd s = \ex{\mass_1^\alpha}\int_0^t s^{\alpha\gamma/(\gamma-1)}\, \dd s,
		\end{equation*}
		where we used that $\mass$ is stable with index $1-1/\gamma$. Now the last integral is finite because of the condition on $\alpha$, and
		\begin{equation*}
		\ex{\mass_1^\alpha} = \frac{1}{\Gammaeuler(|\alpha|)}\int_0^\infty \ex{\e^{-\lambda \mass_1}} \lambda^{-1-\alpha}\, \dd \lambda = \frac{1}{\Gammaeuler(|\alpha|)}\int_0^\infty\e^{-\gamma \kappa^{1/\gamma}\lambda^{1-1/\gamma}} \lambda^{-1-\alpha}\, \dd \lambda <\infty.
		\end{equation*}
		Thus, we get  $F(t) <\infty$ a.s. for $\alpha >-1+1/\gamma$. Furthermore, $F$ is nondecreasing and we have
		\begin{equation}
		\int_0^1 \mass_t^\alpha h(\height_t)\, \dd t = \int_0^1 h(\height_t) \, \dd F(t).
		\end{equation}
		
		We shall need the first and second moment of $F(\theight(a))$ for $a>0$.
		Using \eqref{eq:T_H-at}, we have that 
		\begin{equation*}
		\ex{F(\theight(a))} = \int_0^\infty \ex{\mass_t^\alpha \ind_{\lbrace \theight(a) > t \rbrace}} \, \dd t = \int_0^\infty \ex{\mass_t^\alpha \ind_{\lbrace \height_t < a\rbrace}}\, \dd t.
		\end{equation*}
		On the other hand, notice that for every $s \in [0,\sigma]$, it
        holds that $\sigma_{0,s} = \sigma$ is the total mass and
        $H_{0,s} = \H$ is the total height. Thus, using Bismut's
        decomposition, we have  
\begin{equation}
\label{Bismut justif} 
		\n\left[ \sigma^{\alpha+1}\ind_{\lbrace \H< a \rbrace}\right] =
        \int_0^\infty \n\left[\sigma \sigma_{0,U}^\alpha \ind_{\{H_{0,U}
            < a\}}\middle| H(U) = t\right]\,\dd t = \int_0^\infty
        \ex{\mass_t^\alpha \ind_{\{\height_t < a\}}} \, \dd t, 
\end{equation}
where we recall  that conditionally on $H$,  under $\n[\sigma \bullet]$,
$U$     is     uniformly     distributed     on     $[0,\sigma]$     and
$(\sigma_{0, U}, H_{0, U})$ conditionally on $\{H(U)=t\}$ is then distributed
as $(\mass_t, \height_t)$.  We deduce that
\begin{align}
		\ex{F(\theight(a))} 
&= \n\left[ \sigma^{\alpha+1}\ind_{\lbrace \H< a \rbrace}\right]\notag\\ 
&= \mathfrak{g}(0) \int_0^\infty x^{-1-1/\gamma}
         \excm{x}\left[\sigma^{\alpha+1} \ind_{\lbrace \H < a
          \rbrace}\right]\, \dd x \notag\\ 
&= \mathfrak{g}(0)\int_0^\infty x^{\alpha-1/\gamma} \excm{1}\left[
  x^{1-1/\gamma}\H < a \right]\, \dd x \notag\\ 
&= \frac{\gamma \mathfrak{g}(0)}{(\alpha+1) \gamma  - 1}
          \excm{1}\left[\H^{-1-\alpha\gamma/(\gamma-1)}\right]
          a^{1+\alpha\gamma/(\gamma-1) }, 
\label{claim 1}
\end{align}
		where we disintegrated  with respect to $\sigma$  for the second
        equality and used the scaling  property \eqref{scaling H} of the
        height  process  for the  third.  Recall  that $\H$  has  finite
        moments   of    all   orders    under   $\excm{1}$,    so   that
        $\E[F(\theight(a))]$   is   finite    for   all   $a>0$.   Next,
        set 
\begin{equation*}
Z_\alpha^{\mathfrak{m}} = \int_0^\sigma \dd
          s\int_0^{H(s)} \sigma_{r,s}^\alpha \, \dd r.
\end{equation*} 
		It   follows  from   Lemma \ref{lemma scaling}   that  under   $\excm{x}$,
        $(\H,Z_\alpha^\mathfrak{m})$       is       distributed       as
        $(x^{1-1/\gamma}\H,\allowbreak
        x^{\alpha+2-1/\gamma}Z_\alpha^\mathfrak{m})$
        under $\excm{1}$. Recall  that $\alpha>-1+1/\gamma$. Thus, using
        Bismut's decomposition as in \eqref{Bismut justif}, we have
		\begin{align}
		\ex{F(\theight(a))^2} &= 2\ex{ \int_0^\infty \mass_t^\alpha \ind_{\lbrace \height_t < a\rbrace}\, \dd t \int_0^t \mass_s^\alpha \, \dd s} \notag\\
		&=2\n\left[\sigma^{\alpha+1}\ind_{\lbrace\H< a\rbrace}\int_0^{H(U)}\sigma_{r,U}^\alpha \, \dd r\right] \notag\\
		&=2\n\left[\sigma^\alpha \ind_{\lbrace \H < a \rbrace} Z_\alpha^{\mathfrak{m}}\right]\notag\\
		&= 2 \mathfrak{g}(0) \int_0^\infty x^{-1-1/\gamma} \excm{x}\left[\sigma^\alpha \ind_{\lbrace \H < a \rbrace} Z_\alpha^{\mathfrak{m}}\right]\, \dd x\notag\\
		&= 2 \mathfrak{g}(0) \int_0^\infty x^{-1-1/\gamma} \excm{1}\left[x^\alpha \ind_{\lbrace x^{1-1/\gamma}\H < a \rbrace} x^{\alpha +2-1/\gamma}Z_\alpha^{\mathfrak{m}}\right]\, \dd x\notag\\
		&= \frac{\mathfrak{g}(0)}{\alpha+1-1/\gamma}\excm{1}\left[\H^{-2(1+\alpha\gamma/(\gamma-1))}Z_\alpha^{\mathfrak{m}}\right] a^{2(1+\alpha\gamma/(\gamma-1))}, \label{claim 2}
		\end{align}
		where the last term is finite by \eqref{finite moment delta}.
		Combining \eqref{claim 1} and \eqref{claim 2} and using
        Cauchy-Schwartz inequality, we deduce that there exists some
        finite constant $C>0$ such that  for all $a,b>0$
\begin{equation}
\label{claim 3}
		\ex{F(\theight(a))F(\theight(b))} \leqslant
        \ex{F(\theight(a))^2}^{1/2} \ex{F(\theight(b))^2}^{1/2}
        \leqslant C \ex{F(\theight(a))}\ex{F(\theight(b))}.  
\end{equation}

		For $i \in \mathbb{N}$, put $\theight_i = \theight(2^{-i})$,
        $h_i = h(2^{-i})$ and $\Delta h_i = h_{i+1} - h_i$. Notice that
        the sequence $(\theight_i, \, i \in \mathbb{N})$ is
        nonincreasing and $\Delta h_i \geqslant 0$. Set $V_n =
        \sum_{i=1}^n F(\theight_i) \Delta h_{i-1}$. Notice that
        $\E[V_n]$ is finite as $\E[F(\theight(a))]$ is finite for all
        $a>0$. By \eqref{claim 3}, we have
\begin{align*}
\ex{V_n^2} 
&= \sum_{i=1}^n \ex{F(\theight_i)^2} \left(\Delta h_{i-1}\right)^2 + 2
  \sum_{1\leqslant i < j \leqslant n} \ex{F(\theight_i)F(\theight_j)}
  \Delta h_{i-1} \Delta h_{j-1} \\ 
&\leqslant C\sum_{i=1}^n \ex{F(\theight_i)}^2 \left(\Delta
  h_{i-1}\right)^2 + 2C \sum_{1\leqslant i < j \leqslant n}
  \ex{F(\theight_i)}\ex{F(\theight_j)} \Delta h_{i-1}\Delta h_{j-1} \\ 
&=C \left(\sum_{i=1}^n \ex{F(\theight_i)}\Delta h_{i-1}\right)^2 =
  C\ex{V_n}^2. 
\end{align*}
		Therefore, we get that $\limsup_n \ex{V_n}^2/\ex{V_n^2} >0$. By \cite{kochen1964note}, it follows that 
		\begin{equation}\label{borel cantelli}
		\pr{\limsup_n \frac{V_n}{\ex{V_n}} \geqslant 1} >0.
		\end{equation}
Using \eqref{claim 1}, notice that for some finite constant $C>0$, we have
		\begin{align}
		\int_0^1 x^{1+\alpha \gamma/(\gamma-1)} \, |\dd h(x)| 
&\leqslant \sum_{i=1}^\infty (2^{-i+1})^{1+\alpha \gamma/(\gamma-1)}
  \int_{2^{-i}}^{2^{-i+1}} |\dd h(x)| \notag\\ 
&= C\sum_{i=1}^\infty \ex{F(\theight_i)}\Delta h_{i-1} =
          C\lim_{n\to \infty}\ex{V_n}. \label{X_n} 
		\end{align}
		Since $\int_0^1 x^{1+\alpha\gamma/(\gamma-1)} \, |\dd h(x)| \geqslant -h(1) +\left(1+\alpha \gamma/(\gamma-1)\right) \int_0^1 h(x)x^{\alpha \gamma/(\gamma-1)}\, \dd x= \infty$ by assumption, it follows from \eqref{X_n} that $\lim_{n\to \infty}\allowbreak\ex{V_n}\allowbreak = \infty$. Thus, using \eqref{borel cantelli} and the fact that $V_n$ is nondecreasing, we deduce that $\lim_{n\to \infty} V_n = \infty$ with positive probability, that is
		\begin{equation}\label{positive probability}
		\pr{\sum_{i=1}^\infty F(\theight_i)\Delta h_{i-1} = \infty} >0.
		\end{equation}
		
		Since $h$ is nonincreasing, we have
		\begin{equation}\label{int sum}
		\int_0^{\theight_0} h(\height_t)\, \dd F(t) \geqslant \sum_{i=0}^\infty h_{i-1}\left(F(\theight_{i-1}) - F(\theight_i)\right).
		\end{equation}
		A summation by parts gives
		\begin{equation}\label{spp}
		\sum_{i=1}^n h_{i-1}\left(F(\theight_{i-1}) - F(\theight_i)\right) = F(\theight_0)h_0 - F(\theight_n) h_n + \sum_{i=1}^n F(\theight_i)\Delta h_{i-1}.
		\end{equation}
		But, notice that
		\begin{equation*}F(\theight_n) h_n = F(\theight_n) h(2^{-n}) \leqslant \int_0^{\theight_n} h(\height_t) \, \dd F(t) \leqslant \int_0^{\theight_0} h(\height_t) \, \dd F(t).
		\end{equation*}
		Together with \eqref{int sum} and \eqref{spp}, this yields
		\begin{equation*}
		F(\theight_0)h_0+ \sum_{i=1}^\infty F(\theight_i) \Delta h_{i-1}  \leqslant 2\int_0^{\theight_0} h(\height_t)\, \dd F(t).
		\end{equation*}
		It follows from \eqref{positive probability} that $\int_0^{\theight_0}\mass_t^\alpha h(\height_t)\, \dd t = \int_0^{\theight_0} h(\height_t) \, \dd F(t)$ diverges with positive probability.
		
		Finally, since the event $\left\lbrace \int_0 \mass_t^\alpha h(\height_t) \, \dd t = \infty \right \rbrace$ is $\mathcal{F}_{0+}$-measurable where $(\mathcal{F}_t)_{t\geqslant 0}$ is the filtration generated by the Poisson point process $\ppp$, Blumenthal's zero-one law entails that $\int_0^1 \mass_t^\alpha h(\height_t)\, \dd t$ diverges with probability $1$.
	\end{proof}
	
	\begin{lemma}\label{height polynomial}
		Let $\beta > -1$ and $g\in \cb_+( [0,1])$ be nonincreasing such
        that $\int_0 g(x^{\gamma/(\gamma-1)}) x^{\beta}\, \dd x
        =\infty$. We have that a.s. 
		\begin{equation*}
		\int_0 g(\mass_t) \height_t^\beta \, \dd t =\infty.
		\end{equation*}
	\end{lemma}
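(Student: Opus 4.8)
The plan is to follow the proof of Lemma~\ref{mass polynomial} almost verbatim, interchanging the roles of the mass subordinator $\mass$ and the height record process $\height$: here $\height$ supplies the monotone integrator while $\mass$ supplies the first passage times. Set $G(t) = \int_0^t \height_s^\beta \, \dd s$, a nondecreasing process, so that $\int_0 g(\mass_t)\height_t^\beta \, \dd t = \int_0 g(\mass_t)\,\dd G(t)$. For the $(1-1/\gamma)$-stable subordinator $\mass$, introduce the first passage time
\[
\tmass(a) = \inf\{t > 0 \colon \, \mass_t \geq a\}, \qquad a > 0,
\]
which is a.s.\ positive and, by right-continuity and monotonicity of $\mass$, satisfies $\{\tmass(a) > t\} = \{\mass_t < a\}$ (with no ambiguity at equality, since $\mass_t$ has a continuous law). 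In particular $G(\tmass(a)) = \int_0^\infty \height_s^\beta \ind_{\{\mass_s < a\}}\,\dd s$, and the finiteness of $\ex{G(\tmass(a))}$ obtained below, together with $\tmass(a)\uparrow\infty$ a.s., will show that $G(t) < \infty$ a.s.\ for every $t$ when $\beta > -1$.

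First I would compute the first two moments of $G(\tmass(a))$. By Tonelli's theorem and $\{\mass_s < a\} = \{\tmass(a) > s\}$, then Bismut's decomposition applied exactly as in \eqref{Bismut justif} but now retaining $\H_{0,U} = \H$ and $\sigma_{0,U} = \sigma$,
\[
\ex{G(\tmass(a))} = \int_0^\infty \ex{\height_s^\beta \ind_{\{\mass_s < a\}}}\,\dd s = \n\left[\sigma\,\H^\beta \ind_{\{\sigma < a\}}\right];
\]
disintegrating with respect to $\sigma$ via $\pi_*(\dd x) = \mathfrak{g}(0)x^{-1-1/\gamma}\,\dd x$ and using that $\H$ under $\excm{x}$ has the law of $x^{1-1/\gamma}\H$ under $\excm{1}$ (by~\eqref{scaling H}), this equals a positive constant times $\excm{1}[\H^\beta]\,a^{(\beta+1)(1-1/\gamma)}$, which is finite because $\H$ has finite moments of every real order and $(\beta+1)(1-1/\gamma) > 0$. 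The same manipulation gives
\[
\ex{G(\tmass(a))^2} = 2\,\n\left[\H^\beta \ind_{\{\sigma < a\}}\,Z^{\mathfrak{h}}_\beta\right], \qquad Z^{\mathfrak{h}}_\beta := \int_0^\sigma \dd s \int_0^{H(s)} \H_{r,s}^\beta\,\dd r,
\]
and, disintegrating and applying the joint scaling of $(\H, Z^{\mathfrak{h}}_\beta)$ from Lemma~\ref{lemma scaling}, one gets a positive constant times $\excm{1}[\H^\beta Z^{\mathfrak{h}}_\beta]\,a^{2(\beta+1)(1-1/\gamma)}$; this is finite because $Z^{\mathfrak{h}}_\beta$ under $\excm{1}$ is distributed as $\psij_\rdtree(u^\beta)$ jointly with $\H = \H(\rdtree)$, so that $\excm{1}[\H^\beta Z^{\mathfrak{h}}_\beta] = \ex{\H(\rdtree)^\beta \psij_\rdtree(u^\beta)} < \infty$ by Lemma~\ref{lem:finite moment delta} applied with $\alpha = 0$, $p = 1$, $\delta = \beta$ (legitimate since $\beta > -1$). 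As $\ex{G(\tmass(a))^2}^{1/2}$ and $\ex{G(\tmass(a))}$ are proportional (both are multiples of $a^{(\beta+1)(1-1/\gamma)}$), the Cauchy--Schwarz inequality yields a finite constant $C$ with $\ex{G(\tmass(a))G(\tmass(b))} \leq C\,\ex{G(\tmass(a))}\,\ex{G(\tmass(b))}$ for all $a, b > 0$.

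From this point the argument coincides with the end of the proof of Lemma~\ref{mass polynomial}. Put $\tmass_i = \tmass(2^{-i})$ (nonincreasing in $i$), $g_i = g(2^{-i})$, $\Delta g_{i-1} = g_i - g_{i-1} \geq 0$ (since $g$ is nonincreasing), and $V_n = \sum_{i=1}^n G(\tmass_i)\Delta g_{i-1}$. The covariance estimate gives $\ex{V_n^2} \leq C\,\ex{V_n}^2$, hence $\pr{\limsup_n V_n/\ex{V_n} \geq 1} > 0$ by \cite{kochen1964note}. To see $\ex{V_n}\to\infty$: $\ex{V_n}$ is a constant times $\sum_{i=1}^n (2^{-i})^{(\beta+1)(1-1/\gamma)}\Delta g_{i-1}$, which (as in the estimate~\eqref{X_n}) dominates a constant times $\int_0^1 x^{(\beta+1)(1-1/\gamma)}\,|\dd g(x)|$; integration by parts bounds the latter from below by $-g(1) + (\beta+1)(1-1/\gamma)\int_0 g(x)\,x^{(\beta+1)(1-1/\gamma)-1}\,\dd x$, and the change of variable $x\mapsto x^{\gamma/(\gamma-1)}$ identifies $\int_0 g(x)\,x^{(\beta+1)(1-1/\gamma)-1}\,\dd x$ with a positive multiple of $\int_0 g\big(x^{\gamma/(\gamma-1)}\big)x^\beta\,\dd x = \infty$; thus $\lim_n\ex{V_n} = \infty$. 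Since $V_n$ is nondecreasing, this forces $\pr{\sum_{i\geq 1} G(\tmass_i)\Delta g_{i-1} = \infty} > 0$. A summation by parts together with the monotonicity of $g$, exactly as in \eqref{int sum}--\eqref{spp}, yields $G(\tmass_0)g_0 + \sum_{i\geq 1} G(\tmass_i)\Delta g_{i-1} \leq 2\int_0^{\tmass_0} g(\mass_t)\,\dd G(t)$, so $\int_0 g(\mass_t)\height_t^\beta\,\dd t$ diverges with positive probability; as this event is $\mathcal{F}_{0+}$-measurable for the filtration generated by the Poisson point process $\ppp$, Blumenthal's zero--one law raises the probability to one.

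The step I expect to require the most care is the bookkeeping rather than any new idea: correctly transcribing Bismut's decomposition and the scaling identities of Lemma~\ref{lemma scaling} with mass and height interchanged, checking in particular that the exponent $(\beta+1)(1-1/\gamma)$ governs both $\ex{G(\tmass(a))}$ and $\ex{G(\tmass(a))^2}^{1/2}$ (so that Cauchy--Schwarz yields the self-bounding estimate needed to invoke \cite{kochen1964note}), and that the auxiliary moment $\excm{1}[\H^\beta Z^{\mathfrak{h}}_\beta]$ is genuinely covered by Lemma~\ref{lem:finite moment delta} at $\alpha = 0$.
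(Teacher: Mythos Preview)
Your proposal is correct and follows essentially the same approach as the paper's own proof: interchange the roles of $\mass$ and $\height$ relative to Lemma~\ref{mass polynomial}, compute the first and second moments of $G(\tmass(a))$ via Bismut's decomposition and the scaling from Lemma~\ref{lemma scaling}, verify the key finiteness $\excm{1}[\H^\beta Z^{\mathfrak{h}}_\beta]<\infty$ through Lemma~\ref{lem:finite moment delta}, and then repeat the Kochen--Stone/summation-by-parts argument. The only cosmetic difference is that the paper checks $G(t)<\infty$ directly from $\height_s\geq s$ (so $G(t)\leq\int_0^t s^\beta\,\dd s$ when $-1<\beta<0$), whereas you deduce it a posteriori from the finiteness of $\ex{G(\tmass(a))}$; both are valid.
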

	\begin{proof}
		The proof is similar to that of Lemma \ref{mass polynomial} and we only highlight the major differences. Define the first passage time $\tmass(a) = \inf \left\lbrace t >0 \colon \, \mass_t > a\right \rbrace$ for every $a >0$. Since $\mass$ is a stable subordinator, we have a.s. $\tmass(a) >0$ for every $a >0$. Set $F(t) = \int_0^t \height_s^\beta\, \dd s$. Notice that $F(t)<\infty$ a.s. if $\beta \geqslant 0$. If $-1<\beta <0$, then using that $\height_s \geqslant s$, we have a.s. $F(t)\leqslant \int_0^t s^\beta \, \dd s < \infty$. To compute the first moment of $F(\tmass(a))$, use Bismut's decomposition as in \eqref{Bismut justif} to get
		\begin{align}
		\ex{F(\tmass(a))} &= \ex{\int_0^\infty \height_t ^\beta \ind_{\lbrace \mass_t < a \rbrace}\, \dd t} \notag\\
		&= \n\left[\sigma \ind_{\lbrace \sigma < a \rbrace} \H^\beta\right] \notag\\
		&= \mathfrak{g}(0) \int_0^a x^{(\beta+1)(1-1/\gamma)-1} \excm{1}\left[\H^\beta\right]\,\dd x\notag \\
		&= \frac{\mathfrak{g}(0)}{(\beta+1)(1-1/\gamma)} \excm{1}\left[\H^\beta\right]a^{(\beta+1)(1-1/\gamma)}. \label{claim 4}
		\end{align}
		Setting
		\begin{equation*}
		Z_\beta^{\mathfrak{h}} = \int_0^\sigma \dd s \int_0^{H(s)} H_{r,s}^\beta\, \dd r
		\end{equation*}
		and using Bismut's decomposition as in \eqref{Bismut justif} and
        the fact that under $\excm{x}$, $(\H,Z_\beta^\mathfrak{h})$ is
        distributed as $(x^{1-1/\gamma}\H,\allowbreak
        x^{(\beta+1)(1-1/\gamma)+1}Z_\beta^\mathfrak{h})$ under
        $\excm{1}$ by Lemma \ref{lemma scaling}, we have
		\begin{align}\label{claim 5}
		\ex{F(\tmass(a))^2} &= 2 \ex{\int_0^\infty \height_t^\beta \ind_{\lbrace \mass_t < a\rbrace}\, \dd t \int_0^t \height_s^\beta \, \dd s}\notag\\
		&= 2 \n\left[\sigma \ind_{\lbrace \sigma < a\rbrace} \H^\beta \int_0^{H(U)} H_{r,U}^\beta\, \dd r \right]\notag\\
		&= 2\n\left[\ind_{\lbrace \sigma < a \rbrace} \H^\beta Z_\beta^\mathfrak{h}\right]\notag\\
		&= 2 \mathfrak{g}(0) \int_0^a x^{-1-1/\gamma}  \excm{x}\left[\H^\beta Z_\beta^\mathfrak{h}\right] \, \dd x\notag\\
		&= \frac{\mathfrak{g}(0)}{(\beta+1)(1-1/\gamma)} \excm{1}\left[\H^\beta Z_\beta^\mathfrak{h}\right]a^{2(\beta+1)(1-1/\gamma)},
		\end{align}
		where $\excm{1}\left[\H^\beta Z_\beta^\mathfrak{h}\right]
        <\infty$ by \eqref{finite moment delta}. Combining \eqref{claim
          4} and \eqref{claim 5}, we see that the estimate \eqref{claim
          3} holds. The rest of the proof is similar to that of Lemma
        \ref{mass polynomial} (with $h_i$ replaced by $g_i=g(2^{-i})$). 
	\end{proof}

	We can now finish the proof of Proposition \ref{joint phase
      transition}. Let $f \in \cb_+( [0,1]\times\R_+)$ be of
    the form $f(x,u) = g(x)u^\beta$ or $f(x,u) = x^\alpha h(u)$ with
    $g,h$ nonincreasing and such that $\int_0 f(x^{\gamma/(\gamma-1)},x)\,
    \dd x = \infty$. By Lemmas \ref{mass polynomial} and \ref{height
      polynomial}, we have that, in the cases $\alpha >-1+1/\gamma$ and $\beta
    >-1$,  a.s. 
	\begin{equation}\label{integral PPP}
	\int_0 f(\mass_t,\height_t)\, \dd t = \infty. 
	\end{equation}

	Now suppose that $\alpha \leqslant -1+1/\gamma$. Since $h$ is nonincreasing and satisfies $\int_0 h(x)x^{\alpha\gamma/(\gamma-1)}\, \dd x = \infty$, there exists a constant $C>0$ such that $h\geqslant C$ on some interval $(0,\epsilon)$. Thus, we have
		\begin{equation*}
		\int_0 \mass_t^\alpha h(\height_t)\, \dd t \geqslant C \int_0 \mass_t^\alpha \, \dd t,
		\end{equation*}
		where  the  last integral  diverges  a.s.  by Lemma  \ref{height
          polynomial}                                                 as
        $\int_0   x^{\alpha   \gamma/(\gamma-1)}\,   \dd   x   =\infty$.
        Similarly,  if $\beta  \leqslant  -1$, there  exists a  constant
        $C'>0$  such that  $g\geqslant C'$  on $(0,\epsilon)$.  Thus, we
        have
\begin{equation*}
		\int_0 g(\mass_t)\height_t^\beta\, \dd t \geqslant C' \int_0 \height_t^\beta \, \dd t,
\end{equation*}
		and the last integral diverges by Lemma \ref{mass polynomial} since $\int_0 x^{\beta} \, \dd x = \infty$. This proves that \eqref{integral PPP} holds for all $\alpha,\beta \in \real$.

		Combining \eqref{bismut give H(U)=t} and \eqref{integral PPP}, we deduce that
		\begin{align*}
		\n\left[\sigma;\, Z_f < \infty\right] &= 	\n\left[\sigma; \, \sigma \int_0^{H(U)} f(\sigma_{r,U},H_{r,U})\, \dd r< \infty\right] \\
		&= \int_0^\infty \n\left[\sigma; \, \sigma \int_0^{H(U)} f(\sigma_{r,U},H_{r,U})\, \dd r< \infty\middle| H(U) = t\right]\, \dd t \\
		&=\int_0^\infty\pr{\mass_t \int_0^{t} f(\mass_r,\height_r)\, \dd r< \infty}\, \dd t = 0.
		\end{align*}
		It follows that $\n$-a.e. $Z_f = \infty$. Disintegrating with
        respect to $\sigma$ and using the scaling property from Lemma
        \ref{lemma scaling}, we get
		\begin{equation*}
		0 = \n\left[Z_f < \infty\right] = \int_0^\infty \excm{x}\left[Z_f < \infty\right]\, \pi_*(\dd x) = \int_0^\infty \excm{1}\left[x^{2-1/\gamma} Z_{f_x} < \infty\right]\, \pi_*(\dd x).
		\end{equation*}
		Consequently, $\dd x$-a.e. on $(0,\infty)$, we have
        $\excm{1}\left[Z_{f_x} < \infty\right] = 0$. Suppose that
        $f(y,u) = g(y)u^\beta$ with $g$ nonincreasing. Then, under $\excm{1}$, $Z_{f_x}$ is
        equal to $x^{\beta(1-1/\gamma)} \int_0^1 \dd s \int_0^{H(s)}
        g(x\sigma_{r,s})H_{r,s}^\beta\, \dd r$ and we get that 
\begin{equation*}
		x\mapsto\excm{1}\left[ \int_0^1 \dd s \int_0^{H(s)}
          g(x\sigma_{r,s})H_{r,s}^\beta\, \dd r< \infty\right] 
\end{equation*}
vanishes  $\dd  x$-a.e.  on  $(0,\infty)$. Moreover,  this  function  is
nonincreasing in  $x$ as $g$  is nonincreasing. Hence it  is identically
zero.       In        particular,       taking        $x=1$       yields
$    \excm{1}\left[Z_f    <    \infty\right]     =    0$,    and    thus
$\psij_{\rdtree}(f)=+\infty  $   a.s.  as  $Z_f$  under   $\excm{1}$  is
distributed as  $\psij_{\rdtree}(f)$.  The  same argument applies  if we
suppose that $f(y,u) = y^\alpha h(u)$ instead. This completes the proof.

\section{Phase transition for functionals of the mass and height}\label{phase transition section}
Recall that $\rddtree^n$ is a BGW($\xi$) conditioned to have $n$
vertices (with $n\in \supp$) and $\xi$ satisfies \ref{xi1} and
\ref{xi3}, with the sequence $(b_n, n\in \N^*) $ in \eqref{stable CLT}, 
 and that $\rdtree$ is a stable Lévy tree with branching mechanism $\psi(\lambda) = \kappa \lambda^\gamma$. In this section, we study the limit of
\[
\mathcal{A}_n^\circ(f)
=\frac{b_n}{n^{2}} \sum_{w \in
  \rddtree^{n,\circ}}|\rddtree^n_{w}|f\left(\frac{b_n}{n}
  \rddtree^n_{w},\frac{b_n}{n}H(w)\right)  
\]
for functions $f\in \cb(\T \times \R_+)$ continuous on $(\T\setminus
\TT)\times \R_+$ but  that may blow up as either the mass or the height goes to $0$. 

\subsection{A general convergence result}
We now give a first convergence result for general functionals that may
blow up. Recall from \eqref{T_0} the definition of $\TT$. Notice that
$\mathcal{A}_n^\circ(\TT\times \R_+) = 0$ and $\Psi_\rdtree(\TT\times \R_+) =
0$. 
\begin{proposition}\label{general functional}
	Assume that $\xi$ satisfies \ref{xi1} and \ref{xi3}. Let $f \in
    \cb(\T\times \R_+) $ be  continuous on $(\T
    \setminus \TT)\times \R_+$ and
    $\alpha,\beta \in \real$ with  $\gamma\alpha +
    (\gamma-1)(\beta+1)>0$ be such that 
\begin{equation}
   \label{eq:f<mh}
\left| f(\tree,r)\right| \leqslant C\, \m(\tree)^{\alpha}
\H(\tree)^{\beta}, \quad  \text{for all } \tree  \in \T\setminus
\TT \text{ and }  r\geqslant 0,
\end{equation}
for some finite constant $C >0$. Then $\Psi_\rdtree(|f|) $ is a.s. finite  and we have the convergence
 in distribution 
\begin{equation}
\label{general functional eq}
\mathcal{A}_n^\circ (f)=\frac{b_n}{n^2}\sum_{w\in \rddtree^{n,\circ}}|\rddtree^n_w|
f\left(\frac{b_n}{n}\rddtree^n_{w},\frac{b_n}{n}H(w)\right)\cvlaw
\Psi_{\rdtree}(f).  
\end{equation}
We also  have the convergence of all moments of order $p\geq 1$ such that
$p(\gamma\alpha + (\gamma-1)\beta)>1-\gamma$. 
\end{proposition}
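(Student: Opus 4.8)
The plan is to deduce Proposition \ref{general functional} from the building blocks already assembled in the excerpt: the continuity of $\Psi$ (Proposition \ref{continuity of the measure}), the convergence $\mathcal{A}_n^\circ(\varphi)\to\Psi_\rdtree(\varphi)$ for bounded continuous $\varphi$ (Corollary \ref{convergence for continuous functions discrete}), the $L^1$-boundedness of $\costj_n$ along monomials (Lemma \ref{mass+height bounded in L1}), the $L^p$-boundedness provided by Lemma \ref{lem:Amh^p}, and the finiteness and moment estimates for $\psij_\rdtree$ on the limit tree (Corollary \ref{moments height}, Lemma \ref{lem:finite moment delta}, Proposition \ref{joint phase transition}). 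First I would record that $\Psi_\rdtree(|f|)<\infty$ a.s. and $\E[\Psi_\rdtree(|f|)]<\infty$: by \eqref{eq:f<mh} we have $\Psi_\rdtree(|f|)\le C\,\psij_\rdtree(x^\alpha u^\beta)$ (identifying $\m(\tree_y)^\alpha\H(\tree_y)^\beta$ with the integrand of $\psij$ via \eqref{length measure}), which is a.s. finite and integrable under the hypothesis $\gamma\alpha+(\gamma-1)(\beta+1)>0$ by Corollary \ref{joint phase transition poly} and Corollary \ref{moments height} (here using $\alpha'=\alpha+1$, so that the bound is really on $\m(\tree)^{\alpha}=\m(\tree)^{\alpha'-1}$ times $|\tree_w|$; the extra $\m(\tree_w)$ factor built into $\costj_n$ is exactly what converts $|\rddtree^n_w|^{\alpha'}$ into $|\rddtree^n_w|\cdot(|\rddtree^n_w|/n)^{\alpha}$ after rescaling).

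The core of the argument is a truncation scheme. For $\eta\in(0,1)$ introduce a cutoff $\chi_\eta\colon\T\times\R_+\to[0,1]$, continuous, equal to $1$ on $\{\tree:\m(\tree)\wedge\H(\tree)\ge\eta\}$ and equal to $0$ on a neighbourhood of $\TT\times\R_+$; this is possible since $\ghp(\cdot,\TT)$ is continuous and, by Lemma \ref{distance to F}, comparable to $\m\wedge\H$. Then $f\chi_\eta$ is bounded (using \eqref{eq:f<mh} and that on its support $\m\wedge\H\ge$ const) and continuous on all of $\T\times\R_+$, so Corollary \ref{convergence for continuous functions discrete} gives $\mathcal{A}_n^\circ(f\chi_\eta)\cvlawd \Psi_\rdtree(f\chi_\eta)$ as $n\to\infty$, for each fixed $\eta$. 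Next I would estimate the error $\mathcal{A}_n^\circ(f)-\mathcal{A}_n^\circ(f\chi_\eta)=\mathcal{A}_n^\circ(f(1-\chi_\eta))$. By \eqref{eq:f<mh}, $|f(1-\chi_\eta)|\le C\,\m(\tree)^\alpha\H(\tree)^\beta\,\ind_{\{\m(\tree)\wedge\H(\tree)<\eta\}}$, and since $\mathcal{A}_n^\circ$ and $\costj_n$ agree on functions of mass and height (up to the identification $\m(b_n\rddtree^n_w/n)=|\rddtree^n_w|/n$, $\H(b_n\rddtree^n_w/n)=(b_n/n)\H(\rddtree^n_w)$), we get
\[
\E\big[|\mathcal{A}_n^\circ(f)-\mathcal{A}_n^\circ(f\chi_\eta)|\big]\le C\,\E\big[\costj_n\big(x^\alpha u^\beta \ind_{\{x\wedge u<\eta\}}\big)\big].
\]
To bound this uniformly in $n$ with a quantity vanishing as $\eta\to0$, I would split the indicator into $\{x<\eta\}$ and $\{u<\eta\}$ and dominate each monomial-times-indicator by the sum of two functions of the form $g(x)u^\beta$ and $x^\alpha h(u)$ with $g,h$ nonincreasing, chosen slightly worse than $x^\alpha,u^\beta$ (e.g. replace $\alpha$ by $\alpha-\epsilon$ on $\{x<\eta\}$ so that $x^{\alpha-\epsilon}\ind_{\{x<\eta\}}\le \eta^\epsilon x^{\alpha-\epsilon}$, picking $\epsilon>0$ small enough that $\gamma(\alpha-\epsilon)+(\gamma-1)(\beta+1)>0$ still holds, and symmetrically for $u$); Lemma \ref{mass+height bounded in L1}(i) then gives $\sup_n\E[\costj_n(x^{\alpha-\epsilon}u^\beta)]<\infty$, so the whole error is $\le C'\eta^\epsilon$ uniformly in $n$. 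A parallel estimate on the limit side, $\E[\Psi_\rdtree(|f|(1-\chi_\eta))]\le C'\eta^\epsilon$, follows from Corollary \ref{moments height} applied to $x^{\alpha-\epsilon}u^\beta$. With these three ingredients, a standard $3\epsilon$/Slutsky argument (approximation of $\mathcal{A}_n^\circ(f)$ by $\mathcal{A}_n^\circ(f\chi_\eta)$ uniformly in $n$, convergence of the latter for fixed $\eta$, approximation of $\Psi_\rdtree(f\chi_\eta)$ to $\Psi_\rdtree(f)$ as $\eta\to0$) yields $\mathcal{A}_n^\circ(f)\cvlawd\Psi_\rdtree(f)$, i.e. \eqref{general functional eq}. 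Some care is needed because $f$ is only real-valued and continuous off $\TT\times\R_+$, not bounded; but $\Psi_\rdtree$ and $\mathcal{A}_n^\circ$ charge only $(\T\setminus\TT)\times\R_+$, so one may test against bounded Lipschitz functions after composing with $\chi_\eta$, which is the content of using $d_{\mathrm{BL}}$-type bounds as in the proof of Corollary \ref{convergence for continuous functions discrete}.

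For the convergence of moments of order $p\ge1$ with $p(\gamma\alpha+(\gamma-1)\beta)>1-\gamma$, the plan is to upgrade the convergence in distribution via uniform integrability. By Skorokhod's representation theorem assume the convergence \eqref{general functional eq} holds a.s.; it then suffices to show $(\mathcal{A}_n^\circ(f)^p,\,n\in\supp)$ is uniformly integrable, for which I would prove it is bounded in $L^{p'}$ for some $p'>p$ still satisfying $p'(\gamma\alpha+(\gamma-1)\beta)>1-\gamma$ (possible since the constraint is an open inequality). Indeed $|\mathcal{A}_n^\circ(f)|\le C\,\costj_n(x^\alpha u^\beta)$ by \eqref{eq:f<mh} and the identification above, so $\E[|\mathcal{A}_n^\circ(f)|^{p'}]\le C^{p'}\E[\costj_n(x^\alpha u^\beta)^{p'}]$, and the right-hand side is bounded uniformly in $n$ by Lemma \ref{lem:Amh^p} (with $\delta=0$), whose hypothesis is exactly $p'(\gamma\alpha+(\gamma-1)\beta)>1-\gamma$ together with $\gamma\alpha+(\gamma-1)(\beta+1)>0$. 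This gives $\E[\mathcal{A}_n^\circ(f)^p]\to\E[\Psi_\rdtree(f)^p]$, and finiteness of the limit moment is Lemma \ref{lem:finite moment delta}. The main obstacle I anticipate is the bookkeeping in the uniform error bound: one must carefully arrange the $\epsilon$-perturbation of the exponents so that the perturbed monomials still satisfy the strict finiteness inequality $\gamma\alpha'+(\gamma-1)(\beta'+1)>0$ of Lemma \ref{mass+height bounded in L1}(i), and one must handle separately the regions near $\{x=0\}$ and near $\{u=0\}$ (and their overlap), since the single bound \eqref{eq:f<mh} mixes both; but this is routine once the cutoff $\chi_\eta$ and the comparison $\ghp(\tree,\TT)\asymp\m(\tree)\wedge\H(\tree)$ from Lemma \ref{distance to F} are in place.
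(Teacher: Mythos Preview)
Your approach is correct in spirit and takes a more elementary route than the paper. The paper invokes the abstract framework of Appendix~\ref{append}: it builds a countable family $\mf=\{\ind\}\cup\{f_k,g_k\}$ of comparison functions on $\T\times\R_+$, verifies hypotheses \ref{H1}--\ref{Hstar}, and then proves tightness of $(\mathcal{A}_n^\circ)_n$ in the enriched space $\MF$ from the $L^p$-bounds of Lemma~\ref{lem:Amh^p} together with the tightness of $(\mathcal{A}_n^\circ)_n$ in $\M$ (via Proposition~\ref{tightness} and Proposition~\ref{main result, general}); convergence in $\MF$ then yields $\mathcal{A}_n^\circ(\f h)\cvlaw\Psi_\rdtree(\f h)$ for every $\f\in\mf$ and $h\in\cc_b$, and one takes $\f=g_1$, $h=f/g_1$. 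Your direct truncation-plus-$3\epsilon$ argument is exactly what this machinery abstracts, and rests on the same estimate (Lemma~\ref{lem:Amh^p}, or Lemma~\ref{mass+height bounded in L1} in the monomial case). The moment-convergence step (Skorokhod plus uniform integrability via a slightly larger exponent~$q>p$) is identical in both proofs.

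One point you should tighten: $f\chi_\eta$ need not be bounded, because \eqref{eq:f<mh} only controls $|f|$ by $\m^\alpha\H^\beta$ and $\H$ is unbounded on $\T$; when $\beta>0$, Corollary~\ref{convergence for continuous functions discrete} does not apply directly to $f\chi_\eta$. The fix is a second truncation at large height: multiply by a further continuous cutoff $\psi_M$ supported on $\{\H(\tree)\le M\}$, and bound the tail on $\{\H>M\}$ by the same perturbation trick, namely $x^\alpha u^\beta\ind_{\{u>M\}}\le M^{-\epsilon}x^\alpha u^{\beta+\epsilon}$, invoking Lemma~\ref{lem:Amh^p} with exponents $(\alpha,\beta+\epsilon)$ (the constraint $\gamma\alpha+(\gamma-1)(\beta+\epsilon+1)>0$ only becomes easier). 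In the paper this is handled implicitly by the tightness condition~\eqref{tight2} and by the $\vee$-structure of the comparison functions $f_k(\tree,r)=\bigl(\m(\tree)^{\delta_k}\vee\m(\tree)^{-\delta_k}\bigr)\bigl(\H(\tree)^{\delta_k}\vee\H(\tree)^{-\delta_k}\bigr)$, which are designed to dominate both near $\TT$ and at infinity. With that addition, your argument goes through.
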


\begin{proof}
	By Corollary \ref{convergence for continuous functions discrete}, we know that $\mathcal{A}_n^\circ \cvlaw \Psi_\rdtree$ in the space $\mathcal{M}(\T\times \R_+)$. In particular, the sequence $( \mathcal{A}_n^\circ, \, {n \in \supp})$ is tight (in distribution) in $\mathcal{M}(\T\times \R_+)$, and applying \cite[Theorem 4.10]{kallenberg2017random}, we have
	\begin{equation}\label{A_n is tight}
	\inf_{K \in \mathcal{K}} \sup_{n \in \supp} \ex{1\wedge \mathcal{A}_n^\circ(K^c)} = 0,
	\end{equation}
	where $\mathcal{K}$ is the set of all compact subsets of $\T\times \R_+$. We start by showing that
	\begin{equation}\label{A_n is tight without 1}
	\inf_{K \in \mathcal{K}} \sup_{n \in \supp} \ex{ \mathcal{A}_n^\circ(K^c)} = 0.
	\end{equation}
Let $K\in \mathcal{K}$. Using the inequality $x\leq  1 \wedge x + x \sqrt{1\wedge x}$ with
$x=\mathcal{A}_n^\circ(K^c) \geq 
0$ and the Cauchy--Schwartz inequality, we get that 
\begin{equation}
\label{CS} 
\ex{\mathcal{A}_n^\circ(K^c)} \leqslant \ex{1
        \wedge\mathcal{A}_n^\circ(K^c)} + \sqrt{\ex{\mathcal{A}_n^\circ
          (1)^2}\ex{1\wedge\mathcal{A}_n^\circ(K^c)}}. 
\end{equation}
	Since $\mathcal{A}_n^\circ(1) \leqslant \frac{b_n}{n} \H(\rddtree^n)$
	by \eqref{eq:mass-A_n}, Lemma \ref{height finite moments} implies that
\[
\sup_{n \in \supp}\ex{\mathcal{A}_n^\circ(1)^2} ^{1/2}\leqslant \sup_{n
  \in \supp}\ex{\left(\frac{b_n}{n}\H(\rddtree^n)\right)^2}^{1/2}< \infty  .
\] 
	This, in conjunction with \eqref{A_n is tight} and \eqref{CS}, proves \eqref{A_n is tight without 1}.
	
    Let     $\alpha,\beta     \in     \real$       such     that
    $\gamma\alpha  +  (\gamma-1)(\beta+1)>0$.   We  consider  the  space
    $S=\T       \times      \R_+$       with      the       metric
    $\rho((\tree,r),(\tree',r'))   =    \ghp(\tree,\tree')+|r-r'|$   and
    $S_0=\TT\times \R_+$, so that $(S, \rho)$ is a Polish metric
    space  and $S_0$  is  a  closed subset  of  $S$.  We shall  consider
    $0_S=(\{\root\},  0)\in  S_0$  as  a distinguished  point.   We  shall
    construct  a family  of  functions $\mf$  on $S$ satisfying  assumptions
    \ref{H1}--\ref{Hstar}  of Appendix  \ref{append} in  order to  apply
    Proposition       \ref{main       result,       general}.        Let
    $(\delta_k, \, k \in \N)$  be
    a  positive increasing      sequence      such      that
    $(2\gamma-1)\delta_k < (\gamma-1)+\left(\gamma \alpha + (\gamma-1)\beta\right)\wedge 0$ for all $k\in \N$.
    Define for every $k \in \N$
\[
  f_k(\tree,r) 
= \left(\m(\tree)^{\delta_k}\vee
  \m(\tree)^{-\delta_k}\right)\left(\H(\tree)^{\delta_k}\vee
  \H(\tree)^{-\delta_k}\right)
  \quad\text{and}\quad
  g_k(\tree,r) 
= \m(\tree)^{\alpha} \H(\tree)^{\beta} f_k(\tree,r) ,
\]
 for all $\tree \in \T \setminus \TT$
    and $r \geqslant 0$ and $f_k=g_k=+\infty $ on $\TT\times \R_+$. 
The functions $f_k$ and $g_k$ are positive and continuous on
$(\T\setminus \TT)\times \R_+$. We define $\mf=\{\ind\} \cup
\{f_k, g_k\colon\, k\in \N\}$. 
Therefore assumptions \ref{H1} and \ref{Hcont} are satisfied. 
Notice that  $\rho((\tree,r),S_0) = \ghp(\tree,\TT)$. 
Let $\epsilon >0$ and $M >0$. By \eqref{ghp trivial tree}, $\ghp(\tree,
\{\emptyset\})\leqslant M$ implies that $\H(\tree) \leqslant 2M$ and
$\m(\tree)\leqslant M$. Similarly, by Lemma \ref{distance to F},
$\ghp(\tree,\TT) \geqslant \epsilon$ implies that $\H(\tree)\geqslant
\epsilon$ and $\m(\tree)\geqslant \epsilon$. Therefore, we have the
inclusion 
\[
\left\lbrace (\tree,r) \in S\colon 
  \rho((\tree,r),S_0)\geqslant \epsilon, \,
  \rho((\tree,r),0_S)\leqslant M\right\rbrace
	\subset \{\tree \in \T\colon  \H(\tree)\in
    [\epsilon,2M],\,\m(\tree)\in [\epsilon, M]\}\times \R_+.
\]
	Since $f_k$ and $g_k$ are clearly bounded away from zero and infinity on the latter set, assumption
    \ref{Hbound} is satisfied.
Moreover, $f_k/f_{k+1}$ and $g_k/g_{k+1}$ are  continuous and bounded on
$S_0^c=(\T\setminus \TT)\times \R_+$ for every $k \in \N$. Recall that
$\rho((\tree,r),S_0) = \ghp(\tree,\TT)$. Therefore, as
$\rho((\tree,r),S_0) \to 0$, we have $\H(\tree)\wedge\m(\tree) \to0$ by
Lemma \ref{distance to F}. 
It follows that 
$f_k(\tree,r)/f_{k+1}(\tree,r) \to 0$ and $g_k(\tree,r)/g_{k+1}(\tree,r) \to 0$
as $\rho((\tree,r),S_0) \to 0+$. Recall the notation $\mf^\star(f) $ from
\ref{Hstar}. We deduce  that $f_{k+1} \in \mf^\star
(f_k)$ and $g_{k+1} \in \mf^\star
(g_k)$ for $k\in \N^*$. We also have that $1/f_1$ is continuous and bounded on
$S_0^c$ and that  $1/f_{1}(\tree,r) \to 0$
as $\rho((\tree,r),S_0) \to 0+$. This implies that $f_1\in \mf^\star
(\ind)$. Therefore, assumption \ref{Hstar} is satisfied.

 In order to apply Proposition \ref{main result, general} to the
 sequence of measures $(\mathcal{A}_n^\circ, \, {n \in \supp})$ and the
 family $\mf  $, we shall check that the sequence $(\mathcal{A}_n^\circ,
 \, {n \in \supp})$ is tight (in distribution) in the space $\MF$ (see
 Appendix \ref{append} for the definition of $\MF$). Thanks to  Proposition
 \ref{tightness}, the sequence   $(\mathcal{A}_n^\circ,\,
 {n \in \supp})$ is tight  in the space $\MF$ if and
 only if  
 $(\f\mathcal{A}_n^\circ, \, n\in \supp)$ is tight in $\M(S)$ for all
 $\f\in \mf$. 
 Let $\f\in \mf$. 
 Notice
 that for every $\tree \in \T\setminus \TT$ and $r \geqslant 0$, we have 
\[
\f((\tree,r))\leqslant \sum_{1\leq i,j \leq 2}\m(\tree)^{\alpha_i}
\H(\tree)^{\beta_j}
\]
for $\alpha_1, \alpha_2, \beta_1, \beta_2\in \R$ such that
 $\gamma\alpha_i +  (\gamma-1)(\beta_j+1)>0$ holds for every $i,j \in \{1,2\}$.
	Therefore, by Lemma \ref{lem:Amh^p}, we have for some $p>1$ small enough
\begin{equation}
\label{A_n bounded in L1}
\sup_{n \in \supp} \ex{\mathcal{A}_n^\circ(\f)^p} < \infty
\quad\text{and}\quad
\sup_{n \in \supp} \ex{\mathcal{A}_n^\circ(\f^p)} < \infty. 
\end{equation}
The first bound gives that \eqref{tight1} holds for all $\f\in \mf$ by
the Markov inequality. Recall that $\mathcal{K}$ denotes the set of compact
subsets of $\T\times \R_+$. 
Moreover, with $q$  such that $1/p+1/q = 1$ and $K\in \mathcal{K}$,
using Hölder's inequality, we get
\[
\ex{\mathcal{A}_n^\circ(\f \ind_{K^c})}
\leq  \ex{\mathcal{A}_n^\circ( \ind_{K^c})}^{1/q} 
\ex{\mathcal{A}_n^\circ(\f^p)}^{1/p}.
\]
Using the second bound in \eqref{A_n bounded in L1} and \eqref{A_n is
  tight without 1}, we deduce that 
\[
\inf_{K\in \mathcal{K} } \sup_{n \in \supp} \ex{\mathcal{A}_n^\circ (\f
  \ind_{K^c})}
= 0.
\]
Thus \eqref{tight2} holds for all $\f\in
\mf$. 
According to Proposition \ref{tightness}-(i), we get that 
the sequence $(
\mathcal{A}_n^\circ, \, {n \in \supp})$ is tight (in distribution) in
$\MF (\T\times \R_+)$. Now apply Proposition \ref{main result, general}
and Proposition \ref{convergence in distribution} to get that 
\[
\mathcal{A}_n^\circ(\f h) \cvlawd \Psi_\rdtree(\f h)
\]
	for every $h\in \cc_b(\T\times\R_+)$ and every $\f\in \mf$. Let
    $f\in \cb(\T\times \R_+)$ satisfying the assumptions of Proposition
    \ref{general functional}. 
Consider
    $\f = g_1$ and $h = f/g_1$. 
Notice that \eqref{eq:f<mh} implies that $h$ is continuous on $\T\times\R_+$. 
Since $\f h= g_1 h = f$ except possibly on $S_0 = \TT \times \R_+$ and
$\mathcal{A}_n^\circ(S_0) = \Psi_{\rdtree}(S_0) = 0$, we deduce that the
convergence in distribution \eqref{general functional eq} holds.
	
Let $p >1$ such that $p(\gamma\alpha+(\gamma-1)\beta)>1-\gamma$. There exists $q>p$ satisfying the same inequality. Since $|f(\tree,r) |\leqslant C \m(\tree)^\alpha \H(\tree)^\beta$, we get that
	\begin{equation}
	\sup_{n \in \supp} \ex{|\mathcal{A}_n^\circ(f)|^{q}} \leqslant C^q \sup_{n \in \supp}\ex{\left(\frac{b_n^{1+\beta}}{n^{2+\alpha +\beta}} \sum_{w \in \rddtree^{n,\circ}} |\rddtree^n_{w}|^{1+\alpha} \H(\rddtree^n_{w})^\beta\right)^{q}\,},
	\end{equation}
	where the right-hand side is  finite by Lemma \ref{lem:Amh^p}. Thus,
    the  sequence  $(|\mathcal{A}_n^\circ(f)|^p,  \, n  \in  \supp)$  is
    uniformly integrable and the convergence  of the moment of order $p$
    of  $\mathcal{A}_n^\circ(f)$  towards the  moment  of  order $p$  of
    $\Psi_{\rdtree}(f)$ readily  follows from  \eqref{general functional
      eq}.
\end{proof}

\subsection{Phase transition for functionals of the mass and height}
We refine the convergence result given in Proposition \ref{general functional} for functionals depending only on the mass and height and describe a phase transition in that case.

We start with a technical lemma which is a consequence of the well-known de La Vallée Poussin criterion for uniform integrability.
\begin{lemma}\label{existence f*}
  Let   $\nu$  be   a  nonnegative   finite  measure   on  $(0,1]$   and
  $f\in  \cc_+( (0,1])$  be nonincreasing,  belonging to  $L^1(\nu)$ and
  such  that  $\lim_{x \to  0+}f(x)  =  +\infty$.  Then there  exists  a
  positive  function   $\*{f}\in  \cc_+((0,  1])  $   which  belongs  to
  $L^1(\nu)$,   such  that   $f/\*{f}$   is  bounded   on  $(0,1]$   and
  $\lim_{x \to 0+} f(x)/\*{f}(x) = 0$.
\end{lemma}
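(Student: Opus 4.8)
The plan is to deduce this from the de la Vallée Poussin criterion applied to the measure $f \, \dd\nu$, or more directly to the finite measure $\mu(\dd x) = f(x)\,\nu(\dd x)$ on $(0,1]$. Here is how I would proceed.

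\textbf{Reduction.} Since $f \in L^1(\nu)$, the measure $\mu(\dd x) := f(x)\,\nu(\dd x)$ is a nonnegative finite measure on $(0,1]$. The idea is to find a nondecreasing continuous function $\phi \colon (0,1] \to (0,\infty)$ with $\phi(0+)=0$ — wait, we actually want the opposite: we want a factor that grows as $x \to 0+$. So instead consider the pushforward: map $(0,1]$ to $[1,\infty)$ via $x \mapsto 1/x$ and transport $\mu$ to a finite measure on $[1,\infty)$. A cleaner route: apply the de la Vallée Poussin theorem in the form that states that for any finite measure $\mu$ on a measurable space there exists a nondecreasing convex function $G\colon [0,\infty) \to [0,\infty)$ with $G(t)/t \to \infty$ as $t \to \infty$ and $\int G(\text{``size''})\,\dd\mu < \infty$; but here it is more natural to use the dual statement about a single integrable function. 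Concretely: since $\int_{(0,1]} f\,\dd\nu < \infty$, the de la Vallée Poussin criterion (applied to the single function $f$ with the finite measure $\nu$, viewing $\{f\}$ as a uniformly integrable family) yields a nondecreasing convex function $G\colon [0,\infty)\to[0,\infty)$ with $\lim_{t\to\infty} G(t)/t = +\infty$ and $\int_{(0,1]} G(f(x))\,\nu(\dd x) < \infty$.

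\textbf{Construction of $\*{f}$.} Set $\tilde f(x) = G(f(x))/c$ where I still need to normalize so that $\tilde f \geq f$ and $f/\tilde f$ is bounded. Since $G(t)/t \to \infty$, there is $t_0$ with $G(t)/t \geq 1$ for $t \geq t_0$; on the bounded range $f(x) \leq t_0$ (which, because $f$ is nonincreasing with $f(0+)=+\infty$, corresponds to $x$ bounded away from $0$, i.e. $x \in [x_0,1]$ for some $x_0>0$) we simply add a large constant. So define $\*{f}(x) = G(f(x)) + K$ for a constant $K = \sup_{x\in[x_0,1]} f(x) = f(x_0) < \infty$. Then $\*{f}$ is positive; it belongs to $L^1(\nu)$ since $\int G(f)\,\dd\nu < \infty$ and $\nu$ is finite; $f/\*{f}$ is bounded because on $[x_0,1]$ we have $f \leq f(x_0) \leq \*{f}$ giving ratio $\leq 1$, while on $(0,x_0]$ we have $f(x) \geq t_0$ so $\*{f}(x) \geq G(f(x)) \geq f(x)$, again ratio $\leq 1$; and finally $f(x)/\*{f}(x) \leq f(x)/G(f(x)) \to 0$ as $x \to 0+$, since $f(x) \to +\infty$ and $G(t)/t \to \infty$. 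Continuity of $\*{f}$ follows from continuity of $f$ and of $G$ (the de la Vallée Poussin function $G$ can be taken continuous, indeed piecewise linear convex, hence continuous).

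\textbf{Main obstacle.} There is no serious obstacle; the only points requiring a little care are (i) recalling the precise form of de la Vallée Poussin that produces a \emph{continuous} (convex, piecewise affine) $G$ with superlinear growth and $\int G\circ f\,\dd\nu<\infty$ — this is standard, e.g. one builds $G$ so that $G(t) = \int_0^t g(s)\,\dd s$ with $g$ a nondecreasing step function tending to $\infty$, chosen slowly enough using the integrability $\int f\,\dd\nu < \infty$ and dominated convergence; and (ii) checking that the monotonicity hypothesis on $f$ is exactly what lets us say $\{f \leq t_0\} = [x_0,1]$ so that the constant $K=f(x_0)$ is finite and the ``bounded part'' is handled. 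I would state de la Vallée Poussin as a cited lemma and spend the proof essentially on the three verifications above (integrability, boundedness of $f/\*{f}$, and the limit), which are the routine calculations I will write out in full in the actual proof.
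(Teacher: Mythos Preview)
Your proposal is correct and follows essentially the same route as the paper: apply de la Vallée Poussin to obtain a convex nondecreasing $G$ with $G(t)/t\to\infty$ and $G\circ f\in L^1(\nu)$, then set $\*{f}$ to be (a positive shift of) $G\circ f$. The only cosmetic difference is that the paper shifts $G$ by $1$ to make it strictly positive and then takes $\*{f}=G\circ f$ directly, whereas you add the constant $K=f(x_0)$ to $G\circ f$; both devices serve the same purpose of ensuring positivity and boundedness of $f/\*{f}$ on the region where $f$ is small.
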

\begin{proof}
	We may assume without loss of generality that $f$ does not vanish anywhere in $(0,1]$ and that $\nu$ is a probability measure. By the de La Vallée Poussin criterion (see \cite[§22]{dellacherie1975probabilites}), there exists a convex nondecreasing function $F \colon \R_+ \to \R_+$ such that $\lim_{t \to \infty} F(t)/t = \infty$ and $F\circ f \in L^1(\nu)$. In fact, up to considering $F+1$ instead, we can and will assume that $F$ does not vanish anywhere. Since $F$ is convex on $\R_+$, it is continuous on $(0,\infty)$ and it follows that $F \circ f$ is continuous on $(0,1]$. Moreover, $F\circ f$ is clearly nonincreasing by composition. Further, since $\lim_{x\to 0}f(x) = \infty$ and $\lim_{t\to \infty} t/F(t) = 0$, we get $\lim_{x \to 0}f(x)/F\circ f(x) = 0$. The function $f/F\circ f$ being continuous on $(0,1]$ with a finite limit at $0$, it is bounded on $(0,1]$. Setting $\*{f} = F\circ f$, the conclusion readily follows.
	\end{proof}

We now give the main result of this section. Recall that the notation
$\psij_{\rdtree}(g(x)h(u))$ stands for $\psij_\rdtree(f)$ where $f(x,u)
= g(x)h(u)$. For $g\in \cb(\R_+) $, define 
\begin{equation}
g^*(x) \coloneqq \sup_{x\leqslant y\leqslant 1} |g(y)| \quad \text{ for
all } x  \in (0,1].
\end{equation} 
\begin{thm}\label{mass+height convergence}Assume that $\xi$ satisfies \ref{xi1} and \ref{xi3}.
	\begin{enumerate}[label = (\roman*),leftmargin=*]
	\item Let $\beta \in \real$ and $g \in \cb([0,1])$ be  such that $g$ is continuous on $(0,1]$ and satisfies	
	\begin{equation}\int_0 g^*(x^{\gamma/(\gamma-1)}) x^\beta\, \dd x < \infty.
	\end{equation}
	Then we have the convergence in distribution and of the first moment
	\begin{equation}\frac{b_n^{1+\beta}}{n^{2+\beta}}\sum_{w \in \rddtree^{n,\circ}} |\rddtree^n_{w}| \H(\rddtree^n_{w}) ^\beta g\left( \frac{|\rddtree^n_{w}|}{n}\right)\cvlawmean \psij_\rdtree(g(x)u^\beta)
	\end{equation}
	where $\psij_\rdtree(|g(x)|u^\beta)$ is a.s. finite and integrable.
	\item Let $\alpha \in \real$ and $h \in \cb(\R_+)$ be   such that $h$ is
      continuous on $(0,\infty)$ and satisfies $h(u) = O(\e^{u^\eta})$
      as $u \to \infty$ for some  $\eta\in (0,\gamma)$ and  
\begin{equation}
\int_0 x^{\alpha \gamma/(\gamma -1)}  h^*(x)\, \dd x < \infty.
\end{equation}
	Then we have the convergence in distribution and of the first moment
\begin{equation}
\label{eq:a-h}
\frac{b_n}{n^{2+\alpha}}\sum_{w \in \rddtree^{n,\circ}} |\rddtree^n_{w}|^{1+\alpha} h\left(\frac{b_n}{n}\H(\rddtree^n_{w})\right) \cvlawmean \psij_\rdtree(x^\alpha h(u))
\end{equation}
	where $\psij_\rdtree(x^\alpha |h(u)|)$ is a.s. finite and integrable.
	\item Let $f\in \cb_+([0,1]\times \R_+)$  be such that 
	\begin{equation}
	\int_0 f(x^{\gamma/(\gamma-1)},x)\, \dd x =\infty.
	\end{equation}
	Suppose that $f$ is of the form $f(x,u) = g(x)u^\beta$ or $f(x,u) =
    x^\alpha h(u)$ where $\alpha,\beta\in \real$ and $g,h$ are
    nonincreasing and continuous on $(0,1]$ and  on $(0,\infty)$ respectively. Then we have
	\begin{equation}\label{divergence mass+height}
	\frac{b_n}{n^2} \sum_{w\in \rddtree^{n,\circ}} |\rddtree^n_w| f\left(\frac{|\rddtree^n_w|}{n}, \frac{b_n}{n} \H(\rddtree^n_w)\right) \cvlawmean \infty.
	\end{equation}
\end{enumerate}
\end{thm}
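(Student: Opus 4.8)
The three parts will be handled separately. Parts (i) and (ii) are dual — they exchange into one another under the symmetry swapping the mass $\m(\tree)$ and the height $\H(\tree)$ — and both sharpen Proposition \ref{general functional} by replacing its crude power bound on the test function by the sharp one-dimensional integral test; part (iii) is the divergent counterpart. For (i) and (ii) the plan is to mimic the proof of Proposition \ref{general functional}. Start with (i). The finiteness and integrability of the limit $\psij_{\rdtree}(g(x)u^{\beta})$ follow directly from the first moment formula of Corollary \ref{cor:expectation functional on Levy tree}: plugging in $|g|\leq g^{*}$ and the hypothesis $\int_{0}g^{*}(x^{\gamma/(\gamma-1)})x^{\beta}\,\dd x<\infty$ (equivalently $\int_{0}g^{*}(x)x^{(\beta+1)(1-1/\gamma)-1}\,\dd x<\infty$, which also forces $\beta>-1$) yields $\E[\psij_{\rdtree}(|g(x)|u^{\beta})]<\infty$, hence $\psij_{\rdtree}(|g(x)|u^{\beta})<\infty$ a.s. In case (ii) one argues the same way, writing $|h|\leq h^{*}\ind_{[0,1]}+C\e^{u^{\eta}}\ind_{(1,\infty)}$ and using, besides the integral test, the light upper tail $\P(\H(\rdtree)\geq y)\leq C_{0}\e^{-c_{0}y^{\beta'}}$ for $\beta'<\gamma$ inherited from Lemma \ref{subgaussian} to integrate the exponential part against the explicit formula.

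For the convergence in (i) and (ii), the continuous-functional convergence of Corollary \ref{convergence for continuous functions discrete} must be lifted to the unbounded, possibly signed function $f(\tree,r)=g(\m(\tree))\H(\tree)^{\beta}$. When $g$ is bounded, $|f|\leq\|g\|_{\infty}\H(\tree)^{\beta}$ is a power bound and the claim is Proposition \ref{general functional}. When $g$ is unbounded, $g^{*}$ blows up at $0$, and I would first invoke Lemma \ref{existence f*}, applied to $g^{*}$ relative to a suitable finite reference measure on $(0,1]$ (the one attached to the formula of Corollary \ref{cor:expectation functional on Levy tree}), to produce a continuous nonincreasing $G\geq g^{*}$ with $g^{*}/G\to 0$ as $x\to 0^{+}$ that still obeys the integral test. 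Then, exactly as in Proposition \ref{general functional}, build the family $\mf=\{\ind\}\cup\{f_{k},g_{k}\colon k\in\N\}$ on $\T\times\R_{+}$, now with $g_{k}(\tree,r)=G(\m(\tree))\H(\tree)^{\beta}f_{k}(\tree,r)$ and $f_{k}$ the $\m^{\pm\delta_{k}}\H^{\pm\delta_{k}}$ envelope factors, and verify assumptions \ref{H1}--\ref{Hstar}. Tightness of $(\costj_{n},\,n\in\supp)$ in $\MF$ then reduces, via Proposition \ref{tightness}, to bounds $\sup_{n}\E[\costj_{n}(\f)^{p}]<\infty$ and $\sup_{n}\E[\costj_{n}(\f^{p})]<\infty$ for some $p>1$, supplied by the monotone-function versions of Lemmas \ref{mass+height bounded in L1} and \ref{lem:Amh^p} (in case (ii) the growth $h(u)=O(\e^{u^{\eta}})$ is absorbed, after slightly enlarging $\eta$ to swallow multiplicative constants, by assumption (ii) of Lemma \ref{mass+height bounded in L1}). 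Propositions \ref{main result, general} and \ref{convergence in distribution} now give $\costj_{n}(\f h)\cvlawd\psij_{\rdtree}(\f h)$ for every bounded continuous $h$; taking $\f=g_{1}$ and $h=f/g_{1}$ — which is bounded and continuous on $\T\times\R_{+}$ by construction (it extends by $0$ on $\TT\times\R_{+}$) and satisfies $\f h=f$ off the $\costj_{n}$- and $\psij_{\rdtree}$-null set $\TT\times\R_{+}$ — yields the convergence in distribution, while the $L^{p}$-bound provides the uniform integrability needed for convergence of the first moment.

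For part (iii), Proposition \ref{joint phase transition} gives $\psij_{\rdtree}(f)=\infty$ a.s., and hence $\E[\psij_{\rdtree}(f)]=\infty$. The plan is a monotone approximation from below: choose continuous cutoffs $\phi_{\epsilon},\psi_{\epsilon}\colon[0,\infty)\to[0,1]$, nested in $\epsilon$, vanishing near $0$ (with $\psi_{\epsilon}$ also near $+\infty$) and increasing to $\ind$ as $\epsilon\downarrow 0$, and set $\tilde f_{\epsilon}(x,u)=(f(x,u)\wedge\epsilon^{-1})\,\phi_{\epsilon}(x)\,\psi_{\epsilon}(u)$, so that $\tilde f_{\epsilon}\in\cc_b([0,1]\times\R_{+})$ and $\tilde f_{\epsilon}\uparrow f$ pointwise on $(0,1]\times(0,\infty)$. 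By Corollary \ref{convergence for continuous functions discrete}, $\costj_{n}(\tilde f_{\epsilon})\cvlawd\psij_{\rdtree}(\tilde f_{\epsilon})$ with convergence of the first moment. Since $\costj_{n}(f)\geq\costj_{n}(\tilde f_{\epsilon})$, the portmanteau theorem gives $\liminf_{n}\P(\costj_{n}(f)>M)\geq\P(\psij_{\rdtree}(\tilde f_{\epsilon})>M)$ for every $M\geq 0$, and letting $\epsilon\downarrow 0$ the right-hand side tends to $\P(\psij_{\rdtree}(f)>M)=1$ by monotone convergence, so $\costj_{n}(f)\to\infty$ in probability; likewise $\liminf_{n}\E[\costj_{n}(f)]\geq\E[\psij_{\rdtree}(\tilde f_{\epsilon})]\uparrow\E[\psij_{\rdtree}(f)]=\infty$. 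Together these are the claimed $\cvlawmean\infty$.

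The hard part is (i)--(ii): exhibiting $\costj_{n}$ as a tight random element of $\MF$ when the test functions are no longer pure powers. The technical core is to re-run the estimates behind Lemmas \ref{mass+height bounded in L1} and \ref{lem:Amh^p} with a monotone function in place of $x^{\alpha}$ (resp. $u^{\beta}$) — this is exactly where the hypotheses on $g^{*}$ (resp. $h^{*}$) and the room gained from Lemma \ref{existence f*} get consumed, and also why only $p$ slightly above $1$, hence only first-moment convergence, is available. Checking that the enlarged family $\mf$ still satisfies the structural assumptions \ref{H1}--\ref{Hstar} is routine but must be done with care; the tight coupling between mass and height (the height of a subtree of mass $m$ scaling like $m^{1-1/\gamma}$) is what makes the one-dimensional integral test the natural condition, and the exponential-growth case in (ii) is a minor, localized extra. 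Part (iii), by contrast, is a soft argument once Proposition \ref{joint phase transition} is in hand.
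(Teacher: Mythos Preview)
Your treatment of (iii) is correct and essentially the paper's argument: the paper truncates with $f\wedge k$ and invokes Skorokhod's representation in place of your explicit cutoffs and portmanteau, but the mechanism is the same.

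For (i)--(ii) there is a genuine gap. You build $\mf$ on the two-dimensional space using a \emph{single} dominating function $G$ from one application of Lemma~\ref{existence f*}, and then restore the $\mf^\star$-chain structure by multiplying with the power envelopes $f_k$ (exponents $\pm\delta_k$). But the moment bounds you need for tightness --- even just the first-moment bound $\sup_n\E[\costj_n(g_k)]<\infty$ --- then concern $G(x)x^{-\delta_k}u^{\beta-\delta_k}$, and the integral test of Lemma~\ref{mass+height bounded in L1}(i) reads $\int_0 G(z)\,z^{(\beta+1)(1-1/\gamma)-1-\delta_k(2-1/\gamma)}\,\dd z<\infty$. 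That asks $G$ to lie in $L^1$ of a measure strictly heavier than the one for which Lemma~\ref{existence f*} was run. Lemma~\ref{existence f*} provides no such slack: it outputs $G\in L^1(\nu)$, not $G\in L^1(z^{-\epsilon}\nu)$. So the $\delta_k$-envelope consumes integrability room you do not have. The $L^p$ bounds you invoke are worse still, as they would require $G^p\in L^1(\nu)$; there is no ``monotone-function version of Lemma~\ref{lem:Amh^p}'' available here.

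The paper avoids this by \emph{decoupling the two variables} and proceeding in three one-dimensional stages. Step~1 fixes a nonincreasing $g$, absorbs it into the random measure, and builds $\mf=\{u^{\beta_k}\vee u^k\}$ on $S=\R_+$ (where \ref{H5} is checked by hand); Step~2 fixes $\beta>-1$, absorbs $u^\beta$, and upgrades from nonincreasing to general continuous $g$ by a tightness argument in $\M([0,1])$; Step~3 builds $\mf=\{g_k\}$ on the compact space $S=[0,1]$ by \emph{iterating} Lemma~\ref{existence f*}, so that $g_{k+1}\in\mf^\star(g_k)$ directly, with no envelope factors. Since at each stage the base space is compact or satisfies \ref{H5}, Corollary~\ref{main result, special} applies and only the first-moment bounds of Lemma~\ref{mass+height bounded in L1} are required --- precisely what the hypotheses deliver, with no room to spare.
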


\begin{proof}To prove (i), we proceed in three steps.\\
	\textbf{Step 1 in the proof of (i).} Let $g\in \cc_+([0, 1])$ be
    nonincreasing and nonzero. 
 Let
    $(\beta_k, \, k \in \mathbb{N})$ be a decreasing sequence of
    nonpositive real numbers such that $\beta_0 = 0$ and $\lim_{k \to
      \infty} \beta_k = -1$. We define a set of functions
    $\mf = \{h_k\colon \, k \in \mathbb{N}\}$ where  $h_k(u) =
    u^{\beta_k} \vee u^k$ for $u >0$ and $k \in \mathbb{N}$, and
    $h_0(0)=1$ and $h_k(0)=+\infty $ for $k\in \N $. We shall
    prove that $\mathfrak{F}$ satisfies assumptions \ref{H1}--\ref{H5}
    of Appendix \ref{append} with $S = \R_+$ equipped with the
    Euclidean distance and $S_0 = \{0\}$. Notice that $h_0 \equiv 1$ and
    $h_k $ is continuous on $S_0^c$ for every $k
    \in \mathbb{N}$, so \ref{H1} and \ref{Hcont} are
    satisfied. Moreover, for every $k \in \mathbb{N}$, the function
    $h_k/h_{k+1}$ is continuous on $(0,\infty)$ and we have 
\[
\lim_{u \to 0+} \frac{h_k(u)}{h_{k+1}(u)} = \lim_{u \to
      0+}u^{\beta_k - \beta_{k+1}} = 0 \quad \text{and} \quad \lim_{u \to
      +\infty} \frac{h_k(u)}{h_{k+1}(u)} = \lim_{u \to
      +\infty}\frac{1}{u}= 0,
\] 
	so that \ref{Hstar} and \ref{H5} are satisfied. Finally, since the
    set $\{x \in S \colon \, \rho(x,S_0) \geqslant \epsilon, \,
    \rho(x,0)\leqslant M\} = [\epsilon, M]$ is compact and $h_k$ is
    continuous, it is bounded there and \ref{Hbound} is
    satisfied. Define a (random) measure on $\R_+$ by setting 
\begin{equation}
\zeta_n (h) = \frac{b_n}{n^2} \sum_{w \in \rddtree^{n,\circ}}
|\rddtree^n_w|
g\left(\frac{|\rddtree^n_w|}{n}\right)h\left(\frac{b_n}{n}\H(\rddtree^n_w)\right) 
\end{equation}
	for every $h\in \cb_+(\R_+ )$. By \eqref{convergence for continuous functions
      discrete, mass+height}, $\zeta_n$ converges to $\zeta$ in
    distribution in $\M(\R_+)$ and $\ex{\zeta_n(\bullet)}$
    converges to $\ex{\zeta(\bullet)}$ in $\M(\R_+)$ where $\zeta$
    is defined by $\zeta(h) = \psij_\rdtree(g(x)h(u))$. 
	But, since we have $\int_0 g(x) x^{(\beta_k+1)(1-1/\gamma)-1}\, \dd
    x < \infty$ for every $k \in \mathbb{N}$, Lemma \ref{mass+height
      bounded in L1}-(i) gives 
\[
\sup_{n \in \supp} \ex{\zeta_n(h_k)} \leqslant \sup_{n \in \supp}
    \ex{\zeta_n(u^{\beta_k})} + \sup_{n \in \supp}\ex{\zeta_n(u^k)}<
    \infty \quad \text{ for all } k \in \mathbb{N}.
\]
	Thus, Corollary \ref{main result, special} yields the convergence in
    distribution $\zeta_n \cvlaw \zeta$ in $\MF$ as well as the
    convergence of the first moment $\ex{\zeta_n(\bullet)} \to
    \ex{\zeta(\bullet)}$ in $\MF$. By Proposition \ref{convergence in
      distribution}, this implies that for every $g\in \cc_+([0, 1])$ 
    nonincreasing and every $\beta >
    -1$, we have 
	\begin{equation}\frac{b_n^{1+\beta}}{n^{2+\beta}}\sum_{w \in \rddtree^{n,\circ}} |\rddtree^n_{w}| \H(\rddtree^n_{w}) ^\beta g\left( \frac{|\rddtree^n_{w}|}{n}\right)\cvlawmean \psij_\rdtree(g(x)u^\beta). \label{g beta}
	\end{equation}
	
	\noindent
	\textbf{Step 2 in the proof of (i).} Now fix $\beta > -1$ and define the (random) measure $\xi_n$ on $[0,1]$ by
	\begin{equation}\xi_n(g) = \frac{b_n^{1+\beta}}{n^{2+\beta}}\sum_{w \in \rddtree^{n,\circ}} |\rddtree^n_{w}| \H(\rddtree^n_{w}) ^\beta g\left( \frac{|\rddtree^n_{w}|}{n}\right),
	\end{equation}
	for every  $g \in \cb_+([0,1])$. Notice that \eqref{g beta} can be
    rewritten as 
	\begin{equation}\xi_n(g) \cvlawmean \xi(g) \label{xi g}
	\end{equation}
 	for every $g\in \cc_+([0,1])$ nonincreasing, where the measure $\xi$
    is defined by $\xi(g) = \psij_\rdtree(g(x)u^\beta)$. Moreover, Lemma
    \ref{mass+height bounded in L1}-(i) applied  with $g \equiv 1$ gives
    $\sup_{n \in \supp}  \ex{\xi_n(1)} < \infty$.  As  a consequence, by
    the           Markov          inequality,           we          have
    $\lim_{r  \to \infty}  \sup_{n \in  \supp} \pr{\xi_n(1)  >r }  = 0$.
    Since $[0,1]$  is compact,  this means that  the sequence  of random
    measures $(\xi_n,  \, {n  \in \supp})$ is  tight in  distribution in
    $\M([0,1])$,  see \cite[Theorem  4.10]{kallenberg2017random}. Hence,
    it  is  relatively  compact  by Prokhorov's  theorem  as  the  space
    $\M([0,1])$ is  Polish for the  weak topology. Let $\hat{\xi}$  be a
    limit  point. Then  we  have $\xi(g)  \law  \hat{\xi}(g)$ for  every
     $g \in \cc_+([0, 1])$ nonincreasing. Therefore, we get
    that     $\xi      \law     \hat{\xi}$     and      the     sequence
    $(\xi_n, \, {n \in \supp})$ has only one limit point $\xi$. Since it
    is relatively compact, we deduce  that $\xi_n$ converges to $\xi$ in
    distribution in $\M([0,1])$. A  similar deterministic argument shows
    that  $\ex{\xi_n(\bullet)}$  converges   to  $\ex{\xi(\bullet)}$  in
    $\M([0,1])$.
	
	\noindent
	\textbf{Step 3 in the proof of (i).} Let $\beta>-1$ and $g
    \in \cb([0,1]) $ be
 continuous on $(0,1]$, nonzero and such that  $\int_0 g^*(x)
    x^{(\beta+1)(1-1/\gamma)-1} \, \dd x < \infty$. Set $g_0 \equiv
    1$. If $\lim_{x \to 0} g^*(x) = \infty$, set $g_1 = g^*+1$. If $g^*$
    has a finite limit at $0$ (which is then positive), then  there exists $\epsilon >0$ such that $\int_0
    x^{-\epsilon} g^*(x) x^{(\beta+1)(1-1/\gamma)-1}\, \dd x <
    \infty$. We also have $\lim_{x\to 0+} x^{-\epsilon} g^*(x) =
    \infty$ and the function $x \mapsto x^{-\epsilon} g^*(x)$ is
    continuous and nonincreasing. In that case, we set $g_1(x) =
    x^{-\epsilon} g^*(x)+1$ for $x\in [0, 1]$.

    Define a set of functions $\mf  = \{g_k\colon \, k \in \mathbb{N}\}$
    as follows: for every $k \geqslant 1$, set $g_{k+1} = \*{g}_k$ which
    is given by Lemma \ref{existence f*} applied with the finite measure
    $\nu(\dd x)  = x^{(\beta+1)(1-1/\gamma)-1}\dd x$.   By construction,
    the      sequence      $\mathfrak{F}$     satisfies      assumptions
    \ref{H1}--\ref{Hstar}  of Appendix  \ref{append} with  $S =  [0,1]$,
    $S_0  =  \{0\}$  and  $\mf^\star(g_k)=\{g_j\colon\,  j>k\}$  (notice
    \ref{Hbound}    is   automatically    satisfied   as    $[0,1]$   is
    compact). Notice  that, by  Lemma \ref{existence f*},  for every
    $k   \in  \mathbb{N}$,   the  function   $g_k$  is   continuous  and
    nonincreasing         on        $(0,1]$         and        satisfies
    $\int_0 g_k(x) x^{(\beta+1)(1-1/\gamma)-1}\, \dd x < \infty$. So, by
    Lemma \ref{mass+height bounded in L1}, we get that
\[
\sup_{n \in \supp} \ex{\xi_n(g_k)} < \infty \quad \text{ for all } k \in
\mathbb{N}.
\]
	Now, Corollary \ref{main result, special} applies and yields, in
    conjunction with Proposition \ref{convergence in distribution}, the
    convergence in distribution and of the first moment 
\[
\xi_n(g_k \ell) \cvlawmean \xi(g_k \ell)
\]
	for every $k \in \mathbb{N}$ and  $\ell \in \cc([0, 1])$.  Now apply this with $k = 1$ and $\ell =
    g/g_1$. Notice that $g_1 \ell = g$ except possibly on $S_0 =
    \{0\}$. Since $\xi_n(S_0) = \xi(S_0) =0$, we deduce that 
\[
\xi_n(g) \cvlawmean \xi(g).
\]
This, together with Proposition \ref{joint phase transition}, proves
 (i).

	The proof of (ii) is quite similar so we only indicate the changes compared with (i).\\
	\textbf{Step 1 in the proof of (ii).}  Let $h\in \cc_+(\R^+)
    $ be  nonincreasing and nonzero.

 Taking a decreasing sequence $(\alpha _k , \, k \in \mathbb{N})$ of
 nonpositive real numbers such that $\alpha_0 = 0$ and $\lim_{k \to
   \infty} \alpha_k = -1+ 1/\gamma$
 and defining a set of functions $\mf = \{g_k\colon \, k \in
 \mathbb{N}\}$ by $g_k(x) = x^{\alpha_k}$, we can show that for every
 $h\in \cc_+(\R_+)$  nonincreasing  and every $\alpha >
 -1+1/\gamma$, 
 we have
	\begin{equation}\frac{b_n}{n^{2+\alpha}}\sum_{w \in \rddtree^{n,\circ}} |\rddtree^n_{w}|^{1+\alpha} h\left(\frac{b_n}{n}\H(\rddtree^n_{w})\right) \cvlawmean \psij_\rdtree(x^\alpha h(u)). \label{h alpha}
	\end{equation}
	
	\noindent
	\textbf{Step 2 in the proof of (ii).} Fix $\alpha > -1+1/\gamma$ and define the (random) measure $\xi_n$ on $\R_+$ by
	\begin{equation}\xi_n(h) = \frac{b_n}{n^{2+\alpha}}\sum_{w \in \rddtree^{n,\circ}} |\rddtree^n_{w}|^{1+\alpha} h\left(\frac{b_n}{n}\H(\rddtree^n_{w})\right),
	\end{equation}
	for every  $h\in \cb_+( \R_+)$. Notice that \eqref{h alpha} can be rewritten as
	\begin{equation}
\label{xi h}
\xi_n(h) \cvlawmean \xi(h) 
	\end{equation}
	for every $h\in \cc_+(\R_+)$  nonincreasing, where the measure $\xi$
    is  defined by  $\xi(h) =  \psij_\rdtree(x^\alpha h(u))$.  Moreover,
    Lemma \ref{mass+height bounded in L1}-(ii) applied with $h \equiv 1$
    gives   $\sup_{n  \in   \supp}  \ex{\xi_n(1)}   <  \infty$.    As  a
    consequence,     by    the     Markov     inequality,    we     have
    $\lim_{r \to \infty} \sup_{n \in \supp} \pr{\xi_n(1) >r } = 0$.  Fix
    $\beta   >0$  and   let   $r  >0$.   Then,   using  the   inequality
    $\ind_{[r,\infty)}(u)    \leqslant   (u/r)^\beta    $   for    every
    $u \geqslant 0$, we get
\[
\sup_{n \in \supp}\E\left[\xi_n([r,\infty))\right] \leqslant \frac{1}{r^\beta}\sup_{n
  \in \supp} \E\left[\costj_n(x^\alpha u^\beta)\right].
\]
	Notice that the right-hand side is finite by Lemma \ref{lem:Amh^p} since $\gamma \alpha + (\gamma-1)(\beta+1) >0$. We deduce that
	\[\inf_{K \subset \R_+} \sup_{n \in \supp} \ex{\xi_n(K^c)} = 0,\]
	where the infimum is taken over all compact subsets $K \subset
    \R_+$. By \cite[Theorem 4.10]{kallenberg2017random}, this means that
    the sequence of random measures $(\xi_n, \, {n \in \supp})$ is tight
    in distribution in $\M(\R_+)$. Following the end of step 2 for
    property (i), we are then able to show that $\xi_n$
    converges to $\xi$ in distribution in $\M([0,\infty ))$ and
    $\ex{\xi_n(\bullet)}$ converges to $\ex{\xi(\bullet)}$ in
    $\M([0,\infty ))$. 
	
	\noindent
	\textbf{Step  3  in  the  proof  of  (ii).}  Let  $h \in  \cb(\R_+)$  be
    continuous   on   $(0,\infty)$   such  that   $h^*$   is   non-zero,
    $\int_0 h^*(u) u^{\alpha \gamma/(\gamma - 1)} \, \dd u < \infty$ and
    $h(u)   =    O(\e^{u^\eta})$   as   $u   \to    \infty$   for   some
    $\eta  \in (0,\gamma)$.  Set $h_0  \equiv 1$  and define  a positive
    function  $h_1  \in  \cb_+((0,\infty))  $ in  the  following  way.  If
    $\lim_{u \to  0}h^*(u) = \infty$, set  $h_1 = h^*+1$ on  $(0,1]$. If
    $h^*$  has a  finite limit  at $0$  (which is  positive as  $h^*$ is
    non-zero),  then  $\alpha  >-1+1/\gamma$,   and  thus  there  exists
    $\epsilon                >0$                such                that
    $\int_0  u^{-\epsilon}h^*(u)u^{\alpha\gamma/(\gamma-1)}\,  \dd  u  <
    \infty$.
    Moreover, we have $\lim_{u\to 0}  u^{-\epsilon} h^*(u) = \infty$ and
    the  function  $u  \mapsto u^{-\epsilon}h^*(u)$  is  continuous  and
    nonincreasing. In that case,  we set $h_1(u)= u^{-\epsilon}h^*(u)+1$
    for $u  \in (0,1]$.  Now extend  $h_1$ to  a continuous  function on
    $(0,\infty)$   such    that   $h_1(u)   =    \exp(u^{\eta_1})$   for
    $u \geqslant 2$ for some $\eta_1 \in (\eta,\gamma)$. Define a set of
    functions  $\mf  =   \{h_k\colon  \,  \,  k   \in  \mathbb{N}\}$  as
    follows. Let $(\eta_k, \, k  \geqslant 2)$ be an increasing sequence
    in $ (\eta_1,\gamma)$. Recall that  $\alpha > -1+1/\gamma$ so that the
    measure
    $\nu(\dd u) = \ind_{(0,  1]} (u)\, u^{\alpha\gamma/(\gamma-1)}\dd u$
    is      finite.       For      every     $k\geq      1$,      define
    $h_{k+1}  \in   \cb_+([0,\infty  ))$  continuous  and   positive  on
    $(0,\infty )$  and such  that $h_{k+1} =  \*{h}_k$ on  $(0,1]$, with
    $\*{h}_k$    defined    in    Lemma    \ref{existence    f*},    and
    $h_{k+1}(u)  =  \exp(u^{\eta_{k+1}})$  for   $u  \geqslant  2$.   In
    particular,                          we                         have
    $\lim_{x   \rightarrow  0+}h_k(x)/h_{k+1}(x)=   \lim_{x  \rightarrow
      +\infty                                     }h_k(x)/h_{k+1}(x)=0$.
    Then, it is easy to check that the sequence $\mathfrak{F}$ satisfies
    assumptions   \ref{H1}--\ref{H5}  of   Appendix  \ref{append}   with
    $S = \R_+$, $S_0 =  \{0\}$ and $\mf^\star(h_k)=\{h_j\colon \, j>k\}$
    for $k\in \N$.  Notice that, by Lemma \ref{existence  f*}, for every
    $k   \in  \mathbb{N}$,   the  function   $h_k$  is   continuous  and
    nonincreasing         on        $(0,1]$         and        satisfies
    $\int_0 h_k(u)u^{\alpha\gamma/(\gamma - 1)} \,  \dd u < \infty$. So,
    by Lemma \ref{mass+height  bounded in L1} (i) and (ii),  we get that
    for all $k\in \N$, there exists a finite constant $C_k>0$ such that
\[
\sup_{n \in \supp} \ex{\xi_n(h_k)} \leqslant \sup_{n \in \supp} 
    \ex{\xi_n({h_k}\ind_{(0,1]})}+ C_k\sup_{n \in
      \supp}\ex{\xi_n(\exp(u^{\eta_k})\ind_{\lbrace u\geqslant
        1\rbrace})}< \infty.
\] 
 Now, Corollary \ref{main result, special} applies and yields, in
 conjunction with Proposition \ref{convergence in distribution}, the
 convergence in distribution and of the first moment 
	\[\xi_n(h_k f) \cvlawmean \xi(h_k f)\]
	for every $k \in \mathbb{N}$ and every $f\in \cc( \R_+
    )$. Taking $k = 1$ and $f = h/h_1$ proves \eqref{eq:a-h}
 as $\xi_n(S_0) = \xi(S_0) = 0$. This, together with Proposition \ref{joint phase transition}, proves
 (ii).  
	
	To prove (iii), notice that by \eqref{convergence for continuous functions discrete, mass+height} we have the convergence in distribution $\costj_n \cvlaw \psij_\rdtree$ in the space $\M([0,1]\times \R_+)$. Thanks to Skorokhod's representation theorem, we may assume that we have a.s. convergence. Thus, we get that a.s. for every $k \in \mathbb{N}$,
	\begin{equation*}
		\lim_{n\to \infty}\frac{b_n}{n^2} \sum_{w\in \rddtree^{n,\circ}} |\rddtree^n_w| \left(f\left(\frac{|\rddtree^n_w|}{n}, \frac{b_n}{n} \H(\rddtree^n_w)\right)\wedge k\right)  = \psij_{\rdtree}(f\wedge k).
	\end{equation*}
	Therefore, we have for $k\in \N$
	\begin{equation}\label{divergence liminf}
	\liminf_{n \to \infty} 	\frac{b_n}{n^2} \sum_{w\in \rddtree^{n,\circ}} |\rddtree^n_w| f\left(\frac{|\rddtree^n_w|}{n}, \frac{b_n}{n} \H(\rddtree^n_w)\right) \geqslant \psij_{\rdtree}(f\wedge k).
	\end{equation}
	But by the monotone convergence theorem and Proposition \ref{joint
      phase transition}, we have that a.s. $\lim_{k \to \infty}
    \psij_{\rdtree}(f\wedge k) = \psij_\rdtree(f) = \infty$. Thus,
    \eqref{divergence mass+height} follows from \eqref{divergence
      liminf} by letting $k $ go to infinity. 
\end{proof}

Recall from \eqref{A general} that we
excluded the leaves to be able to consider functions taking infinite
values on trees whose  height vanishes. In the particular case
where the function only blows up as the mass goes to zero, one can get
rid of this restriction. 
\begin{remark}\label{with vs without leaves}Recall the definition of the random measure $\costj_n \in \M([0,1]\times \R_+)$:
\[
\costj_n(f) = \frac{b_n}{n^2}\sum_{w \in \rddtree^{n,\circ}}
    |\rddtree_w^n| f\left(
      \frac{|\rddtree^{n}_w|}{n},\frac{b_n}{n}\H(\rddtree_w^n)\right). 
\]
Similarly to the measure $\costj_n$, we define the measure 
$\costmh_n\in \M([0,1]\times \R_+)$, where the sum is over all the
vertices (the internal vertices 
and the leaves): for $f\in \cb_+([0, 1] \times \R_+)$
\[
\costmh_n(f) = \frac{b_n}{n^2}\sum_{w \in \rddtree^{n}}
    |\rddtree_w^n| f\left(
      \frac{|\rddtree^{n}_w|}{n},\frac{b_n}{n}\H(\rddtree_w^n)\right). 
\]
	Let $\beta \geqslant 0$ and $g\in \cb([0, 1])$ 
 such that $g$ is continuous on $(0,1]$ and $\int_0 g^*(x^{\gamma/(\gamma-1)})x^\beta\, \dd x < \infty$. By Theorem \ref{mass+height convergence}-(i), we have 
	\begin{equation}\label{with leaves eq}
	\costj_n(g(x)u^\beta) \cvlawmean \psij_\rdtree(g(x)u^\beta). 
	\end{equation}
	Now note that
	\begin{equation*}
\costmh_n (g(x)u^\beta) = \frac{b_n^{1+\beta}}{n^{2+\beta}}\sum_{w \in
  \rddtree^n} |\rddtree^n_w|\H(\rddtree^n_w)^\beta
g\left(\frac{|\rddtree^n_w|}{n}\right)
\end{equation*}
	makes sense when the function $g$ blows up at $0$. If $\beta >0$, we
    have $\mathcal{A}_n^{\mathfrak{mh}}(g(x)u^\beta) = \costj_n (g(x)u^\beta)$ since $\H(\rddtree^n_w) = 0$ for every leaf $w \in \operatorname{Lf}(\rddtree^n)$. Thus we only need to consider the case $\beta = 0$. Then, using \eqref{b_n bounded} and the fact that $|\operatorname{Lf}(\rddtree^n)| \leqslant n$ and that $|\rddtree^n_w| = 1$  for every $w \in \operatorname{Lf}(\rddtree^n)$, we have 
	\begin{equation*}
\left|\costmh_n(g(x)) - \costj_{n}(g(x)) \right| = \frac{b_n}{n^2}
\left|\sum_{w \in \mathrm{Lf}(\rddtree^n)}|\rddtree^n_{w}| g\left(
    \frac{|\rddtree^n_{w}|}{n}\right)\right| \leqslant  \overline{b}
\:\! n^{-1+1/\gamma}g^*\left(\frac 1 n \right).
\end{equation*}
	Since $g^*$ is nonincreasing and satisfies $\int_0
    g^*(x^{\gamma/(\gamma-1)})\, \dd x < \infty$, it is straightforward
    to check that $g^*(x) = o(x^{1/\gamma - 1})$ as $x \to 0$. Thus, we
    deduce that $\lim_{n\to
      \infty}\costmh_n(g(x)u^\beta) -
    \costj_{n}(g(x)u^\beta) =0$ a.s. and in $L^1(\P)$. 
	As a consequence, the convergence \eqref{with leaves eq} still holds
    if we replace $\costj_n(g(x)u^\beta)$ by
    $\costmh_n(g(x)u^\beta)$.

Similarly, let $\alpha > -1+1/\gamma$ and $h \in \cc(\R_+)$ such that $h(u) = O(\e^{u^\eta})$ as $u \to \infty$ for some $\eta \in (0,\gamma)$. Then $h^*$ is bounded near $0$ and necessarily $\int_0 x^{\alpha\gamma/(\gamma-1)}h^*(x)\, \dd x < \infty$. Thus, by Theorem \ref{mass+height convergence}, we have
\begin{equation}\label{with leaves eq2}
\costj_n(x^\alpha h(u)) \cvlawmean \psij_\rdtree(x^\alpha h(u)). 
\end{equation}
Furthermore, using \eqref{b_n bounded} we have
\begin{equation*}
\left|\costmh_n(x^\alpha h(u)) - \costj_{n}(x^\alpha h (u)) \right| =
\frac{b_n}{n^{2+\alpha}}\left|\operatorname{Lf}(\rddtree^n)
\right|\left|h(0)\right|\leqslant  \overline{b} \:\!
n^{-\alpha-1+1/\gamma}|h(0)|. 
\end{equation*}
Thus, we deduce that $\lim_{n\to \infty}\costmh_n(x^\alpha h(u)) -
\costj_{n}(x^\alpha h(u)) =0$ a.s. and in $L^1(\P)$ and the convergence
\eqref{with leaves eq2} holds for $\costmh_n(x^\alpha h(u))$. 
\end{remark}

\begin{example}\label{power log}
	Fix $\alpha >-1+1/\gamma$ and set $g(x) = |\log (x)| x^{\alpha}$. It is clear that $\int_0 g(x^{\gamma/(\gamma-1)})  \, \dd x < \infty$, so by Theorem \ref{mass+height convergence} we have  the convergence in distribution
	\[\costj_n(g(x)) \cvlawd \psij_\rdtree(g(x)).\]
	But notice that
	\begin{align*}
\costj_n(g(x))  
&= \frac{b_n \log (n)}{n^{2+\alpha}} \sum_{w \in \rddtree^{n,\circ}}
  \left|\rddtree^n_w\right|^{1+\alpha}- \frac{b_n}{n^{2+\alpha}} \sum_{w
  \in \rddtree^{n,\circ}} \left|\rddtree^{n}_w\right|^{1+\alpha} \log
  \left|\rddtree^n_w\right| \notag \\
	&=  \log (n) \,\costj_n(x^{\alpha})-\frac{b_n}{n^{2+\alpha}} \sum_{w
      \in \rddtree^{n,\circ}} \left|\rddtree^{n}_w\right|^{1+\alpha}
      \log \left|\rddtree^n_w\right|. 
	\end{align*}
Again Theorem \ref{mass+height convergence} gives the convergence in distribution $\costj_n(x^{\alpha}) \cvlaw \psij_{\rdtree}(x^{\alpha})$. Therefore, we get the following asymptotic expansion in distribution
	\[\frac{b_n}{n^{2+\alpha}} \sum_{w \in \rddtree^{n,\circ}} \left|\rddtree^{n}_w\right|^{1+\alpha} \log \left|\rddtree^n_w\right| \lawd \log (n)\, \psij_{\rdtree}(x^{\alpha}) - \psij_{\rdtree}(|\log (x)| x^{\alpha}) + o(1).\]
	Furthermore, since
	\[\lim_{n\to \infty}\ex{\costj_n(g(x)) }=\ex{\psij_{\rdtree}(g(x)) }\quad \text{and} \quad \lim_{n\to \infty}\ex{\costj_n(x^{\alpha})}= \ex{\psij_{\rdtree}(x^{\alpha})},\]
	we get the corresponding asymptotic expansion for the first moment
	\[\frac{b_n}{n^{2+\alpha}} \ex{\sum_{w \in \rddtree^{n,\circ}} \left|\rddtree^{n}_w\right|^{1+\alpha} \log \left|\rddtree^n_w\right|} = \log (n)\ex{ \psij_{\rdtree}(x^{\alpha})} - \ex{\psij_{\rdtree}(|\log (x)| x^{\alpha})} + o(1).\]
\end{example}

\appendix
\section{A space of measures}\label{append}

Let $(S,\rho)$ be a Polish metric space, $S_0 \subset S$ be a closed set
in $S$ and $0 \in S_0$ be a distinguished point. Denote by $\mathcal{K}$
the  class  of  compact  sets  $K\subset  S$. For  any  $x  \in  S$  and
$A\subset   S$,  the   distance  from   $x$   to  $A$   is  defined   by
$\rho(x,A)  = \inf\{\rho(x,y)  \colon  \,  y \in  A\}$.
Let $\mf$ be a countable set of  measurable
$[0,+\infty]$-valued   functions   on   $S$  satisfying   the   following
assumptions:
\begin{enumerate}[label = (H\arabic*)]
	\item \label{H1} The constant function $\ind$ belongs to $\mf$.
	\item \label{Hcont} All  $f\in \mf$ are  continuous
      on $S_0^c$. 
\item \label{Hbound} All  $f\in \mf$  are bounded away from zero and infinity on $\{x \in S\colon
  \, \rho(x,S_0) \geqslant \epsilon, \, \rho(x,0) \leqslant M\}$ for
  every $0 <\epsilon < M<+\infty $. 
\item \label{Hstar} For all $f\in \mf$, the set $\mf^\star(f)\subset \mf$
  of functions $f^\star \in \mf$ such that $f/f^\star$ is bounded on $S_0^c$
  and $\lim_{\rho(x,S_0) \to 0+}f(x)/f^\star(x) = 0$ is non-empty. 
\end{enumerate}
Note that assumption \ref{Hbound} is automatically satisfied when $S$ is
compact and every $f \in \mf$ is positive on $S_0^c$.  Notice   that  \ref{Hstar}  implies  that   $\mf^\star(f)$  is infinitely
countable for any $f\in \mf$.  We  shall write $f^\star$ for any element
of   $\mf^\star   (f)$.    By   \ref{H1}  and   \ref{Hstar},   we   have
$\lim_{\rho(x,S_0) \to 0+} \ind^\star(x) =+  \infty$. By convention, we take
$\ind^\star \equiv + \infty $ on $S_0$ and $f/f^\star \equiv 0$ on $S_0$
for  every  $f\in  \mf$.   We   will  occasionally  need  the  following
additional assumption:
\begin{enumerate}[resume, label = (H\arabic*)]
	\item \label{H5} $S$ is compact or $\inf_{K \in \mathcal{K}} \sup_{x
        \in K^c} f(x)/f^\star (x) = 0$ for every $f\in \mf$ (and some
      $f^\star \in \mf^\star(f)$). 	
\end{enumerate}

Denote by $\M = \mathcal{M}(S)$ the space of nonnegative finite measures
on    $S$    endowed   with    the    weak    topology.   Recall    that
$(\M,  d_{\mathrm{BL}})$, with  $d_{\mathrm{BL}}$ the  bounded Lipschitz
distance is a Polish metric space.   If $\mu \in \M$ and $f\in \cb_+(S)$,
we write $f\mu$ for the measure $f(x) \mu(\dd x)$. Set
\begin{equation} 
\MF =  \MF(S) \coloneqq \left\{ \mu \in \M \colon \, \mu(f) < \infty
  \, \text{ for all } f \in \mathfrak{F}\right\}.
\end{equation}
For $\mu \in \MF$, we have $\mu(S_0) = 0$ (as $\ind^\star \equiv +
\infty $ on $S_0$) and $f\mu  \in \M$ for every $f\in \mf$. In
particular, since $(f/f^\star )f^\star  =f$ on $S_0^c$, we  have
$(f/f^\star )f^\star \mu = f \mu$ for every $f\in \mf$ (and $f^\star \in
\mf^\star(f)$). We say  a sequence $(\mu_n, \, n \in \mathbb{N})$
of elements of $\MF$ converges to $\mu \in \MF$ if and only if
$(f \mu_n, \, n  \in \mathbb{N})$ converges to $ f  \mu$ in $\M$ for
every    $f\in \mf$. We consider the following distance $\dF$
on $\MF$ which defines the same topology: 
\begin{equation} 
\dF(\mu,\nu) = \sum_{k\in \N} \frac{1}{2^k}\left(1\wedge d_{\mathrm{BL}}\left(f_k
    \mu,f_k \nu\right)\right)
\quad\text{for} \quad \mu,
\nu\in \MF,
\end{equation}
where $\{f_k\colon\, k\in \N\}$ is an  enumeration of $\mf$. (The choice of the
enumeration is  unimportant, as  the corresponding distances  all define
the
same topology on $\MF$.)  
Notice that the   mapping
$\mu \mapsto  f\mu$ is continuous  from $\MF$ to $\M$.  In particular,
taking $f=\ind$ gives that every  sequence which converges in  $\MF$ also
converges in $\M$ to the same limit.

We shall see that the space $(\MF,\dF)$ is complete and separable (Proposition \ref{Polish space}) and give a complete description of its compact subsets (Proposition \ref{compactness}). The main goal of this section is to give conditions which allow to strengthen a convergence in $\M$ to a convergence in $\MF$ for deterministic measures (Corollary \ref{from M to MF}) and then to extend this result to random measures (Proposition \ref{main result, general} and Corollary \ref{main result, special}).
\begin{proposition}\label{Polish space}
	The space $(\MF,\dF)$ is complete and separable.
\end{proposition}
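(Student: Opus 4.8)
The plan is to prove completeness and separability of $(\MF,\dF)$ by leveraging the corresponding properties of $(\M,d_{\mathrm{BL}})$ together with the structure of the family $\mf$ (particularly assumption \ref{Hstar}). For separability, I would first recall that $(\M,d_{\mathrm{BL}})$ is separable, so for each $k\in\N$ there is a countable dense subset $\mathcal{D}_k\subset\M$. The difficulty is that an element of $\mathcal{D}_k$ need not be of the form $f_k\mu$ for any $\mu\in\MF$. Instead I would take a countable dense set $\mathcal{D}\subset\M$ and consider measures of the form $\frac{1}{f_1}\nu$ restricted to $S_0^c$ (note $1/f_1$ is bounded on $S_0^c$ by \ref{Hstar} applied to $\ind$, since $\ind^\star=f_1$ may be taken so that $1/f_1$ is bounded there): more carefully, one shows that finite measures supported on $S_0^c$, of the form $g\,\nu$ for $\nu$ in a countable dense family and $g$ ranging over a suitable countable family built from $\mf$, are dense in $\MF$. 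An alternative and cleaner route: show that $\{\mu\in\MF : \mu \text{ has finite support in } S_0^c\}$ with rational weights and points from a countable dense subset of $S_0^c$ is dense in $\MF$; this works because approximating $\mu$ by finitely supported measures in $\M$ can be done simultaneously for all the (countably many) finite measures $f_k\mu$ by a diagonal/truncation argument, using \ref{Hbound} to control $f_k$ on sets bounded away from $S_0$ and bounded in $S$.

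For completeness, suppose $(\mu_n,n\in\N)$ is Cauchy in $\dF$. Then for each $k$, $(f_k\mu_n,n\in\N)$ is Cauchy in $(\M,d_{\mathrm{BL}})$, hence converges to some $\nu_k\in\M$. The crux is to produce a single $\mu\in\MF$ with $f_k\mu=\nu_k$ for all $k$. Taking $k$ with $f_k=\ind$ gives a candidate $\mu:=\nu_{k_0}$ where $f_{k_0}=\ind$. One then must check two things: first, that $\mu(S_0)=0$, which follows because $f_1=\ind^\star$ satisfies $f_1\mu_n\to\nu_1$ in $\M$ with $\sup_n \nu_1(S)<\infty$ (Cauchy sequences are bounded), and since $\ind^\star\equiv+\infty$ on $S_0$ and $\ind^\star$ blows up near $S_0$, any mass of $\mu$ on a neighborhood of $S_0$ would force $\nu_1$ to have infinite mass there — a Portmanteau/lower-semicontinuity argument on the open sets $\{x:\rho(x,S_0)<\epsilon\}$ does this. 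Second, one must verify $f_k\mu=\nu_k$ for each $k$: on $S_0^c$ the functions $f_k$ are continuous (\ref{Hcont}), and on compact subsets of $S_0^c$ (bounded away from $S_0$ and bounded in $S$) they are bounded above and below (\ref{Hbound}); so testing $f_k\mu_n$ and $\mu_n$ against continuous bounded functions supported in such compacts, and passing to the limit, identifies $\nu_k$ with $f_k\mu$ on $S_0^c$; since both $\nu_k$ and $f_k\mu$ charge no mass on $S_0$, they agree. Finally $\mu(f_k)=\nu_k(S)<\infty$ for all $k$, so $\mu\in\MF$, and $\dF(\mu_n,\mu)\to 0$ by definition of $\dF$.

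I expect the main obstacle to be the identification step in the completeness argument: showing that the limits $\nu_k$ of the sequences $f_k\mu_n$ are mutually consistent, i.e. are all of the form $f_k\mu$ for one fixed $\mu$. This requires carefully exploiting that the $f_k$ are genuinely continuous and locally bounded away from $0$ and $\infty$ on $S_0^c$ (so that multiplication and division by $f_k$ behave well under weak limits on regions bounded away from $S_0$), combined with the uniform control near $S_0$ that forces all the limit measures to put no mass on $S_0$. A secondary technical point is the exhaustion of $S_0^c$ by the sets $\{x: \rho(x,S_0)\ge\epsilon,\ \rho(x,0)\le M\}$ as $\epsilon\downarrow 0$, $M\uparrow\infty$, which is where \ref{Hbound} enters and which lets one reduce everything to weak convergence on nice compact pieces. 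Once these are in place, separability is comparatively routine, obtained by a diagonal truncation reducing to separability of $\M$ on each such compact piece.
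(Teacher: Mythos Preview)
Your approach is essentially the same as the paper's: for completeness, pass to limits $\nu_f$ of $f\mu_n$ in $\M$ for each $f\in\mf$, set $\mu=\nu_{\ind}$, and identify $f\mu=\nu_f$ by testing on an exhaustion of $S_0^c$; for separability, exhaust $S_0^c$ by the sets $\{x:\rho(x,S_0)\geq\epsilon,\ \rho(x,0)\leq M\}$ and use \ref{Hbound} to reduce to separability of $\M$ on each piece.

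One point to tighten: you assert ``both $\nu_k$ and $f_k\mu$ charge no mass on $S_0$'' but your Portmanteau/blow-up argument only establishes $\mu(S_0)=0$, not $\nu_k(S_0)=0$ for general $k$. You can of course repeat the same argument with $f_k^\star$ in place of $\ind^\star$, but the paper handles this more cleanly: since $f/f^\star$ is continuous and bounded on $S$ (by \ref{Hstar}, with the convention $f/f^\star=0$ on $S_0$), the map $\pi\mapsto (f/f^\star)\pi$ is continuous on $\M$, so $f\mu_n=(f/f^\star)f^\star\mu_n\to(f/f^\star)\nu_{f^\star}$, whence $\nu_f=(f/f^\star)\nu_{f^\star}$ and $\nu_f(S_0)=0$ immediately. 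This same algebraic device also streamlines the identification step: the paper uses Urysohn cutoffs $\chi_k$ on the level sets $\{f\geq 1/k\}$ (so that $\chi_k/f$ is continuous and bounded) to get $\chi_k\mu=(\chi_k/f)\nu_f$, then lets $k\to\infty$. Your distance-based exhaustion works too, but requires \ref{Hbound} to control $f$ from below on each piece, whereas the level-set route sidesteps this.
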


	
\begin{proof}  
	Let $(\mu_n, \, n \in \mathbb{N})$ be a Cauchy sequence in
    $\MF$. Then, by definition of $\dF$, the sequence $(f \mu_n, \, n
    \in \mathbb{N})$ is Cauchy in $\M$ for every $f \in \mf$. By
    completeness of $\M$, for every $f\in \mf$, there exists a measure $\nu_f \in \M$ such
    that $\lim_{n\to \infty}f\mu_n= \nu_f$ in $\M$. We claim that
    $\nu_f(S_0) = 0$ for every $f\in \mf$. Indeed, fix $f\in \mf$ and
    $f^\star\in \mf^\star(f)$. As $f^\star\in \mf$, we have $\lim_{n\to\infty}f^\star\mu_n=
    \nu_{f^\star}$ in $\M$. By \ref{Hstar}, the function $f/f^\star$ is continuous
    and bounded on $S$, so that the mapping $\pi \mapsto (f/f^\star)
    \pi$ is continuous on $\M$. In particular, we have $\lim_{n\to
      \infty}f \mu_n =  (f/f^\star) \nu_{f^\star}$ in $\M$. On the other
    hand, we have $\lim_{n\to \infty}f \mu_n =\nu_f$ in $\M$. We deduce
    that $\nu_f = (f/f^\star)\nu_{f^\star}$. It follows that $\nu_f(S_0)
    = 0$  since $f/f^\star = 0$ on $S_0$. 

    We set $\mu = \nu_\ind$ so
    that $\lim_{n\to \infty}\mu_n = \mu$ in $\M$. Let $f\in \mf$. We shall prove that
    $f\mu=\nu_f$.  Consider the closed set $F_k=\{f\geq  1/k\}$ for
    $k\in \N^*$. Notice
    that  $F_k \subset \operatorname{int}(F _ {k+1})$. Therefore,
    by Urysohn's lemma, there exists, for $k\in \N^*$,  a continuous function $\chi_k
    \colon S \to [0,1]$ such that $\chi_k = 1$ on $F_ k$ and
    $\operatorname{supp}(\chi_k) \subset
    \operatorname{int}(F_{k+1})$. Notice that
    $(\chi_k f/f)\mu_n = \chi_k \mu_n$ since $(f/f)=1$ on $ S_0^c$ and $\mu_n(S_0) =
    0$.   Since $\chi_k$ and $\chi_k/f$ are continuous and bounded, the
    mappings $\nu \mapsto \chi_k \nu$ and $\nu
    \mapsto (\chi_k/f) \nu$ are   continuous  from  $\M$  to  itself. We
    deduce that $\chi_k \mu= \lim_{n\to \infty} \chi_k \mu_n=
\lim_{n\to \infty} (\chi_k/f) f \mu_n=(\chi_k/f) \nu_f$ in $\M$. 
Letting $k$ go to infinity, as $\chi_k \uparrow \ind$ on $S_0^c$ since $f$ is positive
   on $S_0^c$, and $\mu(S_0)=\nu_f(S_0)=0$, we deduce (using the monotone
 convergence theorem) that $\mu=(1/f) \nu_f$ and thus  $f\mu=
 \nu_f$. Since this holds for all $f\in \mf$, this proves that $\mu
    \in \MF$ and that $\lim_{n\to \infty}f\mu_n = f \mu$ in $\M$ for
    every $f\in \mf$. Thus $\MF$ is complete.

	Next,                                                         define
    $F'_n =  \{x \in S  \colon \, \rho(x,S_0)\geqslant 1/n,  \, \rho(x,0)
    \leqslant                                                      n\}$.
    We will  identify the  space $\mathcal{M}(F'_n)$  with the  subset of
    $\M$ consisting of the measures  whose support lies in $F'_n$. Notice
    that $F'_n$ is a Polish space (when endowed with the topology induced
    by  $\rho$)  as  a  closed  subset  of  the  Polish  space  $S$.  In
    particular, the  set $\mathcal{M}(F'_n)$ endowed with  the bounded Lipschitz distance
    is a Polish space. Let $f\in \mf$. By
    \ref{Hbound}, the functions 
    $f$ and $1/f$ are both continuous and bounded on $F'_n$, so it is
    easy to check that the topology  induced by $\dF$ on $\M(F'_n)$ coincides with the
    topology  of  weak  convergence,  \emph{i.e.}  the  one  induced  by
    $d_{\mathrm{BL}}$. Therefore, the space $(\mathcal{M}(F'_n), \dF)$ is separable. To
    prove that $\MF$ is separable, it suffices to show that $\MF$ is
    equal to the completion of $\bigcup_{n\geqslant1} \mathcal{M}(F'_n)$
    with respect to  $d_{\mathrm{BL}}$.
	Notice that $F'_n \subset \operatorname{int}(F'_ {n+1})$. Therefore,
    by Urysohn's lemma, there exists a continuous function $\chi'_n
    \colon S \to [0,1]$ such that $\chi'_n = 1$ on $F'_ n$ and
    $\operatorname{supp}(\chi'_n) \subset
    \operatorname{int}(F'_{n+1})$. Let $\mu \in \MF$ and set $\mu_n =
    \chi'_n\mu$. Then it is clear that $\mu_n$ has support in $F'_{n+1}$
    and thus $\mu\in \M(F'_{n+1})$. Moreover, for every $f\in \mf$ and
    every nonnegative $h\in \cc_b(S)$, we have
\[
\mu_n(hf) =\mu(hf \chi'_n)\xrightarrow[n\to \infty]{} \mu(hf )
\]
	by the monotone convergence theorem, since $\chi'_n \uparrow
    \ind_{S_0^c}$ and $\mu(S_0) = 0$. This proves that
    $(f\mu_n,  n \in \mathbb{N})$ converges to
    $f\mu$ in $\M$ for every $f\in \mf$, thus $\dF(\mu_n,\mu)
   \to 
    0$. This  concludes the proof. 
\end{proof}

A set of measures $A \subset \M$ is said to be bounded if $\sup_{\mu \in A} \mu(\ind) < \infty$. We now give a characterization of compactness in $\MF$.
\begin{proposition}\label{compactness}Let $A \subset \MF$.
	\begin{enumerate}[label=(\roman*),leftmargin=*]
		\item $A$ is relatively compact if and only if for every $f\in
          \mf$, the family $\{f\mu\colon \, \mu \in A\}$ of finite
          measures is bounded and tight. 
        \item If \ref{H5}  holds, then $A$ is relatively  compact if and
          only    if    for    every     $f\in    \mf$,    the    family
          $\{f \mu\colon \, \mu \in A\}$ is bounded.
\end{enumerate}
\end{proposition}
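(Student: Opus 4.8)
The plan is to deduce both parts from Prokhorov's theorem applied to each family $\{f\mu : \mu \in A\}$ separately, together with the completeness of $(\MF,\dF)$ established in Proposition \ref{Polish space}. Fix an enumeration $\{f_k : k \in \N\}$ of $\mf$ as in the definition of $\dF$, and recall that every $\mu \in \MF$ satisfies $\mu(S_0) = 0$, so all integrals below may be taken over $S_0^c$, where each $f \in \mf$ is positive and where the ratios $f/f^\star$ are defined and bounded (by \ref{Hbound} and \ref{Hstar}).

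For the forward implication in (i), if $A$ is relatively compact in $\MF$ then its closure $\bar A$ is compact, and since $\mu \mapsto f\mu$ is continuous from $\MF$ to $\M$ (see the discussion preceding Proposition \ref{Polish space}), the image $\{f\mu : \mu \in \bar A\}$ is a compact, hence relatively compact, subset of $\M$ for every $f \in \mf$; by Prokhorov's theorem on the Polish space $S$ this forces $\{f\mu : \mu \in A\}$ to be bounded and tight. For the converse, I would argue by sequential compactness: given a sequence $(\mu_n)$ in $A$, the hypothesis and Prokhorov's theorem make $\{f_k\mu_n : n \in \N\}$ relatively compact in $\M$ for every $k$, so a diagonal extraction yields a subsequence $(\mu_{n_j})$ along which $f_k\mu_{n_j}$ converges in $\M$ — in particular is $d_{\mathrm{BL}}$-Cauchy — for every $k$. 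Since $\dF(\mu_{n_i},\mu_{n_j}) = \sum_k 2^{-k}\bigl(1 \wedge d_{\mathrm{BL}}(f_k\mu_{n_i}, f_k\mu_{n_j})\bigr)$ and the tail sum is uniformly small in $i,j$, the subsequence is $\dF$-Cauchy, hence converges in $\MF$ by completeness (Proposition \ref{Polish space}). Thus every sequence in $A$ has a subsequence converging in $\MF$, i.e.\ $A$ is relatively compact.

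For (ii), in view of part (i) it suffices to show that under \ref{H5} boundedness of every family $\{f\mu : \mu \in A\}$ already forces their tightness. If $S$ is compact this is vacuous, so assume the second alternative in \ref{H5}. Fix $f \in \mf$ and pick $f^\star \in \mf^\star(f)$ with $\inf_{K \in \mathcal{K}} \sup_{x \in K^c} f(x)/f^\star(x) = 0$, and set $C := 1 + \sup_{\mu \in A} \mu(f^\star) \in [1,\infty)$, finite by the boundedness of $\{f^\star\mu : \mu \in A\}$. Then for any compact $K \subset S$ and any $\mu \in A$,
\[
(f\mu)(K^c) = \int_{K^c \cap S_0^c} \frac{f}{f^\star}\, f^\star \, d\mu
\le \Bigl(\sup_{x \in K^c} \frac{f(x)}{f^\star(x)}\Bigr)\, \mu(f^\star)
\le C \sup_{x \in K^c} \frac{f(x)}{f^\star(x)}.
\]
Choosing $K$ so that the last supremum is smaller than a prescribed $\varepsilon/C$ shows $\{f\mu : \mu \in A\}$ is tight; since it is also bounded, part (i) gives that $A$ is relatively compact, and the reverse implication is contained in (i).

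The only steps needing care are the diagonal extraction and the bookkeeping around the $\mu$-null set $S_0$; the passage from coordinatewise convergence to the $\dF$-Cauchy property is immediate from the explicit form of $\dF$ and dominated convergence in the index $k$. I do not expect a genuine obstacle: the statement is essentially Prokhorov's criterion transported through the embedding $\mu \mapsto (f_k\mu)_{k}$ of $\MF$ into $\prod_k \M(S)$.
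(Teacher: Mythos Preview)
Your proof is correct and follows essentially the same approach as the paper: continuity of $\mu \mapsto f\mu$ plus Prokhorov for the forward direction, diagonal extraction for the converse, and the $f/f^\star$ estimate under \ref{H5} for part~(ii). The only cosmetic difference is that you conclude the converse of~(i) by showing the extracted subsequence is $\dF$-Cauchy and invoking completeness, whereas the paper re-invokes the identification argument from the proof of Proposition~\ref{Polish space} directly; these are equivalent.
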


\begin{proof}
  To prove  (i), start by assuming  that $A$ is relatively  compact. For
  every $\mu \in \MF$  and every $f\in \mf$, set $F_f(\mu)  = f \mu$. This
  defines a  continuous mapping $F _f\colon  \MF \to \M$. It  follows that
  the set
\[
F_f(A) = \{ f \mu\colon \mu \in A\}
\]
is relatively  compact in $\M$, \emph{i.e.}  it is bounded and  tight by
Prokhorov's theorem.
	
	Conversely,  let us  assume that  $\{ f  \mu\colon \mu  \in A\}$  is
    bounded   and   tight   in   $\M$   for   all   $f\in   \mf$.    Let
    $(\mu_n,  \, n  \in \mathbb{N})$  be a  sequence in  $A$. Since  the
    sequence of measures $(f \mu_n, \, n \in \mathbb{N})$ is bounded and
    tight,  it is  relatively  compact  in $\M$  for  every $f\in  \mf$.
    Therefore, by diagonal extraction,  there exists a subsequence still
    denoted   by  $(f   \mu_n,   \,  n   \in   \mathbb{N})$  which converges   in  $\M$ for  every
    $f\in \mf$.   By the same  argument as  in the proof  of Proposition
    \ref{Polish space}, it  follows that $(\mu_n, \,  n \in \mathbb{N})$
    converges in $\MF$.  This proves that $A$ is relatively compact.
	
    To  prove (ii),  assume that  \ref{H5}  holds. The  statement for  a
    compact $S$ follows immediately since a family of finite measures on
    a compact space is always tight.  Now assume that $S$ is not compact
    and    let    $A    \subset    \MF$    such    that    the    family
    $\{f  \mu\colon \mu  \in A\}$  is bounded  for every  $f\in \mf$. 
    To prove that $A \subset \MF$ is relatively compact, it is enough to
    show that $\{f \mu  \colon \mu \in A\}$ is tight  and to apply the
    first  point. Let $f^\star\in \mf^\star(f)$, which appears in
    \ref{H5}, 
 and  $K  \subset S$  be a  compact  subset. For  every
    $\mu \in A$, since $\mu(S_0) = 0$, we have
\begin{align*}
\int_{K^c} f(x)\,\mu(\dd x) 
&= \int_{K^c} f(x)\ind_{S_0 ^c}(x)\,\mu(\dd x)\\
&=\int_{K^c} \frac{f(x)}{f^\star(x)} \ind_{S_0^c}(x)f^\star(x)\, \mu(\dd
  x) \\
&\leqslant \mu(f^\star)\, \sup_{ K^c}\frac{f}{f^\star}\cdot
	\end{align*}
It follows that
\[
\sup_{\mu \in A} \int_{K^c} f(x)\,\mu(\dd x)
\leqslant \sup_{\mu \in
  A}\mu(f^\star)\,\,  \sup_{ K^c}\frac{f}{f^\star},
\]
	and taking the infimum over all compact subsets $K \in \mathcal{K}$
    yields, thanks to \ref{H5}
\[
\inf_{K\in \mathcal{K}} \, \sup_{\mu \in A} \int_{K^c} f(x)\,\mu(\dd x) =
0,
\]
	\emph{i.e.} the family $\{f \mu \colon \mu \in A\}$ is tight. This  completes the proof.
\end{proof}

The next result gives sufficient conditions allowing to strengthen convergence in $\M$ to convergence in $\MF$.
\begin{corollary}\label{from M to MF}
	Let $(\mu_n, \, n \in \mathbb{N})$ be a sequence of elements of $\MF$ converging in $\M$ to some $\mu \in \M$. Then $\mu \in \MF$ and $\lim_{n\to \infty}\mu_n = \mu$ in $\MF$ under either of the following conditions: 
	\begin{enumerate}[label=(\roman*),leftmargin=*]
		\item $(f\mu_n, \, n \in \mathbb{N})$ is bounded and tight for
          every $f\in \mf$. 
		\item \ref{H5} holds and $(f\mu_n, \, n \in \mathbb{N})$ is
          bounded for every $f\in \mf$. 
	\end{enumerate} 
\end{corollary}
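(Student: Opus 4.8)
The plan is to deduce both statements directly from the compactness criterion of Proposition \ref{compactness}, combined with the fact (noted right after the definition of $\dF$) that convergence in $\MF$ entails convergence in $\M$ to the same limit, via a standard subsequence argument.

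First I would set $A = \{\mu_n \colon\, n \in \mathbb{N}\}$. Under assumption (i), for every $f \in \mf$ the family $\{f\mu \colon\, \mu \in A\} = \{f\mu_n \colon\, n \in \mathbb{N}\}$ is bounded and tight, so Proposition \ref{compactness}(i) yields that $A$ is relatively compact in $\MF$. Under assumption (ii), \ref{H5} holds and $\{f\mu_n \colon\, n \in \mathbb{N}\}$ is bounded for every $f \in \mf$, so Proposition \ref{compactness}(ii) again yields that $A$ is relatively compact in $\MF$. Since $(\MF, \dF)$ is a metric space by Proposition \ref{Polish space}, relative compactness is equivalent to sequential relative compactness; hence the sequence $(\mu_n, \, n \in \mathbb{N})$ admits a subsequence $(\mu_{n_k}, \, k \in \mathbb{N})$ converging in $\MF$ to some $\nu \in \MF$.

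Next I would identify this limit. The mapping $\pi \mapsto \ind\,\pi = \pi$ being continuous from $\MF$ to $\M$, convergence in $\MF$ implies $\mu_{n_k} \to \nu$ in $\M$ as $k \to \infty$. On the other hand, by hypothesis $\mu_{n_k} \to \mu$ in $\M$, and $\M$ is a metric (hence Hausdorff) space, so $\nu = \mu$; in particular $\mu \in \MF$.

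Finally, to pass from subsequential to full convergence, I would observe that the reasoning above applies verbatim to every subsequence of $(\mu_n, \, n \in \mathbb{N})$: any such subsequence lies in the relatively compact set $A$, hence has a further subsequence converging in $\MF$, whose limit must be $\mu$ by the identification step. Since in the metric space $(\MF, \dF)$ a sequence converges to $\mu$ as soon as each of its subsequences has a further subsequence converging to $\mu$, we conclude that $\lim_{n\to\infty}\mu_n = \mu$ in $\MF$. No real obstacle arises here, as all the substance is contained in Propositions \ref{Polish space} and \ref{compactness}, which are already proved.
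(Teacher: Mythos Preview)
Your proof is correct and follows essentially the same approach as the paper's: both invoke Proposition \ref{compactness} to get relative compactness of $(\mu_n)$ in $\MF$, identify any subsequential $\MF$-limit with $\mu$ via the continuous embedding $\MF\hookrightarrow\M$, and then conclude full convergence from uniqueness of the limit point. The only difference is cosmetic: you spell out the ``every subsequence has a further convergent subsequence'' principle, while the paper phrases it as ``relatively compact with only one limit point''.
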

\begin{proof}
	Either condition guarantees that the sequence $(\mu_n, \, n \in \mathbb{N})$ is relatively compact in $\MF$ by Proposition \ref{compactness}. Let $\hat{\mu}\in \MF$ be a limit point of $(\mu_n, \, n \in \mathbb{N})$. Then there exists a subsequence, still denoted by $(\mu_n, \, n \in \mathbb{N})$ such that $\lim_{n\to \infty}\mu_n = \hat{\mu}$ in $\MF$. In particular, we have $\lim_{n\to \infty}\mu_n = \hat{\mu}$ in $\M$. Since $\lim_{n\to \infty}\mu_n = \mu$ in $\M$ by assumption, it follows that $\hat{\mu} = \mu$. This proves that $\mu \in \MF$ and that $\lim_{n\to \infty}\mu_n = \mu$ in $\MF$ since the sequence $(\mu_n, \, n \in \mathbb{N})$ is relatively compact in $\MF$ and has only one limit point $\mu$.
\end{proof}
The compactness criterion of Proposition \ref{compactness} yields a tightness criterion for random measures in $\MF$.
\begin{proposition}\label{tightness}Let $\Xi$ be a family of $\MF$-valued random variables.
	\begin{enumerate}[label=(\roman*),leftmargin=*]
		\item The family $\Xi$ is tight (in distribution) in $\MF$ if
          and only if for every $f\in \mf$, the family $\{f\xi\colon \,
          \xi \in \Xi \}$ is tight (in distribution) in $\M$, \emph{i.e.} if and only if 
\begin{equation}
\label{tight1}
\lim_{r\to \infty} \sup_{\xi \in \Xi}
      \pr{\xi(f) > r} = 0
\end{equation}
	and
\begin{equation}
\label{tight2}
\inf_{K \in \mathcal{K}} \sup_{\xi \in \Xi} \ex{1\wedge \int_{K^c}f(x)
  \xi(\dd x)} = 0.
\end{equation}
\item If \ref{H5} holds, then $\Xi$  is tight (in distribution) in $\MF$
  if and only if \eqref{tight1} holds for every $f\in \mf$.
\end{enumerate}
\end{proposition}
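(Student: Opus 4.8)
The plan is to reduce both parts to the compactness criterion of Proposition~\ref{compactness}, combined with the classical tightness criterion for random finite measures on a Polish space, \cite[Theorem~4.10]{kallenberg2017random}, which I would apply not to the $\xi$'s directly but to their images $f\xi$ under the continuous maps $\mu \mapsto f\mu$ from $\MF$ to $\M$, $f \in \mf$. Fix an enumeration $\mf = \{f_k \colon k \in \N\}$. The second equivalence asserted in~(i) is immediate: for a fixed $f \in \mf$, the $\M$-valued random variables $f\xi$, $\xi \in \Xi$, form a tight family in $\M$ if and only if \eqref{tight1} and \eqref{tight2} hold, since the total mass of $f\xi$ equals $\xi(f)$ and $(f\xi)(K^c) = \int_{K^c} f \, \dd \xi$; this is exactly \cite[Theorem~4.10]{kallenberg2017random} applied to the random measures $f\xi$. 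So it remains to prove the first equivalence in~(i), namely that $\Xi$ is tight in $\MF$ if and only if $\{f\xi \colon \xi \in \Xi\}$ is tight in $\M$ for every $f \in \mf$.

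For the ``only if'' direction, since $\mu \mapsto f\mu$ is continuous from $\MF$ to $\M$, the image of a compact subset of $\MF$ is a compact subset of $\M$, so tightness of $\Xi$ in $\MF$ transfers at once to tightness of $\{f\xi \colon \xi \in \Xi\}$ in $\M$. For the ``if'' direction I would argue straight from the definition of tightness. Assume $\{f\xi \colon \xi \in \Xi\}$ is tight in $\M$ for every $f \in \mf$ and let $\epsilon > 0$. For each $k$, pick a compact set $\mathcal{C}_k \subset \M$ — which, after replacing it by its closure, we may take closed — with $\sup_{\xi \in \Xi} \pr{f_k\xi \notin \mathcal{C}_k} \leq \epsilon\, 2^{-k-1}$, and set
\[
A_\epsilon = \bigcap_{k \in \N} \{\mu \in \MF \colon f_k\mu \in \mathcal{C}_k\}.
\]
Then $A_\epsilon$ is closed in $\MF$ (each member of the intersection is the preimage of a closed set under a continuous map), and for every $f = f_j \in \mf$ the family $\{f_j\mu \colon \mu \in A_\epsilon\} \subset \mathcal{C}_j$ is bounded and tight in $\M$, so $A_\epsilon$ is relatively compact in $\MF$ by Proposition~\ref{compactness}(i), hence compact. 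Moreover $\sup_{\xi \in \Xi} \pr{\xi \notin A_\epsilon} \leq \sum_{k \in \N} \epsilon\, 2^{-k-1} \leq \epsilon$. As $\MF$ is Polish (Proposition~\ref{Polish space}) and $\epsilon > 0$ is arbitrary, this shows $\Xi$ is tight in $\MF$, completing~(i).

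For~(ii) I would run the same scheme but invoke Proposition~\ref{compactness}(ii), which under \ref{H5} characterizes relatively compact subsets of $\MF$ by boundedness of the families $\{f\mu\}$, $f \in \mf$, alone. The ``only if'' part is contained in~(i) (tightness in $\MF$ forces \eqref{tight1} for all $f$). For the ``if'' part, assume \eqref{tight1} for all $f$ and let $\epsilon > 0$; choose $r_k > 0$ with $\sup_{\xi \in \Xi} \pr{\xi(f_k) > r_k} \leq \epsilon\, 2^{-k-1}$ and set $A_\epsilon = \bigcap_{k \in \N} \{\mu \in \MF \colon \mu(f_k) \leq r_k\}$. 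This set is closed because $\mu \mapsto \mu(f_k) = (f_k\mu)(\ind)$ is continuous on $\MF$, it is relatively compact by Proposition~\ref{compactness}(ii) since the total masses of $\{f_k\mu \colon \mu \in A_\epsilon\}$ are bounded by $r_k$, hence compact, and $\sup_{\xi \in \Xi} \pr{\xi \notin A_\epsilon} \leq \epsilon$ as before.

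The argument is mostly bookkeeping: the real content sits in Propositions~\ref{Polish space} and~\ref{compactness} and in the single-function criterion \cite[Theorem~4.10]{kallenberg2017random}, all of which are available. The only points requiring any care — and the nearest thing to an obstacle — are to pass to the closures of the sets $\mathcal{C}_k$ so that $A_\epsilon$ is genuinely closed (hence compact, not merely relatively compact), and to distribute the failure probabilities over the countable family $\mf$ via the weights $2^{-k-1}$; beyond that I do not anticipate any difficulty.
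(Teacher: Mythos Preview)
Your proof is correct and takes a genuinely different route from the paper's. Both arguments share the easy ``only if'' direction and the overall strategy of exhibiting a compact set $A_\epsilon \subset \MF$ capturing most of the mass, but they diverge in how $A_\epsilon$ is built and, crucially, in how its relative compactness is verified. The paper constructs $A_\epsilon$ from explicit scalar constraints $\mu(f_k) \le r_k$ and $\int_{K_k^c} f_k\,\dd\mu \le 1/C_k$, and then must work to check the tightness half of Proposition~\ref{compactness}(i): it exploits the $\mf^\star$ structure from \ref{Hstar}, bounding $\int_{K_i^c} f_k\,\dd\mu$ by $\|f_k/f_i\|_\infty \int_{K_i^c} f_i\,\dd\mu \le 1/i$ for infinitely many $i \ge k$ with $f_i \in \mf^\star(f_k)$. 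You instead pull back compact sets $\mathcal{C}_k \subset \M$ along the continuous maps $\mu \mapsto f_k\mu$, so that $\{f_j\mu : \mu \in A_\epsilon\} \subset \mathcal{C}_j$ is automatically bounded and tight by Prokhorov; this sidesteps the $\mf^\star$ machinery entirely and is the cleaner argument. The paper's approach has the minor advantage of making the compact set more explicit (useful if one later wants quantitative bounds), but yours is conceptually more transparent. One cosmetic remark: compact subsets of the metric space $\M$ are already closed, so the clause ``after replacing it by its closure'' is superfluous.
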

\begin{proof}
  To prove (i),  assume that $\Xi$ is tight in  $\MF$. Since the mapping
  $F_f \colon \mu \mapsto f \mu$  is continuous from $\MF$ to $\M$ for
  every $f\in \mf $ and since tightness is  preserved by continuous
  mappings,        it        follows       that        the        family
  $F_f(\Xi) = \{f\xi \colon \, \xi  \in \Xi\}$ is tight in $\M$ for
  every $f\in \mf$. The
  result now follows from Theorem 4.10 in \cite{kallenberg2017random}.
	
	Conversely, assume that \eqref{tight1} and \eqref{tight2} hold for
    all $f\in \mf$ and
    let $\epsilon >0$. 
Let $\{f_k\colon\, k\in \N^*\}$ be an enumeration of $\mf$. We set for $k\in \N^*$:
\[
C_k= k \left(1+  \sup_{j\leq  k,\, f_k\in \mf^\star(f_j)} \norm{f_j/f_k}_\infty
  \right),
\]
with the convention that $\sup \emptyset=0$. 
For every $k \in \N^*$, there exists $r_k > 0$ and a compact set $K_k
\in \mathcal{K}$ such that 
\[
\sup_{\xi \in \Xi} \pr{\xi(f_k) > r_k} \leqslant
\frac{\epsilon}{2^{k}}
\quad\text{and}\quad
\sup_{\xi \in \Xi} \ex{1\wedge \int_{K_k ^c} f_k(x) \xi(\dd x)}
\leqslant \frac{\epsilon}{C_k 2^{k}}\cdot
\]
	Set 
\[
A_\epsilon =\bigcap _{k\in \N^*} \left\{ \mu \in \MF \colon \, \mu(f_k)
  \leqslant r_k \text{ and } \int_{K_k ^c} f_k(x) \mu(\dd x) \leqslant
  \frac{1}{C_k} \right\}. 
\]
	Then for every $\xi \in \Xi$, we have
\begin{align*}
	\pr{\xi \in A_\epsilon^c} 
&= \pr{\exists k \in \N ^*, \, \xi(f_k) > r_k \text{ or } \int_{K_k
  ^c} f_k(x) \xi(\dd x) > \frac 1{C_ k} } \\ 
&\leqslant \sum_{k\in \N^*} \pr{\xi(f_k) > r_k} + \sum_{k\in \N^*}
  \pr{\int_{K_k ^c} f_k(x) \xi(\dd x) > \frac 1{C_ k}} \leqslant 2\epsilon, 
	\end{align*}
	where in the last inequality we used that
\[
\pr{\int_{K_k ^c} f(x) \xi(\dd x) > \frac 1 {C_k}} 
= \pr{1 \wedge \int_{K_k ^c} f_k(x) \xi(\dd x) > \frac 1 {C_k}} 
\leqslant C_k \ex{1\wedge \int_{K_k ^c} f_k(x) \xi(\dd x)} 
\leqslant \frac{\epsilon}{2^{k}}\cdot
\]
	
Thus, to  prove that $\Xi$  is tight in $\MF$,  it remains to  show that
$A_\epsilon\subset    \MF$    is    relatively    compact.    We    have
$\sup_{\mu \in A_\epsilon} \mu(f_k) \leqslant  r_k < \infty$ so that the
family $\{f_k\mu\colon  \, \mu  \in A_\epsilon\}$  is bounded  for every
$k \in \N^*$.  Moreover, 
for every $i \geqslant k$ such that $f_i\in \mf^\star(f_k)$, we have 
\[
\sup_{\mu \in A_\epsilon}\int_{K_i ^c} f_k(x) \mu(\dd x) \leqslant
\norm{f_k/f_i}_{\infty} \, \sup_{\mu \in A_\epsilon}\int_{K_i
  ^c} f_i(x) \mu(\dd x) \leqslant \frac 1 i\cdot
\]
This implies that 
$\inf_{K \in  \mathcal{K}} \sup_{\mu  \in A_\epsilon}  \int_{K^c} f_k(x)
\mu(\dd                 x)                \leqslant                 1/i$
for  $i \geqslant  k$ such  that  $f_i\in \mf^\star(f_k)$.  Since there  are
infinitely many such $i$, we deduce
that 
\[
\inf_{K \in \mathcal{K}} \sup_{\mu \in A_\epsilon} \int_{K^c} f_k(x)
\mu(\dd x) = 0,
\]
\emph{i.e.} the  family $\{f_k \mu \colon  \, \mu \in A_\epsilon  \}$ is
tight.  As  this holds  for  all  $k\in  \N^*$,  we get  by  Proposition
\ref{compactness} that  $A_\epsilon$ is relatively compact  in $\MF$ (in
fact, $A_\epsilon$  is compact as  it is  closed). This proves  (i). The
proof of (ii) is similar.
\end{proof}
We now give a sufficient condition for tightness in the space $\MF$.
\begin{corollary}\label{sufficient condition for tightness}
	Assume that \ref{H5} holds. Let $\Xi$ be a family of $\MF$-valued
    random variables such that for every $f\in \mf$, 
\begin{equation}
\sup_{\xi \in \Xi} \ex{\xi(f)} < \infty.
\end{equation}
	Then $\Xi$ is tight (in distribution) in $\MF$.
\end{corollary}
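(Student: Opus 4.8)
The plan is to deduce this immediately from Proposition \ref{tightness}-(ii), which characterizes tightness in $\MF$ under assumption \ref{H5} by the single condition \eqref{tight1}, namely $\lim_{r\to\infty}\sup_{\xi\in\Xi}\pr{\xi(f)>r}=0$ for every $f\in\mf$. So the only thing to check is that the uniform first-moment bound forces \eqref{tight1}.

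First I would fix $f\in\mf$ and set $C_f=\sup_{\xi\in\Xi}\ex{\xi(f)}$, which is finite by hypothesis. By the Markov inequality, for every $\xi\in\Xi$ and every $r>0$ we have $\pr{\xi(f)>r}\leqslant \ex{\xi(f)}/r\leqslant C_f/r$. Taking the supremum over $\xi\in\Xi$ and then letting $r\to\infty$ gives $\lim_{r\to\infty}\sup_{\xi\in\Xi}\pr{\xi(f)>r}=0$, i.e.\ \eqref{tight1} holds for $f$. Since $f\in\mf$ was arbitrary, \eqref{tight1} holds for every $f\in\mf$.

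Then I would invoke Proposition \ref{tightness}-(ii): under assumption \ref{H5}, the validity of \eqref{tight1} for all $f\in\mf$ is equivalent to tightness (in distribution) of $\Xi$ in $\MF$. Hence $\Xi$ is tight in $\MF$, which completes the proof. There is no real obstacle here; the statement is a one-line corollary of the tightness criterion already established, the only ingredient being the elementary Markov bound.

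\begin{proof}
	Fix $f\in\mf$ and set $C_f=\sup_{\xi\in\Xi}\ex{\xi(f)}$, which is finite by assumption. By the Markov inequality, for every $\xi\in\Xi$ and every $r>0$,
	\[
	\pr{\xi(f)>r}\leqslant \frac{\ex{\xi(f)}}{r}\leqslant \frac{C_f}{r}\cdot
	\]
	Taking the supremum over $\xi\in\Xi$ and letting $r\to\infty$ yields
	\[
	\lim_{r\to\infty}\sup_{\xi\in\Xi}\pr{\xi(f)>r}=0,
	\]
	that is, \eqref{tight1} holds for $f$. Since $f\in\mf$ is arbitrary, \eqref{tight1} holds for every $f\in\mf$. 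As \ref{H5} is assumed, Proposition \ref{tightness}-(ii) then ensures that $\Xi$ is tight (in distribution) in $\MF$.
\end{proof}
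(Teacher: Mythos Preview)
Your proof is correct and follows exactly the same approach as the paper: apply the Markov inequality to verify \eqref{tight1} for every $f\in\mf$, then invoke Proposition~\ref{tightness}-(ii). The paper's version is simply a bit more condensed.
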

\begin{proof}
	By the Markov inequality, we have for every $f\in \mf$,
\[
\sup_{\xi \in \Xi}\pr{\xi(f) > r} \leqslant \frac{1}{r} \sup_{\xi
      \in \Xi} \ex{\xi(f)} \xrightarrow[r \to \infty]{} 0.
\]
	This proves that $\Xi$ is tight in $\MF$ by Proposition \ref{tightness}-(ii).
\end{proof}
We  denote   by  $\B$   (resp.  $\BF$)   the  Borel   $\sigma$-field  on
$(\M,d_{\mathrm{BL}})$  (resp.  on  $(\MF,\dF)$).   We  also  denote  by
$\Btr = \{A \cap  \MF \colon \, A \in \B\}$  the trace $\sigma$-field of
$\B$ on $\MF$.

\begin{lemma}\label{sigma algebras}
	We have $\BF = \Btr$.
\end{lemma}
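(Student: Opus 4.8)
## Plan for the proof of Lemma \ref{sigma algebras}

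The plan is to prove the two inclusions $\BF \subset \Btr$ and $\Btr \subset \BF$ separately, using in both cases that a $\sigma$-field on a metric space can be generated by a countable family of "coordinate" maps, and that the two topologies on $\MF$ are comparable via the continuous maps $\mu \mapsto f_k\mu$.

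First I would recall the standard fact that, on a separable metric space, the Borel $\sigma$-field coincides with the $\sigma$-field generated by the evaluation maps $\mu \mapsto \mu(g)$ for $g$ ranging over a countable convergence-determining class of bounded continuous functions; this applies to $(\M, d_{\mathrm{BL}})$ (it is Polish by the discussion in Section \ref{Sect def}) and, more to the point, to $(\MF, \dF)$, which is Polish by Proposition \ref{Polish space}. For the inclusion $\Btr \subset \BF$: a set $A \cap \MF$ with $A \in \B$ is determined by the evaluation maps $\mu \mapsto \mu(g)$, $g \in \cc_b(S)$, restricted to $\MF$; but for $\mu \in \MF$ we have $\mu(S_0) = 0$, so $\mu(g) = \mu(g\ind_{S_0^c})$, and since $\ind \in \mf$ the map $\mu \mapsto \ind\mu = \mu$ is continuous from $\MF$ to $\M$, hence $\mu \mapsto \mu(g)$ is $\BF$-measurable for every $g \in \cc_b(S)$. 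As these maps generate $\B$, we get $\B|_{\MF} = \Btr \subset \BF$.

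For the reverse inclusion $\BF \subset \Btr$: by definition $\dF$ is built from the maps $\mu \mapsto d_{\mathrm{BL}}(f_k\mu, f_k\nu)$, so $\BF$ is generated by the family of maps $\mu \mapsto f_k\mu$, $k \in \N$, from $\MF$ to $(\M, d_{\mathrm{BL}})$; equivalently, $\BF$ is generated by the countable family of real-valued maps $\mu \mapsto (f_k\mu)(g_j) = \mu(f_k g_j)$, where $\{g_j\}$ is a fixed countable convergence-determining subclass of $\cc_b(S)$ and $k$ ranges over $\N$. It therefore suffices to show that each map $\mu \mapsto \mu(f_k g_j)$ is $\Btr$-measurable, \emph{i.e.} measurable with respect to the trace on $\MF$ of the weak Borel $\sigma$-field. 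Here $f_k$ is not continuous on all of $S$, only on $S_0^c$ (assumption \ref{Hcont}), so $f_k g_j$ need not be continuous and $\mu \mapsto \mu(f_k g_j)$ need not be weakly continuous; this is the main obstacle. To get around it I would approximate $f_k$ from below on $S_0^c$ by bounded continuous functions: using the closed sets $F'_n = \{\rho(\cdot,S_0) \geq 1/n,\ \rho(\cdot,0) \leq n\}$ from the proof of Proposition \ref{Polish space}, on which $f_k$ is bounded and continuous by \ref{Hbound}, together with the Urysohn cutoffs $\chi'_n$ (continuous, $\chi'_n = 1$ on $F'_n$, supported in $\mathrm{int}(F'_{n+1})$) already constructed there, the functions $\chi'_n f_k g_j$ are bounded and continuous on $S$ and increase to $f_k g_j \ind_{S_0^c}$ pointwise. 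Hence for $\mu \in \MF$, by monotone convergence and $\mu(S_0)=0$, $\mu(f_k g_j) = \lim_n \mu(\chi'_n f_k g_j)$; each $\mu \mapsto \mu(\chi'_n f_k g_j)$ is weakly continuous, hence $\B$-measurable and its restriction to $\MF$ is $\Btr$-measurable, and a pointwise limit of $\Btr$-measurable maps is $\Btr$-measurable. Thus $\mu \mapsto \mu(f_k g_j)$ is $\Btr$-measurable for all $k,j$, so $\BF \subset \Btr$, and combined with the previous paragraph this gives $\BF = \Btr$.

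One subtlety I would be careful about is whether $\{g_j\}$ can indeed be chosen countable and convergence-determining \emph{and} such that the $f_k g_j$ still approximate correctly; this is routine since $S$ is Polish (so $\cc_b(S)$ has a countable convergence-determining subset, e.g. built from a countable base via bounded Lipschitz functions as in Section \ref{Sect def}), and multiplying by the fixed bounded continuous cutoffs $\chi'_n$ preserves boundedness and continuity. I expect the write-up to be short once the identification of $\BF$ with the $\sigma$-field generated by $\{\mu \mapsto \mu(f_k g_j)\}$ and the monotone approximation are in place.
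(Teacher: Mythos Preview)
Your plan is correct and the argument goes through, with one small correction: the sequence $\chi'_n f_k g_j$ need not be monotone (the Urysohn cutoffs $\chi'_n$ from Proposition~\ref{Polish space} are not constructed to be increasing, and $g_j$ may be signed), so you should invoke dominated convergence rather than monotone convergence. The domination $|\chi'_n f_k g_j| \le \|g_j\|_\infty f_k$ with $\mu(f_k)<\infty$ for $\mu\in\MF$ is exactly what you need, and you already have the ingredients for it.

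Your route and the paper's agree on the easy inclusion $\Btr\subset\BF$ (continuity of the identity $\MF\to\M$). For $\BF\subset\Btr$, the paper proceeds differently: it first shows, via a functional monotone class argument, that $\mu\mapsto g\mu$ is $\B/\B$-measurable for every bounded \emph{measurable} $g$, then extends to $f\in\mf$ by the truncations $f\wedge n$, and finally deduces that each distance map $\nu\mapsto \dF(\mu,\nu)$ is $\Btr$-measurable so that open $\dF$-balls lie in $\Btr$. Your approach instead exploits \ref{Hcont} and \ref{Hbound} directly, approximating $f_k$ by the bounded \emph{continuous} functions $\chi'_n f_k$ so that $\mu\mapsto\mu(\chi'_n f_k g_j)$ is weakly continuous; this bypasses the monotone class step and is slightly more elementary. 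The paper's detour has the side benefit of showing $\MF$ is a Borel subset of $\M$, which you do not obtain (nor need) for the lemma as stated.
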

\begin{proof}
  \textbf{Step 1.} We first prove that $\MF$ is a Borel subset in $\M$.
For $g\in \cb_+(S)$, we consider the function $\Theta_g$ defined on $\M
$ by   $\Theta_g(\mu) = g\mu$. 
Denote $\cb_{b+}=\cb_b(S) \cap  \cb_+(S)$ the set of bounded
nonnegative measurable functions defined on $S$. We follow the proof of
\cite[Theorem 15.13]{aliprantis} to prove that,  for every $g\in
\cb_{b+}$, 
$\Theta_g$  is a measurable function from  $\M$ to $\M$. 
Denote by $\cf=\{g\in \cb_{b+} \colon \, \Theta_g \text{ is
  measurable}\}$. The function $\Theta_g$ is continuous for $g$ belonging to
$\cc_{b+}= \cc_b(S) \cap \cc_+(S)$. Furthermore, the set $\cf$ is
closed under bounded pointwise  convergence: if $g_n \rightarrow g$ pointwise, with $g\in
\cb_{b+}$ and $(g_n, \,n\in \N)$ a bounded sequence of elements of $\cf$
(\emph{i.e.} $\sup_{n\in \N} \norm {g_n}_\infty <\infty $), then
$\Theta_g(\mu)=\lim_{n\rightarrow \infty } \Theta_{g_n}(\mu)$ by
dominated convergence and thus  $g$ belongs to $\cf$. An immediate
extension of  \cite[Theorem 4.33]{aliprantis} gives that
$\cb_{b+}\subset \cf$. 

We then deduce that the function  $\theta_g \colon \M \to [0, +\infty ]$
  defined by $\theta_g(\mu) = g\mu(\ind)=\mu(g)$ is measurable for every $g\in
  \cb_{b+}$, and as $g\in \cb_+(S)$ is the limit of $g\wedge n \in
  \cb_{b+}$ as $n$ goes to infinity, we deduce  by monotone
  convergence that
  $\theta_g=\lim_{n\rightarrow \infty } \theta_{g\wedge n}$, and thus $\theta_g$ is
  measurable for every $g\in \cb_+(S)$.   By  definition of
  $\MF$,                  we                  have                  that
  $\MF = \bigcap_{f\in \mf}\theta_f  ^{-1} (\R_+),$ and thus 
 $\MF$ is a Borel subset in $\M$.
	
	\noindent
	\textbf{Step 2.} We prove that for every $\mu \in \MF$, the mapping
    $\nu \mapsto \dF(\mu,\nu)$ defined on $\MF$ is $\Btr$-measurable. 
    Let  $g\in \cb_{b+}$.  Since  the  function $\Theta_g$  is
    measurable   from  $\M$  to  itself by step 1, it  is  $\B/\B$-measurable.   By
    definition of the trace $\sigma$-field,  it follows that the mapping
    $\Theta_g$  from $\MF$ to $\M$  is $\Btr/\B$-measurable. Let
    $f\in  \mf$.   By monotone convergence we get that
    $\Theta_f=\lim_{n\rightarrow \infty } \Theta_{f\wedge n}$, and thus 
    $\Theta_{f}$ is $\Btr/\B$-measurable.

	Since  $\mu \in  \MF$, we  have  $f \mu  \in \M$  and the  mapping
    $\pi \mapsto  d_{\mathrm{BL}}(f\mu, \pi)$ from $\M$  to $\real$ is
    continuous hence  $\B$-measurable. Thus, by composition  we get that
    the mapping $\nu \mapsto d_{\mathrm{BL}}(f\mu, f\nu)$ from $\MF$
    to    $\real$   is    $\Btr$-measurable.   Finally,    the   mapping
    $\nu   \mapsto    \dF(\mu,\nu)$   from    $\MF$   to    $\real$   is
    $\Btr$-measurable as a sum of $\Btr$-measurable mappings.

	\noindent
	\textbf{Step 3.} We conclude the proof of the lemma. For every $\mu \in \MF$ and every $\epsilon > 0$, we have
	\[B(\mu, \epsilon) = \{ \nu \in \MF \colon \, \dF(\mu,\nu) < \epsilon\} \in \Btr\]
	by Step 2. Since $\MF$ is a Polish space, every open set is the countable union of open balls and it follows that every open set lies in $\Btr$. Hence we get $\BF \subset \Btr$.
	
	Conversely, notice that the identity mapping from $ (\MF, \dF)$ to
    $(\MF, d_{\mathrm{BL}})$ is continuous. Therefore, if $V \subset \M$
    is an open set, $V \cap \MF$ is open in $(\MF, d_{\mathrm{BL}})$ hence also in
    $(\MF, \dF)$. In particular, we have $V \cap \MF \in \BF$. Since
    this is true for every open set $V \subset \M$, we deduce that $\Btr
    \subset \BF$. 
\end{proof}

The following two results are a direct consequence of Lemma \ref{sigma algebras}.
\begin{corollary}\label{measurability}
  Let  $\xi$   be  a   $\M$-valued  random   variable  such   that  a.s.
  $\xi(f) < \infty$  for every $f\in \mf$. Then $\xi$  is a $\MF$-valued
  random  variable.  Conversely,  if  $\xi$  is  a  $\MF$-valued  random
  variable then $\xi$ is also a $\M$-valued random variable.
\end{corollary}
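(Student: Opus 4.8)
The plan is to read off Corollary~\ref{measurability} from Lemma~\ref{sigma algebras}, which identifies $\BF$ with the trace $\sigma$-field $\Btr = \{A\cap \MF\colon\, A\in \B\}$, together with the observation made in Step~1 of the proof of that lemma that $\MF$ is a Borel subset of $(\M,d_{\mathrm{BL}})$, i.e.\ $\MF\in \B$. Write $(\Omega,\mathcal{G},\P)$ for the underlying probability space.

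For the first assertion, I would argue as follows. Since $\MF\in \B$ and $\xi$ is $\mathcal{G}/\B$-measurable, the set $\{\xi\in \MF\}$ belongs to $\mathcal{G}$; by hypothesis $\xi(f)<\infty$ a.s.\ for each $f$ in the countable family $\mf$, so $\P(\xi\in \MF)=1$. Redefining $\xi$ on the $\P$-null event $\{\xi\notin \MF\}$, say as the zero measure (which lies in $\MF$), gives a map $\Omega\to \MF$ that agrees with $\xi$ almost surely, so one may assume $\xi$ takes values in $\MF$ everywhere. Then for any $B\in \BF$, Lemma~\ref{sigma algebras} lets us write $B=A\cap \MF$ with $A\in \B$, whence $\xi^{-1}(B)=\xi^{-1}(A)\cap\xi^{-1}(\MF)=\xi^{-1}(A)\in\mathcal{G}$ by the $\M$-measurability of $\xi$. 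Thus $\xi$ is $\mathcal{G}/\BF$-measurable, i.e.\ an $\MF$-valued random variable.

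For the converse, it suffices to check that the canonical inclusion $\iota\colon \MF\hookrightarrow \M$ is $\BF/\B$-measurable, since then an $\MF$-valued random variable $\xi$, viewed as a map into $\M$, is the composition $\iota\circ\xi$ of measurable maps. Given $A\in \B$, one has $\iota^{-1}(A)=A\cap \MF\in \Btr=\BF$ by Lemma~\ref{sigma algebras}, which is precisely the desired measurability; equivalently, $\{\xi\in A\}=\xi^{-1}(A\cap \MF)\in\mathcal{G}$.

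I do not expect any genuine obstacle here: all the work sits in Lemma~\ref{sigma algebras} and in the fact, already established there, that $\MF$ is Borel in $\M$. The only subtlety is the distinction between ``$\xi$ lies in $\MF$ almost surely'' and ``$\xi$ lies in $\MF$ everywhere'' in the first assertion, which is dealt with by the harmless modification of $\xi$ on a null set.
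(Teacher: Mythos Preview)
Your argument is correct and matches the paper's intent: the paper simply states that this corollary is a direct consequence of Lemma~\ref{sigma algebras}, and your unpacking via $\BF=\Btr$ together with $\MF\in\B$ (from Step~1 of that lemma's proof) is exactly the right way to read it off. The only point worth making explicit is that the countability of $\mf$ is what lets you pass from ``$\xi(f)<\infty$ a.s.\ for each $f$'' to ``a.s.\ $\xi(f)<\infty$ for all $f$'', which you implicitly use.
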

\begin{corollary}\label{equality in distribution}Let $\xi$ and $\zeta$ be $\MF$-valued random variables. Then the following conditions are equivalent:
	\begin{enumerate}[label = (\roman*),leftmargin=*]
		\item $\xi \law \zeta$ when viewed as $\MF$-valued random variables.
		\item $\xi \law \zeta$ when viewed as $\M$-valued random variables.
		\item $\xi(h) \law \zeta(h)$ for every $h\in \cc_b(S)$. 
		\item $\xi(fh) \law \zeta(fh)$ for every $h\in \cc_b(S)$ and
          $f\in \mf$. 
	\end{enumerate}
\end{corollary}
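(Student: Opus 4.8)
The plan is to prove the equivalences by establishing the chain (i)$\Leftrightarrow$(ii), (ii)$\Leftrightarrow$(iii), (i)$\Rightarrow$(iv), and (iv)$\Rightarrow$(iii). The equivalence (i)$\Leftrightarrow$(ii) is immediate from Lemma \ref{sigma algebras}: since $\xi$ and $\zeta$ take values in $\MF$ almost surely, for any $A \in \B$ we have $\pr{\xi \in A} = \pr{\xi \in A \cap \MF}$ and likewise for $\zeta$, so the laws of $\xi$ and $\zeta$ on $(\M, \B)$ coincide if and only if their laws on $(\MF, \Btr) = (\MF, \BF)$ coincide. (This also makes the statement meaningful, as a $\MF$-valued random variable is in particular a $\M$-valued random variable by Corollary \ref{measurability}.)

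For (ii)$\Rightarrow$(iii), I would note that for each fixed $h \in \cc_b(S)$ the evaluation $\mu \mapsto \mu(h)$ is continuous, hence Borel measurable, on $(\M, d_{\mathrm{BL}})$, so equality in law on $\M$ transfers to equality in law of $\xi(h)$ and $\zeta(h)$. The implication (iv)$\Rightarrow$(iii) follows by taking $f = \ind \in \mf$ (assumption \ref{H1}), for which $fh = h$. For (i)$\Rightarrow$(iv), I would use the first two steps in the proof of Lemma \ref{sigma algebras}, which show that for every $f \in \mf$ the map $\Theta_f \colon \mu \mapsto f\mu$ is measurable from $(\MF, \BF)$ to $(\M, \B)$; composing $\Theta_f$ with the continuous map $\pi \mapsto \pi(h)$ on $\M$ and using $(f\mu)(h) = \mu(fh)$ for $\mu \in \MF$, one sees that $\mu \mapsto \mu(fh)$ is $\BF$-measurable, whence $\xi \law \zeta$ on $\MF$ yields $\xi(fh) \law \zeta(fh)$ for all $h \in \cc_b(S)$ and $f \in \mf$.

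The remaining implication (iii)$\Rightarrow$(ii) is the crux. Here I would invoke the standard fact that the law of a random element of $\M(S)$, for $S$ Polish, is determined by the one-dimensional distributions of its evaluations against $\cc_b(S)$: since $(\M, d_{\mathrm{BL}})$ is Polish, its Borel $\sigma$-field is generated by a countable family of evaluation maps, so the law of an $\M$-valued variable is pinned down by the finite-dimensional distributions $(\xi(h_1), \dots, \xi(h_k))$ with $h_i \in \cc_b(S)$; as $\cc_b(S)$ is a vector space, knowledge of the law of $\xi(h)$ for every $h \in \cc_b(S)$ determines the law of each linear combination $\sum_i a_i \xi(h_i) = \xi(\sum_i a_i h_i)$, and the Cram\'er--Wold device then recovers these finite-dimensional distributions; alternatively one may argue through the Laplace functionals $\E[\exp(-\xi(h))]$, $h \in \cc_b(S)\cap\cc_+(S)$, which characterize the law of $\xi$ and depend only on the laws of the real random variables $\xi(h)$. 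Combining the pieces gives (i)$\Leftrightarrow$(ii)$\Leftrightarrow$(iii) together with (i)$\Rightarrow$(iv)$\Rightarrow$(iii), hence the equivalence of all four conditions. I expect the only genuinely non-routine point to be the justification of this last step, i.e. citing the appropriate determinacy result for random measures on a Polish space.
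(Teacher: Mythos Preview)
Your proposal is correct and fills in exactly the details that the paper omits: the paper states only that the corollary is ``a direct consequence of Lemma~\ref{sigma algebras}'' and gives no further argument. Your chain (i)$\Leftrightarrow$(ii) via $\BF=\Btr$, together with the standard determinacy of the law of a random measure on a Polish space by its one-dimensional evaluations (which the paper implicitly relies on, as it later cites \cite[Theorem~4.11]{kallenberg2017random} for the corresponding convergence statement), and the trivial implications (i)$\Rightarrow$(iv)$\Rightarrow$(iii), is precisely what is needed.
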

We now characterize convergence in distribution of random measures in
$\MF$. Recall that
\ref{H1}--\ref{Hstar} are in force. 

\begin{proposition}\label{convergence in distribution}
  Let $\xi_n$ and  $\xi$ be $\MF$-valued random  variables. Then $\xi_n$
  converges  in  distribution   to  $\xi$  in  $\MF$  if   and  only  if
  $\xi_n(f  h)  \cvlawd  \xi(f  h)$  for  every  $h\in \cc_b(S) $ and
  every $f\in \mf$. 
\end{proposition}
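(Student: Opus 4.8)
The plan is to reduce the statement to the portmanteau-type characterization of weak convergence in the Polish space $(\MF,\dF)$ and then use the structure of $\dF$ as a countable combination of the distances $d_{\mathrm{BL}}(f_k\,\cdot\,,f_k\,\cdot\,)$. The ``only if'' direction is the easy one: if $\xi_n\cvlawd\xi$ in $\MF$, then since for each $f\in\mf$ the map $\Theta_f\colon\MF\to\M$, $\mu\mapsto f\mu$ is continuous (this is how the topology on $\MF$ was defined) and since for each $h\in\cc_b(S)$ the evaluation map $\M\to\R$, $\nu\mapsto\nu(h)$ is continuous, the composition $\mu\mapsto (f\mu)(h)=\mu(fh)$ is continuous from $\MF$ to $\R$. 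The continuous mapping theorem then gives $\xi_n(fh)\cvlawd\xi(fh)$ for every $h\in\cc_b(S)$ and $f\in\mf$.

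For the ``if'' direction, first I would observe that taking $f=\ind\in\mf$ (assumption \ref{H1}) in the hypothesis gives $\xi_n(h)\cvlawd\xi(h)$ for all $h\in\cc_b(S)$, hence in particular $\xi_n(\ind)\cvlawd\xi(\ind)$, so $(\xi_n(\ind),\,n\in\N)$ is tight in $\R$. More generally, the hypothesis applied with a fixed $f\in\mf$ and $h=\ind$ yields $\xi_n(f)\cvlawd\xi(f)$ in $\R$, and since $\xi(f)<\infty$ a.s. (as $\xi$ is $\MF$-valued), the sequence $(\xi_n(f),\,n\in\N)$ satisfies $\lim_{r\to\infty}\sup_n\pr{\xi_n(f)>r}=0$, which is \eqref{tight1}. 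To get tightness of $(\xi_n,\,n\in\N)$ in $\MF$ via Proposition \ref{tightness}-(i), I also need \eqref{tight2}, i.e. that for every $f\in\mf$ the family $\{f\xi_n\}$ is tight in $\M$. Here I would use the full strength of the hypothesis: for a fixed $f$, the convergence $\xi_n(fh)\cvlawd\xi(fh)$ for all $h\in\cc_b(S)$ means precisely that $f\xi_n\cvlawd f\xi$ in $\M$ (by Corollary \ref{equality in distribution}-type reasoning, or directly by the definition of weak convergence of finite measures testing against $\cc_b$). A convergent-in-distribution sequence of random elements of the Polish space $\M$ is tight, so $\{f\xi_n\}$ is tight in $\M$, giving \eqref{tight2}. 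Thus $(\xi_n,\,n\in\N)$ is tight in $\MF$ and, by Prokhorov's theorem applied to the Polish space $(\MF,\dF)$ (Proposition \ref{Polish space}), it is relatively compact in distribution.

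It remains to identify the limit points. Let $\xi_\infty$ be any subsequential distributional limit of $(\xi_n)$ in $\MF$; along that subsequence the ``only if'' direction (continuous mapping) gives $\xi_n(fh)\cvlawd\xi_\infty(fh)$ for all $f\in\mf$, $h\in\cc_b(S)$, while by hypothesis $\xi_n(fh)\cvlawd\xi(fh)$, so $\xi_\infty(fh)\law\xi(fh)$ for all such $f,h$. By Corollary \ref{equality in distribution} (equivalence of (i) and (iv)), this forces $\xi_\infty\law\xi$ as $\MF$-valued random variables. Hence every subsequential limit has the law of $\xi$, and relative compactness then yields $\xi_n\cvlawd\xi$ in $\MF$. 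The main obstacle is the bookkeeping in the ``if'' direction, namely verifying that the pointwise-in-$h$ convergence hypothesis is exactly equivalent to $f\xi_n\cvlawd f\xi$ in $\M$ (so that one may invoke tightness of weakly convergent sequences) and that the tightness condition \eqref{tight2} of Proposition \ref{tightness} is thereby met for every $f\in\mf$; once tightness in $\MF$ is in hand, the identification of limits through Corollary \ref{equality in distribution} is routine.
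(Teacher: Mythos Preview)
Your proposal is correct and follows essentially the same route as the paper's proof: the ``only if'' direction via the continuous mapping theorem, and the ``if'' direction via tightness in $\MF$ (through Proposition~\ref{tightness}) followed by identification of subsequential limits using Corollary~\ref{equality in distribution}. The one place where your justification is slightly loose is the step ``$\xi_n(fh)\cvlawd\xi(fh)$ for all $h\in\cc_b(S)$ implies $f\xi_n\cvlawd f\xi$ in $\M$'': this is not merely the definition of weak convergence (that would be the deterministic statement), but a genuine theorem about random measures on Polish spaces; the paper invokes \cite[Theorem~4.11]{kallenberg2017random} here, and you correctly flag this as the main point to be checked.
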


\begin{proof}
  Assume that $\xi_n$  converges in distribution to $\xi$  in $\MF$. Let
  $f\in \mf$.  Since $F  \colon \mu  \mapsto f  \mu$ is  continuous from
  $\MF$ to  $\M$ and  $\nu \mapsto  \nu(h)$ is  continuous from  $\M$ to
  $\real$  for  every  $h\in  \cc_b(S)$, it  follows  that  the  mapping
  $\mu  \mapsto \mu(fh)$  is continuous  from $\MF$  to $\real$.  By the
  continuous mapping theorem, we get $\xi_n(fh) \cvlaw \xi(fh)$.

Conversely,  for  every  $f\in\mf $,  $f\xi_n$  and  $f\xi$  are
$\M$-valued       random       variables,        and       we       have
 $\xi_n(fh) \cvlaw \xi(fh)$
for  every $h\in \cc_b(S)$.   By
\cite[Theorem    4.11]{kallenberg2017random},    this    implies    that
$f\xi_n  \cvlawd  f   \xi$  in  the  space   $\M$.   In  particular,
$(f \xi_n,  \, n \in \mathbb{N})$  is tight (in distribution)  in $\M$
for every $f\in \mf$.  By Proposition  \ref{tightness}, it follows
that $(\xi_n, \,  n \in \mathbb{N})$ is tight in  $\MF$.  Since $\MF$ is
Polish, Prokhorov's theorem ensures that  $(\xi_n, \, n \in \mathbb{N})$
is relatively compact  (in distribution) in $\MF$. Let  $\hat{\xi}$ be a
limit point (in  distribution) of $(\xi_n, \, n  \in \mathbb{N})$. There
exists   a   subsequence,   still   denoted  by   $\xi_n$,   such   that
$\xi_n \cvlaw  \hat{\xi}$ in $\MF$.   Let $h\in \cc_b(S)$.  Applying the
first      part      of      the       proof,      we      get      that
$\xi_n(fh)  \cvlawd  \hat{\xi}(fh)$  for   every  $f\in \mf$.
Therefore,   we   have   $\hat{\xi}(fh)    \law   \xi(fh)$   for   every
$h\in   \cc_b(S)$.   It   follows   from   Corollary  \ref{equality   in
  distribution} that  $\hat{\xi} \law \xi$  in $\MF$. Thus  the sequence
$(\xi_n, \,  n \in \mathbb{N})$ is  relatively compact and has  only one
limit point $\xi$ in $\MF$. This proves the result.
\end{proof}

We  state now the main result of this section.   Recall that
\ref{H1}--\ref{Hstar} are in force. 
\begin{proposition}\label{main result, general}
	Let $(\xi_n, \, n \in \mathbb{N})$ be a sequence of $\MF$-valued random variables and $\xi$ be a $\M$-valued random variable such that $\xi_n \cvlaw \xi$ in $\M$ and $(\xi_n, n \in \mathbb{N})$ is tight (in distribution) in $\MF$. Then $\xi$ is a $\MF$-valued random variable and we have the convergence in distribution $\xi_n \cvlaw \xi$ in $\MF$.
\end{proposition}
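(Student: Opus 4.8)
The plan is to combine the tightness assumption with the compactness description from Proposition~\ref{compactness} and the characterization of convergence in distribution in $\MF$ from Proposition~\ref{convergence in distribution}. Since $(\xi_n, n\in \N)$ is tight in distribution in $\MF$ and $(\MF,\dF)$ is Polish by Proposition~\ref{Polish space}, Prokhorov's theorem ensures that $(\xi_n, n\in \N)$ is relatively compact in distribution in $\MF$. Let $\hat\xi$ be any limit point: there is a subsequence, still denoted $(\xi_n)$, with $\xi_n\cvlaw \hat\xi$ in $\MF$. In particular $\hat\xi$ is an $\MF$-valued random variable, hence also an $\M$-valued random variable by Corollary~\ref{measurability}.

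Next I would transfer this convergence to $\M$. Since the map $\mu\mapsto \ind\mu=\mu$ is continuous from $\MF$ to $\M$ (taking $f=\ind\in\mf$ in the definition of $\dF$, using \ref{H1}), the continuous mapping theorem gives $\xi_n\cvlaw \hat\xi$ in $\M$ along this subsequence. But by hypothesis the full sequence satisfies $\xi_n\cvlaw \xi$ in $\M$, so the subsequence also converges to $\xi$ in $\M$; by uniqueness of limits in distribution in the Polish space $(\M,d_{\mathrm{BL}})$, we conclude $\hat\xi\law \xi$ as $\M$-valued random variables. In particular $\xi$ has the same law as the $\MF$-valued random variable $\hat\xi$, which forces $\xi$ (up to modifying on a null set, or rather: $\P(\xi\in\MF)=1$) to be $\MF$-valued; more precisely, $\xi$ takes values in the Borel subset $\MF\subset\M$ almost surely, and by Corollary~\ref{measurability} it is an $\MF$-valued random variable, with $\hat\xi\law\xi$ now holding in $\MF$ by Corollary~\ref{equality in distribution}.

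Finally I would argue that the whole sequence converges in $\MF$. We have shown that every subsequence of $(\xi_n, n\in\N)$ has a further subsequence converging in distribution in $\MF$ to a limit point with law equal to that of $\xi$; combined with the relative compactness of $(\xi_n, n\in\N)$ in $\MF$, this is the standard criterion implying $\xi_n\cvlaw \xi$ in $\MF$. (Concretely: if $\xi_n$ did not converge to $\xi$ in distribution in $\MF$, there would be a bounded continuous test functional $\Phi$ on $\MF$ and a subsequence along which $\ex{\Phi(\xi_n)}$ stays bounded away from $\ex{\Phi(\xi)}$; extracting a further convergent subsequence yields a limit point of law $\xi$, a contradiction.)

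The only mildly delicate point is the passage ``$\hat\xi\law\xi$ in $\M$'' $\Rightarrow$ ``$\xi$ is $\MF$-valued and $\hat\xi\law\xi$ in $\MF$'': this rests on $\MF$ being a Borel subset of $\M$ and on the equivalence of laws on $\M$ and on $\MF$, both of which are exactly the content of Lemma~\ref{sigma algebras}, Corollary~\ref{measurability} and Corollary~\ref{equality in distribution}. No genuinely new estimate is needed; the proof is a short assembly of the structural results already established in this appendix, so I do not expect a real obstacle beyond being careful about which space each convergence takes place in.
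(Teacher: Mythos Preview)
Your proposal is correct and follows essentially the same approach as the paper: relative compactness in $\MF$ via Prokhorov, identification of any limit point with $\xi$ in $\M$ via the continuous inclusion $\MF\hookrightarrow\M$, upgrading $\xi$ to an $\MF$-valued variable through Corollary~\ref{measurability} and Corollary~\ref{equality in distribution}, and concluding by the unique-limit-point argument. The only cosmetic difference is that the paper phrases the transfer to $\M$ via Proposition~\ref{convergence in distribution} with $f=\ind$ rather than the continuous mapping theorem, which is the same step.
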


\begin{proof}
  By  assumption,  the  sequence  $(\xi_n,  \,  n  \in  \mathbb{N})$  is
  relatively  compact   (in  distribution)  in  the   space  $\MF$.  Let
  $\hat{\xi}  \in  \MF$  be  a  limit  point  in  distribution  and  let
$h\in \cc_b(S)$.  On the  one hand,
  Proposition  \ref{convergence in  distribution} applied  with $f=\ind $
  yields the  convergence $\xi_n(h)  \cvlaw \hat{\xi}(h)$. On  the other
  hand,   since   $\xi_n  \cvlaw   \xi$   in   $\M$  it   follows   that
  $\xi_n(h)  \cvlaw \xi(h)$.  Therefore $\hat{\xi}(h)  \law \xi(h)$  for
  every  $h\in \cc_b(S)$,  \emph{i.e.}
  $\hat{\xi} \law \xi$ in $\M$. Since the distribution of $\hat{\xi}$ is
  concentrated on  $\MF$, the  same is  true for  $\xi$. In  other words
  $\xi \in \MF$ a.s., and so  $\xi$ is a $\MF$-valued random variable by
  Corollary \ref{measurability}.  Now, applying  Corollary \ref{equality
    in  distribution}  we   get  $\hat{\xi}  \law  \xi$   in  the  space
  $\MF$. Thus the sequence $(\xi_n,  \, n \in \mathbb{N})$ is relatively
  compact   in  $\MF$   and  has   only  one   limit  point   $\xi$,  so
  $\xi_n \cvlaw \xi$ in $\MF$.
\end{proof}

The following special case is particularly useful. Recall that
\ref{H1}--\ref{Hstar} are in force.
\begin{corollary}\label{main result, special}
	Assume that \ref{H5} holds. Let $(\xi_n, \, n \in \mathbb{N})$ and
    $\xi$ be $\M$-valued random variables such that $\xi_n \cvlaw \xi$
    in $\M$ and for every $f\in \mf$,
	\begin{equation}\sup_{n} \ex{\xi_n(f)} < \infty.\label{boundedness F}
	\end{equation}
	Then $(\xi_n, \, n \in \mathbb{N})$ and $\xi$ are $\MF$-valued
    random variables and we have the convergence in distribution $ \xi_n
    \cvlaw \xi$ in $\MF$. Moreover, for every $f\in \mf$, we have
\[
 \ex{\xi(f)} \leqslant \liminf_{n \to \infty} \ex{\xi_n(f)} <
 \infty.
\]
	Furthermore, if $(\ex{\xi_n(\bullet)}, \, n\in \mathbb{N})$ converges to $ \ex{\xi(\bullet)}$ in $\M$ then the convergence actually holds in $\MF$.
\end{corollary}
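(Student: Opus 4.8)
The plan is to obtain the corollary by assembling the preceding results of this appendix, chiefly Corollary~\ref{sufficient condition for tightness}, Proposition~\ref{main result, general} and Corollary~\ref{from M to MF}. First I would argue that each $\xi_n$ is genuinely a $\MF$-valued random variable: since $\mf$ is countable and $\sup_n \ex{\xi_n(f)} < \infty$ for every $f \in \mf$, the Markov inequality gives $\xi_n(f) < \infty$ almost surely for each $f$, hence simultaneously for all $f \in \mf$, so $\xi_n \in \MF$ a.s.; Corollary~\ref{measurability} then upgrades this to measurability as a $\MF$-valued random variable.

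Next, since \ref{H5} is assumed and $\sup_n \ex{\xi_n(f)} < \infty$ for all $f \in \mf$, Corollary~\ref{sufficient condition for tightness} shows that the family $(\xi_n, \, n \in \mathbb{N})$ is tight in distribution in $\MF$. Combined with the hypothesis $\xi_n \cvlaw \xi$ in $\M$, Proposition~\ref{main result, general} then directly yields both that $\xi$ is a $\MF$-valued random variable and that $\xi_n \cvlaw \xi$ in $\MF$.

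For the moment bound I would fix $f \in \mf$ and note that $\mu \mapsto \mu(f) = (f\mu)(S)$ is continuous from $\MF$ to $[0,\infty)$, being the composition of the continuous map $\mu \mapsto f\mu$ from $\MF$ to $\M$ with the weakly continuous total mass functional on $\M$; the continuous mapping theorem then gives $\xi_n(f) \cvlawd \xi(f)$. Passing to a Skorokhod representation and applying Fatou's lemma yields $\ex{\xi(f)} \leqslant \liminf_n \ex{\xi_n(f)} \leqslant \sup_n \ex{\xi_n(f)} < \infty$. Finally, for the statement on means, note that $\ind \in \mf$ forces $\sup_n \ex{\xi_n(S)} < \infty$, so each intensity measure $\ex{\xi_n(\bullet)}$ is finite; by Tonelli's theorem $\ex{\xi_n(\bullet)}(f) = \ex{\xi_n(f)} < \infty$, hence $\ex{\xi_n(\bullet)} \in \MF$, and likewise $\ex{\xi(\bullet)} \in \MF$ by the bound just established. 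If $\ex{\xi_n(\bullet)} \to \ex{\xi(\bullet)}$ in $\M$, then since $(f\ex{\xi_n(\bullet)})(S) = \ex{\xi_n(f)} \leqslant \sup_n \ex{\xi_n(f)} < \infty$ the family $(f\ex{\xi_n(\bullet)}, \, n \in \mathbb{N})$ is bounded for every $f \in \mf$, so Corollary~\ref{from M to MF}-(ii) (which uses \ref{H5}) promotes the convergence to $\MF$.

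The only genuinely delicate point is the moment inequality: a priori $\xi(f)$ could be infinite, and one must be careful that the ``weak convergence plus Fatou'' argument is legitimate. This is resolved by the uniform bound $\sup_n \ex{\xi_n(f)} < \infty$, which simultaneously makes the $\liminf$ finite and confirms a posteriori that $\xi(f) < \infty$ a.s.; everything else is routine bookkeeping with the tightness and continuity statements already proved.
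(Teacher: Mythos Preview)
Your proof is correct and follows essentially the same route as the paper's: verify each $\xi_n$ is $\MF$-valued via Corollary~\ref{measurability}, apply Corollary~\ref{sufficient condition for tightness} for tightness, invoke Proposition~\ref{main result, general} for the convergence in $\MF$, use Skorokhod and Fatou for the moment bound, and finish with Corollary~\ref{from M to MF}-(ii) for the intensity measures. The only difference is that you spell out a few intermediate steps (countability of $\mf$, continuity of $\mu \mapsto \mu(f)$ on $\MF$) that the paper leaves implicit.
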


\begin{proof}
	The random variable $\xi_n$ is  $\M$-valued and satisfies $\xi_n(f)
    < \infty$ a.s. since $\ex{\xi_n(f)} < \infty$ for every $f\in \mf$,
    so by Corollary \ref{measurability}, $\xi_n$ is a $\MF$-valued
    random variable. By Corollary \ref{sufficient condition for
      tightness}, the assumption \eqref{boundedness F} 
	implies that $(\xi_n, \, n \in \mathbb{N})$ is tight (in
    distribution) in $\MF$.  Therefore Proposition \ref{main result,
      general} applies and gives the convergence in distribution $\xi_n
    \cvlaw \xi$ in $\MF$. Moreover, Skorokhod's representation theorem
    in conjunction with Fatou's lemma implies that for every $f \in \mf$,
	\[\ex{\xi(f)} \leqslant \liminf_{n \to \infty}\ex{\xi_n(f)} < \infty.\]
	
	Now set  $\mu_n = \ex{\xi_n(\bullet)}$ and  $\mu= \ex{\xi(\bullet)}$
    and assume that $\mu_n \to \mu$  in $\M$. Notice that the assumption
    \eqref{boundedness  F}  implies  that  $\mu_n  \in  \MF$  for  every
    $n   \in   \mathbb{N}$   and   that   the   sequence   of   measures
    $(f\mu_n,   \,   n  \in   \mathbb{N})$   is   bounded  for   every
    $f\in \mf$. Thus Corollary  \ref{from M to MF}  gives the
    convergence $\lim_{n\to \infty}\mu_n = \mu$ in $\MF$.
\end{proof}

\section{Sub-exponential tail bounds for the height of conditioned BGW trees}\label{Kortchemski}
Assume  that  $\xi$ satisfies  \ref{xi1}  and  \ref{xi2} and  denote  by
$\rddtree^n$ a BGW($\xi$) tree conditioned to have $n$ vertices. Then by
\cite[Theorem 1]{kortchemski2017sub}  which is stated for  the aperiodic
case  but  is  trivially  extended   to  the  general  case,  for  every
$\alpha  \in  (0,\gamma/(\gamma  -   1))$,  there  exist  two  constants
$C_0,  c_0  >0$   such  that  for  every  $y  \geqslant   0$  and  every
$n \in \supp$
\begin{equation}
\label{subgaussian height zero}
\pr{\frac{b_n}{n}\H(\rddtree^n)\leqslant y} \leqslant C_0 \exp\left(-c_0 y^{-\alpha}\right).
\end{equation}
We will show that under the stronger assumption \ref{xi3}, the previous inequality holds with $\alpha = \gamma/(\gamma -1)$. Since the finite variance case has already been treated in \cite{addario2013sub}, we assume henceforth that $\xi$ has infinite variance.

Recall that $L$ is a  slowly varying function such that $\ex{\xi^2
  \ind_{\{\xi \leqslant n\}}} = n^{2-\gamma}L(n)$. On the other hand,
the slowly varying function appearing in the appendix of
\cite{kortchemski2017sub}, which we denote by $K$, satisfies
$\operatorname{Var}\left(\xi \ind_{\{\xi \leqslant n\}}\right) =
n^{2-\gamma}K(n)$. Since $\operatorname{Var} (\xi)  = +\infty$, we have
as $n$ goes to infinity that 
\[
\ex{\xi^2 \ind_{\{\xi \leqslant n\}}} \sim n^{2-\gamma} K(n)+1 \sim
n^{2-\gamma} K(n),
\]
see the appendix in \cite{kortchemski2017sub}. Therefore, we get $K(n) \sim L(n)$ and $K$ is bounded above. 

Following the proof of   \cite[Theorem 1]{kortchemski2017sub} to get
\eqref{subgaussian height zero}  holds for $\alpha=\gamma/(\gamma-1)$,
it is enough to prove the analogue of Proposition 8 therein with
$\alpha= \gamma/(\gamma-1)$, that is Proposition \ref{prop:B} below.  Let
$(W_n, \, n\in \N)$  be a random walk with starting point  $W_0 = 0$ and
jump distribution $\xi-1$.
\begin{proposition}
\label{prop:B}
  Assume that $\xi$  satisfies \ref{xi1} and \ref{xi3}.  There exist two
  constants $C_0, c_0 >0$ such that  for every $u \geqslant 0$ and every
  $n \geqslant 1$,
\begin{equation}
\label{eq:b2}
\pr{\min_{1 \leqslant i \leqslant n} W_i \leqslant -u\:\!b_n} \leqslant
C_0 \exp\left(-c_0 u^{\gamma/(\gamma-1)}\right).
\end{equation}
\end{proposition}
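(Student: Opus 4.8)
The plan is to establish the left-tail bound for the minimum of the random walk $(W_i)$ by exploiting the stronger regularity assumption \ref{xi3}, which forces the normalizing sequence $b_n$ to be comparable to $n^{1/\gamma}$ (see \eqref{b_n bounded}). The exponent $\gamma/(\gamma-1)$ is precisely the one dictated by large deviations for a spectrally positive stable process: the event $\{\min_{i\le n} W_i \le -u b_n\}$ is a ``downward'' deviation, and since $\xi-1$ has no heavy left tail (it is bounded below by $-1$), such deviations are genuinely exponentially costly in the Cramér sense, with rate governed by the left-tail behavior of the stable limit.

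First I would reduce to a Cramér-type exponential moment estimate. Since $W_i = \sum_{k=1}^i(\xi_k-1)$ has i.i.d.\ increments bounded below by $-1$, the process $(\exp(-\lambda W_i))_{i\ge 0}$ is a submartingale for $\lambda \ge 0$ once we control $\E[\exp(-\lambda(\xi-1))]$; more precisely $(\exp(-\lambda W_i - i\log\E[e^{-\lambda(\xi-1)}]))_i$ is a martingale. By Doob's maximal inequality applied to this nonnegative submartingale (or directly to the martingale), for every $\lambda\ge 0$,
\[
\pr{\min_{1\le i\le n} W_i \le -u b_n} = \pr{\max_{1\le i\le n} e^{-\lambda W_i} \ge e^{\lambda u b_n}} \le e^{-\lambda u b_n}\, \E\!\left[e^{-\lambda W_n}\right] = e^{-\lambda u b_n}\, \phi(\lambda)^n,
\]
where $\phi(\lambda) = \E[e^{-\lambda(\xi-1)}] = e^{\lambda}\E[e^{-\lambda\xi}]$. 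The task then is to bound $\phi(\lambda)^n$, i.e.\ to show $n\log\phi(\lambda) \le C n \kappa' \lambda^\gamma$ for $\lambda$ in a suitable range, mimicking the Laplace exponent $\psi(\lambda)=\kappa\lambda^\gamma$ of the stable limit. Because $\xi\ge 0$ and $\E[\xi]=1$, we have $\log\phi(\lambda) = \lambda + \log\E[e^{-\lambda\xi}]$, and a Taylor expansion together with assumption \ref{xi3} (which says $\E[\xi^2\ind_{\{\xi\le n\}}]=n^{2-\gamma}L(n)$ with $L$ bounded away from $0$ and $\infty$) should give $\log\phi(\lambda) \le C\lambda^\gamma$ for all $\lambda\in(0,1]$, say, for some finite constant $C$. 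This is the standard estimate underlying the domain-of-attraction convergence \eqref{stable CLT}, but here the boundedness of $L$ makes the constant uniform rather than merely asymptotic. Concretely, one writes $1 - \lambda + \log\E[e^{-\lambda\xi}] = \int_0^\infty (e^{-\lambda x} - 1 + \lambda x)\,\P(\xi\in dx)/\dots$, truncates at level $1/\lambda$, and bounds the two pieces using $\E[\xi^2\ind_{\{\xi\le 1/\lambda\}}]$ and $\P(\xi>1/\lambda)$ respectively, both of which are $O(\lambda^{\gamma-2})$ and $O(\lambda^\gamma)$ up to the bounded slowly varying factor.

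Then I would optimize over $\lambda$. With $n\log\phi(\lambda)\le C n\lambda^\gamma$ for $\lambda\in(0,1]$, the bound becomes $\pr{\min_{i\le n} W_i \le -u b_n} \le \exp(-\lambda u b_n + Cn\lambda^\gamma)$. Using $b_n \ge \underline b\, n^{1/\gamma}$ from \eqref{b_n bounded} and choosing $\lambda = c\, u^{1/(\gamma-1)} n^{-1/\gamma}$ for a small enough constant $c$ (so that $\lambda\le 1$ when $u$ is below a threshold; for $u$ above the threshold the probability bound is trivially small and can be absorbed into $C_0$), the exponent becomes $-\lambda u b_n + Cn\lambda^\gamma \le -c\underline b\, u^{\gamma/(\gamma-1)} + C c^\gamma u^{\gamma/(\gamma-1)} = -(c\underline b - Cc^\gamma)u^{\gamma/(\gamma-1)}$, which is $\le -c_0 u^{\gamma/(\gamma-1)}$ for $c$ small. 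For the regime where $\lambda = cu^{1/(\gamma-1)}n^{-1/\gamma}$ would exceed $1$, i.e.\ $u \ge C' n^{(\gamma-1)/\gamma}$, one instead uses the crude bound $\pr{\min_{i\le n} W_i \le -ub_n} \le \pr{\min_{i\le n}W_i \le -n} = 0$ once $ub_n \ge n$ (since $W_i \ge -i \ge -n$ deterministically), which again holds for $u$ large enough relative to $n$ given $b_n \le \overline b\, n^{1/\gamma}$; adjusting constants makes \eqref{eq:b2} hold for all $u\ge 0$ and all $n\ge 1$.

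The main obstacle I anticipate is making the exponential-moment estimate $\log\phi(\lambda)\le C\lambda^\gamma$ genuinely uniform in $\lambda$ over a fixed interval, rather than only as $\lambda\to 0$: one must carry the boundedness of $L$ through the truncation argument carefully, checking that the contribution of large values of $\xi$ (controlled by $\P(\xi>1/\lambda) = O(\lambda^\gamma L(1/\lambda))$, which requires translating \ref{xi3} into a tail bound via an Abelian/Tauberian step, cf.\ the discussion around \eqref{b_n bounded} citing \cite{janson2011stable}) does not blow up. The boundary-regime bookkeeping (splitting into $u$ small and $u$ large relative to $n^{(\gamma-1)/\gamma}$) is routine but must be done so that the final constants $C_0,c_0$ do not depend on $n$. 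Once Proposition \ref{prop:B} is in hand, following the proof of \cite[Theorem 1]{kortchemski2017sub} verbatim, with Proposition 8 therein replaced by Proposition \ref{prop:B}, yields \eqref{subgaussian height zero} with $\alpha = \gamma/(\gamma-1)$, as claimed in Remark \ref{remark subgaussian}(i).
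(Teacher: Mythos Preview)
Your proposal is correct and follows essentially the same strategy as the paper: both apply Doob's maximal inequality to the submartingale $(\e^{-\lambda W_i})$, choose $\lambda$ proportional to $u^{1/(\gamma-1)}/b_n$ (equivalently $u^{1/(\gamma-1)} n^{-1/\gamma}$ by \eqref{b_n bounded}), and dispose of large $u$ via the deterministic bound $W_i\ge -n$. The only notable difference is that where you sketch the truncation argument for $\log\phi(\lambda)\le C\lambda^\gamma$ as the anticipated ``main obstacle'', the paper simply invokes \cite[Eq.~(42)]{kortchemski2017sub}, which already packages this estimate with the slowly varying function $K\sim L$ appearing explicitly; the boundedness of $K$ from above (the only direction needed) then gives the uniform constant directly.
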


\begin{proof}
	Note that $\pr{\min_{1 \leqslant i \leqslant n} W_i \leqslant -u\:
      \!b_n} = 0$ if $u\: \!b_n > n$, so that it is enough to prove
    \eqref{eq:b2} for  $1\leqslant u \leqslant n/b_n$. Write, for $h >0$
\begin{equation}
\label{doob}
\pr{\min_{1 \leqslant i \leqslant n}W_i \leqslant -u\: \!b_n } =
\pr{\max_{1\leqslant i \leqslant n} \e^{-hW_i} \geqslant \e^{hu\:
    \!b_n}} \leqslant \e^{-hu\: \! b_n} \ex{\e^{-hW_n}} = \e^{-hu \: \!
  b_n} \ex{\e^{-hW_1}}^n,  
\end{equation}
	where the inequality follows  from Doob's maximal inequality applied
    to the submartingale $(\e^{-hW_n}, \allowbreak\, n \in \N)$. We
    shall apply  \eqref{doob} with  $h =  \epsilon u^{\eta}/  b_n$ where
    $\eta = 1/(\gamma - 1)$ and $\epsilon >0$ is a constant to be chosen
    later. Note that $\gamma/(\gamma - 1)=\eta \gamma = 1+\eta$. Observe
    that   $\epsilon    u^\eta/   b_n$    is   bounded    uniformly   in
    $1 \leqslant u  \leqslant n/b_n$ and $n \geqslant  1$. Indeed, since
    $b_n \geqslant \underline{b} \: \! n^{1/\gamma}$, we have
	\[\frac{u^\eta}{b_n} \leqslant \left(\frac{n}{b_n}\right)^\eta \frac{1}{b_n}\leqslant \frac{1}{\underline{b}^{1+\eta}}\cdot\]
	Therefore, by \cite[Eq. (42)]{kortchemski2017sub}, we have for every $n \geqslant 1$ and every $1\leqslant u \leqslant n/b_n$
	\[\ex{\e^{-\epsilon \frac{u^\eta}{b_n}W_1}}\leqslant \exp\left\{Cn \left(\epsilon \frac{u^\eta}{b_n}\right)^\gamma K\left(\frac{b_n}{\epsilon u^\eta}\right)\right\} \leqslant \exp\left(C'\epsilon^\gamma u^{\eta \gamma}\right),\]
	as $K$ is bounded from above and $b_n \geqslant \underline{b} \: \!
    n^{1/\gamma}$. Thus, we deduce from \eqref{doob} that for $1\leq
    u\leq  b_n/n$  
\[
\pr{\min_{1 \leqslant i \leqslant n}W_i \leqslant -u\: \!b_n } \leqslant
\exp\left(-\left(\epsilon  - C' \epsilon^\gamma\right)
  u^{1+\eta}\right).
\]
The conclusion  readily  follows by  choosing $\epsilon  >0$
small enough such that $\epsilon - C' \epsilon^\gamma >0$.
\end{proof}
\begin{remark}
	In fact, this proof is valid if we only assume that the slowly
    varying function $L$ of \ref{xi3} is bounded from above, in which
    case $n^{-1/\gamma}b_n$ is bounded below. 
\end{remark}

\medskip
\bibliographystyle{amsplain}
\bibliography{costfunctionals}

\end{document}